\newcommand{\bsl}{\backslash}
\newcommand{\BA}{{\mathbb {A}}}
\newcommand{\BC}{{\mathbb {C}}}
\newcommand{\BQ}{{\mathbb {Q}}}
\newcommand{\BR}{{\mathbb {R}}}
\newcommand{\BZ}{{\mathbb {Z}}}
\newcommand{\CA}{{\mathcal {A}}}
\newcommand{\CB}{{\mathcal {B}}}
\newcommand{\CC}{{\mathcal {C}}}
\newcommand{\CE}{{\mathcal {E}}}
\newcommand{\CF}{{\mathcal {F}}}
\newcommand{\CG}{{\mathcal {G}}}
\newcommand{\CH}{{\mathcal {H}}}
\newcommand{\CL}{{\mathcal {L}}}
\newcommand{\CM}{{\mathcal {M}}}
\newcommand{\CO}{{\mathcal {O}}}
\newcommand{\CP}{{\mathcal {P}}}
\newcommand{\CS}{{\mathcal {S}}}
\newcommand{\CU}{{\mathcal {U}}}
\newcommand{\CV}{{\mathcal {V}}}
\newcommand{\CW}{{\mathcal {W}}}
\newcommand{\CX}{{\mathcal {X}}}
\newcommand{\CY}{{\mathcal {Y}}}
\newcommand{\RO}{{\mathrm {O}}}
\newcommand{\RU}{{\mathrm {U}}}
\newcommand{\Aut}{{\mathrm{Aut}}}
\newcommand{\GL}{{\mathrm{GL}}}
\newcommand{\Hom}{{\mathrm{Hom}}}
\newcommand{\Ind}{{\mathrm{Ind}}}
\newcommand{\Ker}{{\mathrm{Ker}}}
\newcommand{\rk}{{\mathrm{k}}}
\newcommand{\wh}{\widehat}
\newcommand{\bs}{\backslash}
\newcommand{\diag}{\operatorname{diag}}
\newcommand{\od}{\operatorname{d}}
\newcommand{\oL}{\operatorname{L}}
\newcommand{\oH}{\operatorname{H}}
\newcommand{\oZ}{\operatorname{Z}}
\newcommand{\A}{\mathbb{A}}
\newcommand{\be}{\begin {equation}}
\newcommand{\ee}{\end {equation}}
\newcommand{\bee}{\begin {equation*}}
\newcommand{\eee}{\end {equation*}}
\newcommand{\cf}{\emph{cf.}~}
\theoremstyle{Theorem}
\theoremstyle{Theorem}
\theoremstyle{Theorem}
\newtheorem{lem}{Lemma}[section]
\newtheorem{corl}[lem]{Corollary}
\newtheorem{thml}[lem]{Theorem}
\newtheorem{leml}[lem]{Lemma}
\newtheorem{prpl}[lem]{Proposition}
\theoremstyle{Theorem}
\theoremstyle{Plain}
\newtheorem{remarkl}[lem]{Remark}
\theoremstyle{remark}
\newtheorem{example}[lem]{Example}
\theoremstyle{remark}
\theoremstyle{Definition}
\newtheorem{dfnl}[lem]{Definition}
\newtheorem{assd}[lem]{Assumption}
\newcommand{\Irr}{\mathrm{Irr}}
\newcommand{\bdl}{\boldsymbol}
\newcommand{\bk}{\mathbbm{k}}
\numberwithin{equation}{section}
\begin{document}

\title[Blasius-Deligne conjecture]
{On the Blasius-Deligne conjecture for the standard $L$-functions of symplectic type for $\GL_{2n}$}

\author[D. Jiang]{Dihua Jiang}
\address{School of Mathematics, University of Minnesota, Minneapolis, MN 55455, USA}
\email{dhjiang@math.umn.edu}

\author[D. Liu]{Dongwen Liu}
\address{School of Mathematical Sciences, Zhejiang University, Hangzhou, 310058, P. R. China}
\email{maliu@zju.edu.cn}

\author[B. Sun]{Binyong Sun}
\address{Institute for Advanced Study in Mathematics and New Cornerstone Science Laboratory, Zhejiang University, Hangzhou, 310058, P. R. China}
\email{sunbinyong@zju.edu.cn}

\author[F. Tian]{Fangyang Tian}
\address{School of Mathematical Sciences, Zhejiang University, Hangzhou, 310058, P. R. China}
\email{tianfangyangmath@zju.edu.cn}

\subjclass[2010]{22E50, 43A80} \keywords{Cohomological representation, Jacquet-Shalika integral, Friedberg-Jacquet integral, critical value, L-function, period relation}

\begin{abstract}
    In this paper we give an unconditional proof of the Blasius-Deligne conjecture for the critical values of  
the $\GL_{2n}$-standard $L$-functions of symplectic type with $n\geq 1$ and complete the project started in 
\cite{JST19}. %Although the method used in this paper to prove the Archimedean period relations is different from that used in \cite{JST19}, the non-vanishing results of modular symbols proved in \cite{JST19} are still crucial to the proof of the main result of this paper. Meanwhile, the local theory established in this paper for various zeta integrals are important to other arithmetic problems, including the theory of $p$-adic $L$-functions for higher rank groups. 
\end{abstract}

\maketitle

\tableofcontents

\section{Introduction}

The Blasius-Deligne conjecture (\cite{D79,B97}) for automorphic $L$-functions is about the period relations and the algebraicity of critical $L$-values. In the paper, we give an unconditional proof of the Blasius-Deligne conjecture for the $\GL_{2n}$-standard $L$-functions of symplectic type with $n\geq 1$ and completes the project started in 
\cite{JST19}. We refer to the introduction of \cite{JST19, LLS24} for historical comments on earlier work of lower rank cases and relevant work for higher rank cases. 

Let  $\rk$ be a number field with adele ring $\BA$. Let $\rk_v$ be the local field at a local place $v$ of $\rk$, and write $\BA=\BA_f\times \rk_\infty$ with $\BA_f = \bigotimes'_{v\nmid \infty}\rk_v$ being the finite part of $\BA$ and 
$\rk_\infty$ being the so-called $\infty$-part of $\BA$, which has the following realization:
\begin{equation}
   \rk_\infty:=\rk\otimes_\BQ \BR 
  = \prod_{v\mid \infty} \rk_v \hookrightarrow \rk\otimes_\BQ \BC 
  = \prod_{\iota\in \CE_\rk} \BC,   
\end{equation}\label{kinfty}
where $\CE_\rk$ is the set of field embeddings $\iota: \rk\hookrightarrow\BC$. 

Let $\Pi =\Pi_f\otimes \Pi_\infty$ be a  regular algebraic irreducible cuspidal automorphic representation of $\GL_{2n}(\BA)$ ($n\geq 1$) in the sense of \cite{Cl90}. Then up to isomorphism there is a unique irreducible algebraic representation 
$F_\mu$ of $\GL_{2n}(\rk\otimes_\BQ\BC)$, say of highest weight $\mu = \{\mu^\iota\}_{\iota\in \CE_\rk} \in (\BZ^{2n})^{\CE_\rk}$, such that the total continuous cohomology 
\be \label{coh}
\oH^*_{\rm ct}(\BR^\times_+\bsl \GL_{2n}(\rk_\infty)^0; \Pi_\infty \otimes F_\mu^\vee)\neq \{0\},
\ee
where   $\BR^\times_+$ is the diagonal central torus. Here and henceforth, a superscript ${}^\vee$ indicates the contragradient representation, and $X^0$ denotes the identity component of a topological group $X$. 
The representation $F_\mu$ is called the coefficient system of $\Pi$. For  $\sigma\in \Aut(\BC)$, denote by ${}^\sigma\Pi$ the $\sigma$-twist of $\Pi$ in the sense of \cite{Cl90}, which is also  a  regular algebraic irreducible cuspidal automorphic representation of $\GL_{2n}(\BA)$. Similarly denote by ${}^\sigma F_\mu$ the coefficient system of ${}^\sigma\Pi$.  
%As is standard \eqre{coh} is a representation of  $\pi_0(\rk_\infty^\times)$, the component group of $\rk_\infty^\times$. 

Assume that $\Pi$ is of symplectic type, which is equivalent to that there is a character  $\bdl{\eta}: \rk^\times \bsl \BA^\times \to\BC^\times$ such that the complete twisted 
exterior square $L$-function $\oL(s, \Pi, \wedge^2\otimes \bdl{\eta}^{-1})$ has a pole at $s=1$ (\cite[Definition 2.3]{JST19}).  For each $\iota\in \CE_\rk$ write $\mu^\iota = (\mu^\iota_1, \mu^\iota_2, \dots, \mu^\iota_{2n})\in \BZ^{2n}$. Then there exists $w_\iota\in\BZ$ such that 
\[
\mu^\iota_1+\mu^\iota_{2n} = \mu^\iota_2+\mu^\iota_{2n-1} = \cdots = \mu^\iota_n +\mu^\iota_{n-1} =w_\iota.
\] 
For an arbitrary  algebraic Hecke character $\chi=\chi_f\otimes \chi_\infty:\rk^\times \bsl \BA^\times \to \BC^\times$, there exists a unique family $\{\od\!\chi_{\iota}\in \BZ\}_{\iota\in\CE_\rk}$ of integers such that 
\be \label{inftype}
\chi_\infty =  \chi_\natural \vert_{\rk_\infty^\times} \cdot \chi^\natural \quad\textrm{for a  unique quadratic  character $\chi^\natural$ of $\rk_\infty^\times$},
\ee
where $\chi_\natural :=\otimes_{\iota\in \CE_\rk} \iota^{\od\!\chi_{\iota}}$ 
is a character of $(\rk\otimes_\BQ\BC)^\times$.  That is, 
$\chi_\natural$ is the coefficient system of $\chi$. 
Note that the formal sum $\sum_{\iota\in\CE_\rk} \od\!\chi_{\iota} \cdot \iota\in \BZ[\CE_\rk]$ is referred as the infinite type of $\chi$ in the literature. 
View $H:=\GL_n\times \GL_n$ as a standard Levi subgroup of $\GL_{2n}$.  Define a character 
\be \label{char:xi}
\xi_{\mu, \chi_\natural}:= \otimes_{\iota\in\CE_\rk} ( {\det}^{\od\!\chi_{\iota}} \boxtimes  {\det}^{-\od\!\chi_{\iota}-w_\iota})
\ee
of $H(\rk\otimes_\BQ\BC)$. 

\begin{dfnl} \label{def:bal}
With the above notation, we say that $\chi_\natural$ is $F_\mu$-balanced if 
\[
\Hom_{H(\rk\otimes_\BQ\BC)}( F_\mu^\vee\otimes \xi_{\mu, \chi_\natural}^\vee, \BC)\neq \{0\}.
\]
\end{dfnl}

\begin{remarkl}
Some remarks are in order. 
\begin{enumerate}
    \item If  $\chi_\natural$ is $F_\mu$-balanced, then the integers $j$ such that $\chi_\natural\cdot\otimes_{\iota\in\CE_\rk}\iota^j$ is $F_\mu$-balanced are in bijection with the critical places $\frac{1}{2}+j$ of $\oL(s, \Pi\otimes\chi)$. This can be proved in the same way as that of  \cite[Proposition 2.20]{JST19}.
    \item Set 
    $\Omega_{\mu,\chi_\natural}: = {\rm i}^{ \sum_{\iota\in\CE_\rk}\sum^n_{i=1}(\mu^\iota_i + \od\!\chi_{\iota})}$ with ${\rm i}=\sqrt{-1}$.  Then we must have that 
\[
\Omega_{\mu, \chi_\natural\cdot\otimes_{\iota\in\CE_\rk}\iota^j} = {\rm i}^{jn[\rk\,:\,\BQ]}\cdot \Omega_{\mu,\chi_\natural}.
\]
\item If $\rk$ contains no CM field, then 
\begin{itemize}
    \item the integer $\od\!\chi_{\iota}$ is independent of $\iota\in\CE_\rk$;
    \item $\chi_\natural$ is $F_\mu$-balanced if and only if $\frac{1}{2}$ is a critical place of $\oL(s, \Pi\otimes \chi)$;
    \item $\frac{1}{2}$ is a critical place of $\oL(s, \Pi\otimes \chi)$ for some algebraic Hecke characters $\chi: \rk^\times\backslash \A^\times\rightarrow \mathbb C^\times$.   
\end{itemize}
See \cite[Remark  2.23]{JST19}.
\end{enumerate}
\end{remarkl}

We identify the set of quadratic characters of  $\rk_\infty^\times$ with the set of characters $\widehat{\pi_0(\rk_\infty^\times)}$ of the component group $\pi_0(\rk_\infty^\times)$, 
so that $\chi^\natural \in \widehat{\pi_0(\rk_\infty^\times)}$. 
%Let $\sgn_\infty  \in \widehat{\pi_0(\rk_\infty^\times)}$ be the unique element which is nontrivial on $\pi_0(\rk_v^\times)\hookrightarrow \pi_0(\rk_\infty^\times)$ for every real place $v$ of $\rk$. 
Let $\varepsilon \in \widehat{\pi_0(\rk_\infty^\times)}$. We introduce the following assumption for the pair $(\Pi, \varepsilon)$.

\begin{assd}\label{Ass1}
    %For a given irreducible regular algebraic cuspidal automorphic representation $\Pi$ of $\GL_{2n}(\BA)$, which is of symplectic type, 
    There exist $\sigma'\in \Aut(\BC)$ and an algebraic Hecke character $\chi'$ of $\rk^\times\bsl\BA^\times$ such that 
    $\chi'_\natural$ is $F_\mu$-balanced, $\chi'^\natural=\varepsilon$ and 
\[\label{hyp}
 \oL(\frac{1}{2}, {}^{\sigma'}\Pi\otimes {}^{\sigma'}\chi')\neq 0.
\]
\end{assd}

Let us explain the meaning of Assumption \ref{Ass1}. Note that the Blasius-Deligne conjecture is about the 
algebraicity of the critical values of $\oL(s,\Pi\otimes\chi)$ and its reciprocity law. One may only consider that of 
the central value $\oL(\frac{1}{2},\Pi\otimes\chi)$ because of the generality of the algebraic Hecke character $\chi$. 
If Assumption \ref{Ass1} fails, then $\oL(\frac{1}{2},{}^\sigma\Pi\otimes{}^\sigma\chi)=0$ for all $\sigma\in \Aut(\BC)$ and all algebraic Hecke characters $\chi$ such  that 
    $\chi_\natural$ is $F_\mu$-balanced and $\chi^\natural=\varepsilon$. Hence, at least when $\rk$ contains no CM field,  there is nothing to prove if Assumption \ref{Ass1} fails. Under Assumption \ref{Ass1}, we are able to define a canonical family of Shalika periods as in Definition \ref{def:period}, which is the key step towards the formulation and the proof of Theorem \ref{BDconj} below, which is the Blasius-Deligne conjecture for this case. It may be important to point out that without Assumption \ref{Ass1}, the definition of a canonical family of Shalika periods as in Definition \ref{def:period} is  currently unavailable when the underlying number field $\rk$ has a complex local place, due to the appearance of multi-dimensional cohomology groups in the modular symbols. 
The main result of this paper is the following theorem.

\begin{thml}[Blasius-Deligne conjecture] \label{BDconj}
Let $\Pi$ be a  regular algebraic irreducible cuspidal automorphic representation of  $\GL_{2n}(\BA)$ that is of symplectic type.
For a given $\varepsilon \in \widehat{\pi_0(\rk_\infty^\times)}$, 
%Under the Assumption \ref{Ass1} when $\rk$ has a complex place, 
the following reciprocity identity 
\be \label{reci}
\sigma\left( \frac{\oL(\frac{1}{2}, \Pi\otimes\chi)}{\Omega_{\mu, \chi_\natural} \cdot \CG(\chi)^n  \cdot \Omega_\varepsilon(\Pi,\bdl{\eta})}\right) = \frac{\oL(\frac{1}{2}, {}^\sigma\Pi\otimes{}^\sigma\chi)}{\Omega_{\mu, \chi_\natural}  \cdot \CG({}^\sigma\chi)^n \cdot \Omega_\varepsilon({}^\sigma\Pi, {}^\sigma \bdl{\eta})}
\ee
holds for every $\sigma\in\Aut(\BC)$ and  every  algebraic Hecke character  $\chi$ of $\rk^\times\bsl \BA^\times$ 
such that $\chi_\natural$ is $F_\mu$-balanced  and 
$
\chi^\natural = \varepsilon,
$
where 
\begin{itemize} 
\item $\Omega_{\mu,\chi_\natural} = {\rm i}^{ \sum_{\iota\in\CE_\rk}\sum^n_{i=1}(\mu^\iota_i + \od\!\chi_{\iota})}$ with ${\rm i}=\sqrt{-1}$;
\item $\CG(\chi) = \CG(\chi_f)$ is the Gauss sum of $\chi$;
\item $\{\Omega_\varepsilon({}^\sigma\Pi, {}^\sigma \bdl{\eta} )\}_{\sigma\in \Aut(\mathbb C)}$ is the family of Shalika periods in Definition \ref{def:period}. 
\end{itemize}
In particular, 
\be \label{alg}
\frac{\oL(\frac{1}{2}, \Pi\otimes\chi)}{\Omega_{\mu, \chi_\natural} \cdot \CG(\chi)^n  \cdot \Omega_\varepsilon(\Pi, \bdl{\eta})} \in \BQ(\Pi, \bdl{\eta}, \chi),
\ee
where $\BQ(\Pi, \bdl{\eta},\chi)$ is the composition of the rationality fields of $\Pi, \bdl{\eta}$ and $\chi$. 
\end{thml}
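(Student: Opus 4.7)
\medskip

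\noindent\textbf{Proof plan.} The strategy is to interpret $L(\tfrac{1}{2},\Pi\otimes\chi)$ cohomologically via the Friedberg--Jacquet integral and to track the action of $\Aut(\BC)$ through this construction, continuing the program of \cite{JST19}. First, I would invoke the Friedberg--Jacquet integral representation: because $\Pi$ is of symplectic type it admits a global Shalika model, and the integral
\[
\oZ(\varphi,\phi,s,\chi) := \int_{Z_H(\BA)H(\rk)\bsl H(\BA)} \varphi\!\left(\mtrtwo{g_1}{0}{0}{g_2}\right)\phi((0,e_n)g_2)\chi(\det(g_1g_2^{-1}))\,|\det(g_1g_2^{-1})|^{s-\frac{1}{2}}\,\dif g
\]
unfolds to $\oL(s,\Pi\otimes\chi)$ times a product of local normalizing factors. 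The $F_\mu$-balanced condition on $\chi_\natural$ is precisely the branching law from $F_\mu$ to $H$ that guarantees that, after multiplying by a suitable character, the integrand is a class in the top-degree relative Lie algebra cohomology of $\Pi_\infty\otimes F_\mu^\vee$, so the value at $s=\tfrac{1}{2}$ equals the evaluation of a modular symbol.

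Next, I would package this cohomologically. Using Clozel's model of $\Pi_f$ over its rationality field $\BQ(\Pi)$, the cuspidal cohomology class $[\varphi]\in \oH^*_{\rm ct}(\BR^\times_+\bsl\GL_{2n}(\rk_\infty)^0;\Pi_\infty\otimes F_\mu^\vee)\otimes \Pi_f$ has a well-defined $\sigma$-twist, sitting inside cohomology with ${}^\sigma\Pi_f$ coefficients. Assumption \ref{Ass1} ensures that on the $\varepsilon$-component at infinity the Shalika functional and the Friedberg--Jacquet modular symbol are simultaneously nonvanishing, so the Shalika period $\Omega_\varepsilon(\Pi,\bdl{\eta})$ of Definition \ref{def:period} is well defined as the ratio between a chosen cohomological class and its image under the Friedberg--Jacquet pairing; this ratio is characterized canonically by normalizing the non-archimedean Shalika functional to be $\Aut(\BC)$-equivariant. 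By construction, the family $\{\Omega_\varepsilon({}^\sigma\Pi,{}^\sigma\bdl{\eta})\}_\sigma$ transforms equivariantly under $\Aut(\BC)$ up to the rational scalars coming from non-archimedean local Whittaker/Shalika comparisons.

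The archimedean factor is then pinned down: evaluating the Friedberg--Jacquet integral on the cohomological test vector at $s=\tfrac{1}{2}$ produces exactly the constant $\Omega_{\mu,\chi_\natural}={\rm i}^{\sum_{\iota,i}(\mu^\iota_i+\od\chi_\iota)}$, arising from the normalization of the top-degree form on $\BR^\times_+\bsl H(\rk_\infty)^0$ together with the twist $\xi_{\mu,\chi_\natural}$. Meanwhile, on the finite side, inserting a Bruhat function $\phi_f$ which is the characteristic function of a lattice transforms the integral into a finite Euler product which, after matching with the standard Whittaker--new-vector normalization, introduces exactly $\CG(\chi_f)^n=\CG(\chi)^n$ (one Gauss sum per $\GL_n$-factor) together with scalars in $\BQ(\Pi,\bdl{\eta},\chi)$. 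Combining the three inputs---(i) rationality and $\Aut(\BC)$-equivariance of the cohomological modular symbol, (ii) the archimedean computation producing $\Omega_{\mu,\chi_\natural}$, (iii) the non-archimedean computation producing $\CG(\chi)^n$ together with a rational scalar---yields the identity \eqref{reci}, and specializing to $\sigma=\mathrm{id}$ gives the algebraicity \eqref{alg}.

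The main obstacle, and the reason Assumption \ref{Ass1} must be imposed at the outset, is the canonical definition of $\Omega_\varepsilon(\Pi,\bdl{\eta})$ when $\rk$ has complex places: the relevant relative Lie algebra cohomology at each complex place is multi-dimensional, so neither the Shalika functional nor the Friedberg--Jacquet pairing has an intrinsic one-dimensional normalization. This forces one to fix a specific cohomological test vector using a non-vanishing twisted central value and then to verify a posteriori that the resulting Shalika period is independent of all auxiliary choices up to elements of $\BQ(\Pi,\bdl{\eta})$; the bulk of the technical work lies in establishing that this independence is preserved under $\Aut(\BC)$, which in turn reduces to a careful analysis of the Galois behaviour of the archimedean Friedberg--Jacquet integral and of the non-archimedean Shalika newvectors across all conjugates.
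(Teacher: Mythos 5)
Your overall architecture does match the paper's: unfold the global Friedberg--Jacquet period against $\oL(\frac{1}{2},\Pi\otimes\chi)$, encode everything in a global modular symbol on $\CX_H\hookrightarrow\CX_G$ built from $\lambda_{F_\mu,\chi_\natural}$, normalize the finite part with Gauss sums, and track $\Aut(\BC)$ through Betti cohomology, with Assumption \ref{Ass1} invoked for the complex-place difficulty. The genuine gap is in your archimedean step: you assert that ``evaluating the Friedberg--Jacquet integral on the cohomological test vector at $s=\tfrac{1}{2}$ produces exactly the constant $\Omega_{\mu,\chi_\natural}$.'' That statement \emph{is} the refined Archimedean period relation (Theorem \ref{APR}), and it is the central result the paper must prove rather than a computation one can simply perform: no suitable uniform cohomological test vectors are available (the paper explicitly notes that those of \cite{CJLT20, LT20} do not suffice for general algebraic Hecke characters), and the relation is obtained instead by comparing the weight-$\mu$, twist-$\chi$ modular symbol with the weight-$0$, twist-$\chi^\natural$ one through the translation map $\jmath_\mu$, which requires the explicitly normalized Shalika functionals coming from Jacquet--Shalika integrals and the open-orbit modifying factors (Theorems \ref{thm:FE_m}, \ref{thm:MF_m}, \ref{MFSha}) and the identity $\Xi_{\mu,\chi_\natural}(s)\equiv\Omega_{\mu,\chi_\natural}^{-1}$. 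Without this input the archimedean contribution is an unknown scalar depending on $\mu$, $\chi_\infty$ and the chosen class, and \eqref{reci} does not follow.

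A second problem is the well-definedness of the period. The nonvanishing of the archimedean modular symbol is unconditional (\cite[Theorem 3.11]{JST19}); what Assumption \ref{Ass1} actually supplies is different: when $\rk$ has a complex place, $\oH(\Pi_\infty)[\varepsilon]$ is higher-dimensional, and one must show (i) that $\Ker\,\wp^\circ_\varepsilon$ does not depend on the particular $\chi_\infty$ with $\chi^\natural=\varepsilon$ (Lemma \ref{lem:ker}, itself a consequence of Theorem \ref{APR}), and (ii) that the $\sigma$-linear comparison carries $\Ker\,\wp^\circ_\varepsilon$ to $\Ker\,{}^\sigma\wp^\circ_\varepsilon$ (Lemma \ref{lem:inv}), which is exactly where the nonvanishing central value of Assumption \ref{Ass1} enters via a diagram chase against the global modular symbol; only then is ${}^\sigma\wp^\circ_\varepsilon({}^\sigma\kappa_\varepsilon)\neq 0$, so that Definition \ref{def:period} makes sense. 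Your proposal instead fixes the test vector ``using a non-vanishing twisted central value,'' conflating the choice of $\kappa_\varepsilon$ (made in the paper through the $\BQ(\Pi,\bdl{\eta})$-rational structure of $\Pi_f$, independently of any $L$-value) with the Galois-stability of the kernel, and neither (i) nor (ii) is addressed. Finally, the finite-place step yielding $\CG(\chi)^n$ and scalars in $\BQ(\Pi,\bdl{\eta},\chi)$ for arbitrary algebraic Hecke characters is not a routine newvector calculation; the paper imports it as the non-archimedean period relation \cite[Proposition 4.4]{JST19}, and you would need to prove or cite an equivalent statement.
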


The theorem has the following important consequence, the general conjecture of which is attributed to P. Deligne and some relevant progress on which can be found in \cite{CK23}. 
\begin{corl}
    With the notation and assumption as in Theorem \ref{BDconj}, if $\oL(\frac{1}{2}, \Pi\otimes\chi)\neq 0$, then $\oL(\frac{1}{2}, {}^\sigma\Pi\otimes{}^\sigma\chi)\neq 0$ 
    for all  $\sigma\in\Aut(\BC)$.
\end{corl}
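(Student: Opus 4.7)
The corollary should follow essentially immediately from Theorem \ref{BDconj}, and the plan is to extract it from the reciprocity identity \eqref{reci}. The only substantive point is to verify that Theorem \ref{BDconj} is applicable, namely that Assumption \ref{Ass1} is automatically satisfied whenever we have a nonvanishing central value $\oL(\frac{1}{2},\Pi\otimes\chi)\neq 0$.

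First I would observe that the hypothesis $\oL(\frac{1}{2},\Pi\otimes\chi)\neq 0$ is meaningful only when $\frac{1}{2}$ is a critical place of $\oL(s,\Pi\otimes\chi)$, hence in particular $\chi_\natural$ is $F_\mu$-balanced (via the discussion following Definition \ref{def:bal}). Setting $\varepsilon:=\chi^\natural\in\widehat{\pi_0(\rk_\infty^\times)}$ and taking $\sigma'=\mathrm{id}$ together with $\chi'=\chi$ in Assumption \ref{Ass1}, we see that the assumption holds trivially for the pair $(\Pi,\varepsilon)$. Therefore Theorem \ref{BDconj} applies and the reciprocity identity \eqref{reci} is available for this choice of $\chi$ and every $\sigma\in\Aut(\BC)$.

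Next I would argue that every factor in the denominators appearing on both sides of \eqref{reci} is nonzero: the quantity $\Omega_{\mu,\chi_\natural}$ is a power of $\sqrt{-1}$; the Gauss sums $\CG(\chi)$ and $\CG({}^\sigma\chi)$ of finite-order twists of algebraic Hecke characters are nonzero; and the Shalika periods $\Omega_\varepsilon(\Pi,\bdl{\eta})$, $\Omega_\varepsilon({}^\sigma\Pi,{}^\sigma\bdl{\eta})$ are nonzero by construction (their nonvanishing is built into Definition \ref{def:period}, which requires Assumption \ref{Ass1} precisely so that the defining cohomology classes and modular symbol pairings do not degenerate).

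Given these nonvanishing statements, the argument concludes as follows. The left-hand side of \eqref{reci} is $\sigma$ applied to a nonzero complex number, hence is nonzero; so the right-hand side is nonzero, forcing $\oL(\frac{1}{2},{}^\sigma\Pi\otimes{}^\sigma\chi)\neq 0$. This being valid for every $\sigma\in\Aut(\BC)$ yields the corollary. There is no genuine obstacle here beyond bookkeeping; the only delicate point to confirm is the nonvanishing of the Shalika periods, which I would address by pointing to their explicit construction in Definition \ref{def:period}.
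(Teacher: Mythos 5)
Your proposal is correct and is essentially the argument the paper intends: the corollary is an immediate consequence of the reciprocity identity \eqref{reci}, since $\Omega_{\mu,\chi_\natural}$, the Gauss sums, and the Shalika periods of Definition \ref{def:period} all lie in $\BC^\times$, so $\sigma$ applied to the nonzero left-hand side forces $\oL(\frac{1}{2},{}^\sigma\Pi\otimes{}^\sigma\chi)\neq 0$. Your extra observation that Assumption \ref{Ass1} is automatic (taking $\sigma'=\mathrm{id}$, $\chi'=\chi$ when $\oL(\frac{1}{2},\Pi\otimes\chi)\neq 0$) is a harmless refinement, though strictly speaking the corollary already inherits that assumption from the setup of Theorem \ref{BDconj}.
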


Here are some more detailed remarks regarding Theorem \ref{BDconj}, which give an outline of the strategy and byproducts of its proof. The main result of \cite{JST19} is the algebracity \eqref{alg} when $\chi$ is of finite order. Theorem \ref{BDconj} is the first time to consider the Blasius-Deligne conjecture with general algebraic Hecke characters. 

Among others, there are two technical key results needed for the formulation and the proof of Theorem \ref{BDconj}: the nonvanishing of the Archimedean modular symbols and the Archimedean period relations. 
The methods in \cite{JST19} and the current paper are quite different. In \cite{JST19}, both the nonvanishing of the Archimedean modular symbols and the Archimedean period relations are proved based on the explicit calculations of uniform cohomological test vectors in \cite{CJLT20, LT20}. For the reciprocity law considered in Theorem \ref{BDconj}, 
the nonvanishing of the Archimedean modular symbols can be deduced from the proofs in \cite{JST19}. However, 
the results on the uniform cohomological test vectors in \cite{CJLT20, LT20} are not enough to establish 
%We have technical difficulties, which remains to be overcome, to prove 
the refined Archimedean period relations (Theorem \ref{APR}), which are needed for the reciprocity law in Theorem \ref{BDconj}, by means of the arguments in \cite{JST19}. 

In this paper we prove the refined Archimedean period relations (Theorem \ref{APR}) via a robust application of Zuckerman translation functors and the method of modifying factors. This approach has been used in \cite{LLS24}  for the Rankin-Selberg case.
The arguments in this paper combined with those in \cite{LLS24} represent a new and more effective approach to the reciprocity law in the Blasius-Deligne conjecture for automorphic $L$-functions. 

As proved in \cite{JST19}, the periods for this case considered in this paper (and in \cite{JST19}) are defined in terms of the Friedberg-Jacquet local zeta integrals (\cite{FJ93}). The definition of such integrals needs a local Shalika functional. In order to establish refined Archimedean period relations (Theorem \ref{APR}), we need an explicitly normalized local Shalika functional to define explicit Friedberg-Jacquet local zeta integrals. We follow the approach 
by means of open-orbit integrals, as used in \cite{LLS24}, to construct such explicitly normalized local Shalika functionals by means of the Jacquet-Shalika local zeta integrals (\cite{JS90}). Hence the first local result of this paper is to establish the Archimedean theory of Jacquet-Shalika integrals almost completely for $\GL_m$ with $m\geq 1$, which treats principal series representations of $\GL_m$ for all local fields (Theorem \ref{thm:FE_m}). Then we compare the local zeta integrals for the principal series representations as in Theorem \ref{thm:FE_m} with the local integrals defined over the open-orbits when the relevant spherical subgroups acting on the flag variety. 

This general 
open-orbit comparison method yields substantial arithmetic applications. In the %case of the 
Jacquet-Shalika case, 
it leads to the modifying factors in the sense of J. Coates and B. Perrin-Rion for exterior square $L$-functions (Theorem \ref{thm:MF_m}) compatible with the prediction for $p$-adic $L$-functions in \cite{CPR89, C89}. Meanwhile, we also use the local Rankin-Selberg zeta integrals (\cite{JPSS83}) and the local Godement-Jacquet zeta integrals (\cite{GJ72}) to construct the different kind Shalika functionals, with which  the open-orbit comparison method for the Friedberg-Jacquet local zeta integrals leads to the modifying factors for standard $L$-functions of symplectic type via Friedberg-Jacquet integrals (Theorem \ref{MFSha}). 
The local theory of Jacquet-Shalika integrals in the even case gives an explicit realization of Shalika functionals (Theorem \ref{cor:sha}). 
As an application of modifying factors, we prove the Archimedean period relations for Friedberg-Jacquet integrals (Theorem \ref{APR}) in terms of translation functors between regular algebraic representations. It is important to 
mention that those local results have interesting applications to arithmetic problems, including the theory of $p$-adic $L$-functions for higher rank groups and the methods to prove those local results could be extended to treat the arithmetic problems for more general automorphic $L$-functions.

%we accomplish a series of local results, which are interesting with important applications:
%\begin{itemize}
%\item We establish the Archimedean theory of Jacquet-Shalika integrals almost completely for both the even and odd cases, and we treat principal series representations for all local fields (Theorem \ref{thm:FE_m}). 
%\item Our general method is to compare with certain open orbit intertwinings for spherical subgroups, which in the Jacquet-Shalika case leads to the modifying factors for exterior square $L$-functions (Theorem \ref{thm:MF_m}) compatible with the prediction for $p$-adic $L$-functions in \cite{CPR89, C89};
%\item Together with the uniqueness of   certain Shalika models (Theorem \ref{mult1}),  the local theory of Jacquet-Shalika integrals in the even case gives an explicit realization of Shalika functionals (Corollary \ref{cor:sha}); 
%\item Using the open orbit intertwining method mentioned above, we also establish the modifying factors for standard $L$-functions of symplectic type via Friedberg-Jacquet integrals (Theorem \ref{MFSha}); 
%\item As an application of modifying factors, we prove the Archimedean period relations for Friedberg-Jacquet integrals (Theorem \ref{APR}) in terms of translation functors between regular algebraic representations. 
%\end{itemize}

This paper is organized as follows. In Section \ref{sec:MLR} we give a summary of the above local results with more detailed discussions. A large portion (Section \ref{sec:analytic}--Section \ref{sec:Ind})
is devoted to the local theory of Jacquet-Shalika integrals and the corresponding modifying factors, which is  the most technical part of the paper.  In brief, the novelty of our approach is to prove 
Theorem \ref{thm:FE_m} and Theorem \ref{thm:MF_m}  together inductively, using Godement sections. In Section \ref{sec:FJMF} we establish the modifying factors 
for Friedberg-Jacquet integrals, and we prove the Archimedean period relations in Section \ref{sec:PAPR}. We turn to the global setting in Section \ref{sec:CGMS}, where we introduce certain cohomology groups and 
the global and local modular symbols for Friedberg-Jacquet integrals. Finally in Section \ref{sec:SPBD} we define the family of Shalika periods and prove the Blasius-Deligne conjecture (Theorem \ref{BDconj}).

\section{Main Local Results} \label{sec:MLR}

In this section, we develop the local theory for relevant local zeta integrals, which form the main local results of this paper and the main ingredients to establish the refined Archimedean period relations for Friedberg-Jacquet integrals (Theorem \ref{APR}). They will be established through Section \ref{sec:analytic} to Section \ref{sec:PAPR}. 

\subsection{Jacquet-Shalika integrals and modifying factors} 
We discuss the theory of local Jacquet-Shalika zeta integrals (\cite{JS90}) and the associated local integrals from the open-orbit method. The goal is to construct refined explicit local Shalika functionals. 

\subsubsection{Representations and exterior square local factors}
Assume that $\bk$ is an arbitrary local field, with normalized absolute value $|\cdot|_\bk$. For a connected reductive group $G$ over $\bk$, denote by $\Irr(G)$ the set of isomorphism classes of irreducible admissible representations 
of $G$, which are assumed to be Casselman-Wallach if $\bk$ is Archimedean. Let  $\Pi_2(G)$ be the subset of  square-integrable classes in $\Irr(G)$. More precisely, $\pi \in \Irr(G)$ is square-integrable if its central character is unitary and the absolute values of its matrix coefficients are functions in $L^2(G/Z)$, with $Z$ the center of $G$.

For a positive integer $m$, write $G_m:=\GL_m(\bk)$ and let $N_m$ be the  upper triangular maximal unipotent subgroup of $G_m$. Fix a nontrivial unitary character $\psi $ of $\bk$, and define a character 
$\psi_m:  N_m\to \BC$ with $[x_{i,j}]_{m\times m}\mapsto \psi\left(\sum^{m-1}_{i=1}x_{i,i+1}\right)$.
To shorten the notation,  in this paper we  write $\omega(g) = \omega(\det g)$ and $|g|_\bk = |\det g|_\bk$ for a character $\omega$ of $\bk^\times$ and $g\in G_m$. 

We consider a representation of $G_m$ given by the  normalized smooth parabolic induction 
\be \label{nt}
\pi_\lambda =  \Ind^{G_m}_P(\tau_\lambda) = \Ind^{G_m}_P ( \tau_1 |\cdot|^{\lambda_1}_\bk \,\widehat\otimes \, \tau_2|\cdot|_\bk^{\lambda_2}\,\widehat\otimes \cdots \widehat\otimes\, \tau_r|\cdot|^{\lambda_r}_\bk),
\ee
where 
\begin{itemize}
\item $P$ is a parabolic subgroup of $G_m$ with Levi subgroup 
\[
M\cong  G_{n_1} \times G_{n_2}\times \cdots\times G_{n_r},\quad n_1+n_2+\cdots+n_r=m,
\]
\item
 $\tau = \tau_1\,\widehat \otimes \, \tau_2\, \widehat\otimes \cdots\widehat\otimes \, \tau_r \in \Pi_2(M)$  and
 \item  $\lambda = (\lambda_1, \lambda_2, \dots, \lambda_r)  \in X^*(M)\otimes \BC\cong \BC^r$, where $X^*(M)$ is the character lattice of $M$.   
 \end{itemize}
Note that if $\bk$ is Archimedean, then in \eqref{nt} one has that $n_i=1$ or $2$, $i=1,2,\dots, r$. 
The following facts are well-known:
 \begin{itemize}
 
 \item  $\dim \Hom_{N_m}(\pi_\lambda, \psi_m)=1$. 
 
 \item For fixed $\tau \in \Pi_2(M)$, $\pi_\lambda$ is irreducible for $\lambda$ outside a measure zero subset of  $\BC^r$.
 
 \item Any $\pi \in \Irr_{\rm gen}(G_m)$,  the subset of generic classes in $\Irr(G_m)$,  is isomorphic to an induced representation  $\pi_\lambda$ of the form \eqref{nt}.
 \end{itemize}
We will use the following notation: for $\lambda=(\lambda_1,\lambda_2, \dots \lambda_r) \in \BC^r$, write
\be \label{minmax}
\min\Re(\lambda):=\min_{i=1,2,\dots, r} \Re(\lambda_i),\quad \max\Re(\lambda) :=\max_{i=1,2,\dots, r} \Re(\lambda_i).
\ee
 Following \cite{BP21},
 $\pi_\lambda$ in \eqref{nt} is called {\it nearly tempered} if  $|\Re(\lambda_i)|<1/4$ for all $i=1, 2, \ldots, r$.
It is known that nearly tempered representations $\pi_\lambda$ are irreducible.

For $\pi\in \Irr(G_m)$, denote by $\phi_\pi$ the Langlands parameter of $\pi$ under the local Langlands correspondence, which is an  $m$-dimensional  admissible representation of the 
Weil-Deligne group $W_\bk'$  of $\bk$.  
Fix a character $\eta$ of $\bk^\times$.  We have the twisted exterior square local factors (see \cite{CST17, Sh24})
\be \label{exL}
\begin{aligned} & \oL(s, \pi, \wedge^2\otimes \eta^{-1}) = \oL(s, \wedge^2\phi_\pi\otimes \eta^{-1}),\\
& \varepsilon(s, \pi, \wedge^2\otimes\eta^{-1}, \psi) = \varepsilon(s, \wedge^2\phi_\pi\otimes\eta^{-1}, \psi),\\
& \gamma(s, \pi, \wedge^2\otimes \eta^{-1},\psi) =  \varepsilon(s, \pi, \wedge^2\otimes\eta^{-1}, \psi)\cdot  \frac{\oL(1-s, \pi^\vee, \wedge^2\otimes \eta)}{\oL(s, \pi, \wedge^2\otimes \eta^{-1})},
\end{aligned}
\ee
where the right hand sides are as in \cite{T79}.
For the parabolic induction $\pi_\lambda$ in \eqref{nt}, we have 
\be \label{exL2nt}
\begin{aligned}
\oL(s, \pi_\lambda, \wedge^2\otimes\eta^{-1})  = & \prod^r_{i=1} \oL(s+2\lambda_i, \wedge^2\phi_{\tau_i}\otimes\eta^{-1})   \\
& \cdot \prod_{1\leq j< k \leq r} \oL(s+\lambda_j+\lambda_k, \phi_{\tau_j}\otimes\phi_{\tau_k}\otimes\eta^{-1}),
\end{aligned}
\ee
and $\varepsilon(s, \pi_\lambda, \wedge^2\otimes\eta^{-1},\psi)$ and $\gamma(s, \pi_\lambda, \wedge^2\otimes \eta^{-1})$ are similar. 

By the compatibility of local Langlands correspondence with parabolic induction and unramified twists, if $\pi_\lambda^0$ denotes the unique Langlands subquotient of $\pi_\lambda$, then
\[
 \oL(s, \pi_\lambda,  \wedge^2\otimes \eta^{-1}) = \oL(s, \pi_\lambda^0,  \wedge^2\otimes \eta^{-1}),\quad \varepsilon(s, \pi_\lambda, \wedge^2\otimes\eta^{-1},\psi) = 
\varepsilon(s, \pi_\lambda^0, \wedge^2\otimes\eta^{-1},\psi)
\] 
where the right hand sides are given by \eqref{exL}. In particular, $\eqref{exL}$ and $\eqref{exL2nt}$ coincide when $\pi_\lambda$ is irreducible. 

\subsubsection{Jacquet-Shalika integrals} \label{sec1.2.2}
We follow from \cite{JS90}. 
Fix the self-dual Haar measure on $\bk$ with respect to $\psi$.  For integers $n, n'\geq 0$, denote by $\bk^{n \times n'}$ the space of 
$n\times  n'$ matrices over $\bk$, and write $M_n :=\bk^{n\times n}$.  We endow $\bk^{n\times n'}$ with the product measure, and fix the Haar measure on $G_n$ to be 
$\od\! g = | g|_\bk^{-n}\cdot \prod_{i, j =1,2,\dots, n} \od\! g_{i,j}$ for $g=[g_{i, j}]_{n\times n}\in G_n$.
For $\phi \in \CS(\bk^n)$, the space of Schwartz functions on  $\bk^n :=\bk^{1\times n}$, define its Fourier transform  with respect to  a nontrivial unitary character $\psi'$ of $\bk$ by
\[
\CF_{\psi'}(\phi) (x) = \int_{\bk^n} \phi(y) \psi'(y\, {}^t x)\od\! y, \quad x\in \bk^n.
\]
Here and thereafter, ${}^t(\cdot)$ indicates the transpose of a matrix.  

Assume that $m=2n$ or $2n+1$.
The Shalika subgroup $S_m$ of $G_m$ is defined by
\[
S_m:= \begin{cases} \Set{\begin{bmatrix} g & Xg \\ 0 & g \end{bmatrix}  | g\in G_n, X\in M_n}, & \textrm{if }m=2n, \\
\Set{\begin{bmatrix} g &  Xg  & y \\  0 & g & 0 \\ 0 & xg & 1\end{bmatrix} |  \begin{array}{l} g\in G_n, X\in M_n, \\
 y\in \bk^{n\times 1},  x\in \bk^{1\times n}
 \end{array} }, & \textrm{if }m=2n+1,
\end{cases}
\]
which is a unimodular group. 
In the following we  introduce a representation $R_{\varphi_m}$  of $S_m$, where $\varphi_m$ is a certain character determined by
$\eta$ and $\psi$. Similarly, one can define a representation $R_{\varphi_m^{-1}}$, which will be omitted. 

 If $m=2n$ is even,  we first define a character 
\be \label{varphi2n}
 \varphi_{2n}: S_{2n}\to \BC^\times, \quad \begin{bmatrix} g& Xg  \\ & g\end{bmatrix} \mapsto \eta(g) \psi({\rm tr}\, X).
\ee
Let $S_{2n}$ act on $\bk^n$ from the right by
\be \label{Sact}
h = \begin{bmatrix} g & Xg   \\ & g \end{bmatrix}: \bk^n \to \bk^n,\quad v\mapsto vg.
\ee
Then we define  a representation $R_{\varphi_{2n}}$ of $S_{2n}$ on $\CS(\bk^n)$ by 
\be \label{Seven}
R_{\varphi_{2n}}(h)\phi(v) :=  \varphi_{2n}(h) \phi(v.h) = \varphi_{2n}(h) \phi(vg),\quad \phi \in \CS(\bk^n),
\ee
where $h\in S_{2n}$ acts on $\bk^n$ as in \eqref{Sact}.

If $m=2n+1$ is odd, we  first define a character 
\[
\varphi_{2n+1}: S_{2n+1}\cap P_{2n+1}\to \BC^\times,\quad  \begin{bmatrix} g& Xg  & y \\ & g &  0\\ & & 1\end{bmatrix} \mapsto \eta(g) \psi({\rm tr}\, X),
\]
where $P_m$ denotes the mirabolic subgroup of $G_m$, i.e., the subgroup of matrices with last row 
$
e_m:=(0,0,\dots, 0, 1) \in \bk^m.
$
 Then we define 
$R_{\varphi_{2n+1}} := {\rm ind}^{S_{2n+1}}_{S_{2n+1}\cap P_{2n}}\varphi_{2n+1}$ (the Schwartz induction),
which is also realized on the space $\CS(\bk^n)$ (see Section \ref{sec2.2}  for details). 

We identify the symmetric group $\frak{S}_m$ with the group of permutation matrices in $G_m$, and introduce the following element of $\frak{S}_m$,
\be \label{sigmam}
  \sigma_m := \begin{cases} \left(\begin{smallmatrix} 1 & 2 & \cdots & n & n+1 & n+2 & \cdots & 2n  \\
1 & 3 & \cdots & 2n-1 & 2 & 4 & \cdots & 2n \end{smallmatrix}\right), & \textrm{if }m=2n, \\
 \left(\begin{smallmatrix} 1 & 2 & \cdots & n & n+1 & n+2 & \cdots & 2n & 2n+1 \\
1 & 3 & \cdots & 2n-1 & 2 & 4 & \cdots & 2n & 2n+1  \end{smallmatrix}\right), & \textrm{if }m=2n+1.
\end{cases} 
\ee
Assume that $\pi_\lambda$ is an induced representation of $G_m$ as in \eqref{nt}.  Denote by  $\CW(\pi_\lambda,\psi)$ the  Whittaker model of $\pi_\lambda$ with respect to $(N_m, \psi_m)$.   For $W \in \CW(\pi_\lambda, \psi)$, $\phi \in \CS(\bk^n)$ with $n=\lfloor m/2\rfloor$ and $s\in\BC$, the Jacquet-Shalika integral introduced in 
\cite{JS90} can be uniformly reformulated as 
\be \label{JSint}
 \oZ_{\rm JS}(s, W, \phi, \varphi_m^{-1}):  = 
 \begin{cases}
\int_{\overline{S}_m} W(\sigma_m h) R_{\varphi_m^{-1}}(h)\phi(e_n) | h|_\bk^{\frac{s}{2}}\od\! h, & \textrm{if } m=2n, \\
\int_{\overline{S}_m} W(\sigma_m h )  R_{\varphi_m^{-1}}(h)\phi(0) | h|_\bk^{\frac{s}{2}}\od\!h, & \textrm{if }m=2n+1,
\end{cases}
\ee
where  $e_n=(0,0,\ldots, 0,1)\in \bk^n$ as above and 
$
\overline{S}_m:= \sigma_m^{-1}N_m \sigma_m \cap S_m \bsl S_m. 
$
Here and thereafter, the Haar measures on $S_m$ and $N_m$  etc. are induced from the fixed Haar measures on $G_n$ and $\bk$, and  $\overline{S}_m$ is equipped with the right invariant quotient measure. 
In general, we always take right invariant measures (when such measures exist) on locally compact topological groups and homogeneous spaces under the right actions of such groups in this paper. 

\begin{remarkl}
The integral \eqref{JSint} converges absolutely when $\Re(s)$ is sufficiently large, and its meromorphic continuation and functional equation were only proven for $\bk$ non-Archimedean and $\eta$ trivial (see \cite{KR12, M14, CM15, Jo20}). However, it is not known whether the local exterior square $\varepsilon$-factors in the functional equation obtained in the non-Archimedean case are the same as the Artin local factors in \eqref{exL} (see \cite{CST17, Sh24}).  
Moreover, much less was known for the Archimedean case. We will establish the Archimedean theory of Jacquet-Shalika integrals almost completely, and our treatment of principal series representations is uniform for all local fields. In particular we will obtain the expected Artin local factors, which in general are crucial for arithmetic applications. 
\end{remarkl}

Let $w_m$ be the longest element of $\frak{S}_m$, i.e., the $m\times m$ anti-diagonal permutation matrix. For $W\in \CW(\pi_\lambda, \psi)$, define
$\widetilde{W}(h) := W(w_m {}^t h^{-1})$ for $h\in G_m$. 
Introduce the following element of $\frak{S}_m$:
\be \label{taum}
 \tau_m: = \begin{bmatrix}  0 & 1_n \\ 1_n & 0 \end{bmatrix} \quad {\rm resp.}\quad  \begin{bmatrix}0 & 1_n & \\  1_n &0  & \\ & & 1\end{bmatrix},\quad 
 \textrm{if }m = 2n \quad{\rm resp.}\quad 2n+1.
\ee
Here and thereafter, $1_n$ denotes the $n\times n$ identity matrix. 
Denote by  $\widehat{\bk^\times}$ the set of characters of $\bk^\times$, and for any $\omega\in \widehat{\bk^\times}$ let $\Re(\omega)$ be the real number  (which is denoted  by 
${\rm ex}(\omega)$ in \cite{LLSS23}) such that 
$|\omega(a)| =|a|_\bk^{\Re(\omega)}$ for $a\in \bk^\times$. 
Our first main result on the local theory of Jacquet-Shalika integrals is as follows. 

\begin{thml}[${\rm FE}_m$] \label{thm:FE_m}
Assume that $\pi_\lambda = \Ind^{G_m}_P(\tau_\lambda)$ is an induced representation of $G_m$ as in \eqref{nt}, where $P$ is assumed to be a Borel subgroup if $\bk$ is non-archimedean. 
Let $W\in \CW(\pi_\lambda, \psi)$ and $\phi \in \CS(\bk^n)$ with  $n=\lfloor m/2\rfloor$. Then the following hold.
\begin{enumerate}

\item $\oZ_{\rm JS}(s, W, \phi, \varphi_m^{-1})$ converges absolutely when $\Re(s)> \Re(\eta) - 2\min \Re(\lambda)$, and extends to a meromorphic function on $\BC$.

\item It holds the functional equation 
\be \label{eq:FE_m}
\frac{\oZ_{\rm JS}(1-s,  \tau_m.\widetilde W, \hat\phi, \varphi_m)}{\oL(1-s, \pi_\lambda^\vee, \wedge^2\otimes \eta)} = \eta(-1)^{mn} \varepsilon(s, \pi_\lambda, \wedge^2\otimes \eta^{-1},\psi) \frac{\oZ_{\rm JS}(s, W, \phi, \varphi_m^{-1})}{\oL(s, \pi_\lambda, \wedge^2\otimes\eta^{-1})},
\ee
where 
\[
\hat\phi :=\begin{cases} \CF_\psi(\phi), & \textrm{if $m$ is even}, \\
 \CF_{\bar\psi}(\phi), & \textrm{if  $m$ is odd.}
 \end{cases}
 \]

\item The function 
\[
s\mapsto \oZ_{\rm JS}^\circ(s, W, \phi, \varphi_m^{-1}): =  \frac{\oZ_{\rm JS}(s, W, \phi, \varphi_m^{-1})}{\oL(s, \pi_\lambda, \wedge^2\otimes\eta^{-1})}
\]
has a  holomorphic continuation to $\BC$ which is of finite order in vertical strips (in the sense of \cite[2.8]{BP21}).

\item If $\max\Re(\lambda)< \min\Re(\lambda)+1/2$, 
then for every $s_0\in\BC$ there exist $W\in \CW(\pi_\lambda,\psi)$ and $\phi\in \CS(\bk^n)$ such that 
$
\oZ_{\rm JS}^\circ(s_0, W, \phi, \varphi_m^{-1}) \neq 0.
$
\end{enumerate}
\end{thml}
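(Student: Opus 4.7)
The plan is to prove Theorem \ref{thm:FE_m} by induction on $m$, interlocked with Theorem \ref{thm:MF_m}. The base cases $m=1,2$ are essentially classical: for $m=1$ the integral is degenerate, while for $m=2$ it reduces to a twisted $\GL_1$ Tate integral and all four assertions are standard. The central technical device for the inductive step is the Godement section: one expresses the Whittaker function $W$ as an integral of a suitable smooth kernel against a Schwartz function $\phi' \in \CS(M_n)$, so that a Jacquet-Shalika integral for $\GL_m$ is recast as a Tate-type integral on $M_n$ combined with an integral that is structurally of Jacquet-Shalika type for smaller $m$ (with the parity splitting between the even and odd cases being handled by the two parallel definitions in Section \ref{sec1.2.2}). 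Applying the Fourier transform on $M_n$ will interchange the two dual integrals, while the inductive hypothesis absorbs the smaller-rank factor.

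For part (1), the absolute convergence for $\Re(s) > \Re(\eta) - 2\min\Re(\lambda)$ follows from the standard gauge estimates for Whittaker functions (Jacquet-Shalika in the non-Archimedean case, Wallach in the Archimedean case), combined with the rapid decay of $\phi$ on $\bk^n$ and a routine bound on matrices in $\overline{S}_m$. Meromorphic continuation arises from the same Godement section presentation: the outer Tate-type integral on $M_n$ has classical meromorphic continuation in $s$, while the inner integral falls under the inductive hypothesis applied to a representation of $\GL_{m-2}$ (respectively $\GL_{m-1}$).

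The heart of the proof is the functional equation (part 2). After substituting the Godement section expression for $W$, I would apply the Fourier transform on $M_n$ and unfold the integrand into the dual integral involving $\tau_m \cdot \widetilde{W}$ and $\hat\phi$, together with a ratio of $L$-factors and an $\varepsilon$-factor produced by the Tate/Godement-Jacquet step. Combined with the inductive Jacquet-Shalika functional equation at smaller rank, the accumulated factors should reassemble into the expected gamma factor. The main obstacle, and the real delicate point, is identifying the $\varepsilon$-factor produced by the Fourier analysis on $M_n$ with the Artin $\varepsilon(s, \pi_\lambda, \wedge^2 \otimes \eta^{-1}, \psi)$ coming from \eqref{exL2nt}. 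For this I would use the compatibility of the local Langlands correspondence with parabolic induction and unramified twists applied to the discrete series data $\tau = \tau_1 \widehat\otimes \cdots \widehat\otimes \tau_r$, reducing the matching to the square-integrable case where both sides can be verified parameter-by-parameter. The sign $\eta(-1)^{mn}$ should appear naturally from conjugation of $\sigma_m$ against $w_m$ in transforming the dual integral.

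Once the functional equation is in hand, part (3) follows from a standard holomorphy-plus-convexity argument: the normalized integral $\oZ_{\rm JS}^\circ$ is holomorphic for $\Re(s) \gg 0$ by the definition and a cancellation between $L$-factor poles and integral singularities, and it extends via the functional equation (applied to $\pi_\lambda^\vee$) to a holomorphic function for $\Re(s) \ll 0$; a Phragm\'en-Lindel\"of argument together with polynomial bounds on each half-plane then yields entire continuation and finite order in vertical strips. For the non-vanishing assertion (part 4), I would exploit the Godement section construction in the opposite direction: the hypothesis $\max\Re(\lambda) < \min\Re(\lambda) + 1/2$ places $\pi_\lambda$ in a near-tempered regime (so in particular irreducible with well-controlled exterior-square $L$-factor), and the freedom to vary $\phi'\in\CS(M_n)$ allows one to realize, at any prescribed $s_0 \in \BC$, a Mellin transform on $\bk^\times$ that can be arranged to be nonzero at $s_0$; the inductive nonvanishing at smaller rank then supplies the companion factor, producing $(W,\phi)$ with $\oZ_{\rm JS}^\circ(s_0, W, \phi, \varphi_m^{-1}) \neq 0$.
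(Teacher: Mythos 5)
Your outline reproduces some of the scaffolding of the paper's argument (induction on $m$ interlocked with $({\rm MF}_m)$, Godement sections, the trivial cases $m=1,2$, gauge estimates for convergence), but it omits the ingredient that actually makes the proof work: the open-orbit integral $\Lambda_{\rm JS}$ of \eqref{openint} and its functional equation $({\rm FE}'_m)$ (Theorem \ref{thm:FE'_m}), which is proved directly — uniformly in $m$, not by induction — by applying Tate's thesis to a torus conjugated into $\overline{B}_m$ by $z_m$. In the paper the functional equation \eqref{eq:FE_m} is never obtained by unfolding $\oZ_{\rm JS}$ itself; one first reduces (by the deformation/finite-order argument of \cite[Prop.\ 2.8.1]{BP21} and Lemma \ref{lem:FE}) to principal series in the convergence range (Theorem \ref{thm2:FE_m}), and then $({\rm FE}_m)$ falls out of $({\rm MF}_m)+({\rm FE}'_m)$; Godement sections enter only in the step $({\rm MF}_m)+({\rm FE}_m)\Rightarrow({\rm MF}_{m+1})$. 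Your proposed substitute — insert a Godement section into $\oZ_{\rm JS}$, getting ``a Tate-type integral on $M_n$ combined with a lower-rank JS integral,'' then Fourier-transform and invoke the inductive functional equation — does not match what the unfolding actually produces: the recurrences \eqref{rec} and \eqref{recodd} express the rank-$(m{+}1)$ integral as an integral over all of $G_m$ of rank-$m$ integrals (with a partial Fourier transform of the section in the inner variable), not as a clean product, and the odd-to-even step is seriously obstructed by the fact that $S_{2n-1}$ does not embed into $S_{2n}$, forcing a conjugation of a subgroup of $S_{2n-1}$ and a change of base point for the open orbit — a difficulty your sketch never engages.

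The deeper gap is your treatment of the $\varepsilon$-factor. You propose to identify the factor ``produced by the Fourier analysis on $M_n$'' with the Artin factor $\varepsilon(s,\pi_\lambda,\wedge^2\otimes\eta^{-1},\psi)$ by LLC-compatibility with parabolic induction, ``reducing the matching to the square-integrable case.'' But there is no prior identification of the gamma factor arising from any analytic functional equation of the Jacquet--Shalika integral with the Artin gamma factor, even for square-integrable data; as the paper's remark after Theorem \ref{thm:FE_m} points out, this matching is not known even in the non-Archimedean settings where a functional equation had been proved. So the step you defer to a ``parameter-by-parameter'' check is precisely the open problem, and your reduction runs in the wrong direction: in the paper LLC-compatibility is used only to replace the Artin factor of $\pi_\lambda$ by that of a principal series $I(\xi)$ (equation \eqref{eq:gamma}), while the analytic side is arranged — via the open-orbit integrals — so that only abelian Tate factors $\gamma(s,\xi_i\xi_j\eta^{-1},\psi)$ ever appear, whence the Artin factor is matched by definition. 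Finally, your part (4) claims non-vanishing of $\oZ^\circ_{\rm JS}$ at an arbitrary $s_0$ by varying the section alone; this cannot work, since the integral only converges in a half-plane. The paper instead proves non-vanishing in each convergence half-plane (Proposition \ref{prop:nonv}) and uses the functional equation plus the hypothesis $\max\Re(\lambda)<\min\Re(\lambda)+1/2$, which guarantees the two half-planes cover $\BC$ (Lemma \ref{lem:FE}).
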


In particular, we have the following:
\begin{itemize}
\item  Theorem  \ref{thm:FE_m} holds for any $\pi \in \Irr_{\rm gen}(G_m)$ when $\bk$ is Archimedean.  
\item If $\pi_\lambda\otimes |\eta|^{-\frac{1}{2}}$ is nearly tempered, where $|\eta|^{-\frac{1}{2}}$ indicates the character $|\eta(\det(\cdot))|^{\frac{1}{2}}$ of $G_m$, then the condition in Theorem \ref{thm:FE_m} (4) clearly holds. 
\end{itemize}

\begin{remarkl} 
In view of $\CF_{\bar\psi}(\phi)(x)=\CF_\psi(\phi)(-x)$ and that 
\[
\varepsilon(s, \delta, \bar\psi) = \det(\delta)(-1)\,\varepsilon(s, \delta, \psi)
\]
for an admissible representation $\delta$ of the Weil-Deligne group $W_\bk'$, it is easy to show that 
the functional equation \eqref{eq:FE_m} in Theorem $\ref{thm:FE_m}$ can be equivalently written as 
\[
\begin{aligned}
\frac{\oZ_{\rm JS}(1-s,  \tau_m.\widetilde W, \CF_{\bar\psi}(\phi), \varphi_m)}{\oL(1-s, \pi_\lambda^\vee, \wedge^2\otimes \eta)} & = \omega_{\pi_\lambda}(-1)^{m-1} \eta(-1)^{n} \varepsilon(s, \pi_\lambda, \wedge^2\otimes \eta^{-1},\psi) \frac{\oZ_{\rm JS}(s, W, \phi, \varphi_m^{-1})}{\oL(s, \pi_\lambda, \wedge^2\otimes\eta^{-1})} \\
& = \varepsilon(s, \pi_\lambda, \wedge^2\otimes \eta^{-1},\bar\psi) \frac{\oZ_{\rm JS}(s, W, \phi, \varphi_m^{-1})}{\oL(s, \pi_\lambda, \wedge^2\otimes\eta^{-1})},
\end{aligned}
\]
where $\omega_{\pi_\lambda}$ is the central character of $\pi_\lambda$. 
It seems that different conventions for the local $\varepsilon$-factors have been used in the literature. In this paper we stick to the convention in Tate's classical treatments \cite{T50, T79}, which in the abelian case is given by \eqref{Tate}.
\end{remarkl}

\subsubsection{Open orbit integrals and modifying factors}
Our proof of Theorem \ref{thm:FE_m} is purely local and  uses the idea from \cite{LLSS23} which studies the  modifying factors for the Rankin-Selberg convolution case. The strategy is to compare the Jacquet-Shalika integrals of principal series representations  with the integrals over the open orbit of the Shalika subgroup $S_m$  acting on a certain variety. Note that 
$S_m$ is a spherical subgroup of $G_m$. 

Such a comparison  in turn produces certain modifying factors, which are compatible in 
the non-Archimedean case with
the conjecture for $p$-adic $L$-functions given by Coates and Perrin-Riou in \cite{CPR89, C89}.  This kind of phenomena has been observed for several families of periods (see \cite{LSS21, LLSS23, LS25}). In particular, the Friedberg-Jacquet case has been established in \cite{LS25}, which leads to the construction of nearly ordinary standard $p$-adic $L$-functions of symplectic type.   It will be established 
in a different setting  later in this paper, the Archimedean case of which is crucial for our proof of 
the Archimedean period relations for Friedberg-Jacquet integrals (Theorem \ref{APR}) and of 
the Blasius-Deligne conjecture for standard $L$-functions of symplectic type (Theorem \ref{BDconj}).  

The comparison in the Jacquet-Shalika case is carried out inductively via the theory of Godement sections.  Thus we have labeled  Theorem \ref{thm:FE_m} as $({\rm FE}_m)$ for the purpose of induction. 
To explain the details, we introduce an $S_m$-variety $\CX_m$ as follows. Let  $\overline{B}_m$ be the lower triangular Borel subgroup of $G_m$, and let $\CB_m:= \overline{B}_m\bsl G_m$ be the flag variety on which $G_m$ acts from the right. Define
$\CX_m:= \CB_m \times \bk^n$ with $n=\lfloor m/2\rfloor$. 
We have specified a right action of $S_m$ on $\bk^n$ when $m$ is even in \eqref{Sact}. If $m=2n+1$, then we have a right action of 
$S_m$ on $\bk^n$ given by 
\be \label{Sact'}
\begin{bmatrix} g & Xg & y \\ & g & 0\\ & xg & 1\end{bmatrix}: \bk^n \to \bk^n, \quad  v\mapsto (v+x)g.
\ee
The diagonal action of $S_m$ on $\CX_m$ has a unique  Zariski-open orbit, with a base point 
\be \label{basept}
x_m := \begin{cases} (\overline{B}_m z_{m}, v_n), & \textrm{if }m=2n, \\
(\overline{B}_m z_m, 0), & \textrm{if }m=2n+1,
\end{cases}
\ee
where 
\be \label{zm}
\begin{cases}
 v_n :=(1,1,\dots, 1)\in \bk^n, \\
 z_m:= \begin{bmatrix} 1_n & 0\\0 & w_n\end{bmatrix} \ {\rm resp.} \ \begin{bmatrix} 1_n & & \\ & w_n & {}^tv_n \\ &0 & 1\end{bmatrix}, \quad 
\textrm{if }m=2n \quad {\rm resp.} \quad 2n+1.
\end{cases}
\ee
Moreover, the stabilizer of $x_m$ in $S_m$ is trivial.

View an element $\xi=(\xi_1,\xi_2,\dots,\xi_m)\in (\widehat{\bk^\times})^m$ as a character of $\overline{B}_m$ in the obvious way and put 
$
I(\xi) : = {\rm Ind}^{G_m}_{\overline{B}_m}(\xi).
$
For $f\in I(\xi)$, $\phi \in \CS(\bk^n)$ and $s\in\BC$, formally define an integral 
\be \label{openint}
\Lambda_{\rm JS}(s, f, \phi, \varphi_m^{-1}) := \begin{cases}
 \int_{S_m} f(z_m h) R_{\varphi_m^{-1}}(h) \phi(v_n) | h|_\bk^{\frac{s}{2}} \od\!h, & \textrm{if }m=2n, \\
 \int_{S_m}f(z_m h)  R_{\varphi_m^{-1}}(h)\phi(0) | h|_\bk^{\frac{s}{2}}\od\! h,  & \textrm{if }m=2n+1,
 \end{cases}
\ee
where $v_n$ is given by \eqref{zm}. 
Denote by $W_f \in \CW(I(\xi), \psi)$  the Whittaker function associated to $f$ and $\psi$ via  the Jacquet integral 
\[
W_f(g) =\int_{N_m} f(ug) \bar\psi_m(u)\od\!u
\]
in the sense of holomorphic continuation (see \cite[Theorem 15.4.1]{W92} for detailed explanation). 

Define  
\be \label{minmax'}
\Omega_\eta^m:= \Set{ (s, \xi ) \in \BC \times (\widehat{\bk^\times})^m  |  \begin{array}{l} \Re(\xi_1)<\Re(\xi_2)<\cdots < \Re(\xi_{m}), \\
 -2  \Re(\xi_1) < \Re(s)   - \Re(\eta) < 1 - 2 \Re(\xi_m) \end{array} },
\ee
% where similar to \eqref{minmax} we put \[ \min\Re(\xi):=\min_{i=1,2,\dots, m}\Re(\xi_i),\quad \max\Re(\xi):=\max_{i=1,2,\dots, m} \Re(\xi_i). \]
and for $\xi \in (\widehat{\bk^\times})^m$ define 
$\Omega_{\xi, \eta}:= \set{ s \in \BC | (s, \xi) \in \Omega_\eta^m}$.
Note that $\Omega_{\xi,\eta}$ may be empty. Put 
$
\tilde\xi:=(\xi_m^{-1}, \dots, \xi_1^{-1})
$
and for $f\in I(\xi)$ define 
$\tilde f(h): = f(w_m {}^th^{-1})$ for $h\in G_m.$ 
Note that $\tilde f \in I(\tilde\xi)$ and  
$
\widetilde{W_f} = W_{\tilde f} \in \CW(I(\tilde\xi), \bar\psi).
$
Here and thereafter, by abuse of notation we write $W_{\tilde f}$ for the Whittaker function associated to $\tilde f$ and $\bar\psi$, which should not cause any confusion.

 The connected component $\CM$ of $(\widehat{\bk^\times})^{m}$ containing $\xi$  is the set of all the unramified twists of $\xi$, which  is a complex affine space
of dimension $m$.  A standard section on $\CM$ is a map $\xi' \mapsto f_{\xi'}\in I(\xi')$, $\xi' \in \CM$ such that $f_{\xi'} |_{K_{m}}$ does not depend on $\xi'$, where $K_{m}$ is the standard maximal compact subgroup of $G_{m}$. For any $f\in I(\xi)$, there is a unique standard section $\xi' \mapsto f_{\xi'}$ such that $f_\xi = f$. 

The relevant analytic properties of $\Lambda_{\rm JS}(s, f, \phi, \varphi_m^{-1})$ are established in the following theorem. 

\begin{thml}[${\rm FE}'_m$] \label{thm:FE'_m} Let $\phi\in \CS(\bk^n)$ with $n = \lfloor m/2\rfloor$.   
\begin{enumerate}
\item For $(s, \xi)\in \Omega_\eta^m$ and $f\in I(\xi)$,  the integral $\Lambda_{\rm JS}(s,f,\phi,\varphi_m^{-1})$ in \eqref{openint} converges absolutely, and it holds that 
\be \label{eq:FE'}
\Lambda_{\rm JS}(1-s, \tau_m. \tilde f, \hat\phi, \varphi_m) =\eta(-1)^{mn} \prod^n_{i=1} \gamma(s, \xi_i \xi_{m+1-i} \eta^{-1}, \psi)\cdot  \Lambda_{\rm JS}(s,f,\phi,\varphi_m^{-1}),
\ee
where
\[
\hat\phi =\begin{cases} \CF_\psi(\phi), & \textrm{if $m$ is even}, \\
 \CF_{\bar\psi}(\phi), & \textrm{if  $m$ is odd.}
 \end{cases}
 \]

 \item Let $\xi\mapsto f_{\xi}$ be a standard section on a connected component $\CM$ of $(\widehat{\bk^\times})^m$. Then the function 
 \[
 \Omega_\eta^m \cap (\BC\times \CM) \to\BC, \quad (s, \xi) \mapsto \Lambda_{\rm JS}(s, f_\xi,\phi,\varphi_m^{-1})
 \]
 has a meromorphic continuation to $\BC\times \CM^\circ$, where
 \[
 \CM^\circ:=\set{ (\xi_1,\xi_2,\dots, \xi_m)\in  \CM | \Re(\xi_1)<\Re(\xi_2)<\cdots<\Re(\xi_m) }.
 \]
 \end{enumerate}
\end{thml}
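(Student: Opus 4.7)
The plan is to model the argument on the Rankin--Selberg case treated in \cite{LLSS23}, exploiting the fact that $S_m$ acts on $\CX_m=\CB_m\times \bk^n$ with a Zariski-open orbit of trivial stabilizer having base point $x_m$ as in \eqref{basept}. The integral $\Lambda_{\rm JS}$ is the Mellin-type transform of the data $(f,\phi)$ along this open orbit, and the proof reduces to an explicit coordinate computation which transforms $\Lambda_{\rm JS}$ into a product of $n$ Tate $\GL_1$-zeta integrals, one for each character $\xi_i\xi_{m+1-i}\eta^{-1}$.

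Treating the even case $m=2n$ for clarity, I would parametrize $h=\begin{bmatrix} g & Xg\\ 0 & g\end{bmatrix}\in S_m$ by $(g,X)\in G_n\times M_n$, so that $|h|_\bk=|g|_\bk^{2}$ and the integrand of \eqref{openint} takes the shape $f(z_mh)\,\eta^{-1}(g)\,\psi(-\tr X)\,\phi(v_n g)\,|g|_\bk^{s}$. Performing the Iwasawa decomposition $g=\bar u\,t\,k$ on $G_n$ (with $\bar u\in \overline{N}_n$, $t=\diag(t_1,\ldots,t_n)$, $k\in K_n$) and a suitable change of variable on $X$, the factor $f(z_m h)$ is brought into the Borel through repeated use of $f(\bar b h)=\xi(\bar b)\delta^{1/2}_{\overline{B}_m}(\bar b)f(h)$, peeling off the inducing character $\xi$ on the diagonal torus. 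After tracking the twist by $z_m$, each entry $t_i$ acquires exactly the product character $\xi_i\xi_{m+1-i}\eta^{-1}$ together with a modulus-shift coming from $\delta^{1/2}_{\overline{B}_m}$ and from the factor $|g|_\bk^{s}$. The residual integrations over $\overline{N}_n$, $X$, and $K_n$ collapse into effective Schwartz functions $\Phi_{i,k}\in\CS(\bk^\times)$, producing $n$ Tate integrals whose common region of absolute convergence is, by inspection of the exponents, precisely the region $\Omega_\eta^m$.

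The functional equation \eqref{eq:FE'} then follows by applying Tate's local functional equation to each of the $n$ Tate factors. The transformation $f\mapsto \tau_m.\tilde f$ sends $\xi=(\xi_1,\ldots,\xi_m)$ to $\tilde\xi=(\xi_m^{-1},\ldots,\xi_1^{-1})$, which permutes the pairs $\{\xi_i,\xi_{m+1-i}\}$ and inverts them while preserving the product characters; the Fourier transform $\phi\mapsto\hat\phi$ converts each $\Phi_{i,k}$ to its Tate dual. Collating the $n$ Tate functional equations gives the product $\prod_{i=1}^n\gamma(s,\xi_i\xi_{m+1-i}\eta^{-1},\psi)$, and the sign $\eta(-1)^{mn}$ is tracked from the $v\mapsto -v$ substitution implicit in the Fourier pairing combined with the conjugation by $\tau_m$. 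For part (2), once a standard section $\xi\mapsto f_\xi$ is fixed, the $\xi$-dependence of each Tate factor is polynomial in the unramified twists, and each Tate integral continues meromorphically to $\BC$ with poles controlled by $\oL(s,\xi_i\xi_{m+1-i}\eta^{-1})$; hence $(s,\xi)\mapsto\Lambda_{\rm JS}(s,f_\xi,\phi,\varphi_m^{-1})$ extends meromorphically to $\BC\times \CM^\circ$.

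For the odd case $m=2n+1$, the additional coordinates $(x,y)\in \bk^{1\times n}\times\bk^{n\times 1}$ of $S_m$ are absorbed as follows: the $y$-integration realises $R_{\varphi_m^{-1}}=\mathrm{ind}^{S_m}_{S_m\cap P_m}\varphi_m^{-1}$ concretely on $\CS(\bk^n)$, and the $x$-integration is accommodated via the translation action \eqref{Sact'} on $\phi$, after which the computation reduces to the even case with one extra unpaired character $\xi_{2n+1}$ whose contribution disappears in the pairing. The main obstacle I expect is the bookkeeping in the unfolding step: identifying the precise change of variables on $X$ (and on $x,y$ when $m$ is odd) that aligns $\psi(-\tr X)$ with the Fourier pairing arising from the Iwasawa integration over $\overline{N}_n$, and correctly tracking every shift coming from $\delta^{1/2}_{\overline{B}_m}$ and from the multiplication by $z_m$. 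This is the direct analogue of the delicate computations in \cite{LLSS23} and forms the technical core of the argument.
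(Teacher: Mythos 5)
Your overall strategy --- exploit the open $S_m$-orbit through $x_m$ and reduce \eqref{eq:FE'} to Tate's thesis on a rank-$n$ torus that pairs $\xi_i$ with $\xi_{m+1-i}$ --- is the same germ as the paper's argument, but as written your proof has genuine gaps at exactly the points you defer to ``bookkeeping''. First, the unfolding you propose does not work as stated: the diagonally embedded $\overline{N}_n$ is \emph{not} conjugated into $\overline{B}_m$ by $z_m$ (for $m=2n$, $z_{2n}\bar u^\dag z_{2n}^{-1}=\mathrm{diag}(\bar u, w_n\bar u w_n)$ has an upper-triangular block), so you cannot peel off a full Iwasawa decomposition of $g$ through $f$, and the claimed collapse of the residual $\overline{N}_n$-, $X$- and $K_n$-integrations into finitely many one-variable Schwartz functions $\Phi_{i,k}$ is neither available (a Schwartz function of $n$ variables is not a finite sum of products, and in the Archimedean case convergence of these residual integrals is essentially the original convergence problem) nor needed. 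The paper only conjugates the \emph{torus}: for $m=2n$ it writes the integral as $\int_{A_n^\dag\backslash S_{2n}}\int_{A_n^\dag}$, uses that $z_{2n}a^\dag z_{2n}^{-1}$ is diagonal so the $a$-dependence of $f(z_{2n}a^\dag h)$ is exactly $\prod_i(\xi_i\xi_{2n+1-i})(a_i)$ up to modulus shifts, while the $a$-dependence of $R_{\varphi_{2n}^{-1}}(a^\dag h)\phi(v_n)$ is evaluation of the single Schwartz function $R_{\varphi_{2n}^{-1}}(h)\phi$ at $(a_1,\dots,a_n)$; convergence and meromorphic continuation in part (1)--(2) are not obtained ``by inspection'' but by recognizing the inner unipotent integral as an intertwining operator into $I(\xi^1)\,\widehat\otimes\,I(\xi^2)$ and the outer integral as an open-orbit Rankin--Selberg integral, quoting \cite{LLSS23}.

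Second, and more seriously, the functional-equation step is asserted rather than proved: applying Tate's functional equation to your unfolded integral produces Fourier transforms of your auxiliary data, and the claim that this equals $\Lambda_{\rm JS}(1-s,\tau_m.\tilde f,\hat\phi,\varphi_m)$ --- with precisely $\tilde f(h)=f(w_m\,{}^th^{-1})$, the $\tau_m$-translate, the Fourier transform of $\phi$ itself, and $\varphi_m$ --- is the heart of the matter. This identification requires the involution $h\mapsto\hat h=\tau_m\,{}^th^{-1}\tau_m$ on $S_m$, the compatibility $R_{\varphi_m}(\hat h)\CF(\phi)=|h|_\bk^{1/2}\CF(R_{\varphi_m^{-1}}(h)\phi)$ (Propositions \ref{prop:Reven} and \ref{prop:Rodd}), and the base-point identity $w_{2n}z_{2n}\tau_{2n}=z_{2n}$ in the even case; none of this appears in your sketch. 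In the odd case your reduction ``to the even case'' fails outright: the dual base point is $z_{2n+1}'=w_{2n+1}\,{}^tz_{2n+1}^{-1}\tau_{2n+1}\neq z_{2n+1}$, and one needs Lemma \ref{lem:z'} (the element $h_0$ with $\det g_0=(-1)^n$, which is where the sign $\eta(-1)^n=\eta(-1)^{mn}$ actually originates) together with a conjugated torus $A_n'=u^{-1}A_n^\ddag u$, since the straight diagonal torus is not carried into $\overline{B}_{2n+1}$ by $z_{2n+1}$ (Lemma \ref{An'}). Without these ingredients your argument cannot produce the precise dual datum, the correct $\gamma$-factors with respect to $\psi$ (rather than $\bar\psi$), or the sign.
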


 In view of  Theorem \ref{thm:FE_m} and Theorem \ref{thm:MF_m} below,  the meromorphic continuation in Theorem \ref{thm:FE'_m} (2) in fact holds over $\BC\times\CM$.
However we first need this weaker version, in order to prove Theorem \ref{thm:MF_m}.

For any subset $I$ of $\BR$, write 
\be  \label{HI}
\CH_{I}:= \Set{ s\in \BC |   \Re(s) \in I}.
\ee

\begin{remarkl} \label{rmk:Omega} It is easy to see that 
\begin{enumerate}
\item  $\Omega_{\tilde\xi, \eta^{-1}} = \Set{ 1-s | s\in \Omega_{\xi, \eta}}$. Thus the first assertion in Theorem \ref{thm:FE'_m} implies that the defining integral of $\Lambda_{\rm JS}(1-s, \tau_m. \tilde f, \hat\phi, \varphi_m)$
also converges absolutely when $(s,\xi)\in \Omega_\eta^m$.

\item If $I(\xi)\otimes |\eta|^{-\frac{1}{2}}$ is nearly tempered and $\xi \in \CM^\circ$,  
% in other words \[ \frac{1}{2}\Re(\eta) -\frac{1}{4} \Re(\xi_1)<\cdots<\Re(\xi_m)< \frac{1}{2}\Re(\eta) +\frac{1}{4},  \]
then there exists $\epsilon>0$ such that 
$\Omega_{\xi, \eta} \supset \CH_{(\frac{1}{2}-\epsilon, \frac{1}{2}+ \epsilon)}.
$
\end{enumerate}
\end{remarkl}

For completeness, we recall the gamma factor 
\[
\gamma(s, \omega, \psi) = \varepsilon(s, \omega, \psi) \frac{\oL(1-s,\omega^{-1})}{\oL(s, \omega)}
\]
for  $\omega \in \widehat{\bk^\times}$ defined as in Tate's thesis (\cite{T50, K03}), which is holomorphic and non-vanishing when $ - \Re(\omega) < \Re(s) <1-\Re(\omega)$. More precisely, the Tate integral 
\[
\oZ(s, \omega, \phi) :=\int_{\bk^\times} \omega(a) \phi(a) |a|_\bk^s \od\!^\times a
\]
where $\phi\in\CS(\bk)$ and $\od\!^\times a = |a|_\bk^{-1}\od\!a$, converges absolutely for $\Re(s)> -\Re(\omega)$. It has a meromorphic continuation to $s\in\BC$ and  satisfies a functional equation 
\be \label{Tate}
\frac{\oZ(1-s, \omega^{-1}, \CF_\psi(\phi))}{\oL(1-s,\omega^{-1})} = \varepsilon(s, \omega, \psi)\frac{\oZ(s, \omega, \phi)}{\oL(s,\omega)},
\ee
where both sides are holomorphic. 
We have the following basic facts:
\begin{itemize}
\item $\varepsilon(s,\omega,\bar\psi) = \omega(-1) \varepsilon(s, \omega, \psi)$,
\item  $\gamma(1-s,\omega^{-1},\bar\psi)\gamma(s,\omega,\psi)= \varepsilon(1-s,\omega^{-1},\bar\psi)\varepsilon(s,\omega,\psi)=1$.
\end{itemize}

The Jacquet-Shalika integral $\oZ_{\rm JS}(s, W_f, \phi, \varphi_m^{-1})$ and the open orbit integral $\Lambda_{\rm JS}(s, f, \phi, \varphi_m^{-1})$ are related as follows. 

\begin{thml}[${\rm MF}_m$] \label{thm:MF_m}
For $(s,\xi)\in \Omega^m_\eta$,  $f\in I(\xi)$ and $\phi\in \CS(\bk^n)$ with $n = \lfloor m/2\rfloor$, it holds that 
\[
\Lambda_{\rm JS}(s,f,\phi,\varphi_m^{-1}) = \prod_{1\leq i <j \leq m-i} \gamma(s, \xi_i \xi_j \eta^{-1},\psi) \cdot 
\oZ_{\rm JS}(s, W_f, \phi, \varphi_m^{-1}).
\]
\end{thml}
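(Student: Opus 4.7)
The plan is to prove Theorem \ref{thm:MF_m} jointly with Theorems \ref{thm:FE_m} and \ref{thm:FE'_m} by induction on $m$, exactly as flagged in the paper via the Godement-section strategy. Indeed, once $(\mathrm{MF}_m)$ and $(\mathrm{FE}'_m)$ are known, the functional equation $(\mathrm{FE}_m)$ follows by applying $(\mathrm{MF}_m)$ to both $(s,f,\phi)$ and to the dual datum $(1-s,\tau_m.\widetilde f,\hat\phi)$, combining with $(\mathrm{FE}'_m)$, and using the factorization $\oL(s,I(\xi),\wedge^2\otimes\eta^{-1}) = \prod_{i<j}\oL(s,\xi_i\xi_j\eta^{-1})$. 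The base cases $m=1,2$ are direct: for $m=1$ the modifying product is empty and the statement is tautological, while for $m=2$ the Shalika subgroup is essentially a Heisenberg-type group and the identity reduces to a single application of Tate's functional equation.

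For the inductive step, I would first unfold $W_f$ in $\oZ_{\rm JS}(s,W_f,\phi,\varphi_m^{-1})$ via the Jacquet integral $W_f(g)=\int_{N_m}f(ug)\bar\psi_m(u)\,du$, interpreted in the sense of meromorphic continuation in $\xi$. Substituting into \eqref{JSint} and interchanging integrations (justifiable inside $\Omega_\eta^m$), one gets a double integral over $N_m\times \overline{S}_m$. A change of variables $u\mapsto \sigma_m u\sigma_m^{-1}$ together with the identification $\overline{S}_m = (\sigma_m^{-1}N_m\sigma_m\cap S_m)\backslash S_m$ rearranges this as an integral over a product group, which one then tries to match directly with $\Lambda_{\rm JS}(s,f,\phi,\varphi_m^{-1})$ after extracting certain ``residual'' coordinates that are not present in the open-orbit picture.

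This is where Godement sections enter: expressing $f\in I(\xi_1,\dots,\xi_m)$ through a Godement section built from a section of $I(\xi_2,\dots,\xi_{m-1})$ at level $m-2$, the inductive hypothesis $(\mathrm{MF}_{m-2})$ supplies the gamma factors $\prod_{2\leq i<j\leq m-i}\gamma(s,\xi_i\xi_j\eta^{-1},\psi)$ attached to the ``inner'' data. The residual integration, over the complementary unipotent coordinates parametrizing the $\xi_1$ and $\xi_m$ directions, should evaluate via Tate's functional equation \eqref{Tate} (applied one coordinate at a time) to the product $\prod_{j=2}^{m-1}\gamma(s,\xi_1\xi_j\eta^{-1},\psi)$. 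Together these recover exactly the index set $\{(i,j)\colon 1\leq i<j\leq m-i\}$ appearing in the statement of Theorem \ref{thm:MF_m}.

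The main obstacle will be twofold. First, the combinatorial bookkeeping of the unipotent decomposition and its matching against the open-orbit stratification via $z_m$ is delicate; the odd case $m=2n+1$ is particularly subtle because of the mirabolic component of $S_m$ and the Schwartz induction defining $R_{\varphi_{2n+1}}$, which reshapes the residual integration. Second, convergence must be controlled at every step: $\Omega_\eta^m$ is tailored so that the Jacquet, Tate, and Jacquet-Shalika integrals are simultaneously absolutely convergent, but propagating these bounds through the inductive reduction requires verifying that the inner data $(\xi_2,\dots,\xi_{m-1})$ and the shifted spectral parameter continue to lie in the corresponding region $\Omega_\eta^{m-2}$, which in turn limits how Tate's functional equation can be legally applied to the outer coordinates before meromorphic continuation in $\xi$ is invoked via the standard-section machinery of Theorem \ref{thm:FE'_m}.
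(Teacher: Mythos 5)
You have the right skeleton---prove $({\rm FE}'_m)$ independently, observe that $({\rm MF}_m)+({\rm FE}'_m)\Rightarrow({\rm FE}_m)$, and run an induction through Godement sections with trivial base cases---but the inductive step you propose for the modifying factors is not the one that works, and its key mechanism is asserted rather than proved. The paper's induction goes one step at a time, $({\rm MF}_m)+({\rm FE}_m)\Rightarrow({\rm MF}_{m+1})$, using the Godement section that adjoins the last character $\xi_{m+1}$ to $I(\xi_1,\dots,\xi_m)$. The essential point your sketch misses is that on the open-orbit side the recurrence does not stay at the same spectral point: unfolding $\Lambda_{\rm JS}(s,f',\phi,\varphi_{m+1}^{-1})$ through the Godement section naturally produces level-$m$ open-orbit integrals at $1-s$ attached to the dual data $\tilde\xi$ (via the involution $h\mapsto\hat h$ and a change of base point for the open orbit), while the $\oZ_{\rm JS}$-side recurrence produces level-$m$ Jacquet--Shalika integrals at $s$ with the same data. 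Matching the two sides therefore forces the use of the full level-$m$ functional equation $({\rm FE}_m)$, and the new row of gamma factors comes from the identity $\gamma(s,I(\xi),\wedge^2\otimes\eta^{-1},\psi)\cdot\prod_{1\le i<j\le m-i}\gamma(1-s,\xi_{m+1-i}^{-1}\xi_{m+1-j}^{-1}\eta,\bar\psi)=\prod_{1\le i<j\le m+1-i}\gamma(s,\xi_i\xi_j\eta^{-1},\psi)$, not from coordinate-wise Tate functional equations.

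By contrast, your proposed step $({\rm MF}_{m-2})$ plus Tate $\Rightarrow({\rm MF}_m)$, with a two-sided Godement section stripping both $\xi_1$ and $\xi_m$, is unsupported: the Godement machinery used here adjoins one character at the end, so removing $\xi_1$ as well would require a mirror construction you have not set up; a step of size two stays within one parity of $m$, whereas the real difficulty is the even/odd alternation ($S_{2n-1}$ does not embed into $S_{2n}$, which is precisely why one must conjugate subgroups and move base points); and you give no mechanism for why the residual unipotent integration should yield exactly $\prod_{j=2}^{m-1}\gamma(s,\xi_1\xi_j\eta^{-1},\psi)$ while the coordinates in the $\xi_m$ direction contribute nothing. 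The only place a coordinate-by-coordinate Tate argument is legitimately applied is in the proof of $({\rm FE}'_m)$, where a torus conjugated into $\overline{B}_m$ by $z_m$ produces the antidiagonal pairs $\gamma(s,\xi_i\xi_{m+1-i}\eta^{-1},\psi)$---a different index set from the one in $({\rm MF}_m)$. (Minor: for $m=2$, $({\rm MF}_2)$ is the Jacquet-integral identity with an empty gamma product, and $S_2=Z_2N_2$ is abelian rather than Heisenberg-type; Tate's thesis gives $({\rm FE}_2)$ and $({\rm FE}'_2)$.)
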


\subsubsection{The ideas of the proof} We will prove Theorem \ref{thm:FE'_m} (${\rm FE}'_m$)  in Section \ref{sec:FE'} using \cite{LLSS23} and Tate's thesis. Theorem \ref{thm:FE_m} $({\rm FE}_m)$ and  Theorem \ref{thm:MF_m} $({\rm MF}_m)$ will be proved together inductively. Let us  outline  the strategy of the proof. 

We first establish the basic analytic properties of 
Jacquet-Shalika integrals in Section \ref{sec:analytic}, and reduce Theorem  \ref{thm:FE_m} to the case of principal series representations in the convergence range in Section \ref{sec:red}, a large portion of which 
 is parallel to the work \cite{BP21} on the local zeta integrals for the local Asai $L$-functions.   
More precisely, 
we make a reduction to Theorem \ref{thm2:FE_m}, which amounts to the functional equation \eqref{eq:FE_m} for $I(\xi)$ when $(s,\xi)\in \Omega^m_\eta$. In this case, on both sides of \eqref{eq:FE_m} the integrals are absolutely convergent and the $L$-functions are holomorphic.  Theorem \ref{thm2:FE_m} will be also referred as (${\rm FE}_m$), and at this point it is clear that 
\[
({\rm MF}_m) +({\rm FE}'_m) \Rightarrow ({\rm FE}_m).
\]
Applying the theory of Godement sections (see \cite{J09}), we finish the main induction step
\[
({\rm MF}_m) +({\rm FE}_m) \Rightarrow ({\rm MF}_{m+1})
\]
in Section \ref{sec:Ind}, which together with Section \ref{sec:FE'} forms the most essential and technical part of the proof.

As the starting point of the induction, we give the following low rank examples.

\begin{example}
\begin{enumerate}
\item For $m=1$, all three theorems $({\rm FE}_1)$,  $({\rm FE}'_1)$ and $({\rm MF}_1)$ are obviously trivial. 

\item For $m=2$, we have $S_2 = Z_2 N_2$ where $Z_2$ is the center of $G_2$, and the elements $\sigma_2 = z_2= 1_2$ and $\tau_2=w_2$. In this case both $({\rm FE}_2)$ and $({\rm FE}'_2)$ follow from Tate's thesis for the character 
$\xi_1\xi_2\eta^{-1}$, while $({\rm MF}_2)$ amounts to  the Jacquet integral
\[
W_f(g) = \int_{N_2} f(ug) \bar\psi_2(u) \od\! u,\quad f\in I(\xi),
\]
which converges absolutely when $\Re(\xi_1)<\Re(\xi_2)$. 
\end{enumerate}
\end{example}

\begin{remarkl}
The work \cite{BP21} on the Archimedean theory of the local zeta integrals for the local Asai $L$-functions uses global method, by choosing an auxiliary split place (for a quadratic extension of number fields) and reducing to the known Rankin-Selberg case $($\cite{JPSS83,J09}$)$. This trick is unavailable for the Jacquet-Shalika case. The 
global method also relies on the comparison between the Langlands-Shahidi local factors and the Artin local factors. 
On the other hand, our approach is purely local, and the result on modifying factors has important arithmetic applications towards automorphic and $p$-adic $L$-functions. 
\end{remarkl}

\subsection{Friedberg-Jacquet integrals and modifying factors} \label{sec2.3} We now give the applications of Theorems \ref{thm:FE_m}, \ref{thm:FE'_m} and \ref{thm:MF_m} towards twisted Shalika models and Friedberg-Jacquet integrals. % Assume that $m=2n$ is even. 

\begin{dfnl} \label{defxi}
Let $\xi = (\xi_1, \xi_2, \dots, \xi_{m})\in (\widehat{\bk^\times})^{m}$. We say that
\begin{enumerate}
\item   $\xi$ is   of Whittaker type if $I(\xi)$ has a unique irreducible generic quotient $\pi(\xi)$;
\item  $\xi$ is   $\eta$-symmetric if $m=2n$ is even and 
$\xi_1\xi_{2n}= \xi_2 \xi_{2n-1} = \cdots = \xi_n \xi_{n+1} = \eta.
$
\end{enumerate}
\end{dfnl}

\begin{remarkl} \label{rmk:xi} 

We have the following remarks regarding Definition $\ref{defxi}$.
\begin{enumerate}
%\item $\xi$ is of Whittaker type amounts to that $I(\xi)^\vee $ is of Whittaker type in the sense of \cite{JPSS83}.

\item If $\Re(\xi_1)\geq \Re(\xi_2)\geq \cdots \geq \Re(\xi_{m})$, then $\xi$ is of Whittaker type by $(1)$ and \cite[Lemma 2.5]{J09}, since we use the opposite Borel subgroup $\overline{B}_m$. 

\item If  $\xi$ is of Whittaker type, then $\tilde\xi$ is of Whittaker type as well and $\pi(\tilde\xi) \cong \pi(\xi)^\vee$ by the properties of MVW involution $($\cite{MVW87}$)$. 

\item If $\xi \in (\widehat{\bk^\times})^{2n}$ is of Whittaker type, then 
\[
\xi^{1}:=(\xi_1,\xi_2,\ldots, \xi_n) \quad \textrm{and} \quad \xi^{2}:=(\xi_{n+1}, \xi_{n+2}, \ldots, \xi_{2n})
\]
are both of Whittaker type by the exactness of parabolic induction functor. If moreover $\xi$ is $\eta$-symmetric, then by $(3)$ it holds that 
$\pi(\xi^2) \cong \pi(\xi^1)^\vee \otimes \eta$.
\end{enumerate}
\end{remarkl}

Note that there is an $S_{2n}$-equivariant quotient map 
$\pi\,\widehat\otimes\,\CS(\bk^n)\twoheadrightarrow \pi$ 
 induced by 
 \[
 \phi\mapsto \phi(0), \quad \phi\in \CS(\bk^n).
 \]
 Our main result on twisted Shalika models is as follows.
 %which is a  consequence  of Theorem  \ref{thm:FE_m} and Theorem \ref{thm:MF_m}.  

\begin{thml} \label{cor:sha}
Assume that $\xi \in (\widehat{\bk^\times})^{2n}$ is  $\eta$-symmetric, and $I(\xi)$ has an irreducible generic quotient $\pi(\xi)$ such that $\pi(\xi)\otimes |\eta|^{-\frac{1}{2}}$ is nearly tempered.  Then 
\begin{enumerate}
\item $\oZ^\circ_{\rm JS}(0, W, \phi, \varphi_{2n}^{-1})=0$ for all $W\in \CW(\pi(\xi),\psi)$ and $\phi\in \CS(\bk^n)$ with $\phi(0)=0$;

\item 
$\Hom_{S_{2n}}(\pi(\xi), \varphi_{2n})\neq \{0\}$ and  is spanned by the functional 
\[
 W\mapsto \oZ^\circ_{\rm JS}(0, W, \phi, \varphi_{2n}^{-1}),\quad W\in \CW(\pi(\xi), \psi),
\]
where $\phi$ is an arbitrary element of $\CS(\bk^n)$ such that $\phi(0)=1$.
\end{enumerate}
\end{thml}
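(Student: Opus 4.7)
The approach is to prove (1) first and then derive (2) from (1) together with an $S_{2n}$-invariance property of the functional and the nonvanishing statement of Theorem \ref{thm:FE_m}(4). I will set $\Lambda(W,\phi) := \oZ^\circ_{\rm JS}(0,W,\phi,\varphi_{2n}^{-1})$, a continuous bilinear functional on $\CW(\pi(\xi),\psi)\times\CS(\bk^n)$ by Theorem \ref{thm:FE_m}(3). A change of variable $h'\mapsto h'h^{-1}$ in the defining integral \eqref{JSint}, using right-invariance of the quotient measure on $\overline{S}_{2n}$ under $S_{2n}$, yields the diagonal invariance $\Lambda(\pi(\xi)(h)W,R_{\varphi_{2n}^{-1}}(h)\phi) = \Lambda(W,\phi)$ for all $h\in S_{2n}$.

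For (1), the key tool is the modifying factor identity of Theorem \ref{thm:MF_m}. Every $W\in\CW(\pi(\xi),\psi)$ equals $W_f$ for some $f\in I(\xi)$, and dividing that identity by $\oL(s,\pi(\xi),\wedge^2\otimes\eta^{-1})$ gives
\[
\oZ^\circ_{\rm JS}(s,W_f,\phi,\varphi_{2n}^{-1}) = \Lambda_{\rm JS}(s,f,\phi,\varphi_{2n}^{-1})/\CL(s),
\]
where $\CL(s) := \prod_{1\le i<j\le 2n-i}\gamma(s,\xi_i\xi_j\eta^{-1},\psi)\cdot\oL(s,\pi(\xi),\wedge^2\otimes\eta^{-1})$. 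The $\eta$-symmetry $\xi_i\xi_{2n+1-i}=\eta$ produces $n$ trivial characters among the summands of $\wedge^2\phi_{\pi(\xi)}\otimes\eta^{-1}$, giving $\oL(s,\pi(\xi),\wedge^2\otimes\eta^{-1})$ a pole of order $n$ at $s=0$; crucially, the antidiagonal pairs $i+j=2n+1$ violate $j\le 2n-i$ and hence are absent from the gamma product, while under the nearly tempered (and generic) hypothesis the remaining gamma and $L$-factors are finite and nonzero at $s=0$. Thus $\CL(s)^{-1}$ vanishes to order exactly $n$ at $s=0$, and it remains to show that $\Lambda_{\rm JS}(s,f,\phi,\varphi_{2n}^{-1})$ has a pole of order at most $n-1$ at $s=0$ whenever $\phi(0)=0$. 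Parametrizing $h=\mtrtwo{g}{Xg}{0}{g}$, the integrand contains $\phi(v_n g)\,|g|_\bk^s$ and the potential poles at $s=0$ arise from the region $|g|_\bk\to 0$; when $\phi(0)=0$ the factor $\phi(v_n g)$ vanishes as $g\to 0$, which via a Tate-type analysis along each of the $n$ ``trivial-character directions'' in the Iwasawa decomposition of $g\in G_n$ strictly lowers the pole order.

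For (2), fix $\phi_0\in\CS(\bk^n)$ with $\phi_0(0)=1$ and set $L(W):=\Lambda(W,\phi_0)$. For $h\in S_{2n}$ the invariance from the first paragraph gives $L(\pi(\xi)(h)W) = \Lambda(W,R_{\varphi_{2n}^{-1}}(h^{-1})\phi_0)$; since $R_{\varphi_{2n}^{-1}}(h^{-1})\phi_0 - \varphi_{2n}(h)\phi_0$ vanishes at $0$, part (1) makes its contribution to $\Lambda$ vanish, yielding $L(\pi(\xi)(h)W)=\varphi_{2n}(h)L(W)$ and hence $L\in\Hom_{S_{2n}}(\pi(\xi),\varphi_{2n})$. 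Nonvanishing of $L$ follows from Theorem \ref{thm:FE_m}(4) at $s_0=0$ (its hypothesis $\max\Re(\lambda)<\min\Re(\lambda)+\tfrac12$ is automatic from the nearly tempered assumption): any witnessing pair $(W,\phi)$ with $\oZ^\circ_{\rm JS}(0,W,\phi,\varphi_{2n}^{-1})\ne 0$ must by (1) satisfy $\phi(0)\ne 0$, and rescaling $\phi\mapsto\phi/\phi(0)$ produces a nonzero value of $L$. The spanning assertion reduces to the multiplicity-one theorem for Shalika models of irreducible nearly tempered generic representations, which is known.

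The main obstacle is the pole-order analysis of $\Lambda_{\rm JS}$ in the proof of (1): one needs the pole at $s=0$ to drop by the full $n$ units when $\phi(0)=0$, matching all $n$ trivial characters from the $\eta$-symmetric antidiagonal, rather than merely an improvement by one. I expect the argument to parallel the open-orbit analyses of \cite{LLSS23, LS25}, combined with the Jacquet-Shalika-specific machinery developed in Sections \ref{sec:analytic}--\ref{sec:Ind}.
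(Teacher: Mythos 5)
Your reduction of (2) to (1) (diagonal $S_{2n}$-equivariance of the bilinear functional, independence of $\phi$ once (1) is known, nonvanishing via Theorem \ref{thm:FE_m}(4), and multiplicity one for twisted Shalika functionals) is sound and is essentially how the paper proceeds. The problem is part (1), which is exactly where you stop: the pole-order drop for $\Lambda_{\rm JS}(s,f,\phi,\varphi_{2n}^{-1})$ at $s=0$ when $\phi(0)=0$ is not proved but only gestured at ("a Tate-type analysis \dots strictly lowers the pole order"), and you yourself flag it as the main obstacle; as submitted this is a genuine gap at the central step. Moreover the surrounding bookkeeping is wrong in general: the hypotheses allow coincidences among the $\xi_i$ (e.g.\ $\xi_i=\xi_j$, which forces $\xi_i\xi_{2n+1-j}\eta^{-1}=1$ for a non-antidiagonal pair), so the claim that "the remaining gamma and $L$-factors are finite and nonzero at $s=0$" and hence that $\CL(s)^{-1}$ vanishes to order exactly $n$ is false in general; the argument would have to compare the pole order of $\Lambda_{\rm JS}$ with the exact (possibly larger) pole order of $\CL$ case by case. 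Your closing sentence is also internally inconsistent with your setup (you first assert that a drop to order $\le n-1$ suffices, then demand a drop "by the full $n$ units"), which suggests the intended estimate was not pinned down.

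For contrast, the paper avoids all pole analysis at $s=0$. It applies the functional equation $({\rm FE}_m)$: up to a nonzero $\varepsilon$-factor, $\oZ^\circ_{\rm JS}(0,W,\phi,\varphi_{2n}^{-1})$ equals $\oZ_{\rm JS}(1,\tau_{2n}.\widetilde W,\hat\phi,\varphi_{2n})/\oL(1,\pi(\xi)^\vee,\wedge^2\otimes\eta)$, whose denominator is holomorphic at this point; so (1) reduces to the vanishing of the dual-side integral at $s=1$ (Lemma \ref{lem:sha}), which lies in the range of absolute convergence by near-temperedness. That vanishing is then proved by passing (via Theorems \ref{thm:FE'_m}, \ref{thm:MF_m} and meromorphic continuation in the spectral parameter) to the open-orbit integral $\Lambda_{\rm JS}(1,f',\hat\phi,\varphi_{2n})$ and performing an inner integration over the diagonal torus $A_n^\dag\subset S_{2n}$: the $\eta^{-1}$-symmetry of $\tilde\xi$ makes the torus characters collapse so that $\prod_i|a_i|_\bk\,\od a^\dag$ becomes the additive Haar measure on $\bk^n$, and the inner integral is $\int_{\bk^n}\hat\phi(xg)\,\od x$, which equals a multiple of $\phi(0)=0$ by Fourier inversion. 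If you want to salvage your route, you would need to prove the exact leading-term statement for $\Lambda_{\rm JS}$ at $s=0$ (in effect, Proposition \ref{prop:Sha}(1) together with the factorization of its value through $\phi\mapsto\phi(0)$), which is precisely what the paper derives \emph{from} Lemma \ref{lem:sha} rather than the other way around.
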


In the following we reinterpret the generator of $\Hom_{S_{2n}}(\pi(\xi),\varphi_{2n})$, which will be crucial for the study of modifying factors and the proof of Archimedean period relations
for standard $L$-functions of symplectic type (Theorem \ref{APR}) via the Friedberg-Jacquet local zeta integrals.

In view of Theorem \ref{thm:MF_m}, for $\xi\in (\widehat{\bk^\times})^{m}$ define the modified exterior square $L$-function 
\[
\begin{aligned}
\CL(s,  I(\xi), \wedge^2\otimes \eta^{-1}):  & = \prod_{1\leq i<j \leq m-i} \gamma(s, \xi_i \xi_j \eta^{-1}, \psi) \cdot \oL(s, I(\xi), \wedge^2\otimes \eta^{-1}) \\
& =     \prod_{1\leq i<j  \leq m-i} \oL(1-s, \xi_i^{-1} \xi_j^{-1}\eta) 
\cdot \prod_{1\leq i \leq m-i <j }\oL(s, \xi_i \xi_j\eta^{-1}).
\end{aligned}
\]

\begin{remarkl} In the $p$-adic case, under certain slope conditions (nearly ordinary or non-critical slope) $\CL(s,  I(\xi), \wedge^2\otimes \eta^{-1})$ is expected to be the factor at $p$ of certain exterior square $p$-adic $L$-function, which justifies the notion of modifying factors. 
%We hope to construct such $p$-adic $L$-functions in future works using  the results of this paper and \cite{LS25}. 
\end{remarkl}

Assume that $\xi\in (\widehat{\bk^\times})^{2n}$ and $\CM$ is the connected component of $(\widehat{\bk^\times})^{2n}$ containing $\xi$.
By Theorem \ref{thm:FE_m} and Theorem \ref{thm:MF_m}, for any standard section $\xi' \mapsto f_{\xi'}$ on $\CM$ and $\phi \in \CS(\bk^n)$, the function on $\BC\times\CM$ given by 
\[
 (s,\xi')\mapsto \Lambda_{\rm JS}^\circ(s, f_{\xi'}, \phi, \varphi_{2n}^{-1}):= \frac{\Lambda_{\rm JS}(s, f_{\xi'}, \phi, \varphi_{2n}^{-1})}{\CL(s, I(\xi'), \wedge^2\otimes \eta^{-1})}
\]
is holomorphic and coincides with 
\[
\prod_{1\leq i<j \leq 2n-i}\varepsilon(s, \xi_i'\xi_j' \eta^{-1}, \psi) \cdot \oZ_{\rm JS}^\circ(s, W_{f_{\xi'}}, \phi, \varphi_{2n}^{-1}).
\]
However, the last function might vanish at $s=0$ and $\xi'=\xi$. To remedy this issue,  we introduce 
\[
\Gamma(s, I(\xi), \wedge^2\otimes \eta^{-1},\psi) : = \prod_{1\leq i\leq 2n-i <j} \gamma(s, \xi_i \xi_j \eta^{-1}, \psi),
\]
and denote by $d_{\xi}$ the order of  $\Gamma(s, I(\xi), \wedge^2\otimes \eta^{-1},\psi)$ at $s=0$.

\begin{prpl} \label{prop:Sha}
Keep the assumptions of Theorem \ref{cor:sha}. Let $\lambda_{\pi(\xi)}\in \Hom_{S_{2n}}(\pi(\xi), \varphi_{2n})$ be a generator. Then the following hold.
\begin{enumerate}
\item 
The functional  
\[
f\otimes \phi\mapsto s^{d_\xi} \, \Lambda_{\rm JS}(s, f, \phi, \varphi_{2n}^{-1}),\qquad f\in I(\xi), \ \phi\in \CS(\bk^n)
\]
 is holomorphic and non-vanishing at $s=0$, and its value at $s=0$ factors through the quotient $I(\xi)\,\widehat\otimes\,\CS(\bk^n)\twoheadrightarrow I(\xi)$.
 
 \item There is a unique $p_\xi\in \Hom_{G_{2n}}(I(\xi), \pi(\xi))$ such that $\lambda_{\pi(\xi)}\circ p_\xi = \lambda_{I(\xi)}$, where  $\lambda_{I(\xi)}\in \Hom_{S_{2n}}(I(\xi),\varphi_{2n})$ is given by
\[
\lambda_{I(\xi)}(f):=\left( s^{d_\xi} \, \Lambda_{\rm JS}(s, f, \phi, \varphi_{2n}^{-1})\right)_{s=0},\qquad   f\in I(\xi),
\]
for an arbitrary element  $\phi\in \CS(\bk^n)$ such that $\phi(0)=1$.
\end{enumerate}
\end{prpl}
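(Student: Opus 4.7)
The plan is to derive Proposition~\ref{prop:Sha} from the modifying-factor identity of Theorem~\ref{thm:MF_m} combined with the Shalika functional description in Theorem~\ref{cor:sha}. Theorem~\ref{thm:MF_m} rewrites
\[
\Lambda_{\rm JS}(s, f, \phi, \varphi_{2n}^{-1}) = \prod_{1 \leq i < j \leq 2n-i} \gamma(s, \xi_i \xi_j \eta^{-1}, \psi)\cdot \oZ_{\rm JS}^\circ(s, W_f, \phi, \varphi_{2n}^{-1})\cdot \oL(s, I(\xi), \wedge^2\otimes\eta^{-1}).
\]
The decisive observation is that the $\eta$-symmetric hypothesis makes $\xi_i\xi_j\eta^{-1}$ trivial precisely at the $n$ pairs $(i, 2n+1-i)$, which satisfy $i+j = 2n+1 > 2n$ and are therefore excluded from the gamma product above, but contribute $\gamma(s, 1, \psi)^n$ to $\Gamma(s, I(\xi), \wedge^2\otimes\eta^{-1}, \psi)$ and $\oL(s, 1)^n$ to $\oL(s, I(\xi), \wedge^2\otimes\eta^{-1})$. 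Under the near-tempered assumption no other $\xi_i\xi_j\eta^{-1}$ has a pole or zero at $s=0$; thus $\Gamma$ has a zero of order exactly $n$ (so $d_\xi = n$) while $\oL(s, I(\xi), \wedge^2\otimes\eta^{-1})$ has a pole of order exactly $n$ at $s=0$, and the remaining gamma product is finite and non-zero there.

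For (1), these orders together with the holomorphy of $\oZ_{\rm JS}^\circ$ (Theorem~\ref{thm:FE_m}~(3)) yield that $s^{d_\xi}\Lambda_{\rm JS}(s, f, \phi, \varphi_{2n}^{-1})$ is holomorphic at $s=0$, with value a non-zero $\xi$-dependent constant times $\oZ_{\rm JS}^\circ(0, W_f, \phi, \varphi_{2n}^{-1})$; non-vanishing for suitable $(f, \phi)$ then follows from Theorem~\ref{thm:FE_m}~(4), whose hypothesis $\max\Re(\xi_i) - \min\Re(\xi_i) < 1/2$ is secured by the near-tempered condition on $I(\xi)\otimes |\eta|^{-1/2}$. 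For the factorization, Theorem~\ref{cor:sha}~(1) forces the vanishing of $\oZ_{\rm JS}^\circ(0, W, \phi, \varphi_{2n}^{-1})$ whenever $\phi(0)=0$ and $W\in \CW(\pi(\xi), \psi)$; since the Jacquet integral on $I(\xi)$ factors through the canonical projection $\pi_0: I(\xi) \twoheadrightarrow \pi(\xi)$, giving $\CW(I(\xi), \psi) = \CW(\pi(\xi), \psi)$, the value at $s=0$ depends on $(f, \phi)$ only through $\phi(0)$ and $\pi_0(f)$, and in particular factors through $I(\xi)\,\widehat\otimes\,\CS(\bk^n) \twoheadrightarrow I(\xi)$.

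For (2), a change of variables in the defining integral gives
\[
\Lambda_{\rm JS}(s, h_0.f, \phi, \varphi_{2n}^{-1}) = |h_0|_{\bk}^{-s/2}\, \Lambda_{\rm JS}\bigl(s, f, R_{\varphi_{2n}^{-1}}(h_0^{-1})\phi, \varphi_{2n}^{-1}\bigr), \qquad h_0 \in S_{2n}.
\]
At $s=0$ the prefactor equals $1$ and $(R_{\varphi_{2n}^{-1}}(h_0^{-1})\phi)(0) = \varphi_{2n}(h_0)\phi(0)$, so (1) yields $\lambda_{I(\xi)}(h_0.f) = \varphi_{2n}(h_0)\lambda_{I(\xi)}(f)$, placing $\lambda_{I(\xi)}$ in $\Hom_{S_{2n}}(I(\xi), \varphi_{2n})$. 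Part~(1) further shows $\lambda_{I(\xi)} = \bar\lambda\circ\pi_0$ for some $\bar\lambda \in \Hom_{S_{2n}}(\pi(\xi), \varphi_{2n}) = \BC\cdot\lambda_{\pi(\xi)}$; writing $\bar\lambda = c\cdot\lambda_{\pi(\xi)}$, one takes $p_\xi := c\cdot\pi_0 \in \Hom_{G_{2n}}(I(\xi), \pi(\xi))$. For uniqueness, if $p'_\xi$ also works, then $\lambda_{\pi(\xi)}\circ(p_\xi - p'_\xi) = 0$, so the $G_{2n}$-stable subspace $\Ima(p_\xi - p'_\xi)$ is contained in the proper subspace $\Ker\lambda_{\pi(\xi)}$ of the irreducible $\pi(\xi)$ and hence vanishes.

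I expect the main obstacle to be the careful bookkeeping of the orders of $\gamma$- and $\oL$-factors at $s=0$ under the combined $\eta$-symmetric and near-tempered hypotheses, verifying that $d_\xi$ matches exactly the order of pole of $\Lambda_{\rm JS}(s, f, \phi, \varphi_{2n}^{-1})$ there so that $s^{d_\xi}\Lambda_{\rm JS}$ is both holomorphic and generically non-vanishing at $s=0$; once this is settled, the remaining assertions are formal consequences of Theorem~\ref{cor:sha} and the factorization of the Jacquet integral through the unique irreducible generic quotient.
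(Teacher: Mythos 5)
Your reduction of part (2) to part (1) and the uniqueness argument via irreducibility are fine, but the ``decisive observation'' on which part (1) rests contains a genuine gap. You assume, in effect, that the exponents of $\xi$ themselves are nearly tempered: that $\xi_i\xi_j\eta^{-1}$ is trivial \emph{only} for the $n$ symmetric pairs, that $d_\xi=n$, that $\oL(s,I(\xi),\wedge^2\otimes\eta^{-1})$ has a pole of order exactly $n$ at $s=0$, and that the gamma product $\prod_{1\le i<j\le 2n-i}\gamma(s,\xi_i\xi_j\eta^{-1},\psi)$ is finite and non-zero there. But the hypothesis of Theorem \ref{cor:sha} is only that the \emph{quotient} $\pi(\xi)\otimes|\eta|^{-\frac12}$ is nearly tempered, not $I(\xi)\otimes|\eta|^{-\frac12}$ (you even invoke Theorem \ref{thm:FE_m}(4) for $I(\xi)$ via ``the near-tempered condition on $I(\xi)\otimes|\eta|^{-1/2}$'', whose hypothesis $\max\Re(\xi)<\min\Re(\xi)+1/2$ fails in the situation below; it must be applied to $\pi(\xi)$ realized as induced from square-integrables). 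In the very case the proposition is designed for, namely $\xi=\zeta_\mu=\chi_\mu\rho_{2n}$ in Sections \ref{sec:APR} and \ref{sec:PAPR}, the real parts of the $\xi_i$ are spread out by $\rho_{2n}$, and the non-symmetric characters $\xi_i\xi_j\eta_\mu^{-1}$ are algebraic characters times nontrivial powers of $|\cdot|_\bk$; already for $n=2$ over $\BR$ one checks that some of these produce extra poles of $\oL(s,\xi_i\xi_j\eta^{-1})$ and extra poles or zeros of the relevant $\gamma$-factors at $s=0$, so none of your four exact-order claims holds (and ``precisely at the $n$ pairs'' also fails whenever the $\xi_i$ are not distinct, which the hypotheses allow). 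Since the paper defines $d_\xi$ abstractly precisely because it is not a universal constant, and since your holomorphy, non-vanishing, and the identification of the value at $s=0$ with a nonzero multiple of $\oZ^\circ_{\rm JS}(0,W_f,\phi,\varphi_{2n}^{-1})$ --- hence the appeal to Theorem \ref{cor:sha}(1) for the factorization through $\phi\mapsto\phi(0)$ --- all hinge on this bookkeeping, part (1) is not established, and part (2) falls with it.

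The paper avoids any case analysis of local factors at $s=0$: it multiplies the identity of Theorem \ref{thm:MF_m} by $\Gamma(s,I(\xi),\wedge^2\otimes\eta^{-1},\psi)$ and uses the functional equation $({\rm FE}_{2n})$ to identify the result with the dual-side integral $\oZ_{\rm JS}(1-s,\tau_{2n}.W_{\tilde f},\hat\phi,\varphi_{2n})$, which is controlled near $s=0$ (i.e.\ near the point $1$ of the dual integral) by absolute convergence coming from the near-temperedness of $\pi(\tilde\xi)\otimes|\eta|^{\frac12}$, and which vanishes when $\phi(0)=0$ by Lemma \ref{lem:sha}. This single step yields simultaneously the holomorphy of $s^{d_\xi}\Lambda_{\rm JS}$ at $s=0$, its non-vanishing, and the factorization through $I(\xi)\,\widehat\otimes\,\CS(\bk^n)\twoheadrightarrow I(\xi)$, with $d_\xi$ entering only through its definition as the order of $\Gamma$ at $s=0$. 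To repair your argument you would have to replace the pointwise zero/pole counting by this passage to the dual side (or prove, under the actual hypotheses, that the order of $\prod_{1\le i<j\le 2n-i}\gamma(s,\xi_i\xi_j\eta^{-1},\psi)\cdot\oL(s,I(\xi),\wedge^2\otimes\eta^{-1})$ at $s=0$ is exactly $-d_\xi$, which your counting does not give).
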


Using the twisted Shalika functional $\lambda_{\pi(\xi)}$ in the last proposition, we proceed to the Friedberg-Jacquet integrals introduced in \cite{FJ93}.
Let $\chi \in \widehat{\bk^\times}$. The Friedberg-Jacquet integral for $\pi(\xi)$ and $\chi$ is defined by 
\be \label{FJint}
\oZ_{\rm FJ}(s, v, \chi):=\int_{G_n} \left\langle \lambda_{\pi(\xi)}, \begin{bmatrix} g \\ & 1_n\end{bmatrix}.v\right\rangle \, \chi(g) |g|_\bk^{s-\frac{1}{2}}\od\! g,\quad {\rm for}\ v\in \pi(\xi).
\ee
It converges absolutely for $\Re(s)$ sufficiently large and extends to a holomorphic multiple of $\oL(s, \pi(\xi)\otimes\chi)$ on the complex plane.  By definition,
if $f\in I(\xi)$ has image $v\in \pi(\xi)$, then
\[
\oZ_{\rm FJ}(s, v, \chi) = \oZ_{\rm FJ}(s, f, \chi):=\int_{G_n} \left\langle \lambda_{I(\xi)}, \begin{bmatrix} g \\ & 1_n\end{bmatrix}.f\right\rangle \, \chi(g) |g|_\bk^{s-\frac{1}{2}}\od\! g.
\]
Note that in this expression of the local Friedberg-Jacquet zeta integrals, the local Shalika functional $\lambda_{I(\xi)}$ is defined in Part (2) of Proposition \ref{prop:Sha}, in terms of the local integral 
defined by the open-orbit method. 

We now introduce another type of integrals, whose comparison with the Friedberg-Jacquet integral yields the modifying factors for standard $L$-functions of symplectic type. 
To this end, we first introduce certain Rankin-Selberg period. For a standard section $\xi'\mapsto f_{\xi'}$ on $\CM$ and $\phi \in \CS(\bk^n)$, it follows easily from \cite{LLSS23} that 
the function
\[
(s,\xi')\mapsto \Lambda_{\rm RS}(s, f_{\xi'}, \phi,\eta^{-1}):= \int_{G_n} f_{\xi'} \left(z_{2n}\begin{bmatrix} g \\ & g \end{bmatrix}\right)  \phi(v_ng) \eta^{-1}(g) |g|_\bk^s \od\!g
\]
is holomorphic on $\Omega^{2n}_\eta\cap (\BC\times\CM)$ and has a meromorphic continuation to $\BC\times\CM$. 
As in Remark \ref{rmk:xi} (4), for $\xi  = (\xi_1, \xi_2, \ldots, \xi_{2n})\in (\widehat{\bk^\times})^{2n}$ write $\xi^1= (\xi_1,\xi_2,\dots, \xi_n)$.

\begin{prpl} \label{prop:RS}
Assume that $\xi \in (\widehat{\bk^\times})^{2n}$ is of Whittaker type and $\eta$-symmetric. 
%, and  that  $\oL(s, \pi(\xi^1) \times \pi(\xi^1)^\vee)$ is holomorphic at $s=1$. 
Then the functional 
\[
f\otimes \phi \mapsto s^{d_\xi} \, \Lambda_{\rm RS}(s, f, \phi, \eta^{-1}),  \qquad  f\in I(\xi), \ \phi \in \CS(\bk^n)
\]
is holomorphic and non-vanishing at $s=0$, and its value at $s=0$ factors through the quotient $I(\xi)\,\widehat\otimes\,\CS(\bk^n)\twoheadrightarrow I(\xi)$. 
\end{prpl}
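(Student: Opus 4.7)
The plan is to mirror the proof of Proposition \ref{prop:Sha}, replacing the Jacquet--Shalika open-orbit integral by $\Lambda_{\rm RS}$ and invoking the Rankin--Selberg open-orbit comparison of \cite{LLSS23} in place of Theorem \ref{thm:MF_m}. First I would establish an RS analogue of that theorem: in the convergence region $\Omega_\eta^{2n}\cap(\BC\times\CM)$, an identity of the form
\[
\Lambda_{\rm RS}(s, f, \phi, \eta^{-1}) \;=\; P_\xi(s)\cdot Z_{\rm RS}(s, f, \phi),
\]
where $P_\xi(s)$ is an explicit product of Tate gamma factors $\gamma(s,\xi_i\xi_j\eta^{-1},\psi)$ indexed by the pairs $(i,j)$ with $1\le i\le n<j\le 2n$, and $Z_{\rm RS}(s,f,\phi)$ is a $G_n\times G_n$ Rankin--Selberg type zeta integral for the pair $(\pi(\xi^1),\pi(\xi^2))$ twisted by $\eta^{-1}$. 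This identity is obtained by unfolding $\Lambda_{\rm RS}$ along the $\mathrm{diag}(G_n)$-orbits on $\overline{B}_{2n}\bsl G_{2n}\times\bk^n$ and isolating the open orbit, in direct parallel with \cite{LLSS23}.

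I would then deduce the holomorphy assertion as follows. By the standard theory of Rankin--Selberg L-functions over any local field, the normalization $Z_{\rm RS}(s,f,\phi)/\oL(s,\pi(\xi^1)\times\pi(\xi^2)\otimes\eta^{-1})$ is entire of finite order in vertical strips. Under $\eta$-symmetry, Remark \ref{rmk:xi}(4) identifies $\pi(\xi^2)\cong\pi(\xi^1)^\vee\otimes\eta$, turning the background L-function into that of a self-pairing. On the gamma-factor side, $P_\xi(s)$ is indexed by exactly the same pairs as $\Gamma(s,I(\xi),\wedge^2\otimes\eta^{-1},\psi)$ and, after matching characters, equals it up to a nowhere-vanishing unit, so $P_\xi$ has order $d_\xi$ at $s=0$. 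Combining these, $s^{d_\xi}\Lambda_{\rm RS}(s,f,\phi,\eta^{-1})$ is holomorphic at $s=0$. Non-vanishing is then arranged by choosing test data $(f,\phi)$ for which the normalized Rankin--Selberg integral is non-zero at $s=0$, a standard density-of-translates argument.

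For the factorization through the quotient $I(\xi)\,\widehat\otimes\,\CS(\bk^n)\twoheadrightarrow I(\xi)$, the key point is that the $\phi$-dependence in $\Lambda_{\rm RS}=P_\xi\cdot Z_{\rm RS}$ enters $Z_{\rm RS}$ through a Tate-type integral of the form $\int_{G_n}\phi(v_n g)(\dots)\,\od g$. When $\phi\in\CS(\bk^n)_0:=\set{\phi\in\CS(\bk^n)|\phi(0)=0}$, this Tate integral is regular at $s=0$, whereas for general $\phi$ it contributes a singularity whose residue is proportional to $\phi(0)$ by the standard Tate-thesis phenomenon. Consequently, restricting to $\phi\in\CS(\bk^n)_0$ kills the pole that $s^{d_\xi}$ was designed to cancel, so the value at $s=0$ vanishes on $I(\xi)\,\widehat\otimes\,\CS(\bk^n)_0$, yielding the desired factorization.

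The main obstacle will be the careful execution of the first step: the open-orbit unfolding of $\Lambda_{\rm RS}$ and the explicit matching of $P_\xi$ with $\Gamma(s,I(\xi),\wedge^2\otimes\eta^{-1},\psi)$. This demands a precise orbit-geometric analysis of $\mathrm{diag}(G_n)$ acting on $\overline{B}_{2n}\bsl G_{2n}\times\bk^n$, parallel to but distinct from the Jacquet--Shalika analysis behind Theorem \ref{thm:MF_m}. Since the near-temperedness hypothesis of Proposition \ref{prop:Sha} is not imposed here, one must also verify that the open-orbit identity and the subsequent factor-matching remain valid for arbitrary principal series of Whittaker type, tracking possible cancellations between the gamma factors in $P_\xi$ and the L-factors appearing in $Z_{\rm RS}$.
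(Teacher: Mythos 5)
Your overall architecture (restrict to $H_n$, compare the open-orbit integral with a $G_n\times G_n$ Rankin--Selberg integral via \cite{LLSS23}, then use Rankin--Selberg $L$-function theory) is the same as the paper's, but your identification of the modifying factor $P_\xi$ is wrong, and this breaks the analytic bookkeeping. No new orbit analysis is needed: after restricting $f$ to $H_n$ and translating by one explicit element $g'$, the integral is literally the open-orbit Rankin--Selberg integral attached to the base point $(\overline{B}_n,\overline{B}_n w_n, v_n)$, and \cite[Theorem 1.6 (a)]{LLSS23} gives $\Lambda_{\rm RS}(s,f,\phi,\eta^{-1}) = c\,|g'|_\bk^s \prod_{i+j\leq n}\gamma(s,\xi_i\xi_{n+j}\eta^{-1},\psi)\cdot \oZ_{\rm RS}(s,f_1,f_2,\phi,\eta^{-1})$. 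Thus the gamma product runs over the pairs $(i,j)$ with $i\leq n<j$ and $i+j\leq 2n$ (there are $n(n-1)/2$ of them), which is the \emph{complement}, inside $\{i\leq n<j\}$, of the index set of $\Gamma(s,I(\xi),\wedge^2\otimes\eta^{-1},\psi)$; it is neither that index set ($n(n+1)/2$ pairs) nor the full set $\{1\leq i\leq n<j\leq 2n\}$ ($n^2$ pairs) -- your two descriptions are also mutually inconsistent. The correct mechanism is that multiplying by $\Gamma(s,I(\xi),\wedge^2\otimes\eta^{-1},\psi)$ completes the partial product to the \emph{full} Rankin--Selberg gamma factor $\gamma(s,\pi(\xi^1)\times\pi(\xi^1)^\vee,\psi)$ (using $\eta$-symmetry and $\pi(\xi^2)\cong\pi(\xi^1)^\vee\otimes\eta$), so that $\Gamma\cdot\Lambda_{\rm RS}$ equals a nowhere-vanishing factor times $\oL(1-s,\pi(\xi^1)^\vee\times\pi(\xi^1))\cdot\oZ^\circ_{\rm RS}$; this is holomorphic at $s=0$ because $\oL(s,\pi\times\pi^\vee)$ is holomorphic at $s=1$, and non-vanishing because $\oZ^\circ_{\rm RS}(0,\cdot)$ is a nonzero functional and local $L$-factors never vanish. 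Dividing by $\Gamma$ then shows $s^{d_\xi}\Lambda_{\rm RS}$ is holomorphic and non-vanishing at $s=0$. With your premise ``$P_\xi$ equals $\Gamma$ up to a unit, of order $d_\xi$ at $s=0$'', the function $s^{d_\xi}\Lambda_{\rm RS}=s^{d_\xi}P_\xi Z_{\rm RS}$ would acquire a zero of order $2d_\xi$ against an $L$-function pole of unrelated order, so neither holomorphy nor non-vanishing follows from your chain of claims; the argument only closes once the index sets are matched correctly.

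The factorization step is also a genuine gap as written. Your claim that the pole of the integral at $s=0$ has ``residue proportional to $\phi(0)$ by the standard Tate-thesis phenomenon'' is essentially the statement to be proved: the pole at $s=0$ can have order greater than one, and the exceptional-pole formalism you are implicitly invoking is not available off the shelf in the Archimedean case. The paper instead observes that the value at $s=0$ of $s^{d_\xi}\Lambda_{\rm RS}$ is a nonzero element of $\Hom_{G_n}(\pi(\xi^1)\,\wh\otimes\,\pi(\xi^2)\,\wh\otimes\,\CS(\bk^n),\eta)$, that $\Hom_{G_n}(\pi(\xi^1)\,\wh\otimes\,\pi(\xi^2),\eta)\cong\Hom_{G_n}(\pi(\xi^1)\,\wh\otimes\,\pi(\xi^1)^\vee,\BC)\neq\{0\}$, and then applies the uniqueness of Rankin--Selberg periods (\cite{SZ12,S12}) to force the limiting functional to factor through the evaluation $\phi\mapsto\phi(0)$. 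You need this multiplicity-one input (or a full Archimedean exceptional-pole analysis, which you cannot simply cite) to obtain the factorization through $I(\xi)\,\wh\otimes\,\CS(\bk^n)\twoheadrightarrow I(\xi)$; your final worry about dropping near-temperedness, on the other hand, is unproblematic, since the holomorphy input is just the regularity of $\oL(s,\pi\times\pi^\vee)$ at $s=1$ for any irreducible generic $\pi$.
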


Under the assumptions of Proposition \ref{prop:RS}, we have a nonzero functional $\lambda_{I(\xi)}'$ in the space 
$\Hom_{G_n}(I(\xi), \eta)$ (viewing $G_n$ as a subgroup of $S_{2n}$) given by
\be \label{lambda'}
\lambda_{I(\xi)}'(f):= \left( s^{d_\xi} \, \Lambda_{\rm RS}(s, f, \phi, \eta^{-1})\right)_{s=0},\qquad  f\in I(\xi),
\ee
where $\phi$ is an arbitrary element of $\CS(\bk^n)$ such that $\phi(0)=1$. 
Let 
\be \label{Hn}
H_n := \Set{ \begin{bmatrix} g_1 \\ & g_2 \end{bmatrix} | g_1, g_2\in G_n},
\ee
which is a spherical subgroup of $G_{2n}$.  
%The  right action of $H_n$  on $\CB_{2n}$ has a unique open orbit with a  base point $\overline{B}_{2n}\gamma_n$, where\[\gamma_n : = \begin{bmatrix} 1_n & 1_n \\ & w_n \end{bmatrix}. \]
Let $\overline{Q}_n$ be the lower triangular maximal parabolic subgroup of $G_{2n}$ with Levi subgroup $H_n$.  Then the right action of $H_n$ on the Grassmannian 
$ \overline{Q}_n \bsl G_{2n} $
has a unique open orbit with a base point $\overline{Q}_n \gamma_n$, where 
\be \label{gamman}
\gamma_n:=  \begin{bmatrix} 1_n & 1_n \\ 0 & 1_n \end{bmatrix},
\ee
and the stabilizer of $\overline{Q}_n \gamma_n$ in $H_n$ is  $S_{2n}\cap H_n$, i.e., the diagonal $G_n$.  

Consider the following space
\be\label{sharp}
I(\xi)^\sharp:=\set{f\in I(\xi) | {\rm supp}(f)\subset \overline{Q}_n \gamma_n H_n},
\ee
and for $f\in I(\xi)^\sharp$ introduce the integral 
\[
\Lambda_{\rm FJ}(s, f, \chi):= \int_{G_n} \left\langle \lambda_{I(\xi)}',  \gamma_n \begin{bmatrix} g  \\ & 1_n\end{bmatrix}.f\right\rangle \, \chi(g)|g|_\bk^{s-\frac{1}{2}}\od\! g. 
\]
The following is our main result on Friedberg-Jacquet integrals and the corresponding modifying factors. 

\begin{thml}\label{MFSha}
Assume that $\xi \in (\widehat{\bk^\times})^{2n}$ is of Whittaker type and $\eta$-symmetric.
%and that  $\oL(s, \pi(\xi^1)\times \pi(\xi^1)^\vee)$ is holomorphic at $s=1$. 
\begin{enumerate} 
\item For $f\in I(\xi)^\sharp$, the integral $\Lambda_{\rm FJ}(s, f, \chi)$ converges absolutely and defines a holomorphic function of $s\in \BC$. 

\item For any $s_0\in \BC$, there exists $f\in I(\xi)^\sharp$ such that $\Lambda_{\rm FJ}(s_0, f, \chi) \neq 0$. 

\item 
 If moreover  
$\pi(\xi)\otimes |\eta|^{-\frac{1}{2}}$ is nearly tempered, then
for $f\in I(\xi)^\sharp$ it holds that 
\[
\Lambda_{\rm FJ}(s, f, \chi) = \prod^n_{i=1} \gamma(s, \xi_i\chi, \psi) \cdot \oZ_{\rm FJ}(s, f, \chi).
\]
\end{enumerate}
\end{thml}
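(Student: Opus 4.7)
My strategy follows the open-orbit comparison method of \cite{LLSS23, LS25, LLS24}. The right action of $H_n$ on the Grassmannian $\overline{Q}_n\bs G_{2n}$ has open orbit $\overline{Q}_n\gamma_n H_n$ with stabilizer of $\overline{Q}_n\gamma_n$ equal to the diagonal $G_n\subset H_n$. Hence sections $f\in I(\xi)^\sharp$ correspond essentially to compactly supported smooth functions on $G_n$ via the parametrization $g\mapsto \overline{Q}_n\gamma_n\diag(g,1_n)$. The two local Shalika-type functionals $\lambda_{I(\xi)}$ (Jacquet-Shalika) and $\lambda'_{I(\xi)}$ (Rankin-Selberg restricted to the diagonal $G_n$) then provide two natural pairings whose comparison yields the modifying factor.

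For Parts (1) and (2), I would unfold the definition of $\lambda'_{I(\xi)}$ as the leading coefficient at $s=0$ of the Rankin-Selberg integral $\Lambda_{\rm RS}(\cdot,\phi_0,\eta^{-1})$ from Proposition \ref{prop:RS}. Applied to the translate $\gamma_n\diag(g,1_n).f$, this expresses the integrand of $\Lambda_{\rm FJ}(s,f,\chi)$ through values of $f$ restricted to the open orbit. Since $f\in I(\xi)^\sharp$ has support in this open orbit, the resulting function $h_f(g):=\langle\lambda'_{I(\xi)},\gamma_n\diag(g,1_n).f\rangle$ is compactly supported on $G_n$, so the Mellin transform $\int_{G_n}h_f(g)\chi(g)|g|_\bk^{s-\frac{1}{2}}\,dg$ is entire in $s$, giving (1). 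For (2), since $I(\xi)^\sharp$ produces essentially arbitrary compactly supported smooth sections on the open orbit, one can realize $h_f$ as a bump function concentrated where $\chi(g)|g|_\bk^{s_0-\frac{1}{2}}$ is nonzero, so the integral is nonvanishing at any prescribed $s_0\in\BC$.

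Part (3) is the heart of the theorem. The key observation is that $\lambda_{I(\xi)}$ and $\lambda'_{I(\xi)}$ are related by (formal) Fourier transform along the unipotent radical $N_S=\Set{n(X)=\begin{bmatrix} 1_n & X \\ & 1_n\end{bmatrix} | X\in M_n}$ of the Shalika subgroup: by uniqueness of local Shalika functionals (Theorem \ref{cor:sha}) together with the modifying-factor relation of Theorem \ref{thm:MF_m}, one expects the formal identity
\[
\lambda_{I(\xi)}(F)=\int_{M_n}\lambda'_{I(\xi)}(n(X).F)\,\bar\psi(\tr X)\,dX
\]
in the sense of meromorphic continuation. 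Substituting this into $\oZ_{\rm FJ}(s,f,\chi)$, using the identity $n(X)\diag(g,1_n)=\diag(g,1_n)n(g^{-1}X)$ together with the change of variable $X\mapsto gY$ of Jacobian $|g|_\bk^n$, and exchanging integrations by Fubini, the inner integration over $M_n$ becomes an $n\times n$ matrix Tate integral twisted by $\bar\psi(\tr(gY))$. Factoring through the Levi of $\overline{B}_{2n}$ and applying Tate's functional equation \eqref{Tate} to each of the $n$ resulting one-dimensional Tate integrals along the characters $\xi_i\chi$ ($i=1,\dots,n$) produces precisely the modifying factor $\prod_{i=1}^n\gamma(s,\xi_i\chi,\psi)$, converting $\oZ_{\rm FJ}(s,f,\chi)$ into $\Lambda_{\rm FJ}(s,f,\chi)$.

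The main obstacle is the rigorous justification of these formal manipulations: the integral over $M_n$ expressing $\lambda_{I(\xi)}$ in terms of $\lambda'_{I(\xi)}$ diverges and must be regularized, and Fubini with the $G_n$-integration requires care. My plan is to first establish the identity of Part (3) for $f\in I(\xi)^\sharp$ with very small support near $\gamma_n$ (where all integrals converge absolutely in a suitable strip of $s$) and then extend to all $f\in I(\xi)^\sharp$ by analytic continuation together with variation of the inducing parameter $\xi$ along the standard section, invoking the meromorphic continuation results of Theorems \ref{thm:FE_m}, \ref{thm:FE'_m}, \ref{thm:MF_m} and Proposition \ref{prop:RS}. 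Confirming that the Tate gamma factors assemble precisely into $\prod_{i=1}^n\gamma(s,\xi_i\chi,\psi)$ rather than some other combination relies on the $\eta$-symmetry $\xi_i\xi_{2n+1-i}=\eta$ and the block structure of the Levi of $\overline{Q}_n$, which pairs the inducing characters symmetrically.
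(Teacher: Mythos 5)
Your treatment of Parts (1) and (2) is essentially the paper's: induction in stages $I(\xi)\cong\Ind^{G_{2n}}_{\overline{Q}_n}(I(\xi^1)\,\widehat\otimes\,I(\xi^2))$ plus the bijection $\overline{Q}_n\times\{\gamma_n\}\times G_n'\to\overline{Q}_n\gamma_n H_n$ shows that $g\mapsto f'(\gamma_n\,\mathrm{diag}(g,1_n))$ is compactly supported, and $\lambda'_{I(\xi)}$ factors as a functional $\lambda'$ on $I(\xi^1)\,\widehat\otimes\,I(\xi^2)$ by the uniqueness of Rankin--Selberg periods (Proposition \ref{prop:RS}), so $\Lambda_{\rm FJ}(s,f,\chi)$ is an entire Mellin transform of a compactly supported function; this part is fine.

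For Part (3), however, there is a genuine gap at the decisive step. After the relation $\langle\lambda_{I(\xi)},f\rangle=\int_{M_n}\langle\lambda',f'(n(X))\rangle\bar\psi(\tr X)\,\od X$ and the substitution $X\mapsto gX$, the resulting double integral over $G_n\times M_n$ has integrand $\langle\lambda',I(\xi^1)(g).f'(n(X))\rangle\chi(g)\bar\psi(\tr(gX))$: the $g$-dependence is a genuine matrix coefficient of the principal series $I(\xi^1)\otimes\chi$, not a product of characters, so there is no ``factoring through the Levi of $\overline{B}_{2n}$ into $n$ one-dimensional Tate integrals.'' What the paper actually does at this point is recognize the whole expression as a Godement--Jacquet zeta integral for $I(\xi^1)\otimes\chi$ with datum in ${\rm MC}(I(\xi^1)\otimes\chi)\otimes C^\infty_c(M_n)$, apply the Godement--Jacquet functional equation, and compute the dual-side integral $\int_{G_n}\Phi(g^{-1},g)|g|_\bk^{\frac12-s}\od g$ to be exactly $\Lambda_{\rm FJ}(s,f,\chi)$; the factor $\prod_{i=1}^n\gamma(s,\xi_i\chi,\psi)$ is the Godement--Jacquet gamma factor of the principal series, not an assembly of independent Tate functional equations, and the $\eta$-symmetry plays no role in this particular computation. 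Without invoking Godement--Jacquet theory (or an equivalent substitute, e.g.\ an induction via Godement sections that you do not set up), your key identity is unproved. A secondary, fixable, inaccuracy: the Fourier-transform relation between $\lambda_{I(\xi)}$ and $\lambda'_{I(\xi)}$ needs no regularization and should not be obtained ``by uniqueness of Shalika functionals,'' which would only determine it up to an unspecified scalar and would ruin the exact constant in the final identity. For $f\in I(\xi)^\sharp$ the support condition, via $\overline{Q}_n\gamma_n H_n=\overline{Q}_n N_{Q_n}^\diamond$, forces $X\mapsto f'(n(X))$ to be compactly supported inside the invertible matrices, so the relation with constant $1$ follows directly from the definitions of the two functionals as leading terms at $s=0$ of the open-orbit Jacquet--Shalika and Rankin--Selberg integrals.
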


It is worth pointing out that the proof of  Theorem \ref{cor:sha}, Propositions \ref{prop:Sha}, \ref{prop:RS} and Theorem \ref{MFSha}, which will be given in Section \ref{sec:FJMF},  utilizes the strength of many ingredients such as the following: 
\begin{itemize}
\item theory of Jacquet-Shalika integrals  (Theorem \ref{thm:FE_m}) and the corresponding modifying factors (Theorem \ref{thm:MF_m});

\item  theory of  Rankin-Selberg integrals for $\GL_n\times \GL_n$ (\cite{JPSS83, J09}) and the corresponding modifying factors (\cite{LLSS23});

\item  uniqueness of  Rankin-Selberg periods (\cite{SZ12, S12});

\item  theory of Godement-Jacquet integrals (\cite{GJ72}).
\end{itemize}

The key idea for the proof of Theorem \ref{MFSha} is to relate the Godement-Jacquet integrals for $G_n$ and the Friedberg-Jacquet integrals for $G_{2n}$. Such a relation has been used in \cite{LS25} to evaluate the modifying factors for  nearly ordinary standard $p$-adic $L$-functions of symplectic type as we mentioned earlier.

\subsection{Archimedean period relations}  \label{sec:APR} Finally we give the application of Theorem \ref{MFSha} towards the Archimedean period relations for standard $L$-functions of symplectic type. 

We  set up some notation and refer to \cite{JST19, LLS24} for more details. Assume that $\bk$ is  Archimedean, and denote by $\CE_\bk$ the set of continuous field embeddings 
$\iota: \bk\hookrightarrow\BC$.  
%Let $\sgn_{\bk^\times}$ be the quadratic or trivial character of $\bk^\times$ for $\bk\cong\BR$ or $\BC$. 
For a subgroup $H$ of $G_{2n}$ defined over $\mathbb{R}$, denote $H_\BC \subset G_{2n,\BC}= \GL_{2n}(\bk\otimes_\BR\BC)$ its complexification. 

Let $\mu = (\mu^\iota)_{\iota\in\CE_\bk} \in (\BZ^{2n})^{\CE_\bk}$ be a pure weight in the sense of \cite{Cl90},
where $\mu^\iota=(\mu^\iota_1, \mu^\iota_2, \ldots, \mu^\iota_{2n})\in \BZ^{2n}$.  Then we have an irreducible algebraic representation 
$F_\mu$ of $G_{2n,\BC}$ with highest weight $\mu$, and a unique irreducible generic essentially unitarizable  Casselman-Wallach representation 
$\pi_\mu$ of $G_{2n}$, such that the total continuous cohomology 
\[
\oH^*_{\rm ct}(\BR^\times_+\bsl G_{2n}^0; \pi_\mu\otimes F_\mu^\vee)\neq \{0\},
\]
where $\BR^\times_+$ is the split component of the center of $G_{2n}$. 

Assume that $\pi_\mu$ is of symplectic type, which is equivalent to that for each $\iota\in  \CE_\bk$, there 
exists $w_\iota\in\BZ$ such that 
\[
\mu^\iota_1+\mu^\iota_{2n} = \mu^\iota_2 + \mu^\iota_{2n-1} = \cdots = \mu^\iota_n + \mu^\iota_{n+1} = w_\iota. 
\]
Put $\eta_\mu:=\otimes_{\iota\in\CE_\bk} \iota^{w_\iota}$, which is a character of $(\bk\otimes_\BR\BC)^\times$. By abuse of notation, also write 
$\eta_\mu$ for its restriction to $\bk^\times$. As  is well-known, $\pi_\mu\otimes |\eta_\mu|^{-\frac{1}{2}}$ is tempered. 

Fix $\psi$ to be the nontrivial unitary character of $\bk$ given by 
\[
\psi(x) := \exp\left(2\pi {\rm i} \sum_{\iota\in\CE_\bk}\iota(x)\right),\quad x\in \bk. 
\]
Let $\varphi_{2n, \mu}$ be the character of the Shalika subgroup $S_{2n}$ given by \eqref{varphi2n} using $ \eta_\mu$ and $\psi$. Then by assumption, we have that $\Hom_{S_{2n}}(\pi_\mu,\varphi_{2n,\mu})\neq \{0\}$. We fix a generator  $\lambda_{\pi_\mu}$.  Similar to \eqref{inftype}, assume that $\chi$ is a character of $\bk^\times$ of the form
$\chi = \chi_\natural |_{\bk^\times} \cdot \chi^\natural$, 
where $\chi_\natural = \bigotimes_{\iota\in \CE_{\bk}}\iota^{\od\!\chi_{\iota}}$ and $\chi^\natural$ is quadratic. 
Using the fixed $\lambda_{\pi_\mu}$, as in \eqref{FJint},  we have the 
normalized Friedberg-Jacquet integral
\[
\oZ_{\rm FJ}^\circ(s, v, \chi) := \frac{ \oZ_{\rm FJ}(s, v,\chi)}{\oL(s, \pi_\mu\otimes\chi)},\quad v\in \pi_\mu.
\]
% Since $\pi_\mu\otimes\chi^\natural \cong\pi_\mu$, we must have that $\oL(s, \pi_\mu\otimes\chi) = \oL(s, \pi_\mu\otimes\chi_\natural)$. 

As in \cite{LLS24}, we  consider the principal series representation 
$
I_\mu :=\Ind^{G_{2n}}_{\overline{B}_{2n}}(\chi_\mu \rho_{2n}),
$
where $\chi_\mu := (\otimes_{\iota\in\CE_\bk}\iota^{\mu^\iota_1}, \dots, \otimes_{\iota\in\CE_\bk} \iota^{\mu^\iota_{2n}})\in (\widehat{\bk^\times})^{2n}$ by restriction, and $\rho_{2n}$
is the square root of the modular character of the upper triangular Borel subgroup $B_{2n}$.  Then $\chi_\mu\rho_{2n}$ is $\eta_\mu$-symmetric, and 
 by \cite[Lemma 2.2]{LLS24} $I_\mu$ has a unique irreducible quotient  which is isomorphic to $\pi_\mu$. 
Let $\lambda_{I_\mu}$ be the generator of $\Hom_{S_{2n}}(I_\mu, \varphi_{2n, \mu})$ as in Proposition \ref{prop:Sha}, so that there is a unique 
$p_\mu \in \Hom_{G_{2n}}(I_\mu, \pi_\mu)$ such that $\lambda_{\pi_\mu}\circ p_\mu = \lambda_{I_\mu}$. 

All the above discussions apply to the zero weight $\mu =0$ case. In such a case $F_0$ is trivial. Let 
$
\imath_\mu \in \Hom_{G_{2n}}(I_0, I_\mu\otimes F_\mu^\vee)
$
 be the explicit translation given in 
\cite[Section 2.2]{LLS24}. Then there is a unique 
$
\jmath_\mu \in \Hom_{G_{2n}}(\pi_0, \pi_\mu\otimes F_\mu^\vee)
$
making the following diagram commutative:
\be\label{CDtrans}
\xymatrix{
I_0 \ar@{^(->}[r]^(0.4){\imath_\mu} \ar@{>>}[d]_{p_0} & I_\mu\otimes F_\mu^\vee \ar@{>>}[d]^{p_\mu\otimes {\rm id}} \\
\pi_0 \ar@{^(->}[r]^(0.4){\jmath_\mu} & \pi_\mu \otimes F_\mu^\vee 
}
\ee
Define the character 
$
\xi_{\mu, \chi}:=\chi  \boxtimes  (\chi^{-1}\eta_\mu^{-1})
$
of $H_n\cong G_n\times G_n$, and similar to \eqref{char:xi}  define the character 
$\xi_{\mu,\chi_\natural} := \otimes_{\iota\in\CE_\bk} ( {\det}^{\od\!\chi_{\iota}}\boxtimes  {\det}^{-\od\!\chi_{\iota}-w_\iota})$ 
of $H_{n,\BC}\cong G_{n,\BC}\times G_{n,\BC}$. Note that 
$
\xi_{\mu, \chi} \otimes \xi_{\mu, \chi_\natural}^\vee = \chi^\natural \boxtimes \chi^\natural
$
as a character of $H_n$. 
%where  \[ \chi_\natural^{(j)} := \sgn_{\bk^\times}^j \chi_\natural,  \]
In particular $\xi_{\mu, \chi} \otimes \xi_{\mu,\chi_\natural}^\vee$ only depends on $\chi^\natural$.
Assume that the $\chi_\natural$ is  $F_\mu$-balanced in the sense of Definition \ref{def:bal}. Let
\[
\lambda_{F_\mu, \chi_\natural} \in \Hom_{H_{n,\BC}}(F_\mu^\vee, \xi_{\mu, \chi_\natural})
\]
be the generator given in Lemma \ref{balanced-vec}. 
The functional $\oZ^\circ_{\rm FJ}(\frac{1}{2}, \cdot, \chi)\otimes \lambda_{F_\mu, \chi_\natural}$  induces the Archimedean modular symbol 
\be \label{archMS}
\wp_{\mu, \chi}: \oH^{d_\bk}_{\rm ct}(\BR^\times_+\bsl G_{2n}^0; \pi_\mu\otimes F_\mu^\vee)\otimes \oH^0_{\rm ct}(\BR^\times_+\bsl H_n^0; \xi_{\mu,\chi}\otimes \xi_{\mu, \chi_\natural}^\vee)
\to \oH^{d_\bk}_{\rm ct}(\BR^\times\bsl  H_n^0; \BC),
\ee
which is non-vanishing by \cite[Theorem 3.11]{JST19}.  Here
\be \label{dk}
d_\bk:=\begin{cases}n^2+n-1, & \textrm{if }\bk\cong\BR,\\
2n^2-1, & \textrm{if }\bk\cong\BC.
\end{cases}
\ee

Applying Theorem \ref{MFSha}, we obtain the following theorem, which will be proved in Section \ref{sec:PAPR}. It is clear that Theorem \ref{APR} refines \cite[Theorem 3.12]{JST19}.

\begin{thml}[Archimedean Period Relation] \label{APR}
Let the notation and assumption be as above. Then  one has the following commutative diagram 
\[
\begin{CD}
 \oH^{d_\bk}_{\rm ct}(\BR^\times_+\bsl G_{2n}^0; \pi_\mu\otimes F_\mu^\vee)\otimes \oH^0_{\rm ct}(\BR^\times_+\bsl H_n^0; \xi_{\mu,\chi}\otimes \xi_{\mu, \chi_\natural}^\vee) @>\Omega_{\mu,\chi_\natural} \cdot\wp_{\mu,\chi}>>  \oH^{d_\bk}_{\rm ct}(\BR^\times\bsl  H_n^0; \BC) \\
 @A  \jmath_\mu\otimes {\rm id} AA  @| \\
  \oH^{d_\bk}_{\rm ct}(\BR^\times_+\bsl G_{2n}^0; \pi_0)\otimes \oH^0_{\rm ct}(\BR^\times_+\bsl H_n^0; \xi_{0,\chi^\natural}) @> \wp_{0,\chi^\natural}>>  \oH^{d_\bk}_{\rm ct}(\BR^\times\bsl  H_n^0; \BC)
\end{CD}
\]
where
$\Omega_{\mu,\chi_\natural}: ={\rm i}^{ \sum_{\iota\in\CE_\bk}\sum^n_{i=1} (\mu^\iota_i+\od\!\chi_{\iota})}.
$
\end{thml}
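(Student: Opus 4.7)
The plan is to transport the commutative diagram to the level of the degenerate principal series $I_0$ and $I_\mu$ via \eqref{CDtrans} and Proposition \ref{prop:Sha}(2), and then to trade the normalized Friedberg-Jacquet integrals $\oZ^\circ_{\rm FJ}$ for the open-orbit integrals $\Lambda_{\rm FJ}$ of Theorem \ref{MFSha}, on which the explicit translation $\imath_\mu$ of \cite[Section 2.2]{LLS24} is more tractable. Concretely, the relation $\lambda_{\pi_\mu}\circ p_\mu = \lambda_{I_\mu}$ lifts $\oZ^\circ_{\rm FJ}(\tfrac12,\cdot,\chi)$ from $\pi_\mu$ to $I_\mu$, and the diagram \eqref{CDtrans} reduces Theorem \ref{APR} to the identity of $H_n$-equivariant functionals on $I_0$
\[
\Omega_{\mu,\chi_\natural}\cdot \bigl(\oZ^\circ_{\rm FJ}(\tfrac12, p_\mu(\cdot),\chi)\otimes\lambda_{F_\mu,\chi_\natural}\bigr)\circ\imath_\mu \;=\; \oZ^\circ_{\rm FJ}(\tfrac12, p_0(\cdot),\chi^\natural),
\]
with values in $\xi_{\mu,\chi}\otimes\xi_{\mu,\chi_\natural}^\vee = \chi^\natural\boxtimes\chi^\natural$. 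Since the modular symbols $\wp_{\mu,\chi},\wp_{0,\chi^\natural}$ are built cohomologically from these branching functionals, this scalar identity of functionals is equivalent to the claimed diagram commutation.

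To verify the displayed identity I would restrict attention to sections $f\in I_0^\sharp$ supported on the open $H_n$-orbit $\overline{Q}_n\gamma_n H_n$, which by Theorem \ref{MFSha}(2) suffice to detect $\Lambda_{\rm FJ}$ and generate $I_0$ under $G_{2n}$-translations. By Theorem \ref{MFSha}(3), each $\oZ^\circ_{\rm FJ}$ equals the corresponding $\Lambda_{\rm FJ}$ divided by a product $\prod_{i=1}^n \gamma(1/2, \xi_i\chi, \psi)$ times the relevant $L$-factor, and Theorem \ref{MFSha}(1) guarantees these $\Lambda_{\rm FJ}$ are entire, so the substitution at $s = 1/2$ is harmless. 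The core computation is then to compare $\Lambda_{\rm FJ}(1/2, \imath_\mu(f), \chi)$ paired against $\lambda_{F_\mu,\chi_\natural}$ with $\Lambda_{\rm FJ}(1/2, f, \chi^\natural)$: on the open orbit the explicit translation $\imath_\mu$ multiplies a given section by a matrix coefficient of $F_\mu^\vee$ along the appropriate weight line, and pairing against $\lambda_{F_\mu,\chi_\natural}$ extracts a character of the integrating diagonal $G_n \subset H_n$ whose difference from $\chi^\natural$ is exactly $\chi/\chi^\natural$. This step is parallel to but distinct from the Rankin-Selberg comparison in \cite{LLS24}, adapted to the Shalika open-orbit base point $\gamma_n$ from \eqref{gamman} and the functional $\lambda'_{I(\xi)}$ from \eqref{lambda'}.

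Assembling these ingredients, the ratio between the two sides of the displayed identity factors as the product of (i) the explicit scalar from the $\imath_\mu$-comparison, (ii) the ratio of $\gamma$-factor products from Theorem \ref{MFSha}(3), and (iii) the ratio of Archimedean $L$-factors $\oL(1/2,\pi_0\otimes\chi^\natural)/\oL(1/2,\pi_\mu\otimes\chi)$. A standard Archimedean $\gamma$-factor computation at $s = 1/2$, keeping track of the powers of ${\rm i}$ produced when algebraic characters are shifted through $\gamma(1/2,\cdot,\psi)$, then collapses this product to exactly $\Omega_{\mu,\chi_\natural} = {\rm i}^{\sum_\iota\sum_{i=1}^n (\mu^\iota_i + \od\!\chi_{\iota})}$. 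The main obstacle will be the $\imath_\mu$-comparison in step (i): this requires explicit matrix-coefficient computations for $F_\mu^\vee$ on the $H_n$-slice through $\overline{Q}_n\gamma_n$, careful tracking of the normalization of $\lambda_{F_\mu,\chi_\natural}$ from Lemma \ref{balanced-vec}, and a precise matching of signs and powers of ${\rm i}$ against the Archimedean $\gamma$-factor identities in order to pin down the exact form of $\Omega_{\mu,\chi_\natural}$ rather than just its shape up to a nonzero rational constant.
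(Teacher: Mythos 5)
Your proposal follows essentially the same route as the paper's proof: reduce via \eqref{CDtrans} and Proposition \ref{prop:Sha} to an identity of $H_n$-equivariant functionals on $I_0$, pass (using uniqueness of twisted linear periods and Theorem \ref{MFSha}) to the open-orbit integrals $\Lambda_{\rm FJ}$ on sections in $I_0^\sharp$, and obtain $\Omega_{\mu,\chi_\natural}$ from the ratio of $\gamma$- and $L$-factors, which is exactly the paper's lemma $\Xi_{\mu,\chi_\natural}(s)\equiv\Omega_{\mu,\chi_\natural}^{-1}$. The one point worth noting is that the $\imath_\mu$-comparison you defer as the main obstacle produces no extra scalar in the paper: since $z_{2n}\gamma_n=\gamma_n'$ and $\lambda_{F_\mu,\chi_\natural}(\gamma_n'^{-1}.v_\mu^\vee)=1$, one gets the exact identity $\langle\Lambda_{\rm FJ}(s,\cdot,\chi)\otimes\lambda_{F_\mu,\chi_\natural},\imath_\mu(f)\rangle=\Lambda_{\rm FJ}(s,f,\chi^\natural)$, so the whole constant comes from the modifying factors.
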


\section{Basic Properties of Jacquet-Shalika Integrals}\label{sec:analytic}

\subsection{Preliminaries on Whittaker functions} \label{sec:WF}

For preparations, we briefly recall some general results from \cite{BP21}. Let $G$ be a quasi-split connected reductive group over a local field $\bk$. Denote by $A_G$  the maximal split torus in the center of $G$, and by $X^*(G)$ be the group of algebraic characters of $G$. Put
\[
\CA_G^*:=X^*(G)\otimes\BR= X^*(A_G)\otimes\BR\quad {\rm and}\quad \CA_{G,\BC}^*:=X^*(G)\otimes\BC=X^*(A_G)\otimes\BC.
\]
Fix a Borel subgroup $B$ of $G$ with Levi decomposition $B=TN$, and write $A_0:=A_{T}$, $\CA_0^*:=\CA_{T}^*$. Denote by $\delta_B$ the modular character of $B$. Fix a maximal compact subgroup $K$ of $G$ such that $G=BK$. 

Let $\Delta\subset X^*(A_0)$ be the set of simple roots of $A_0$ in $N$. As usual, for $\alpha\in \Delta$ denote by $\alpha^\vee$ the corresponding simple coroot. Define the closed negative Weyl chamber 
\[
\overline{(\CA_0^*)^+}:=\set{\lambda \in \CA_0^* | \langle \lambda, \alpha^\vee\rangle \leq 0, \forall \alpha\in\Delta}.
\]
Let $W^G=N_G(T)/T$ be the Weyl group of $T$. For $\lambda\in \CA_0^*$, denote by $|\lambda|$ the unique element in $W^G\lambda\cap \overline{(\CA_0^*)^+}$. Define 
a partial order $\prec$ on $\CA_0^*$ by 
\[
\lambda \prec \mu \textrm{\quad if and only if $\mu-\lambda = \sum_{\alpha\in\Delta} x_\alpha\alpha$ where $x_\alpha>0$ for every $\alpha\in\Delta$}.
\]
Fix an algebraic group embedding $\imath: G/A_G\hookrightarrow G_m$ for some $m \geq 1$, and define the log-norm 
\be \label{log-norm}
\bar\sigma(g) := \sup\left(\{1\}\cup \{\log |\imath(g)_{i,j}|_\bk   \mid  i, j =1,2,\dots, m\}\right), \quad g\in G.
\ee
Let $\psi_N$ be a generic unitary character of $N$. For every $\lambda\in \CA_0^*$, let $\CC_\lambda(N\bsl G, \psi_N)$ be the LF space of Whittaker functions on $G$ defined as in 
\cite[2.5]{BP21}, whose precise definition will not be recalled here. 

We need the following estimate.

\begin{leml}[Lemma 2.5.1 of \cite{BP21}] \label{whit-est}  
Let $\lambda\in \CA_0^*$. For any $R, d>0$, there exists a continuous semi-norm $p_{R,d}$ on $\CC_\lambda(N\bsl G, \psi_N)$ such that 
\[
|W(tk)| \leq p_{R,d}(W) \left(\prod_{\alpha\in\Delta}(1+t^\alpha)^{-R}\right) \delta_B(t)^{1/2} t^{|\lambda|} \bar\sigma(t)^{-d}
\]
for every $W\in \CC_\lambda(N\bsl G, \psi_N)$, $t\in T$ and $k\in K$. 
\end{leml}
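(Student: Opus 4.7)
The plan is to exploit the interplay between the $(N,\psi_N)$-equivariance on the left and smooth right translation, together with the baseline growth estimate built into the definition of $\CC_\lambda(N\bsl G,\psi_N)$.

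First I would reduce to bounding $W(tk)$ with $t\in T$, $k\in K$: this is automatic from the Iwasawa decomposition $G=BK=NTK$, since $W(ntk)=\psi_N(n)W(tk)$ has the same absolute value as $W(tk)$. By the definition of $\CC_\lambda(N\bsl G,\psi_N)$ (as the LF space of smooth Whittaker functions with the prescribed asymptotic majorant), there is a baseline bound of the form
\[
|W(tk)| \;\leq\; p(W)\,\delta_B(t)^{1/2}\,t^{|\lambda|}\,\bar\sigma(t)^{N_0}
\]
for some continuous seminorm $p$ and some $N_0$ depending only on $\lambda$. The game is then to upgrade the polynomial factor $\bar\sigma(t)^{N_0}$ to $\bar\sigma(t)^{-d}$ and to insert the rapidly decaying factors $\prod_\alpha(1+t^\alpha)^{-R}$.

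To produce the rapid decay in the directions $t^\alpha\to\infty$, I would fix for each $\alpha\in\Delta$ a vector $X_\alpha$ in the simple root space, chosen so that $c_\alpha:=d\psi_N(X_\alpha)\neq 0$ (possible because $\psi_N$ is generic). Differentiating the identity $W(\exp(sX_\alpha)g)=\psi_N(\exp(sX_\alpha))W(g)$ on the left and rewriting $\exp(sX_\alpha)\cdot tk=tk\cdot\exp(st^{-\alpha}\Ad(k^{-1})X_\alpha)$ using $\Ad(t^{-1})X_\alpha=t^{-\alpha}X_\alpha$, one obtains
\[
c_\alpha\,t^\alpha\,W(tk)\;=\;(R_{\Ad(k^{-1})X_\alpha}W)(tk),
\]
where $R_Y$ denotes the right regular derivation. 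Iterating this identity $R_\alpha$ times for each $\alpha\in\Delta$ produces, on the left, the factor $\prod_\alpha (c_\alpha t^\alpha)^{R_\alpha}$ times $W(tk)$, and on the right a universal right-invariant differential operator applied to $W$, with the $k$-dependence entering only through the uniformly bounded Ad$(k^{-1})$ coefficients. Since $\CC_\lambda(N\bsl G,\psi_N)$ is stable under such differential operators with a continuous seminorm estimate, each iterate still satisfies the baseline bound with a new continuous seminorm. Balancing against the trivial bound $|W(tk)|\leq p(W)\delta_B(t)^{1/2}t^{|\lambda|}\bar\sigma(t)^{N_0}$ when $t^\alpha\leq 1$, and the derived bound scaled by $t^{-\alpha R_\alpha}$ when $t^\alpha\geq 1$, yields the factor $\prod_\alpha(1+t^\alpha)^{-R}$ with $R$ arbitrary.

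To obtain the arbitrary polynomial decay $\bar\sigma(t)^{-d}$ in the remaining directions (where some $t^\alpha$ stays bounded, so the previous step does not help), I would use the smoothness/right-$K$-finiteness structure together with a Casselman--Wallach type argument: the space $\CC_\lambda$ is stable under left translation by the center $A_G$ and by $Z(\mathfrak{g})$ or compact operators (depending on whether $\bk$ is Archimedean or not), and the constant term formalism forces $W(tk)\delta_B(t)^{-1/2}t^{-|\lambda|}$ to have rapid decay in $\bar\sigma(t)$ on every proper face of the Weyl chamber. Concretely one applies a family of differential operators, respectively averaging projectors on the $K$-side, whose action is encoded by a continuous seminorm on $\CC_\lambda$ and whose effect on $W(tk)$ is to multiply by arbitrarily high powers of $\bar\sigma(t)^{-1}$. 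Combining this with the rapid-decay argument along each simple-root direction gives the asserted bound, with $p_{R,d}$ assembled as a finite sum of the continuous seminorms produced in each of these steps.

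The main obstacle is the $\bar\sigma(t)^{-d}$ factor rather than the $(1+t^\alpha)^{-R}$ factors: the latter are a direct consequence of the genericity of $\psi_N$ and the simple-root calculation, whereas the former requires that the defining seminorms of $\CC_\lambda(N\bsl G,\psi_N)$ already encode decay in $\bar\sigma$ transversal to the directions controlled by $\Delta$. Most of the real work is in verifying that the operators used to extract $t^{-\alpha R}$ and $\bar\sigma(t)^{-d}$ act continuously on $\CC_\lambda$, which is ultimately a repackaging of the Casselman--Wallach / uniform moderate growth formalism underlying the definition of these Whittaker spaces in \cite{BP21}.
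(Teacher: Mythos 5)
The paper does not actually prove this statement: it is imported verbatim as Lemma 2.5.1 of \cite{BP21}, and even the definition of $\CC_\lambda(N\bsl G,\psi_N)$ is deliberately not recalled, so the only meaningful comparison is with Beuzart-Plessis's argument. Your central mechanism --- choosing $X_\alpha$ in the simple root space with $c_\alpha=d\psi_N(X_\alpha)\neq 0$, converting left $(N,\psi_N)$-equivariance into the identity $c_\alpha\,t^\alpha\,W(tk)=(R_{\Ad(k^{-1})X_\alpha}W)(tk)$, iterating, and balancing against the trivial bound on the region $t^\alpha\le 1$ --- is exactly how the factor $\prod_{\alpha\in\Delta}(1+t^\alpha)^{-R}$ arises, and that part of your sketch is sound for Archimedean $\bk$. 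For non-Archimedean $\bk$ (the lemma is used for all local fields) differentiation is unavailable; there one instead uses right invariance of $W$ under a small compact open subgroup to show $W(tk)=0$ once some $|t^\alpha|_\bk$ exceeds a constant depending on the level, and your write-up does not address this case.

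The genuine gap is your treatment of $\bar\sigma(t)^{-d}$. Starting from a baseline bound with polynomial growth $\bar\sigma(t)^{N_0}$ and then invoking a ``Casselman--Wallach/constant term'' argument to force rapid decay of $W(tk)\,\delta_B(t)^{-1/2}t^{-|\lambda|}$ in $\bar\sigma(t)$ cannot work: equivariance, smoothness and moderate growth do not imply any decay transverse to the directions controlled by $\Delta$ (one can write down a smooth $(N,\psi_N)$-equivariant function supported where all $t^\alpha\le 2$, of size $\delta_B(t)^{1/2}t^{|\lambda|}$ there, with no $\bar\sigma$-saving as $t^\alpha\to 0$), and even for genuine Whittaker functions of $\pi_\lambda$ the leading asymptotic terms are of size $\delta_B(t)^{1/2}t^{|\Re\lambda|}$ times powers of $\log$, which admit no gain by any power of $\bar\sigma(t)$ --- this is precisely why Proposition \ref{whit-incl} places $\CW(\pi_\lambda,\psi)$ only in $\CC_\mu$ for $\mu$ \emph{strictly} dominating $|\Re(\lambda)|$, the strict inequality buying a positive power of each $t^\alpha$ that absorbs the logarithms. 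In \cite{BP21} the factor $\bar\sigma(t)^{-d}$ is not derived at all: it is built into the defining seminorms of $\CC_\lambda(N\bsl G,\psi_N)$ (weights by $\bar\sigma^{d}$ for every $d$, together with right derivatives in the Archimedean case), so the actual content of the lemma is the insertion of $\prod_\alpha(1+t^\alpha)^{-R}$ and the harmless passage to the anti-dominant representative $|\lambda|$, which your simple-root identity already delivers. Your proof becomes correct once the baseline in your first step is replaced by the true definition of the space; as written, your third step asserts something false and is in any case unnecessary.
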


For  a standard parabolic subgroup 
$P=MU$ of $G$, the restriction map $X^*(M)\to X^*(T)$ induces an embedding $\CA_M^*\hookrightarrow \CA_0^*$. 
The restriction $X^*(A_M)\to X^*(A_G)$ induces  surjections $\CA_M^*\to \CA_G^*$ and $\CA_{M,\BC}^*\to \CA_{G,\BC}^*$, whose kernels will be denoted by 
$(\CA^G_M)^*$ and $(\CA_{M,\BC}^G)^*$ respectively. When $M=T$, we also write $(\CA^G_0)^*:=(\CA^G_T)^*$ and $(\CA^G_{0,\BC})^*:=(\CA^G_{T,\BC})^*$. 

Fix $\tau\in \Pi_2(M)$ (or more generally an irreducible tempered representation of $M$), and for 
$\lambda\in \CA_{M,\BC}^*$ denote by $\tau_\lambda$ the unramified twist of $\tau$ by $\lambda$.  Put
$\pi_\lambda:= \Ind^G_P(\tau_\lambda)$ (normalized smooth induction).
As in \cite[2.6]{BP21}, assume that $J_\lambda\in \Hom_N(\pi_\lambda, \psi_N)$ is a family of Whittaker functionals on $\pi_\lambda$, $\lambda\in \CA_{M,\BC}^*$ such that the map $\lambda\mapsto
J_\lambda \in (\pi_\lambda)'$ is holomorphic in the sense of \cite[2.3]{BP21}. Then we have a continuous $G$-equivariant linear map 
$\widetilde{J}_\lambda: \pi_\lambda \to C^\infty(N\bsl G, \psi_N),
$
where the target is the space of all smooth functions $W: G\to \BC$ such that $W(ug)= \psi_N(u)W(g)$ for any $u\in N$ and $g\in G$. 

We recall Proposition 2.6.1 and Corollary 2.7.1 in \cite{BP21} as follows.

\begin{prpl}  \label{whit-incl}  
Let the notation be as above.
\begin{enumerate}
\item For $\lambda \in \CA_{M,\BC}^*$ and $\mu\in \CA_0^*$ such that $|\Re(\lambda)| \prec\mu$, the image 
of $\widetilde{J}_\lambda$ is contained in $\CC_\mu(N\bsl G, \psi_N)$ and the resulting linear map 
\[
\pi_\lambda \to \CC_\mu(N\bsl G, \psi_N)
\]
is continuous.
\item Let $\mu\in (\CA^G_0)^*$ and $\CU[\prec\mu]:=\set{ \lambda \in (\CA^G_{M,\BC})^* | \,  |\Re(\lambda)|\prec\mu }$. Then the family of continuous linear maps 
\[
\lambda \in \CU[\prec\mu] \mapsto \widetilde{J}_\lambda \in \Hom_G(\pi_\lambda, \CC_\mu(N\bsl G, \psi_N))
\]
is analytic in the sense that for every analytic section $\lambda\mapsto e_\lambda\in \pi_\lambda$ (see \cite[2.3]{BP21}) the resulting map 
\[
\lambda \in \CU[\prec\mu] \mapsto \widetilde{J}_\lambda(e_\lambda) \in \CC_\mu(N\bsl G, \psi_N)
\]
is analytic. 
\item For every $\lambda_0\in (\CA^G_{M,\BC})^*$ and $W_{\lambda_0}\in \CW(\pi_{\lambda_0}, \psi_N)$, there exists a map 
\[
\lambda \in (\CA^G_{M,\BC})^* \mapsto W_\lambda \in \CW(\pi_\lambda, \psi_N)
\]
such that 
\begin{itemize}
\item for every $\mu \in \CA^*_0$ and $\lambda \in \CU[\prec \mu]$, we have $W_\lambda \in \CC_\mu(N\bsl G,\psi_N)$ and the resulting map 
\[
\lambda \in \CU[\prec\mu] \mapsto W_\lambda  \in \CC_\mu(N\bsl G,\psi_N)
\]
is analytic;
\item $W_{\lambda_0}=W$. 
\end{itemize}
\end{enumerate}
\end{prpl}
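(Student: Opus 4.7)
The plan is to follow essentially verbatim the arguments of Beuzart-Plessis in \cite[Proposition 2.6.1 and Corollary 2.7.1]{BP21}; the content of the statement is insensitive to the specifics of $G$ or the generic character $\psi_N$, and the three assertions hold in any quasi-split reductive setting. I would organise the proof in the order $(1)\Rightarrow (2)\Rightarrow (3)$, using the estimate in Lemma \ref{whit-est} as a black box.

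For part (1), I would first realise $\widetilde J_\lambda$ on the compact picture of $\pi_\lambda$ so that the underlying vector space is $\lambda$-independent. Fix $e\in \pi_\lambda$ and evaluate at $g=tk$ with $t$ in a negative chamber of $T$ and $k\in K$. Using the Jacquet-integral presentation of $J_\lambda$ together with the Iwasawa decomposition of $w_0 N$, one reads off the leading exponent $\delta_B(t)^{1/2} t^{|\Re(\lambda)|}$ from the asymptotics of the integrand. To upgrade this to the claimed bound, I would differentiate $W$ by products of simple root vectors $X_\alpha\in \n_\alpha$, use the $(N,\psi_N)$-equivariance to integrate by parts, converting each $X_\alpha$ into multiplication by $\psi_N(X_\alpha)\neq 0$ together with an operator on the inducing data, and combine this with the trivial bound to obtain a gain of $\min(1, t^{-\alpha})^R$ for every $R$; globalising over $\alpha\in\Delta$ gives the factor $\prod_\alpha (1+t^\alpha)^{-R}$. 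The logarithmic decay $\bar\sigma(t)^{-d}$ is produced by iterated application of Casselman-Wallach-type estimates on non-leading exponents in the asymptotic expansion of generalised matrix coefficients. All constants can be taken uniformly in $\lambda$ on compact subsets of $\CU[\prec\mu]$, which is the point for the next part.

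For part (2), analyticity is an immediate consequence of these uniform estimates: an analytic section $\lambda \mapsto e_\lambda$ becomes, in the compact picture, a holomorphic family of vectors in a fixed \Fre{} space, and the Jacquet integrals $\widetilde J_\lambda(e_\lambda)$ converge locally uniformly on $\CU[\prec\mu]$, hence define a holomorphic family in $\CC_\mu(N\bsl G,\psi_N)$ by Morera's theorem applied on finite-dimensional complex slices of $(\CA^G_{M,\BC})^*$. For part (3), given $W_{\lambda_0}\in \CW(\pi_{\lambda_0},\psi_N)$, I would choose $e_{\lambda_0}\in \pi_{\lambda_0}$ with $\widetilde J_{\lambda_0}(e_{\lambda_0}) = W_{\lambda_0}$ (possible because $J_{\lambda_0}$ is surjective onto its Whittaker model), extend $e_{\lambda_0}$ to the tautological analytic section $\lambda\mapsto e_\lambda$ by fixing its restriction to $K$, and set $W_\lambda := \widetilde J_\lambda(e_\lambda)$; then (2) supplies the required analyticity.

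The technical heart of the proof is the estimate in (1), and within it the integration-by-parts step that produces the factor $\prod_\alpha (1+t^\alpha)^{-R}$ in directions where $|\Re(\lambda)|\prec\mu$ is not tight, together with uniformity in $\lambda$. Once this bound is in hand (the argument is carried out in detail in \cite[\S 2.5--2.6]{BP21} and transports to our setting without modification), parts (2) and (3) are essentially formal consequences of the standard theory of analytic families of induced representations in the compact picture.
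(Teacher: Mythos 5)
Your proposal is correct and in essence coincides with the paper's treatment: the paper gives no proof of this proposition at all, stating it only as a recollection of \cite[Proposition 2.6.1 and Corollary 2.7.1]{BP21}, which is precisely the reduction you make (your sketch of the internal integration-by-parts and analytic-family arguments is a reasonable summary of what happens inside \cite{BP21}, but it is not something the paper reproduces or needs). Since the setting here is literally that of \cite{BP21} (a quasi-split connected reductive group over a local field with a generic character), no transport argument is even required.
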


\subsection{Jacquet-Shalika integrals revisited} \label{sec2.2}
From now on assume that $G=G_m$.  We recall the explicit formulation of  Jacquet-Shalika integrals following \cite{JS90, CM15}. 

Since the element $\tau_m$ given by \eqref{taum} is fixed by the MVW involution $h\mapsto {}^t h^{-1}$ on $G_m$, the involution  ${\rm Ad}(\tau_m)$ and  the MVW involution commutes. We introduce the following involution 
\be \label{inv}
G_m \to G_m,\quad h\mapsto \hat h:= \tau_m {}^t h^{-1} \tau_m. 
\ee
It is easy to check that the Shalika subgroup $S_m$ is stable under \eqref{inv}. 

Recall the representation $R_{\varphi_m}$ of $S_m$ defined in Section \ref{sec1.2.2}.  When $m=2n$ is even,  as in \cite{JS90} the Jacquet-Shalika integral  \eqref{JSint}
can be explicitly written as 
\be \label{JSeven}
\begin{aligned}
\oZ_{\rm JS}(s, W, \phi,\varphi_{2n}^{-1}) & = \int_{N_n\bsl G_n} \int_{\frak q_n \bsl M_n}W\left(\sigma_{2n} \begin{bmatrix} g & Xg\\  & g\end{bmatrix}\right) \bar\psi({\rm tr}\, X) \od\!X\\
&\quad \quad  \phi(e_ng) \eta^{-1}(g)  | g|_\bk^s \od\! g,
\end{aligned}
\ee
where  $\frak q_n$ denotes the space of upper triangular matrices in $M_n$. 

For later use we give the following result.

\begin{prpl} \label{prop:Reven}
It holds that 
$
R_{\varphi_{2n}}(\hat h)\CF_{\psi}(\phi) =  |h|_\bk^{\frac{1}{2}} \, \CF_{\psi}(R_{\varphi_{2n}^{-1}}(h)\phi),
$
where $\phi \in \CS(\bk^n)$, $h\in S_{2n}$ and $\hat h$ is given by \eqref{inv}.
\end{prpl}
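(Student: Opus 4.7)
The plan is to prove the identity by direct computation, unwinding the definitions of the representation $R_{\varphi_{2n}}$ and the hat involution, and then reducing both sides to a comparison that only involves the standard change-of-variable formula for the Fourier transform on $\bk^n$. Since both sides of the claim are $\BC$-valued functions on $\bk^n$, it suffices to verify the identity pointwise at an arbitrary $v \in \bk^n$.

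First I would make the hat involution explicit on $S_{2n}$. Writing $h = \begin{bmatrix} g & Xg \\ 0 & g\end{bmatrix}$, a direct block-matrix computation using ${}^t h^{-1} = \begin{bmatrix} {}^t g^{-1} & 0 \\ -{}^t X\cdot {}^t g^{-1} & {}^t g^{-1}\end{bmatrix}$ and conjugation by $\tau_{2n}$, which swaps the $n \times n$ blocks, gives
\[
\hat h \;=\; \begin{bmatrix} {}^t g^{-1} & -{}^t X\cdot {}^t g^{-1} \\ 0 & {}^t g^{-1}\end{bmatrix},
\]
so $\hat h \in S_{2n}$ with parameters $g' = {}^t g^{-1}$ and $X' = -{}^t X$. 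A one-line consequence, which is the conceptual heart of the identity, is that
\[
\varphi_{2n}(\hat h) \;=\; \eta({}^t g^{-1})\,\psi\bigl(\tr(-{}^tX)\bigr) \;=\; \eta(g)^{-1}\psi(-\tr X) \;=\; \varphi_{2n}^{-1}(h).
\]
This explains how the characters on the two sides of the claim match up.

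Next I would insert this into the two sides and compare. From \eqref{Seven} and the action \eqref{Sact} applied to $\hat h$,
\[
R_{\varphi_{2n}}(\hat h)\CF_\psi(\phi)(v) \;=\; \varphi_{2n}^{-1}(h)\,\CF_\psi(\phi)\bigl(v\cdot {}^t g^{-1}\bigr).
\]
On the other side, since $R_{\varphi_{2n}^{-1}}(h)\phi(y) = \varphi_{2n}^{-1}(h)\,\phi(yg)$, pulling the scalar out of the Fourier transform and performing the substitution $y \mapsto y g^{-1}$ in the defining integral yields
\[
\CF_\psi\bigl(R_{\varphi_{2n}^{-1}}(h)\phi\bigr)(v) \;=\; \varphi_{2n}^{-1}(h)\,|g|_\bk^{-1}\,\CF_\psi(\phi)\bigl(v\cdot {}^t g^{-1}\bigr),
\]
where the change-of-variable Jacobian produces the factor $|g|_\bk^{-1}$. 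Comparing the two displayed expressions and noting that $|h|_\bk^{1/2} = |\det g|_\bk = |g|_\bk$ yields the claim.

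There is no real obstacle here; the only thing to be careful with is the bookkeeping of transposes, signs and absolute-value exponents, and the observation that the inverse character in $R_{\varphi_{2n}^{-1}}$ on the right-hand side is precisely what appears on the left once $\hat h$ is substituted into $\varphi_{2n}$.
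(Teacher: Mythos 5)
Your proof is correct and follows essentially the same route as the paper: make $\hat h$ explicit, check $\varphi_{2n}(\hat h)=\varphi_{2n}^{-1}(h)$, and then absorb the Jacobian from a single change of variables in the Fourier integral, with $|h|_\bk^{1/2}=|g|_\bk$. The only cosmetic difference is that you perform the substitution on the side $\CF_\psi(R_{\varphi_{2n}^{-1}}(h)\phi)$ while the paper does it inside $\CF_\psi(\phi)(v.\hat h)$, which is the same computation.
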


\begin{proof}
As before write $h = \begin{bmatrix} g & Xg \\ & g\end{bmatrix}$.  Then 
$
\hat h =\begin{bmatrix} {}^t g^{-1} &  -  {}^t X\, {}^t g^{-1}  \\ & {}^t g^{-1}\end{bmatrix}. 
$
It is easy to check that $\varphi_{2n}(\hat h) = \varphi_{2n}^{-1}(h)$.  The proposition follows from
\eqref{Seven} and  that 
\[
\CF_\psi(\phi)(v. \hat h)     = \int_{\bk^n}\phi(x) \psi(v \, {}^t g^{-1}  \, {}^t x) \od\! x 
=  | g|_\bk \int_{\bk^n} \phi(xg) \psi(v \, {}^t x)\od\! x 
=  | h|_\bk^{\frac{1}{2}} \,\CF_\psi(h.\phi)(v),
\]
for $v\in \bk^n$, 
where $h.\phi(x) := \phi(x.h) = \phi(xg)$, $x\in \bk^n$. 
\end{proof}

Next we elaborate the odd case. The following is a variant of Propositions 3.1 and 3.2 in \cite{CM15}. 

\begin{prpl}\label{prop:Rodd}
{\rm (1)} The representation $R_{\varphi_{2n+1}}$ can be realized on the space $\CS(\bk^n)$ such that 
\[
\begin{aligned}
&    R_{\varphi_{2n+1}}\left(\begin{bmatrix} g & \\ & g \\ & & 1\end{bmatrix}\right)\phi(v) = \eta( g) \phi(vg);\  
R_{\varphi_{2n+1}}\left(\begin{bmatrix} 1_n & X &0\\ & 1_n &0\\ & & 1\end{bmatrix}\right)\phi(v) = \psi({\rm tr}\, X) \phi(v);\\
& R_{\varphi_{2n+1}}\left(\begin{bmatrix} 1_n &0 & y \\ & 1_n &0 \\ & & 1\end{bmatrix}\right)\phi(v) =  \psi( - v y)\phi(v);\ 
R_{\varphi_{2n+1}}\left(\begin{bmatrix} 1_n & &  \\ 0& 1_n & \\ 0& x & 1\end{bmatrix}\right)\phi(v) = \phi(v+x),
\end{aligned}
\]
%\begin{itemize}
%\item $R_{\varphi_{2n+1}}\left(\begin{bmatrix} g & \\ & g \\ & & 1\end{bmatrix}\right)\phi(v) = \eta( g) \phi(vg)$,
%\item $R_{\varphi_{2n+1}}\left(\begin{bmatrix} 1_n & X \\ & 1_n \\ & & 1\end{bmatrix}\right)\phi(v) = \psi({\rm tr}\, X) \phi(v)$, 
%\item $R_{\varphi_{2n+1}}\left(\begin{bmatrix} 1_n & & y \\ & 1_n & \\ & & 1\end{bmatrix}\right)\phi(v) =  \psi( - v y)\phi(v)$,
%\item $R_{\varphi_{2n+1}}\left(\begin{bmatrix} 1_n & &  \\ & 1_n & \\ & x & 1\end{bmatrix}\right)\phi(v) = \phi(v+x)$,
%\end{itemize}
where $\phi\in\CS(\bk^n)$, $g\in G_n$, $X\in M_n$, $y\in \bk^{n\times 1}$ and $x, v\in \bk^{1\times n}$. 

{\rm (2)} It holds that 
$
R_{\varphi_{2n+1}}(\hat h)\CF_{\bar\psi}(\phi) =  |h|_\bk^{\frac{1}{2}} \, \CF_{\bar\psi}(R_{\varphi_{2n+1}^{-1}}(h)\phi),
$
where $\phi \in \CS(\bk^n)$, $h\in S_{2n+1}$ and $\hat h$ is given by \eqref{inv}.
\end{prpl}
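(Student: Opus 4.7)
The plan is to establish Part (1) by realizing the Schwartz induction concretely on $\CS(\bk^n)$ via a chosen family of coset representatives, and to prove Part (2) by verifying the identity on a generating set of $S_{2n+1}$ and invoking multiplicativity.

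For Part (1), I would first observe that $S_{2n+1}\cap P_{2n+1}$ is precisely the set of elements of $S_{2n+1}$ with bottom-row parameter $x=0$, so the coset space $(S_{2n+1}\cap P_{2n+1})\backslash S_{2n+1}$ is naturally parametrized by $\bk^{1\times n}$ via the representatives $s_v := \begin{bmatrix} 1_n & 0 & 0 \\ 0 & 1_n & 0 \\ 0 & v & 1 \end{bmatrix}$, and the Schwartz-induced space is identified with $\CS(\bk^n)$ by $\phi(v) := F(s_v)$. To derive the four displayed action formulas, I would compute $s_v h$ and decompose it as $p(v,h)\cdot s_{v'(v,h)}$ with $p\in S_{2n+1}\cap P_{2n+1}$; the formula $(R_{\varphi_{2n+1}}(h)\phi)(v) = \varphi_{2n+1}(p(v,h))\,\phi(v'(v,h))$ can then be read off. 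Three of the four decompositions are immediate; the only delicate case is $h=\begin{bmatrix} 1_n & 0 & y \\ 0 & 1_n & 0 \\ 0 & 0 & 1 \end{bmatrix}$, where matching the $(1,2)$-block in $p \cdot s_{v'} = \begin{bmatrix}\tilde g & \tilde X\tilde g + \tilde y v' & \tilde y \\ 0 & \tilde g & 0 \\ 0 & v' & 1\end{bmatrix}$ against that of $s_v h$ forces $\tilde X = -yv$, producing the factor $\psi({\rm tr}(-yv)) = \psi(-vy)$.

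For Part (2), I would first check that $\hat{\cdot}\colon h \mapsto \tau_{2n+1}\,{}^th^{-1}\,\tau_{2n+1}$ is a group automorphism (since $h\mapsto {}^th^{-1}$ is one) and that both sides of the claimed identity are multiplicative in $h$; hence it suffices to check the identity on a generating set. For each of the four generator types I would then compute $\hat h$ explicitly: the block-diagonal $\diag(g,g,1)$ maps to $\diag({}^tg^{-1},{}^tg^{-1},1)$; the upper-$X$ generator maps to its own type with $X\mapsto -{}^tX$; and the $(1,3)$-column-$y$ generator and the $(3,2)$-row-$x$ generator swap with each other, up to transposes and signs. Part (1) then furnishes the left-hand side, and the identity reduces in each case to a standard Fourier transform identity on $\bk^n$: the substitution rule $\CF_{\bar\psi}(\phi(\cdot g))(v) = |g|_\bk^{-1}\,\CF_{\bar\psi}(\phi)(v\,{}^tg^{-1})$ (whose Jacobian matches the $|h|_\bk^{1/2}=|g|_\bk$ factor), the modulation-translation duality $\CF_{\bar\psi}(\phi(\cdot + x))(v) = \psi(x\,{}^tv)\,\CF_{\bar\psi}(\phi)(v)$, and its Fourier dual. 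The character factors $\eta(g)^{\pm 1}$ and $\psi({\rm tr}(\pm X))$ then cancel between the two sides.

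The main obstacle is careful bookkeeping, not any conceptual difficulty. Unlike the even case (Proposition \ref{prop:Reven}), the $(1,3)$- and $(3,2)$-blocks genuinely mix under $\hat{\cdot}$, so Part (1) cannot be reduced directly to the even-case analogue, and the nontrivial $p$-decomposition flagged above is essential for treating the interplay with the Fourier transform in Part (2). Sign conventions arising from transposes (row versus column) must also be tracked consistently against the use of $\bar\psi$ rather than $\psi$ in the odd case.
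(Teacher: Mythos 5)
Your proposal is correct. Note that the paper offers no internal proof of this proposition: it is stated as ``a variant of Propositions 3.1 and 3.2 in \cite{CM15}'', so the argument is delegated to that reference, and your computation supplies exactly the verification that the citation stands in for, by essentially the same method used there. Concretely, your realization of the Schwartz induction on $\CS(\bk^n)$ via the transversal $s_v$, the decomposition $s_vh=p\,s_{v'}$ with $p\in S_{2n+1}\cap P_{2n+1}$, and in particular the rank-one correction $\tilde X=-yv$ giving $\psi(\mathrm{tr}(-yv))=\psi(-vy)$ for the $(1,3)$-generator, all check out; likewise for part (2): $h\mapsto\hat h$ is an automorphism, both sides of the identity are multiplicative in $h$, $\hat h$ sends $g^{\ddag}$-type to $({}^tg^{-1})^{\ddag}$-type and $X\mapsto-{}^tX$, exchanges the column-$y$ and row-$x$ generators up to transpose and sign, and the Jacobian $|g|_\bk^{-1}$ from the substitution rule for $\CF_{\bar\psi}$ cancels against $|h|_\bk^{1/2}=|\det g|_\bk$ (well defined since $\det h=(\det g)^2$), while the $\eta^{\pm1}$ and $\psi(\pm\mathrm{tr}\,X)$ factors cancel because $R_{\varphi_{2n+1}^{-1}}$ is built from $(\eta^{-1},\bar\psi)$. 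Two routine points you should record explicitly rather than leave implicit: that restriction $F\mapsto(v\mapsto F(s_v))$ identifies the Schwartz induction with $\CS(\bk^n)$ (standard, as $(S_{2n+1}\cap P_{2n+1})\backslash S_{2n+1}\cong\bk^n$ with $v\mapsto s_v$ a polynomial section), and that the four displayed element types generate $S_{2n+1}$, which follows from factoring a general element with data $(g,X,y,x)$ as (upper-$(X-yx)$)(column-$y$)$\,g^{\ddag}\,$(row-$xg$). Neither is a gap, only bookkeeping.
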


When $m=2n+1$ is odd, as in \cite{CM15} the Jacquet-Shalika integral \eqref{JSint} can be explicitly written as 
\be \label{JSodd}
\begin{aligned}
\oZ_{\rm JS}(s, W, \phi,\varphi_{2n+1}^{-1}) & = \int_{N_n\bsl G_n} \int_{\frak q_n \bsl M_n}\int_{\bk^n}W\left(\sigma_{2n+1} \begin{bmatrix} g & Xg &0\\  & g & 0\\ & x & 1\end{bmatrix}\right) \phi(x) \od\! x  \\ 
&\qquad \qquad  \bar\psi({\rm tr}\, X) \od\!X \, \eta^{-1}( g)  | g|_\bk^{s-1} \od\! g.
\end{aligned}
\ee
To ease the notation, for a subgroup $\CG$ of $G_n$ put 
\be \label{dag}
\CG^\dag:=\set{g^\dag | g\in \CG}\subset S_{2n}\quad \textrm{and}\quad  \CG^\ddag:=\set{ g^\ddag | g\in \CG}\subset S_{2n+1},
\ee
where for $g\in G_n$ we write 
\[
g^\dag := \begin{bmatrix} g & \\ & g\end{bmatrix} \in S_{2n}\quad \textrm{and}\quad  g^\ddag :=\begin{bmatrix} g \\ & g \\ & & 1\end{bmatrix} \in S_{2n+1}.
\]

\subsection{Convergence and continuity}

Apply the discussion in Section \ref{sec:WF} for the upper triangular Borel subgroup $B_m$ of $G_m$. Then $\CA_0^*=\BR^m$ and the closed negative Weyl chamber is 
\[
\overline{(\CA_0^*)^+}=\set{ \lambda = (\lambda_1,\ldots, \lambda_{m})\in \BR^{m} | \lambda_1\leq \cdots\leq \lambda_{m}}.
\]
For $\lambda \in \CA_0^*$, we have $|\lambda| = (\lambda_{w(1)},\ldots, \lambda_{w(m)})$ for any permutation $w\in \frak S_{m}$ such that $\lambda_{w(1)}\leq \cdots \leq \lambda_{w(m)}$. 
Similar to \eqref{minmax}, put
$\min \lambda :=\min_{i=1,2,\dots, m} \lambda_i.
$
We collect some more notation to be used later.  
\begin{itemize}
\item Let $\delta_m$ be the modular character of  $B_m=A_m N_m$, where $A_m$ is the diagonal torus, and let
\[
\rho_m := \delta_{m}^{1/2} = \left(\frac{m-1}{2},  \frac{m-3}{2}, \ldots, \frac{1-m}{2}\right) \in \CA_{0,\BC}^*.
\]
\item  Let $\bar{\frak v}_n$ be the space of strictly lower triangular matrices in $M_n$, so that $M_n  = \frak q_n \oplus \bar{\frak v}_n$. 
\item Let $K_m$ be the standard maximal compact subgroup $\RO(m)$, $\RU(m)$ or 
$\GL_{m}(\CO_\bk)$ of $G_m$,  for $\bk\cong \BR, \BC$ or $\bk$ non-Archimedean with ring of integers $\CO_\bk$, respectively. 
\item Recall the mirabolic $P_m$ of $G_m$. Let $U_m$ be the unipotent radical of $P_m$, and let $\overline{U}_m={}^tU_m$. Let $Z_m$ be the center of $G_m$. 
\end{itemize}

For $W\in C^\infty(N_m\bsl G_m, \psi_m)$ and $\phi\in \CS(\bk^n)$ with $n=\lfloor m/2\rfloor$, formally define the integral $\oZ_{\rm JS}(s, W, \phi, \varphi_m^{-1})$ by \eqref{JSint}. 
Recall the notation $\CH_I$, $I\subset \BR$ in \eqref{HI}.  A vertical strip is a subset of $\BC$ of the form $\CV=\CH_I$ for a finite closed  interval $I\subset \BR$.  

 In view of Proposition \ref{whit-incl}, we start from the following result.
  
\begin{prpl} \label{prop:conv0} Let $\mu\in\CA_0^*$, $W\in \CC_\mu(N_m \bsl G_m, \psi_m)$ and $\phi\in \CS(\bk^n)$ with $n=\lfloor m/2\rfloor$. Then the following hold.

\begin{enumerate}

\item  The integral
$
\oZ_{\rm JS}(s, W, \phi, \varphi_m^{-1})
$
converges absolutely for all $s\in \CH_{(\Re(\eta) - 2\min\mu, \infty)}$. 

\item The function $s\mapsto \oZ_{\rm JS}(s, W, \phi, \varphi_m^{-1})$ is holomorphic and bounded in vertical strips on  $\CH_{(\Re(\eta) - 2\min\mu, \infty)}$. More precisely,
for any vertical strip $\CV \subset \CH_{(\Re(\eta) - 2\min\mu, \infty)}$, there exist continuous semi-norms $p_{\CV}$   on $\CC_\mu(N_m \bsl G_m, \psi_m)$ and  $q_\CV$ on $\CS(\bk^n)$ such that 
 $\oZ_{\rm JS}(s, W, \phi, \varphi_m^{-1})$, with integrand replaced by its absolute value, is bounded by
$p_{\CV}(W) q_\CV(\phi)$ 
for any $W\in \CC_\mu(N_m \bsl G_m, \psi_m)$, $\phi \in \CS(\bk^n)$ and $s\in \CV$.  In particular the family of functions 
\[
(W, \phi)\mapsto \oZ_{\rm JS}(s, W, \phi, \varphi_m^{-1})
\]
on $ \CC_\mu(N_m \bsl G_m, \psi_m) \times \CS(\bk^n)$ indexed by $s\in \CV$ are equicontinuous. 

\end{enumerate}
\end{prpl}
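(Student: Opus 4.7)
My plan is to treat the even and odd cases in parallel using the explicit expressions in \eqref{JSeven} and \eqref{JSodd}. First I would apply the Iwasawa decomposition $g = tk$ with $t \in A_n$ and $k \in K_n$ to the outer variable in $N_n\bsl G_n$, converting the outer integral into an integral over $A_n \times K_n$ against the character $\eta^{-1}\cdot|\cdot|_\bk^s\cdot\delta_n^{-1}$. This reduces the estimation to controlling the integrand on the torus $A_n$, uniformly in $k$ and in the inner unipotent variables $X \in \bar\v_n$ (identified with a set of coset representatives for $\q_n\bsl M_n$) and, in the odd case, $x \in \bk^n$.

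Second, I would put the $\sigma_m$-conjugates into Iwasawa form. Conjugating $\sigma_m$ through $\begin{bmatrix} t & Xt \\ & t\end{bmatrix}$ and its odd analog produces a unipotent element of $N_m$ whose coordinates are linear combinations of entries of $tXt^{-1}$, together with a diagonal element obtained by shuffling $t$. A change of variables in the $X$-integration absorbs the $t$-factors into the unipotent coordinate, leaving an integrand that is Schwartz in $X$ after suitably combining with the character $\bar\psi(\tr X)$. In the odd case, Proposition \ref{prop:Rodd} allows us to rewrite $R_{\varphi_{2n+1}^{-1}}(h)\phi$ explicitly, and the $\bk^n$-integration is handled analogously.

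Third, I would apply Lemma \ref{whit-est} to the resulting Iwasawa component: for any $R, d > 0$ there is a continuous semi-norm $p_{R,d}$ on $\CC_\mu(N_m\bsl G_m, \psi_m)$ such that $|W(\sigma_m \cdot)|$ is bounded by
\[
p_{R,d}(W)\prod_{\alpha\in\Delta}(1+a^\alpha)^{-R}\,\delta_m(a)^{1/2}\, a^{|\mu|}\,\bar\sigma(a)^{-d},
\]
where $a$ denotes the diagonal Iwasawa component. The simple-root decay factors, combined with the Schwartz semi-norm bounds on $\phi$ (and, where relevant, $\CF_\psi(\phi)$ or $\CF_{\bar\psi}(\phi)$), absorb the inner unipotent integrations in a controlled way. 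The remaining integral over $A_n$ is then bounded by a product of powers of $|t_i|_\bk$ whose exponents depend linearly on $\Re(s)$, $\Re(\eta)$ and $\mu$; tracing these exponents shows that absolute convergence in each coordinate direction is equivalent to the single inequality $\Re(s) > \Re(\eta) - 2\min\mu$.

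For part (2), every bound above is continuous in $(W,\phi)$ and uniform in $s$ in a vertical strip $\CV \subset \CH_{(\Re(\eta) - 2\min\mu,\infty)}$, yielding the asserted semi-norm estimate and hence equicontinuity. Holomorphy in $s$ then follows from absolute and locally uniform convergence together with Morera's theorem. The main difficulty I anticipate is the careful book-keeping in the second and third steps: correctly identifying the Iwasawa decomposition of the $\sigma_m$-conjugates, and verifying that the cumulative convergence condition along all simple coroots of $A_n$ collapses to the single inequality $\Re(s) > \Re(\eta) - 2\min\mu$.
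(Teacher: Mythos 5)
Your overall skeleton (Iwasawa decomposition of the outer $G_n$-variable, the estimate of Lemma \ref{whit-est}, exponent bookkeeping on the torus) is the same as the paper's, but your second step contains a genuine gap that the rest of the argument cannot repair. For $X$ ranging over the coset representatives $\bar{\frak v}_n$ of $\frak q_n\bsl M_n$, the conjugate $u_X=\sigma_{2n}\left[\begin{smallmatrix}1_n&X\\&1_n\end{smallmatrix}\right]\sigma_{2n}^{-1}$ of \eqref{uX} is \emph{not} an element of $N_m$: only the $\frak q_n$-part of the Shalika unipotent conjugates into $N_m$, which is precisely why the integral in \eqref{JSeven} is taken over $\frak q_n\bsl M_n$ to begin with. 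So the $X$-dependence cannot be pushed into a unipotent coordinate of $N_m$, and, more seriously, your claim that the integrand becomes ``Schwartz in $X$ after suitably combining with the character $\bar\psi({\rm tr}\,X)$'' is unusable here: the proposition asks for a bound with the integrand replaced by its absolute value, so $|\bar\psi({\rm tr}\,X)|=1$ and no oscillatory cancellation is available. Note also that if the $X$-dependence really sat inside $N_m$, then $|W|$ would not depend on $X$ at all (by $(N_m,\psi_m)$-equivariance), while $\phi(e_ng)$ does not see $X$ either, and the integral over the noncompact space $\bar{\frak v}_n$ would diverge; so as written the estimate cannot close.

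The missing ingredient is the quantitative control of the Iwasawa torus part of $u_X$. Write $u_X=n_Xt_Xk_X\in N_{2n}A_{2n}K_{2n}$, so that after the change of variables the Whittaker function is evaluated at (essentially) $\tilde a\,t_X$ times compact factors, and invoke the Jacquet--Shalika bound (Propositions 4 and 5 of \cite[Section 5]{JS90}): $\prod_{i=1}^n|t_{2i-1}|_\bk\geq m(X)^\alpha$ with $m(X)=\sqrt{1+\|X\|}$, resp.\ $\sup(1,\|X\|)$. Feeding this into the factor $\prod_{\alpha\in\Delta}(1+t^\alpha)^{-R}$ of Lemma \ref{whit-est} shows that, for $R$ large, the Whittaker decay dominates the polynomial growth of the positive character $\mu_s(t_X)$, and this --- not Schwartz decay in $X$ --- is what makes the $\bar{\frak v}_n$-integral converge, uniformly for $s$ in a vertical strip; the remaining $A_n$-integral is then handled by the exponent count you describe and yields the range $\Re(s)>\Re(\eta)-2\min\mu$. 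Two smaller points: the Fourier transforms $\CF_\psi(\phi)$, $\CF_{\bar\psi}(\phi)$ you mention play no role in this convergence statement (they only enter the functional equation), and for the semi-norm assertion in (2) you should record why the resulting bound is jointly continuous in $(W,\phi)$ --- the paper uses that separately continuous bilinear maps on LF spaces are continuous. In the odd case the extra $\bk^n$-variable is indeed controlled by $\phi$ via Proposition \ref{prop:Rodd}, but the $X$-issue is identical.
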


\begin{proof} We only prove the case that $m=2n$ is even. The odd case can be proved similarly with suitable modifications using the proof of 
Proposition 3 in \cite[Section 9]{JS90}, which will be omitted. 

By  unramified twists, we may assume that $\eta$ is unitary so that $\Re(\eta)=0$, and that $s\in \BR$. 
By the Iwasawa decomposition $G_n=N_n A_n K_n$, we need to estimate the integral 
\[
\int_{A_n \times \bar{\frak v}_n\times K_n}  \left| W \left( \sigma_{2n} \begin{bmatrix} 1_n & X \\ 0 & 1_n\end{bmatrix} (ak)^\dag \right) \phi(e_n ak) \right|  
 | a|_\bk^s \, \delta_n(a)^{-1} \od\!a \od\! X\od\! k.
\]
For $X\in M_n $, introduce the element
 \be \label{uX}
 u_X := \sigma_{2n} \begin{bmatrix} 1_n & X \\ 0 & 1_n \end{bmatrix}\sigma_{2n}^{-1}.
 \ee
 Then the above integral can be written as 
 \[
\int_{A_n \times \bar{\frak v}_n\times K_n}  |W(\tilde{a} u_X \sigma_{2n} k^\dag )  \phi(e_n ak) | \, | a|^s_\bk \, \delta_n(a)^{-2}  \od\!a \od\! X \od\! k,
 \]
 where for $a=\diag \{a_1, a_2, \ldots, a_n\}\in A_n$ we set
 \[
 \tilde{a} := \diag\{a_1, a_1, a_2, a_2, \ldots, a_n, a_n\}\in A_{2n}.
 \]
We write 
$u_X = n_X t_X k_X \in N_{2n} A_{2n} K_{2n}$, where $t_X =\diag\{t_1,\ldots, t_{2n}\}\in A_{2n}$, following 
the Iwasawa decomposition. The above integral is 
 \[
 \int_{A_n \times \bar{\frak v}_n\times K_n}  |W(\tilde a \, t_X k_X  \sigma_{2n} a^\dag ) \phi(e_n ak)|   \,  | a|^s_\bk \,  \delta_n(a)^{-2} \od\!a \od\! X \od\! k.
 \]
 For each $R>0$ we have the following  continuous semi-norm on $\CS(\bk^n)$, 
 \[
 q_R(\phi):=\sup_{a\in A_n, k\in K_n} (1+|a_n|_\bk)^R |\phi(e_n ak)| <\infty. 
 \]
It is straightforward to verify that 
$\delta_{2n}(\tilde a)^{1/2} = \delta_n(a)^2$. 
 Thus by Lemma \ref{whit-est}, we are reduced to estimate 
 \[
 \begin{aligned}
 \int_{A_n\times \bar{\frak v}_n} & \prod^n_{i=1}  \left(1+\left| \frac{t_{2i-1}}{t_{2i}} \right|_\bk\right)^{-R} \cdot \prod^{n-1}_{i=1} \left(1+\left| \frac{a_i t_{2i}}{a_{i+1}t_{2i+1}}\right|_\bk\right)^{-R}\\
 & \quad \cdot  
 (1+|a_n|_\bk)^{-R}  \prod^n_{i=1} |a_i|_\bk^{s+|\mu|_{2i-1}+|\mu|_{2i}} \od\! a \od\! X,
 \end{aligned}
 \]
 where we write $|\mu| = (|\mu|_1,\ldots, |\mu|_{2n})$. After a suitable translation of  the  $a_i$'s,  we are reduced to estimate a product of two integrals 
 \be \label{int1}
 \int_{\bar{\frak v}_n} \prod^n_{i=1} \left(1+\left| \frac{t_{2i-1}}{t_{2i}} \right|_\bk\right)^{-R}\mu_s(t_X)\od\!X
 \ee
 where $\mu_s$ is a positive character of $A_{2n}$ depending on $s$ and $\mu$, and 
 \be \label{int2}
  \int_{A_n}  \prod^{n-1}_{i=1} \left(1+\left| \frac{a_i }{a_{i+1}}\right|_\bk\right)^{-R}\cdot 
 (1+|a_n|_\bk)^{-R}
  \prod^n_{i=1} |a_i|_\bk^{s+|\mu|_{2i-1}+|\mu|_{2i}} \od\! a.
  \ee
By Propositions 4 and 5 in \cite[Section 5]{JS90}, there exists $\alpha>0$ such that 
 \[
   \prod^n_{i=1} \left(1+\left| \frac{t_{2i-1}}{t_{2i}} \right|_\bk\right) \geq \prod^n_{i=1} | t_{2i-1}|_\bk \geq m(X)^\alpha,
 \]
 where $m(X) := \sqrt{1+\|X\|}$ or $\sup(1,\|X\|)$ for $\bk$ Archimedean or non-Archimedean respectively, and $\|\cdot\|$ is the standard norm on $M_n $. Note that $m(X)$ can be also replaced by
 $e^{\bar\sigma(u_X)}$ where $\bar\sigma$ is the log-norm \eqref{log-norm}.  Since $\mu_s(t_X)$ is of polynomial growth in $X$, given any finite interval $I\subset \BR$, when $R$ is sufficiently large the integral \eqref{int1} converges uniformly for $s\in I$. 
 
 The integral \eqref{int2} can be estimated in the same way as in the proof of \cite[Lemma 3.3.1]{BP21}. By the elementary inequality 
 \[
  \prod^{n-1}_{i=1} \left(1+\left| \frac{a_i }{a_{i+1}}\right|_\bk\right)^{-R}\cdot 
 (1+|a_n|_\bk)^{-R} \leq \prod^n_{i=1} (1+|a_i|_\bk)^{-R/n},
 \]
 and given each $r\in\BR$ the locally uniform convergence of the integral 
 \[
 \int_{\bk^\times} (1+|x|_\bk)^{-R/n} |x|_\bk^{s+r} \od^\times\!x
 \]
 for $R/n - r  >s > -r$, we find that  \eqref{int2} converges locally uniformly for $R/n - 2\max \mu > s > -2\min \mu$. 
 
Combining the discussions for \eqref{int1} and \eqref{int2}, the proposition follows easily by noting that separately continuous maps on LF spaces are continuous. 
 \end{proof}

The following result gives the absolute convergence in Theorem \ref{thm:FE_m} (1), which holds in general without assuming that $P$ is a Borel subgroup for $\bk$ non-Archimedean. 

\begin{prpl} \label{prop:conv}
Let $\pi_\lambda = \Ind^{G_m}_P(\tau_\lambda)$ be given by \eqref{nt}.  Then the following hold. 

\begin{enumerate}

\item Proposition $\ref{prop:conv0}$ holds with $\CC_\mu(N_m\bsl G_m, \psi_m)$ replaced by $\CW(\pi_\lambda,\psi)$ and $\min \mu$ replaced by  $\min \Re(\lambda)\in \CA^*_M\subset \CA^*_0$.

\item If $\pi_\lambda\otimes |\eta|^{-\frac{1}{2}}$ is nearly tempered, then there is an $\epsilon>0$ so that $\oZ_{\rm JS}(s, W, \phi,\varphi_m^{-1})$ converges absolutely and defines a  holomorphic function on 
$\CH_{(\frac{1}{2}-\epsilon, \infty)}$  bounded in vertical strips, for any $W\in \CW(\pi_\lambda, \psi)$ and $\phi\in\CS(\bk^n)$ with $n=\lfloor m/2\rfloor$.

%\item If $\pi\otimes |\eta|^{-1/2}$ is tempered, then $\oZ_{\rm JS}(s, W, \phi,\eta^{-1})$ converges absolutely and defines a  holomorphic function on $\CH_{>0}$ bounded in vertical strips. 
\end{enumerate}
\end{prpl}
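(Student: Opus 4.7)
The plan is to deduce Proposition \ref{prop:conv} from Proposition \ref{prop:conv0} via the continuous embedding supplied by Proposition \ref{whit-incl}(1): for any $\mu\in\CA_0^*$ with $|\Re(\lambda)|\prec\mu$, this embeds $\CW(\pi_\lambda,\psi)$ continuously into $\CC_\mu(N_m\bsl G_m,\psi_m)$. Since every assertion in part (1) transfers through such a continuous inclusion by pullback of semi-norms, the problem reduces to showing that for every $s_0$ in the open half-plane $\CH_{(\Re(\eta)-2\min\Re(\lambda),\infty)}$ one can choose $\mu\succ|\Re(\lambda)|$ with $\min\mu>(\Re(\eta)-\Re(s_0))/2$.

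To construct such a $\mu$, fix $s_0$ in the above range, set $t_0:=(\Re(\eta)-\Re(s_0))/2<|\Re(\lambda)|_1$, and take
\[
\mu:=|\Re(\lambda)|+\delta(\alpha_1+\cdots+\alpha_{m-1}), \qquad 0<\delta<|\Re(\lambda)|_m-t_0,
\]
where $\alpha_1,\ldots,\alpha_{m-1}$ are the simple roots of $A_m$ in $N_m$. In coordinates on $\CA_0^*\cong\BR^m$ this choice adds $\delta$ to the first coordinate of $|\Re(\lambda)|$ and subtracts $\delta$ from the last, leaving the middle coordinates unchanged; hence $|\Re(\lambda)|\prec\mu$ and $\min\mu>t_0$, so that $\Re(s_0)>\Re(\eta)-2\min\mu$. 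Applying Proposition \ref{prop:conv0} and pulling back the resulting semi-norm estimate along the continuous inclusion $\CW(\pi_\lambda,\psi)\hookrightarrow\CC_\mu(N_m\bsl G_m,\psi_m)$ yields absolute convergence at $s_0$, holomorphy, and equicontinuity of the assignment $(W,\phi)\mapsto \oZ_{\rm JS}(s_0,W,\phi,\varphi_m^{-1})$.

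The uniform bound on a vertical strip $\CV\subset \CH_{(\Re(\eta)-2\min\Re(\lambda),\infty)}$ is obtained by the same recipe with a single $\delta>0$: the inequality $\delta<|\Re(\lambda)|_m-(\Re(\eta)-a)/2$, where $a:=\inf_{s\in\CV}\Re(s)$, is satisfiable because $a>\Re(\eta)-2|\Re(\lambda)|_1$. Part (2) is then immediate from part (1): the near-temperedness of $\pi_\lambda\otimes|\eta|^{-\frac{1}{2}}$ forces $|\Re(\lambda_i)-\Re(\eta)/2|<1/4$ for every $i$, hence $\Re(\eta)-2\min\Re(\lambda)<1/2$, so any $\epsilon\in(0,\tfrac{1}{2}-\Re(\eta)+2\min\Re(\lambda))$ works.

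The only subtle point is producing $\mu\succ|\Re(\lambda)|$ whose minimum coordinate remains close to $|\Re(\lambda)|_1$ from above. Because the simple roots are orthogonal to the line $\BR\cdot(1,\ldots,1)$, any such $\mu$ has the same coordinate sum as $|\Re(\lambda)|$, so $\min\mu$ cannot exceed the average of $\Re(\lambda)$; in the constant case $\Re(\lambda)\in\BR\cdot(1,\ldots,1)$ this forces $\min\mu<\min\Re(\lambda)$ strictly, explaining why the convergence half-plane is only open and why the perturbation along $e_1-e_m$ is essentially the optimal choice. This minor bookkeeping aside, no new ingredient beyond Propositions \ref{whit-incl} and \ref{prop:conv0} is required.
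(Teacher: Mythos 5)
Your argument is correct and is essentially the paper's own proof: both deduce the proposition from Proposition \ref{prop:conv0} via the continuous inclusion $\CW(\pi_\lambda,\psi)\subset\CC_\mu(N_m\bsl G_m,\psi_m)$ supplied by Proposition \ref{whit-incl}, for a perturbation $\mu\succ|\Re(\lambda)|$ whose minimum coordinate can be taken arbitrarily close to $\min\Re(\lambda)$. The only cosmetic differences are that the paper perturbs by $\varepsilon\rho$ rather than by $\delta(\alpha_1+\cdots+\alpha_{m-1})=\delta(e_1-e_m)$, and for part (2) it first reduces to unitary $\eta$ by an unramified twist instead of carrying $\Re(\eta)$ through the inequality directly.
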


\begin{proof} The proof is similar to that of  \cite[Lemma 3.3.2]{BP21}, and we repeat the arguments for completeness.

Let $\CV\subset \CH_{(\Re(\eta)-2\min\Re(\lambda),\infty)}$ be a vertical strip.  We have $|\Re(\lambda)|\prec |\Re(\lambda)|+\varepsilon\rho$ for  every $\varepsilon>0$. 
Clearly, we have that 
$\CV\subset  \CH_{(\Re(\eta)-2\min (\Re(\lambda)+\varepsilon\rho),\infty)}
$
for sufficiently small $\varepsilon>0$. Proposition \ref{whit-incl} implies that
$
\CW(\pi_\lambda, \psi) \subset \CC_{|\Re(\lambda)|+\varepsilon\rho}(N_m\bsl G_m, \psi_m),
$
from which (1) follows. 

For (2), again by unramified twists we may assume that $\pi$ is nearly tempered and that $\eta$ is unitary, so that $|\Re(\lambda_i)|<1/4$ for all $i$. The required assertion follows easily from (1) and that 
$-2\min \Re(\lambda)<1/2$. 
\end{proof}

\subsection{A non-vanishing result}

We give the following non-vanishing result. 

\begin{prpl} \label{prop:nonv}
Let $\pi \in \Irr_{\rm gen}(G_m)$. For every $s_0 \in \BC$, there exist finitely many 
$W_i \in \CW(\pi,\psi)$ and $\phi_i\in \CS(\bk^n)$ with $n=\lfloor m/2\rfloor$ indexed by $i\in I$, such that the function 
\[
s\mapsto \sum_{i\in I} \oZ_{\rm JS}(s, W_i, \phi_i, \varphi_m^{-1}),
\]
which is defined for $\Re(s)$ sufficiently large, 
has a holomorphic extension to $\BC$ and is non-vanishing at the given $s_0\in\BC$.
\end{prpl}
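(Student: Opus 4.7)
The plan is to choose finitely many $(W_i,\phi_i)$ such that the resulting sum of Jacquet--Shalika integrals, after test-vector unwinding, becomes a Mellin transform of a compactly supported smooth function; this will be entire in $s$ and, by Mellin inversion, can be arranged to be non-vanishing at any prescribed $s_0\in\BC$.

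In the even case $m=2n$, starting from \eqref{JSeven} write
\[
\oZ_{\rm JS}(s, W, \phi, \varphi_{2n}^{-1}) = \int_{N_n\bsl G_n} W^\sharp(g)\, \phi(e_n g)\, \eta^{-1}(g)\, |g|_\bk^s\, \od\! g,
\]
where $W^\sharp(g):=\int_{\frak q_n\bsl M_n} W\!\left(\sigma_{2n}\begin{bmatrix} g & Xg \\ & g\end{bmatrix}\right)\bar\psi(\tr X)\od\! X$. A direct check shows that $\sigma_{2n}$ conjugates $\diag(n',n')$, $n'\in N_n$, into an element of $N_{2n}$ with trivial $\psi_{2n}$-value, so $W^\sharp$ descends to a smooth function on $N_n\bsl G_n$. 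Applying the Iwasawa decomposition $G_n = N_n A_n K_n$ converts the integral into a product of Tate integrals on $A_n\cong(\bk^\times)^n$ against the $K_n$-averaged function $a\mapsto \int_{K_n} W^\sharp(ak)\phi(e_n ak)\,\od\! k$.

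The crucial input is a density/reachability statement: as $(W,\phi)$ ranges over $\CW(\pi,\psi)\otimes \CS(\bk^n)$, the $K_n$-averaged function on $A_n$ above ranges over a dense subspace of $\CS((\bk^\times)^n)$. In the non-Archimedean case this follows from the Gelfand--Kazhdan/Bernstein--Zelevinsky result that $\CW(\pi,\psi)|_{P_{2n}}$ contains $C_c^\infty(N_{2n}\bsl P_{2n},\psi_{2n})$, combined with the freedom to choose $\phi$ concentrated on the open $G_n$-orbit $e_n G_n\subset\bk^n$. In the Archimedean case one applies Dixmier--Malliavin together with the Schwartz-space description of smooth vectors in the Kirillov model of $\pi$.

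Given $s_0\in\BC$, pick $\Phi\in C_c^\infty((\bk^\times)^n)$ whose Mellin transform $\int_{A_n}\Phi(a)\eta^{-1}(a)|a|_\bk^s\delta_n^{-1}(a)\,\od\! a$ (automatically entire in $s$ since $\Phi$ vanishes near the walls) is non-zero at $s_0$. By the density result, choose $(W_i,\phi_i)_{i\in I}$ so that $\sum_{i\in I}\int_{K_n} W_i^\sharp(\cdot\, k)\phi_i(e_n \cdot k)\,\od\! k$ is sufficiently close to $\Phi$ in $\CS((\bk^\times)^n)$; combined with the convergence and continuity properties of Proposition \ref{prop:conv0} and the continuity of the Mellin transform at $s_0$, this produces $\sum_i\oZ_{\rm JS}(s,W_i,\phi_i,\varphi_{2n}^{-1})$ that is holomorphic on $\BC$ and non-vanishing at $s_0$. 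The odd case $m=2n+1$ is handled identically using \eqref{JSodd} and Proposition \ref{prop:Rodd}, which absorbs the extra $x$-integration into a further $R_{\varphi_{2n+1}^{-1}}$-action on $\phi$. The main obstacle is the Archimedean density step: it follows from Dixmier--Malliavin and the Kirillov-model structure, but requires some care to arrange the combined freedom in $(W,\phi)$ so as to hit the target Schwartz space.
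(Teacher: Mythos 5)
Your overall strategy---use the freedom in $(W,\phi)$ to force the unipotent-integrated Whittaker data to be compactly supported, so that the remaining torus integral is an entire Mellin/Tate-type transform that can be made nonzero at $s_0$---is the right one, and in the non-Archimedean case your plan is close to what the paper does. The paper, however, does not pass through the Iwasawa decomposition and a density statement: it integrates over the open dense subset $P_nZ_n\overline{U}_n$ of $G_n$, uses the Kirillov-model fact that every $f\in C_c^\infty(N_{2n}\bsl P_{2n},\psi_{2n})$ is the restriction to $P_{2n}$ of some $W_0\in\CW(\pi,\psi)$, absorbs the $\overline{U}_n$-direction by Dixmier--Malliavin (writing $W_0=\sum_{i\in I} R(\varphi_{\overline{U},i})W_i$), and chooses the $\phi_i$ so that $\phi_i(e_nz\bar u)=\varphi_Z(z)\varphi_{\overline{U},i}(\bar u)$; the center then contributes a Tate integral $\zeta(s,\varphi_Z)$ with $\varphi_Z\in C_c^\infty(Z_n)$, which is entire and chosen nonzero at $s_0$, while the remaining $f$-integral over $N_n\bsl P_n\times\bar{\frak v}_n$ is entire by compact support. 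Everything is achieved exactly, not approximately.

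The genuine gap in your write-up is the final approximation step. Closeness of $\sum_{i}\int_{K_n}W_i^\sharp(\cdot\,k)\phi_i(e_n\cdot k)\,\od\! k$ to a chosen $\Phi\in C_c^\infty((\bk^\times)^n)$ in a Schwartz-type topology does not give a holomorphic extension of $\sum_{i}\oZ_{\rm JS}(s,W_i,\phi_i,\varphi_m^{-1})$ to all of $\BC$: entirety of a Mellin transform is not preserved under limits, and the approximant's transform is a priori only defined and holomorphic in the original half-plane of convergence. Likewise ``continuity of the Mellin transform at $s_0$'' presupposes that $s_0$ lies in the common region of convergence, which is exactly what fails for a general $s_0\in\BC$. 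Moreover, the Archimedean density claim itself is unsubstantiated: the averaged functions on $A_n$ inherit the wall asymptotics of Whittaker functions (power-type behavior governed by the exponents of $\pi$, as in the estimates underlying Proposition \ref{prop:conv0}), so they neither lie in $\CS((\bk^\times)^n)$ nor can be expected to reach $C_c^\infty((\bk^\times)^n)$ exactly merely by invoking Dixmier--Malliavin and ``the Kirillov model.'' To repair the argument you must achieve exact compact support of the relevant data---as the paper does via the surjectivity of $\CW(\pi,\psi)|_{P_{2n}}$ onto $C_c^\infty(N_{2n}\bsl P_{2n},\psi_{2n})$, Dixmier--Malliavin in the $\overline{U}_n$-direction, and a factorized $\phi$ producing an entire Tate integral on $Z_n$---rather than approximate it.
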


\begin{proof} Again we only give the proof for the case that $m=2n$ is even, which is similar to that of \cite[Lemma 3.3.3]{BP21}, and omit the odd case.

Note that  $P_nZ_n\overline{U}_n\subset G_n$ is open dense. By Proposition \ref{prop:conv}, for $W\in \CW(\pi, \psi)$, $\phi\in \CS(\bk^n)$ and $\Re(s)$ sufficiently large we have the absolutely convergent integral 
\[
\begin{aligned}
\oZ_{\rm JS}(s, W, \phi, \varphi_{2n}^{-1})  &= \int_{Z_n\times \overline{U}_n} \int_{N_n\bsl P_n \times \bar{\frak v}_n}  W(u_X \sigma_{2n} (pz\bar u)^\dag) \eta^{-1}( p) | p|_\bk^{s-1} \od\!p \od\! X\\
&\qquad \qquad  \cdot \phi(e_n z\bar u) \eta^{-1}( z)  | z|_\bk^s \od\!z\od\!\bar u \\
& = \int_{Z_n \times \overline{U}_n}  \int_{N_n\bsl P_n \times \bar{\frak v}_n}  W(u_X \sigma_{2n} (p\bar u)^\dag) \eta^{-1}(p) | p|_\bk^{s-1} \od\!p \od\! X\\
&\qquad \qquad  \cdot \phi(e_n z\bar u) \omega_\pi(z^\dag)  \eta^{-1}(z) | z|_\bk^s \od\!z\od\!\bar u, 
\end{aligned}
\]
where $u_X$ is as in \eqref{uX} and $\omega_\pi$ is the central character of $\pi$. 
For $\varphi_Z \in C^\infty_c(Z_n)$ and $\varphi_{\overline{U}}\in C^\infty_c(\overline{U}_n)$, there is a unique 
$\phi = \phi_{\varphi_Z, \varphi_{\overline{U}}} \in C^\infty_c(\bk^n)
$
such that $\phi(e_n z\bar u) = \varphi_Z(z) \varphi_{\overline{U}}(\bar u)$
for all $(z, \bar u)\in Z_n\times \overline{U}_n$. By abuse of notation, 
view $\varphi_{\overline{U}}$ as a function on $\overline{U}_n^\dag $. Then for the above $\phi$  and $\Re(s)$ sufficiently large we have 
\[
\begin{aligned}
\oZ_{\rm JS}(s, W, \phi, \varphi_{2n}^{-1})& = \int_{N_n\bsl P_n \times \bar{\frak v}_n} \left(R(\varphi_{\overline{U}})W\right)(u_X\sigma_{2n} p^\dag)\eta^{-1}(p) | p|_\bk^{s-1}\od\!p \od\! X\\
&\qquad\qquad \cdot \int_{Z_n}\varphi_Z(z) \omega_\pi(z^\dag)  \eta^{-1}( z) |\det z|_\bk^s \od\!z,
\end{aligned}
\]
where $R(\varphi_{\overline{U}})$ denotes the right regular action. The Tate integral 
\[
\zeta(s, \varphi_Z):=\int_{Z_n}\varphi_Z(z) \omega_\pi(z^\dag)  \eta^{-1}( z) | z|_\bk^s \od\!z
\]
 converges absolutely for all $s\in\BC$, and we can choose $\varphi_Z$ such that the $\zeta(s_0, \varphi_Z)\neq 0$.  
 
It is known that for any $f \in C^\infty_c(N_{2n}\bsl P_{2n}, \psi_{2n})$, there exists 
$W_0\in \CW(\pi,\psi)$ whose restriction to $P_{2n}$ coincides with $f$. By the Dixmier-Malliavin lemma, there exist finitely many $W_i \in \CW(\pi, \psi)$ and $\varphi_{\overline{U},i}\in 
C^\infty_c(\overline{U}_n)$, indexed by $i\in I$, such that $W_0 = \sum_{i\in I}R(\varphi_{\overline{U},i})W_i$. 
Put $\phi_i :=\phi_{\varphi_Z, \varphi_{\overline{U},i}}$, $i\in I$. Then
 for $\Re(s)$ sufficiently large we have that
 \[
 \begin{aligned}
    &\sum_{i\in I} \oZ_{\rm JS}(s, W_i, \phi_i, \varphi_{2n}^{-1})  \\
   = &  \sum_{i\in I}  \int_{N_n\bsl P_n\times \bar{\frak v}_n}\left(R(\varphi_{\overline{U},i})W_i\right)(u_X\sigma_{2n} p^\dag)\eta^{-1}(p) |p|_\bk^{s-1}\od\!p \od\! X \cdot  \zeta(s, \varphi_Z)\\
     = &  \int_{N_n\bsl P_n\times \bar{\frak v}_n} W_0(u_X\sigma_{2n} p^\dag)\eta^{-1}( p) | p|_\bk^{s-1}\od\!p \od\! X  \cdot \zeta(s, \varphi_Z) \\
     = &  \int_{N_n\bsl P_n\times \bar{\frak v}_n} f(u_X\sigma_{2n} p^\dag)\eta^{-1}( p) |p|_\bk^{s-1}\od\!p \od\! X  \cdot \zeta(s, \varphi_Z), 
 \end{aligned}
 \] 
noting  that $u_X\sigma_{2n} p^\dag \in P_{2n}$. The above integrals converge absolutely for all $s\in\BC$, uniformly on compacta, hence define a holomorphic function on $\BC$. We can choose 
$f$ such that 
\[
\int_{N_n\bsl P_n\times \bar{\frak v}_n} f(u_X\sigma_{2n} p^\dag)\eta^{-1}(p) | p|_\bk^{s_0-1}\od\!p \od\! X \neq 0.
\]
The holomorphic continuation of $\sum_{i\in I} \oZ_{\rm JS}(s, W_i, \phi_i, \varphi_{2n}^{-1})$ does not vanish at $s_0$, since we have chosen $\varphi_Z$ such that $\zeta(s_0,\varphi_Z)\neq 0$. 
\end{proof}

\section{Reductions of $({\rm FE}_m)$} \label{sec:red}

In this short section we make a few reductions of Theorem \ref{thm:FE_m}, which ultimately lead to Theorem \ref{thm2:FE_m} for principal series representations. 

\subsection{Reductions of inducing data}

\subsubsection{Reduction of spectral parameters} Without loss of generality, assume that $\eta$ is unitary.  We  first show  that  for a fixed $\tau\in \Pi_2(M)$, Theorem \ref{thm:FE_m} for an arbitrary $\pi_{\lambda_0}$  can be reduced to the case for  nearly tempered representations 
$\pi_{\lambda}$ with $\lambda =(\lambda_1, \lambda_2, \ldots, \lambda_r) \in \CA_{M,\BC}^*$ satisfying the condition: 
$ \Re(\lambda_1)< \Re(\lambda_2)<\cdots <\Re(\lambda_r).
$
The arguments are the same as in  \cite[3.10]{BP21} and we give a sketch  for completeness. Note that this reduction holds in general, with no extra assumption on $P$ for $\bk$ non-Archimedean.

We may assume that  $\lambda_0 \in (\CA^{G_m}_{M,\BC})^*$.
Let $W\in \CW(\pi_{\lambda_0}, \psi_m)$ and $\phi \in \CS(\bk^n)$.   Let $\mu\in\CA^*_0$ such that $\lambda_0 \in \CU[\prec\mu]$, and choose an analytic section 
\[
\lambda\in\CU[\prec\mu]\mapsto W_\lambda \in \CC_\mu(N_m \bsl G_m, \psi_m) 
\]
as in Proposition \ref{whit-incl}  with $W_\lambda\in W(\pi_\lambda,\psi)$ and $W_{\lambda_0}=W$. 

There exist constants $u\in \BC^\times$ and $C\in \mathbb{R}^\times_+$, and a linear form $L$ on $(\CA^G_{M,\BC})^*$ such that 
\[
\eta(-1)^{mn} \varepsilon(s, \pi_\lambda, \wedge^2\otimes \eta^{-1}, \psi) = u C^{L(\lambda)+s-\frac{1}{2}}.
\]
Take a square root $v$ of $u$ and put
\[
\epsilon_{1/2}(s, \pi_\lambda, \wedge^2\otimes\eta^{-1}, \psi):= v \sqrt{C}^{L(\lambda)+s-\frac{1}{2}},\quad \lambda \in (\CA^G_{M,\BC})^*,\quad s\in\BC,
\]
so that $\eta(-1)^{mn} \varepsilon(s, \pi_\lambda, \wedge^2\otimes \eta^{-1}, \psi)= \epsilon_{1/2}(s, \pi_\lambda, \wedge^2\otimes\eta^{-1}, \psi)^2$.  Define
\[
\begin{aligned} 
\oZ_+(s,\lambda)  & := \epsilon_{1/2}(s, \pi_\lambda,\wedge^2\otimes\eta^{-1},  \psi) \frac{\oZ_{\rm JS}(s, W_\lambda, \phi,\varphi_m^{-1})}{\oL(s, \pi_\lambda, \wedge^2\otimes\eta^{-1})}, \\
\oZ_-(s,\lambda) & :=  \epsilon_{1/2}(s, \pi_\lambda, \wedge^2\otimes\eta^{-1}, \psi)^{-1} \frac{\oZ_{\rm JS}(1-s, \widetilde{W}_\lambda, \hat\phi,\varphi_m)}{\oL(1-s, \pi_\lambda^\vee, \wedge^2\otimes\eta)},
\end{aligned}
\]
which are a priori partially defined on $\BC\times (\CA^G_{M,\BC})^*$ by Proposition \ref{prop:conv}.  Set
\[
U:=\Set { (\lambda_1, \lambda_2, \ldots,\lambda_r) \in (\CA^{G_m}_{M,\BC})^* | \begin{array}{l} -\frac{1}{4}<\Re(\lambda_1)<\cdots <\Re(\lambda_r)<\frac{1}{4}, \\
 |\Im(\lambda_i)|<1, i=1,2, \ldots, r\end{array}},
\]
which is a nonempty relatively compact connected open subset of $(\CA^{G_m}_{M,\BC})^*$. Then $\pi_\lambda$, $\lambda\in U$, are nearly tempered. By Proposition \ref{prop:conv}, 
$\oZ_+(s,\lambda)$ and $\oZ_-(s,\lambda)$ are defined on $\CH_{[\frac{1}{2},\infty)}\times U$. 

Assume that Theorem \ref{thm:FE_m} holds for $\pi_\lambda$, $\lambda\in U$. Then $\oZ_+(s,\lambda)$ and $\oZ_-(s,\lambda)$ admit holomorphic continuations to $\BC\times U$, which are of finite order 
in vertical strips in the first variable and locally uniform in the second variable (see \cite[2.8]{BP21}) and satisfy the functional equation 
\be \label{FEpm}
\oZ_+(s,\lambda)= \oZ_-(s,\lambda),\quad (s,\lambda)\in \BC\times U.
\ee
For a relatively compact connected open subset $U'\subset (\CA^{G_m}_{M,\BC})^*$ containing $U$, there exists $\mu \in \CA^*_0$ such that $U'\subset \CU[\prec\mu]$. By Proposition \ref{prop:conv0},
$\oZ_+(s,\lambda)$ and $\oZ_+(s,\lambda)$ admit holomorphic continuations to $\CH_{(D,\infty)}\times U'$ for sufficiently large $D\in\BR$ which are of finite order in vertical strips in the first variable and locally
uniform in the second variable. Hence by \cite[Proposition 2.8.1]{BP21}, $\oZ_+(s,\lambda)$ and $\oZ_+(s,\lambda)$ extend to holomorphic functions on $\BC\times (\CA^{G_m}_{M,\BC})^*$ of finite order in vertical strips in the first variable and locally uniform in the second variable such that \eqref{FEpm} holds on $\BC\times (\CA^{G_m}_{M,\BC})^*$.

By the definitions of $W_\lambda$ and $\oZ_\pm(s,\lambda)$, specializing to $\lambda = \lambda_0$ shows that Theorem \ref{thm:FE_m}  (1), (2) and (3) hold for $\pi_{\lambda_0}$. 
The following general statement  implies that Theorem \ref{thm:FE_m} (4)  holds  when $\max\Re(\lambda_0)<\min\Re(\lambda_0)+1/2$.

\begin{leml} \label{lem:FE}
Assume that $\pi_\lambda=\Ind^{G_m}_P(\tau_\lambda)$ is as in \eqref{nt} such that  
\[
\max \Re(\lambda)< \min \Re(\lambda)+1/2. 
\]
For $(a,b)=(\Re(\eta)-2\min \Re(\lambda), \Re(\eta)+1-2\max \Re(\lambda))$, 
if  \eqref{eq:FE_m} holds when $s$ lies in a nonempty open subset of 
$
\CH_{(a,b)},
$
then Theorem $\ref{thm:FE_m}$ holds for $\pi_\lambda$. 
\end{leml}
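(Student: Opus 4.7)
The plan is to exploit the overlap strip $\CH_{(a,b)}$, which is nonempty precisely because the hypothesis $\max \Re(\lambda) < \min \Re(\lambda) + 1/2$ forces $b > a$. In this strip both sides of \eqref{eq:FE_m} are absolutely convergent: by Proposition \ref{prop:conv}(1), $\oZ_{\rm JS}(s, W, \phi, \varphi_m^{-1})$ is holomorphic on $\CH_{(a,\infty)}$, and applying the same proposition to the contragredient parabolic induction (whose spectral parameter satisfies $\min \Re = -\max \Re(\lambda)$) with $\eta^{-1}$ in place of $\eta$ shows that the dual integral $\oZ_{\rm JS}(1-s, \tau_m.\widetilde W, \hat\phi, \varphi_m)$ is holomorphic on $\CH_{(-\infty, b)}$. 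Since the L-factors appearing in \eqref{eq:FE_m} are meromorphic and the $\varepsilon$-factor is entire and nowhere-vanishing, the functional equation is a meromorphic identity on $\CH_{(a,b)}$. The hypothesis that it holds on some nonempty open subset then extends it, via the identity theorem for meromorphic functions on the connected open set $\CH_{(a,b)}$, to the entire strip.

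The functional equation now provides the meromorphic continuation: $\oZ_{\rm JS}(s, W, \phi, \varphi_m^{-1})$ is defined by its absolutely convergent integral on $\CH_{(a,\infty)}$, and by the rearranged right-hand side of \eqref{eq:FE_m} (involving the holomorphic dual integral and the meromorphic L-factors) on $\CH_{(-\infty, b)}$; the two definitions agree on the overlap and glue to a meromorphic function on $\BC$ satisfying \eqref{eq:FE_m} throughout. This settles parts (1) and (2) of Theorem \ref{thm:FE_m}. For part (3), the key observation is that reciprocals of local Artin L-factors are entire: the Euler factors $(1 - q^{-s}\alpha)$ in the non-Archimedean case and the products of Gamma factors in the Archimedean case have no zeros. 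Consequently $\oZ^\circ_{\rm JS}(s, W, \phi, \varphi_m^{-1})$ is holomorphic on $\CH_{(a, \infty)}$ as the product of the holomorphic $\oZ_{\rm JS}$ and the entire $1/\oL$, and via the functional equation it equals a nowhere-vanishing $\varepsilon$-factor times $\oZ_{\rm JS}(1-s, \tau_m.\widetilde W, \hat\phi, \varphi_m)/\oL(1-s, \pi_\lambda^\vee, \wedge^2 \otimes \eta)$, hence is holomorphic on $\CH_{(-\infty, b)}$ by the same reasoning. The two half-planes cover $\BC$, so $\oZ^\circ_{\rm JS}$ is entire; the finite-order property in vertical strips follows from the continuous-seminorm bounds of Proposition \ref{prop:conv}(2) combined with standard Stirling and polynomial estimates for $1/\oL$ and $\varepsilon^{\pm 1}$.

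For part (4), Proposition \ref{prop:nonv} supplies, for any prescribed $s_0 \in \BC$, finitely many pairs $(W_i, \phi_i)$ whose summed integral admits a holomorphic continuation to $\BC$ and is non-vanishing at $s_0$. When $s_0$ is not a pole of $\oL(\cdot, \pi_\lambda, \wedge^2 \otimes \eta^{-1})$, dividing by the finite value $\oL(s_0, \pi_\lambda, \wedge^2 \otimes \eta^{-1})$ and invoking bilinearity isolates a single pair $(W, \phi)$ with $\oZ^\circ_{\rm JS}(s_0, W, \phi, \varphi_m^{-1}) \neq 0$. The main obstacle is the residual case where $s_0$ is a pole of $\oL$: the sum from Proposition \ref{prop:nonv} then necessarily has $\sum_i \oZ^\circ_{\rm JS}(s_0, W_i, \phi_i, \varphi_m^{-1}) = 0$ (since the sum of entire functions $\oZ^\circ_{\rm JS}$ times the pole $\oL(s_0)$ yields a finite value), and one must instead use the functional equation to transfer the non-vanishing problem to $1 - s_0$ for the contragredient data $(\pi_\lambda^\vee, \eta^{-1})$, iterating until landing at a point where the relevant L-factor is regular. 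This last step is routine once the entireness of $\oZ^\circ_{\rm JS}$ on both sides of the functional equation has been established.
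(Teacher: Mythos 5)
Your proposal for parts (1)--(3) is essentially the paper's own argument: both normalized quotients $\oZ_{\rm JS}(s,W,\phi,\varphi_m^{-1})/\oL(s,\pi_\lambda,\wedge^2\otimes\eta^{-1})$ and $\oZ_{\rm JS}(1-s,\tau_m.\widetilde W,\hat\phi,\varphi_m)/\oL(1-s,\pi_\lambda^\vee,\wedge^2\otimes\eta)$ are holomorphic (of finite order in vertical strips) on $\CH_{(a,\infty)}$ and $\CH_{(-\infty,b)}$ respectively, the hypothesis plus the identity theorem propagates \eqref{eq:FE_m} through the overlap strip, and uniqueness of holomorphic continuation glues the two half-planes, which cover $\BC$ since $a<b$. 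Where you deviate is part (4), and there your organization is clumsier than needed: the paper simply splits according to whether $s_0\in\CH_{(a,\infty)}$ or $s_0\in\CH_{(-\infty,b)}$, applies Proposition \ref{prop:nonv} directly at $s_0$ in the first case and to the dual data at $1-s_0$ (then invokes the functional equation) in the second, and never has to worry about poles of the $L$-factor, because within each convergence half-plane the relevant $L$-factor is automatically pole-free. Your "iterate until the $L$-factor is regular" step is, as written, not meaningful --- applying the functional equation twice just returns you to the starting point --- and your argument only terminates because of a fact you did not verify: the poles of $\oL(s,\pi_\lambda,\wedge^2\otimes\eta^{-1})$ lie in $\Re(s)\le a$ while those of $\oL(1-s,\pi_\lambda^\vee,\wedge^2\otimes\eta)$ lie in $\Re(s)\ge b$, so under the hypothesis $a<b$ a single transfer always lands at a regular point. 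Also note that the transfer tacitly applies Proposition \ref{prop:nonv} to the contragredient data, using that $W\mapsto\tau_m.\widetilde W$ and $\phi\mapsto\hat\phi$ exhaust the dual Whittaker model and $\CS(\bk^n)$; with these two points made explicit (or, better, replaced by the paper's half-plane dichotomy), your proof is complete.
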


\begin{proof}
By Proposition \ref{prop:conv} and standard properties of Artin $L$-functions, 
\[
 \frac{\oZ_{\rm JS}(s, W, \phi, \varphi_m^{-1})}{\oL(s, \pi_\lambda, \wedge^2\otimes\eta^{-1})}\quad \textrm{and}\quad \frac{\oZ_{\rm JS}(1-s,  \tau_m.\widetilde W, \hat\phi, \varphi_m)}{\oL(1-s, \pi_\lambda^\vee, \wedge^2\otimes \eta)} 
\]
are holomorphic on $\CH_{(\Re(\eta)-2\min \Re(\lambda),\infty)}$ and $\CH_{(-\infty, \Re(\eta)+1-2\max \Re(\lambda))}$ respectively, of finite order in vertical strips. Thus Theorem \ref{thm:FE_m} (1), (2) and (3)
hold by the uniqueness of holomorphic continuation. By Proposition \ref{prop:nonv}, for $s_0\in \CH_{(\Re(\eta)-2\min \Re(\lambda),\infty)}$ (resp. $s_0\in \CH_{(-\infty, \Re(\eta)+1-2\max \Re(\lambda))}$), there exist
$W\in \CW(\pi_\lambda, \psi)$ and $\phi\in \CS(\bk^n)$ such that 
\[
 \frac{\oZ_{\rm JS}(s_0, W, \phi, \varphi_m^{-1})}{\oL(s_0, \pi_\lambda, \wedge^2\otimes\eta^{-1})}\neq 0 \quad (\textrm{resp. } 
 \frac{\oZ_{\rm JS}(1-s_0,  \tau_m.\widetilde W, \hat\phi, \varphi_m)}{\oL(1-s_0, \pi_\lambda^\vee, \wedge^2\otimes \eta)}\neq 0). 
\]
It follows that Theorem \ref{thm:FE_m} (4) holds as well.
\end{proof}

\subsubsection{Reduction to principal series representations}  Next we show that when $\bk$ is Archimedean, Theorem \ref{thm:FE_m} can be reduced to the case that $P$ is a Borel subgroup,  so that $\pi_\lambda$ is isomorphic to a  principal series representation of the form $I(\xi)$ with $\xi \in (\widehat{\bk^\times})^m$. 

By the above reduction, we may assume that $\pi_\lambda\otimes |\eta|^{-\frac{1}{2}}$ is nearly tempered.  Suppose that $P$ is lower triangular of type $(n_1, n_2, \ldots, n_r)$ with $n_i=1$ or $2$ for $i=1,2,\dots, r$.  We may realize each $\tau_i|\cdot|_\bk^{\lambda_i}$ as a quotient of a principal series representation 
$I(\xi^i)$ where $\xi^i \in (\widehat{\bk^\times})^{n_i}$. Then $\pi_\lambda$ is isomorphic to a quotient of $I(\xi)$ where $\xi =(\xi^1, \xi^2,\dots, \xi^r)\in (\widehat{\bk^\times})^m$, and from the irreducibility of $\pi_\lambda$ we see that 
$\pi_\lambda^\vee$ is isomorphic to a quotient of $I(\tilde\xi) = I(\tilde\xi^r, \dots, \tilde\xi^2, \tilde\xi^1)$.   Using standard results 
on the admissible representations of $W_\bk'$ and the local factors in the Archimedean case, it is straightforward to check that 
\be\label{eq:gamma}
 \gamma(s, \pi_\lambda, \wedge^2\otimes \eta^{-1}, \psi) =\gamma(s, I(\xi), \wedge^2\otimes\eta^{-1}, \psi).
\ee

Let $W\in \CW(\pi_\lambda, \psi) =\CW(I(\xi),\psi)$ so that $\widetilde{W}\in \CW(\pi_\lambda^\vee,\bar\psi) = \CW(I(\tilde\xi),\bar\psi)$, and let  $\phi \in \CS(\bk^n)$. By Proposition \ref{prop:conv},  there exists $0<\epsilon<\frac{1}{4}$ such that  both $\oZ_{\rm JS}(s, W, \phi,\varphi_m^{-1})$ and $\oZ_{\rm JS}(1-s, \tau_m.\widetilde{W}, \hat\phi, \varphi_m)$  converge absolutely when 
$s\in \CH_{(\frac{1}{2}-\epsilon, \frac{1}{2}+\epsilon)}$.  Moreover, both $\oL(s, \pi_\lambda, \wedge^2\otimes\eta^{-1})$ and $\oL(1-s, \pi_\lambda^\vee, \wedge^2\otimes\eta)$ are holomorphic on $\CH_{(\frac{1}{2}-\epsilon, \frac{1}{2}+\epsilon)}$. Thus in view of Lemma \ref{lem:FE} and \eqref{eq:gamma}, if Theorem \ref{thm:FE_m} holds for $I(\xi)$, then it holds for $\pi_\lambda$ as well.

% \subsection{Principal series representations}

\subsection{$({\rm MF}_{m})+ ({\rm FE}'_{m}) \Rightarrow ({\rm FE}_{m})$} 

By the above reductions, to prove Theorem \ref{thm:FE_m} it suffices to consider a principal series representation $I(\xi)$, where $\xi \in (\widehat{\bk^\times})^m$ such that
\be \label{range}
\Re(\xi_1)<\Re(\xi_2)<\cdots< \Re(\xi_m)<\Re(\xi_1)+1/2.
\ee
Clearly \eqref{range} is equivalent to that $\Omega_{\xi,\eta}\neq \varnothing$, and  we note that every $\gamma(s,\xi_i \xi_j\eta^{-1},\psi)$, where $i, j =1,2,\dots, m$, is holomorphic and non-vanishing on $\Omega_{\xi,\eta}$.

In view of Lemma \ref{lem:FE},  to complete the proof of Theorem \ref{thm:FE_m} it remains to establish the following result, which will be also referred  as (${\rm FE}_m$) from now on.

\begin{thml} [${\rm FE}_m$] \label{thm2:FE_m}
For $(s, \xi)\in \Omega^m_\eta$, $f\in I(\xi)$ and $\phi \in \CS(\bk^n)$ with $n=\lfloor m/2\rfloor$, it holds that 
\[
\oZ_{\rm JS}(1-s, \tau_m.W_{\tilde f}, \hat\phi, \varphi_m) = \eta(-1)^{mn} \prod_{1\leq i<j \leq m} \gamma(s, \xi_i\xi_j\eta^{-1}, \psi) \cdot \oZ_{\rm JS}(s, W_f, \phi, \varphi_m^{-1}),
\]
where
\[
\hat\phi :=\begin{cases} \CF_\psi(\phi), & \textrm{if $m$ is even}, \\
 \CF_{\bar\psi}(\phi), & \textrm{if  $m$ is odd.}
 \end{cases}
\]
\end{thml}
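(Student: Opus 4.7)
The plan is to prove Theorem \ref{thm2:FE_m} by executing the implication $({\rm MF}_m)+({\rm FE}'_m)\Rightarrow({\rm FE}_m)$: I will sandwich the two Jacquet--Shalika Whittaker integrals appearing in the desired functional equation between the corresponding open-orbit integrals and invoke the known functional equation for the latter. Throughout, fix $(s,\xi)\in\Omega^m_\eta$. By Remark \ref{rmk:Omega} (1), $(1-s,\tilde\xi)\in\Omega^m_{\eta^{-1}}$, so Theorem \ref{thm:MF_m} will be applicable both to the data $(s,\xi,\eta,\psi)$ and to the dual data $(1-s,\tilde\xi,\eta^{-1},\bar\psi)$, noting that $\varphi_m^{-1}$ for the original data coincides with $\varphi_m$ for the dual data, and vice versa.

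First, I will apply Theorem \ref{thm:MF_m} to convert $\oZ_{\rm JS}(s,W_f,\phi,\varphi_m^{-1})$ into $\Lambda_{\rm JS}(s,f,\phi,\varphi_m^{-1})$ at the cost of the product of $\gamma(s,\xi_i\xi_j\eta^{-1},\psi)$ over pairs $1\leq i<j\leq m-i$. Next, I will apply Theorem \ref{thm:FE'_m} to move from $\Lambda_{\rm JS}(s,f,\phi,\varphi_m^{-1})$ to $\Lambda_{\rm JS}(1-s,\tau_m.\tilde f,\hat\phi,\varphi_m)$, which introduces the factor $\eta(-1)^{mn}\prod_{i=1}^n \gamma(s,\xi_i\xi_{m+1-i}\eta^{-1},\psi)$. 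Finally, I will reapply Theorem \ref{thm:MF_m} with the dual data to convert $\Lambda_{\rm JS}(1-s,\tau_m.\tilde f,\hat\phi,\varphi_m)$ into $\oZ_{\rm JS}(1-s,W_{\tau_m.\tilde f},\hat\phi,\varphi_m)$. The identity $W_{\tau_m.\tilde f}=\tau_m.W_{\tilde f}$ follows immediately from the right-translation equivariance of the Jacquet integral, interpreted via holomorphic continuation.

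Combining the three identities and using the reciprocity
\[
\gamma(1-s,\omega^{-1},\bar\psi)\gamma(s,\omega,\psi)=1,
\]
each gamma factor produced by the dual application of Theorem \ref{thm:MF_m} becomes $\gamma(s,\xi_{m+1-i}\xi_{m+1-j}\eta^{-1},\psi)^{-1}$, and after sending these to the numerator and reindexing $(k,l)=(m+1-j,m+1-i)$, they are labeled by pairs $k<l$ with $k+l\geq m+2$. It remains to verify the combinatorial identity that the three index families
\[
\{(i,j):i<j,\ i+j\leq m\},\quad \{(i,m+1-i):1\leq i\leq n\},\quad \{(k,l):k<l,\ k+l\geq m+2\}
\]
form a disjoint partition of $\{(i,j):1\leq i<j\leq m\}$, which is immediate from classifying pairs by the value of $i+j$. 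The resulting product is then exactly $\prod_{1\leq i<j\leq m}\gamma(s,\xi_i\xi_j\eta^{-1},\psi)$, and the sign $\eta(-1)^{mn}$ carries through unchanged.

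Since $({\rm MF}_m)$ and $({\rm FE}'_m)$ are already in hand at this stage of the paper, the argument is essentially a bookkeeping exercise. The principal risk lies in tracking signs and characters through the gamma-factor reciprocity across the swaps $\psi\leftrightarrow\bar\psi$ and $\eta\leftrightarrow\eta^{-1}$; the absolute convergence of every intermediate integral and the holomorphy of every intermediate gamma factor are guaranteed by the standing assumption $(s,\xi)\in\Omega^m_\eta$ together with its dual counterpart.
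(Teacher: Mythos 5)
Your proposal is correct and follows exactly the route the paper intends: Theorem \ref{thm2:FE_m} is stated there as the combination $({\rm MF}_m)+({\rm FE}'_m)\Rightarrow({\rm FE}_m)$, with the verification left as "straightforward," and your chain (MF$_m$ for $(s,\xi,\eta,\psi)$, then FE$'_m$, then MF$_m$ for the dual data $(1-s,\tilde\xi,\eta^{-1},\bar\psi)$ together with $W_{\tau_m.\tilde f}=\tau_m.W_{\tilde f}$ and the reciprocity $\gamma(1-s,\omega^{-1},\bar\psi)\gamma(s,\omega,\psi)=1$) is precisely that verification. The bookkeeping is right: the three index families $i+j\leq m$, $i+j=m+1$, $i+j\geq m+2$ partition $\{1\leq i<j\leq m\}$, producing the full product of gamma factors with the sign $\eta(-1)^{mn}$ unchanged.
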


It is straightforward to verify that Theorem \ref{thm:MF_m} (${\rm MF}_m$) and Theorem \ref{thm:FE'_m} (${\rm FE}'_m$) imply Theorem \ref{thm2:FE_m} (${\rm FE}_m$). These three theorems will be proved 
in the next two sections. 

\section{Proof of $({\rm FE}_m')$} \label{sec:FE'}

In this section we prove Theorem \ref{thm:FE'_m} $({\rm FE}_m')$.
To prove the absolute convergence and meromorphic continuation, we use the results for Rankin-Selberg integrals in \cite{LLSS23}. To prove the functional equation, 
 the basic  idea is to apply Tate's thesis for a maximal torus in $S_m$ which can be conjugated into $\overline{B}_m$ by the element $z_m$.  The diagonal torus works when $m$ is even, but for the odd case one has  to take a conjugation of the diagonal torus in $S_m$. 

\subsection{Convergence and continuation} \label{sec:CC}  We first prove that for a standard section $\xi\mapsto f_\xi$ on a connected component $\CM$ of 
$(\widehat{\bk^\times})^m$, the integral  $\Lambda_{\rm JS}(s, f_\xi, \phi, \varphi_m^{-1})$ given by \eqref{openint} converges absolutely when $(s,\xi)\in \Omega^m_\eta\cap (\BC\times\CM)$ and has a meromorphic continuation 
to $\BC\times\CM^\circ$. 

First assume that $m=2n$ is even. Then 
\be \label{openinteven}
\Lambda_{\rm JS}(s, f_\xi, \phi, \varphi_{2n}^{-1}) = \int_{G_n}\int_{M_n} f_\xi\left( \begin{bmatrix} g &  gX  \\ & w_ng \end{bmatrix}\right) \phi(v_n g)   \psi(-{\rm tr}\, X)  \od\! X  \, \eta^{-1}(g) |g|_\bk^{s}\od\!g.
\ee
By the standard theory of intertwining operators, when $\xi\in \CM^\circ$ the integral
\[
\int_{M_n}  f_\xi\left(\begin{bmatrix} g_1 & g_1 X \\ & g_2\end{bmatrix}\right) \psi(-{\rm tr}\,X) \od\! X, \quad  g_1, g_2\in G_n,
\]
converges absolutely hence defines an element of $I(\xi^1)\,\widehat\otimes\, I(\xi^2)$, where $\xi^1, \xi^2\in (\widehat{\bk^\times})^{n}$ are as in Remark \ref{rmk:xi} (4).

It is easy to check that $(\overline{B}_n, \overline{B}_n w_n, v_n)$ is a base point of the unique open $G_n$-orbit in $\CB_n\times \CB_n\times \bk^n$.  It follows easily from 
\cite[Proposition 1.4]{LLSS23} that \eqref{openinteven} converges absolutely when $(s,\xi)\in \Omega^{2n}_\eta \cap (\BC\times \CM)$. Moreover by \cite[Theorem 1.6 (a)]{LLSS23} and the theory of Rankin-Selberg integrals
for  $G_n\times G_n$, 
\eqref{openinteven} has a meromorphic continuation to $(s,\xi) \in \BC\times \CM^\circ$. 

The proof for the case $m=2n+1$ is similar, by using \cite[Theorem 1.6 (b)]{LLSS23} and the fact that 
$\left(\overline{B}_{n}, \overline{B}_{n+1} \begin{bmatrix} w_n & {}^tv_n \\ & 1\end{bmatrix} \right)$ 
is a base point of the unique open $G_n$-orbit in $\CB_n\times \CB_{n+1}$. We omit the details. 

It remains to prove \eqref{eq:FE'}. We consider the even and odd cases separately. 

\subsection{The even case}  \label{sec5.2} Assume that $m=2n$, in which case $\eqref{eq:FE'}$ is
\[
\Lambda_{\rm JS}(1-s, \tau_{2n}. \tilde f, \CF_\psi(\phi), \varphi_{2n}) = \prod^n_{i=1} \gamma(s, \xi_i \xi_{2n+1-i} \eta^{-1}, \psi)\cdot  \Lambda_{\rm JS}(s,f,\phi,\varphi_{2n}^{-1}),
\]
where $s\in \Omega_{\xi, \eta}$.  By definition and noting that ${}^tz_{2n}^{-1} =z_{2n}$, we obtain that 
\[
\Lambda_{\rm JS}(1-s, \tau_{2n}.\tilde f, \CF_\psi(\phi), \varphi_{2n}) = \int_{S_{2n}} f(w_{2n}z_{2n} {}^th^{-1} \tau_{2n}) R_{\varphi_{2n}}(h)\CF_\psi(\phi)(v_n) | h|_\bk^{\frac{1-s}{2}} \od\! h.
\]
A direct calculation shows that $w_{2n}z_{2n} \tau_{2n} = z_{2n}$.
Thus by a change of variable $h\mapsto \hat h $ and using Proposition \ref{prop:Reven}, we obtain that 
\be \label{Lambda1}
\Lambda_{\rm JS}(1-s, \tau_{2n}. \tilde f, \CF_\psi(\phi), \varphi_{2n})   = \int_{S_{2n}} f(z_{2n} h) \CF_\psi(R_{\varphi_{2n}^{-1}}(h)\phi)(v_n) |h|_{\bk}^{\frac{s}{2}}\od\! h. 
\ee

Recall that $A_n$ is the diagonal maximal torus in $G_n$. Write \eqref{Lambda1} as an iterated integral  $\int_{ A_n^\dag\bsl S_{2n}} \int_{A_n^\dag}$. For $a = \diag\{a_1, a_2,\dots, a_n\}\in A_n$ and $ a^\dag =\begin{bmatrix} a \\ & a \end{bmatrix}\in S_{2n}$, using Proposition \ref{prop:Reven} again one can verify that 
\[
\begin{aligned}
& f(z_{2n} a^\dag h) \CF_\psi(R_{\varphi_{2n}^{-1}}(a^\dag h)\phi)(v_n) | a^\dag h|_{\bk}^{\frac{s}{2}} \\
= \,&  f(z_{2n}  h) | h|_\bk^{\frac{s}{2}}\prod^n_{i=1} (\xi_i \xi_{2n+1-i}\eta^{-1})(a_i) |a_i|_\bk^{s-1} \cdot   \CF_\psi(R_{\varphi_{2n}^{-1}}(h)\phi)(a_1^{-1}, \dots, a_n^{-1}) .
\end{aligned}
\]
By a change of variable $a\mapsto a^{-1}$ and Tate's thesis, we get that 
\[
\begin{aligned}
& \int_{A_n^\dag}  \prod^n_{i=1} (\xi_i \xi_{2n+1-i}\eta^{-1})(a_i) |a_i|_\bk^{s-1} \cdot   \CF_\psi(R_{\varphi_{2n}^{-1}}(h)\phi)(a_1^{-1}, \dots, a_n^{-1}) \od\! a^\dag \\
= & \prod^n_{i=1} \gamma(s, \xi_i \xi_{2n+1-i}\eta^{-1}, \psi) \cdot\int_{A_n^\dag} \prod^n_{i=1}(\xi_i \xi_{2n+1-i}\eta^{-1})(a_i) |a_i|_\bk^s \cdot R_{\varphi_{2n}^{-1}}(h)\phi(a_1, \dots, a_n) \od\! a^\dag,
\end{aligned}
\]
where both integrals  converge absolutely. In view of the last equation and 
\[
\begin{aligned}
& f(z_{2n} a^\dag h) R_{\varphi_{2n}^{-1}}(a^\dag h)\phi(v_n) | a^\dag h |_{\bk}^{\frac{s}{2}} \\
= \,&  f(z_{2n}  h) | h|_\bk^{\frac{s}{2}}\prod^n_{i=1} (\xi_i \xi_{2n+1-i}\eta^{-1})(a_i) |a_i|_\bk^{s} \cdot   R_{\varphi_{2n}^{-1}}(h)\phi(a_1, \dots, a_n),
\end{aligned}
\]
we find that \eqref{Lambda1} equals 
\[
\begin{aligned}
&  \prod^n_{i=1} \gamma(s, \xi_i \xi_{2n+1-i}\eta^{-1}, \psi)  \cdot \int_{S_{2n}}f(z_{2n} h) R_{\varphi_{2n}^{-1}}(h)\phi(v_n) | h|_\bk^{\frac{s}{2}}\od\! h \\
= &  \prod^n_{i=1} \gamma(s, \xi_i \xi_{2n+1-i}\eta^{-1}, \psi) \cdot \Lambda_{\rm JS}(s, f, \phi, \varphi_{2n}^{-1}).
\end{aligned}
\]
This proves \eqref{eq:FE'} in the even case.

\subsection{The  odd case} Assume that $m=2n+1$, in which case $\eqref{eq:FE'}$ is
\[
\Lambda_{\rm JS}(1-s, \tau_{2n+1}. \tilde f,  \CF_{\bar\psi}(\phi), \varphi_{2n+1}) =\eta(-1)^n \prod^n_{i=1} \gamma(s, \xi_i \xi_{2n+2-i} \eta^{-1}, \psi)\cdot  \Lambda_{\rm JS}(s,f,\phi,\varphi_{2n+1}^{-1}),
\]
where $s\in  \Omega_{\xi,\eta}$.  We have that 
\be \label{Lambda2}
\begin{aligned}
& \Lambda_{\rm JS}(1-s, \tau_{2n+1}. \tilde f,  \CF_{\bar\psi}(\phi), \varphi_{2n+1}) \\
  =\, & \int_{S_{2n+1}} f(w_{2n+1} {}^t z_{2n+1}^{-1} {}^t h^{-1} \tau_{2n+1})R_{\varphi_{2n+1}}(h) \CF_{\bar\psi}(\phi)(0)| h|_\bk^{\frac{1-s}{2}}\od\!h \\
= \, & \int_{S_{2n+1}} f(z_{2n+1}' \hat h)R_{\varphi_{2n+1}}(h) \CF_{\bar\psi}(\phi)(0)| h|_\bk^{\frac{1-s}{2}}\od\!h,
\end{aligned}
\ee
where
\be \label{z'}
z_{2n+1}' := w_{2n+1} {}^t z_{2n+1}^{-1}\tau_{2n+1} = \begin{bmatrix} -v_n & 0& 1 \\ 1_n &0 & 0\\ 0& w_n & 0\end{bmatrix}. 
\ee

In contrast to the even case, the computation in the odd case is much more complicated.   We first give the following result regarding the element $z_{2n+1}'$.

\begin{leml} \label{lem:z'} The element $z_{2n+1}'$ as defined in \eqref{z'} belongs to $\overline{N}_{2n+1}z_{2n+1} S_{2n+1}$, 
where $\overline{N}_{2n+1}$ is the unipotent radical of $\overline{B}_{2n+1}$.
More precisely, there exists $u_0\in \overline{N}_{2n+1}$ such that 
$z_{2n+1}' = u_0 z_{2n+1} h_0^{-1}$, where 
\[
h_0:=\begin{bmatrix} g_0 & {}^t e_n e_n  & {}^t e_n \\ & g_0 & 0\\ & e_n & 1\end{bmatrix}\quad {\rm and}\quad g_0:= \left[\begin{smallmatrix} -2 & 1 \\ 1 & -2 & 1  \\ & \ddots & \ddots & \ddots  \\  & & 1 & -2 & 1 \\
 & & & 1 & -1 \end{smallmatrix}\right]_{n\times n}.
\]
\end{leml}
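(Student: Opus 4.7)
The proof is by direct matrix calculation. The strategy is to compute $u_0 := z_{2n+1}' h_0 z_{2n+1}^{-1}$ explicitly using block multiplication, verify that it is lower triangular unipotent, and separately check that $h_0 \in S_{2n+1}$. First I would record the combinatorial identities that drive the computation, all immediate from the tridiagonal shape of $g_0$: writing $e_1 = (1, 0, \ldots, 0) \in \bk^{1 \times n}$ and ${}^te_1$ for its transpose, one has $v_n g_0 = -e_1$, $g_0 \cdot {}^tv_n = -{}^te_1$, $e_n \cdot {}^tv_n = 1$, $e_n w_n = e_1$, $w_n \cdot {}^tv_n = {}^tv_n$, and $w_n^2 = 1_n$.

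For $h_0 \in S_{2n+1}$, matching $h_0$ against the defining shape $\begin{bmatrix} g & Xg & y \\ 0 & g & 0 \\ 0 & xg & 1 \end{bmatrix}$ of elements of $S_{2n+1}$ forces $g = g_0$, $y = {}^te_n$, $X = {}^te_n e_n g_0^{-1}$, and $x = e_n g_0^{-1}$, so the only nontrivial condition is the invertibility of $g_0$. A short induction via cofactor expansion along the last row, using the standard formula for the determinant of the $(-2,1,1)$-tridiagonal $k \times k$ matrix, yields $|\det g_0| = 1$.

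With row blocks $(1, n, n)$ and column blocks $(n, n, 1)$, the main computation produces
\[
z_{2n+1}' h_0 = \begin{bmatrix} e_1 & 0 & 0 \\ g_0 & {}^te_n e_n & {}^te_n \\ 0 & w_n g_0 & 0 \end{bmatrix}, \qquad z_{2n+1}^{-1} = \begin{bmatrix} 1_n & 0 & 0 \\ 0 & w_n & -{}^tv_n \\ 0 & 0 & 1 \end{bmatrix},
\]
and multiplying and simplifying via the identities above (for instance, $e_n \cdot {}^tv_n = 1$ kills the $(2,3)$ block, while $g_0 \cdot {}^tv_n = -{}^te_1$ together with $w_n \cdot {}^te_1 = {}^te_n$ produces the $(3,3)$ block) yields
\[
u_0 = \begin{bmatrix} e_1 & 0 & 0 \\ g_0 & {}^te_n e_1 & 0 \\ 0 & w_n g_0 w_n & {}^te_n \end{bmatrix}.
\]

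The verification that $u_0 \in \overline{N}_{2n+1}$ is a row-by-row check: the super-diagonal entries (all equal to $1$) of $g_0$ supply the diagonal $1$'s of $u_0$ in rows $2, \ldots, n$; the boundary row $n+1$ is fixed by the $-1$ in the $(n,n)$-corner of $g_0$ together with the $1$ in position $(n,1)$ of ${}^te_n e_1$, which lands at column $n+1$; $w_n g_0 w_n$ is again tridiagonal, now with $-1$ at position $(1,1)$ and $1$'s on the off-diagonals, so it supplies the diagonal $1$'s in rows $n+2, \ldots, 2n$; and the last row is pinned down by the $1$ entry of ${}^te_n$ in column $2n+1$. The entries above the diagonal vanish thanks to the tridiagonality of both $g_0$ and $w_n g_0 w_n$ and the sparsity of ${}^te_n e_1$ and ${}^te_n$. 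The main obstacle is purely bookkeeping the non-square block structure (row blocks $(1,n,n)$ against column blocks $(n,n,1)$), and the specific shape of $g_0$ (tridiagonal $-2$'s with a $-1$ boundary correction) is precisely what is needed to make every diagonal entry of $u_0$ equal to $1$.
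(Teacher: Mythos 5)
Your proposal is correct and follows essentially the same route as the paper: a direct block computation of $z_{2n+1}'h_0 z_{2n+1}^{-1}$, yielding exactly the matrix $\begin{bmatrix} e_1 & & \\ g_0 & {}^te_n e_1 & \\ 0 & w_n g_0 w_n & {}^te_n\end{bmatrix}$ that the paper displays, followed by the observation that it is lower triangular unipotent. Your additional details (invertibility of $g_0$, so that $h_0\in S_{2n+1}$, with $\det g_0=(-1)^n$, and the row-by-row check of unipotence) merely spell out what the paper leaves as ``clear,'' and they check out.
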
 

\begin{proof}
By direct calculation we find that  %$z_{2n+1}^{-1} =\left(\begin{smallmatrix} 1_n  \\ & w_n & -{}^t v_n \\ &  & 1\end{smallmatrix}\right)$ and 
\[
z'_{2n+1} h_0 z_{2n+1}^{-1}
 % =\begin{bmatrix} - v_n g_0 & e_n  - v_n {}^t e_n e_n & 1- v_n {}^te_n \\ g_0 & {}^t e_n e_n w_n & e_n - {}^t e_n e_n {}^tv_n \\ & w_n g_0 w_n & -w_n g_0 {}^tv_n \end{bmatrix}  
=\begin{bmatrix} e_1 & & \\
g_0 & {}^t e_n e_1 & \\
0& w_n g_0 w_n & {}^te_n
\end{bmatrix},
\]
where $e_1 =(1,0,\dots, 0)\in \bk^n$.  It is clear that the above element lies in $\overline{N}_{2n+1}$.
\end{proof}

By Lemma \ref{lem:z'} and Proposition \ref{prop:Rodd} (2),  and noting that $\det g_0 = (-1)^n$, a change of variable $h\mapsto \hat h_0 \hat h$ in \eqref{Lambda2} gives that
\[
\begin{aligned}
\Lambda_{\rm JS}(1-s, \tau_{2n+1}. \tilde f,  \CF_{\bar\psi}(\phi), \varphi_{2n+1}) 
= &  \int_{S_{2n+1}} f(z_{2n+1} h_0^{-1} \hat h)R_{\varphi_{2n+1}}(h) \CF_{\bar\psi}(\phi)(0)| h|_\bk^{\frac{1-s}{2}}\od\!h \\
 = &  \int_{S_{2n+1}} f(z_{2n+1} h ) R_{\varphi_{2n+1}}(\hat h_0) \CF_{\bar\psi}(R_{\varphi_{2n+1}^{-1}}(h)\phi)(0)| h|_\bk^{\frac{s}{2}}\od\!h. 
\end{aligned}
\]
Let us compute the action of $R_{\varphi_{2n+1}}(\hat h_0)$. It is easy to verify that 
\[
h_0 = \begin{bmatrix} 1_n & & {}^t e_n \\ & 1_n  \\ & & 1 \end{bmatrix} \begin{bmatrix} g_0  \\ & g_0 \\ & & 1\end{bmatrix} \begin{bmatrix} 1_n \\ & 1_n \\ & e_n & 1\end{bmatrix},
\]
so that 
\[
\hat h_0 =  \begin{bmatrix} 1_n   \\ & 1_n  \\ &  -e_n & 1 \end{bmatrix} \begin{bmatrix} {}^tg_0^{-1}  \\ & {}^tg_0^{-1} \\ & & 1\end{bmatrix} \begin{bmatrix} 1_n & & -{}^te_n\\ & 1_n \\ &  & 1\end{bmatrix}.
\]
Using Proposition \ref{prop:Rodd} (1), we find that for $\phi \in \CS(\bk^n)$,
\[
\begin{aligned}
R_{\varphi_{2n+1}}(\hat h_0)\phi (0) = \eta(-1)^n  \psi( - e_n {}^tg_0^{-1} \, {}^te_n) \phi_1(- {}^te_n {}^t g_0^{-1})  = \eta(-1)^n \psi(n) \phi(v_n'),
\end{aligned}
\]
where 
$v_n': = (1,2,\dots, n)\in \bk^n$. 
It follows that 
\be \label{Lambda3}
\begin{aligned}
& \Lambda_{\rm JS}(1-s, \tau_{2n+1}. \tilde f,  \CF_{\bar\psi}(\phi), \varphi_{2n+1}) \\
 =\ &  \eta(-1)^n \psi(n)  \int_{S_{2n+1}} f(z_{2n+1} h ) \CF_{\bar\psi}(R_{\varphi_{2n+1}^{-1}}(h)\phi)(v_n')| h|_\bk^{\frac{s}{2}}\od\!h.
\end{aligned}
\ee
Because of the diagonal torus $A_n$ of $G_n$ and \eqref{dag}, we have the diagonal torus $A_n^\ddag$ of $S_{2n+1}$. Put $A_n' := u^{-1} A_n^\ddag u$ and $a':= u^{-1} a^\ddag u$ for $a \in A_n$, 
where 
\[
u: = \begin{bmatrix} u_n & \\ 0& u_n  \\0 & e_n & 1\end{bmatrix}\quad  {\rm and}\quad 
u_n := \left[\begin{smallmatrix}
1  \\ -1 & 1 \\ 
& \ddots & \ddots \\
& & -1 & 1
\end{smallmatrix}\right]_{n\times n}.\]
The following result is rather technical but can be verified directly, the proof of which will be omitted. 

\begin{leml} \label{An'} 
For $a ={\rm diag}\{a_1, a_2,\dots, a_n\}\in A_n$, the element $z_{2n+1} a' z_{2n+1}^{-1}$ belongs to $\overline{B}_{2n+1}$ with diagonal entries 
$a_1, a_2,\dots, a_n, 1, a_n,  \dots, a_2, a_1$, which means that  
\[
z_{2n+1} A_n' z_{2n+1}^{-1}\subset \overline{B}_{2n+1}.
\]
\end{leml}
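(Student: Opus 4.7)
The plan is a direct matrix computation in the $(n,n,1)$ block decomposition of $(2n+1)\times(2n+1)$ matrices. First I would write $u^{-1}$ in closed form, using the fact that $u_n^{-1}$ is lower triangular with $1$'s everywhere on and below the diagonal, so that its last row equals $v_n$; this gives
\[
u^{-1} = \begin{bmatrix} u_n^{-1} & 0 & 0 \\ 0 & u_n^{-1} & 0 \\ 0 & -v_n & 1\end{bmatrix}.
\]
A straightforward expansion then yields
\[
a' = u^{-1} a^{\ddag} u = \begin{bmatrix} b & 0 & 0\\ 0 & b & 0 \\ 0 & c & 1\end{bmatrix},
\]
where $b := u_n^{-1} a u_n$ is lower triangular with $b_{i,i}=a_i$ and $b_{i,j} = a_j - a_{j+1}$ for $j<i$, and $c := e_n - v_n a u_n = (a_2 - a_1,\, a_3 - a_2,\, \ldots,\, a_n - a_{n-1},\, 1 - a_n)$.

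Next I would compute $z_{2n+1}^{-1}$, which is clean because $w_n\,{}^t v_n = {}^t v_n$, and carry out the conjugation $z_{2n+1} a' z_{2n+1}^{-1}$ blockwise. The result fits into the $(n,n,1)$ pattern with diagonal blocks $b$, $w_n b w_n + {}^t v_n c w_n$, and $1 - c \cdot {}^t v_n$; the middle-right block becomes ${}^t v_n - (w_n b + {}^t v_n c)\,{}^t v_n$, and the remaining off-block positions are either zero or of the form $c w_n$ sitting strictly below the diagonal. It then remains to verify that the middle block is lower triangular, compute its diagonal, and show that the middle-right block vanishes.

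All of this reduces to two elementary identities forced by the explicit form of $b$ and $c$. The pointwise cancellation $b_{I,J} + c_J = (a_J - a_{J+1}) + (a_{J+1} - a_J) = 0$ for $J<I\leq n$ makes the middle block lower triangular, and the same sum on the diagonal gives $a_I + c_I$, equal to $a_{I+1}$ for $I<n$ and to $1$ for $I=n$; reading with $I = n+1-i$ as $i$ runs from $1$ to $n$ produces the sequence $1, a_n, a_{n-1}, \ldots, a_2$ in positions $n+1,\ldots,2n$. The telescoping identities $b \cdot {}^t v_n = a_1 \cdot {}^t v_n$ (each row of $b$ sums to $a_1$) and $c \cdot {}^t v_n = 1 - a_1$ then imply that $(w_n b + {}^t v_n c)\,{}^t v_n = {}^t v_n$, so the middle-right block vanishes, and that the bottom-right entry equals $a_1$. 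Concatenating the three diagonals yields the claimed sequence $a_1, a_2, \ldots, a_n, 1, a_n, \ldots, a_2, a_1$.

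The proof is entirely mechanical, so the only real obstacle is careful bookkeeping in the block computation. It is worth emphasizing, however, that the form of $u_n$ (and hence of $u$) is engineered precisely so that the cancellation and telescoping identities above hold on the nose: without first conjugating by $u$, the torus $A_n^{\ddag}$ would not be carried into $\overline{B}_{2n+1}$ by $z_{2n+1}$, and Tate's thesis on $A_n'$ could not be invoked in the odd case of the functional equation in the same way the diagonal torus $A_n^{\dag}$ was used in the even case.
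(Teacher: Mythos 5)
Your proof is correct: the block forms of $u^{-1}$, $a'=u^{-1}a^{\ddag}u$ (with $b=u_n^{-1}au_n$ and $c=e_n-v_nau_n$) and $z_{2n+1}^{-1}$ are right, and the cancellation $b_{I,J}+c_J=0$ for $J<I$, the diagonal values $a_I+c_I$, and the telescoping identities $b\,{}^t v_n=a_1\,{}^t v_n$, $c\,{}^t v_n=1-a_1$ (together with $w_n{}^t v_n={}^t v_n$) do exactly what you claim, yielding a lower triangular matrix with diagonal $a_1,\dots,a_n,1,a_n,\dots,a_2,a_1$. The paper omits this proof, saying only that the lemma "can be verified directly," and your blockwise computation is precisely that intended direct verification, carried out in full.
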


By Proposition \ref{prop:Rodd} (2) again, for $\phi\in \CS(\bk^n)$ we have that 
\be \label{a1}
\CF_{\bar\psi}(R_{\varphi_{2n+1}^{-1}}(a' )\phi)  = | a|_\bk^{-1} R_{\varphi_{2n+1}}(\widehat{u^{-1} a^\ddag}) \CF_{\bar\psi}(R_{\varphi_{2n+1}^{-1}}(u)\phi).
\ee
Using Proposition \ref{prop:Rodd} (1)  and 
\[
\widehat{u^{-1} a^\ddag} =\begin{bmatrix} 1_n & 0& {}^t e_n \\ & 1_n &0\\ & & 1\end{bmatrix}  \begin{bmatrix}  {}^t u_n a^{-1}  \\ & {}^t u_n a^{-1}  \\ & & 1\end{bmatrix},
\]
we find that for $\phi_1\in \CS(\bk^n)$, 
\be \label{a2}
\begin{aligned}
R_{\varphi_{2n+1}}(\widehat{u^{-1} a^\ddag})\phi_1(v_n')   = \psi(-v_n' {}^te_n) \eta( a)^{-1} \phi_1 (v_n'  {}^t u_n  a^{-1})   = \psi(-n) \eta( a)^{-1} \phi_1(v_n a^{-1}).
\end{aligned}
\ee
Similar to the even case, write  the integral in \eqref{Lambda3} as an iterated integral $\int_{A_n' \bsl S_{2n+1}} \int_{A_n' }$. 
Applying Lemma \ref{An'}, \eqref{a1} and \eqref{a2}, we find that for $a= {\rm diag}\{a_1,a_2,\dots, a_n\}\in A_n$, 
\[
\begin{aligned}
&    f(z_{2n+1}a' h ) \CF_{\bar\psi}(R_{\varphi_{2n+1}^{-1}}(a'h)\phi)(v_n')| a'h|_\bk^{\frac{s}{2}} \\
 = & \psi(-n)  f(z_{2n+1}  h) | h|_\bk^{\frac{s}{2}} \prod^n_{i=1}(\xi_i\xi_{2n+2-i}\eta^{-1})(a_i)|a_i|_\bk^{s-1} \cdot  \CF_{\bar\psi}(R_{\varphi_{2n+1}^{-1}}(uh)\phi)(a_1^{-1}, \dots, a_n^{-1}).
\end{aligned}
\]
By a change of variable $a\mapsto a^{-1}$ and Tate's thesis, we obtain that 
\[
\begin{aligned}
&  \int_{A_n'}  \prod^n_{i=1} (\xi_i \xi_{2n+2-i}\eta^{-1})(a_i) |a_i|_\bk^{s-1} \cdot   \CF_{\bar \psi}(R_{\varphi_{2n+1}^{-1}}(uh)\phi)(a_1^{-1}, \dots, a_n^{-1}) \od\! a'\\
= &   \prod^n_{i=1} \gamma(s, \xi_i \xi_{2n+2-i}\eta^{-1}, \bar\psi) \cdot\int_{A_n'} \prod^n_{i=1}(\xi_i \xi_{2n+2-i}\eta^{-1})(a_i) |a_i|_\bk^s \cdot R_{\varphi_{2n+1}^{-1}}(uh)\phi(a_1, \dots, a_n) \od\! a' \\
= &   \prod^n_{i=1} \gamma(s, \xi_i \xi_{2n+2-i}\eta^{-1},  \psi) \cdot\int_{A_n'} \prod^n_{i=1}(\xi_i \xi_{2n+2-i}\eta^{-1})(a_i) |a_i|_\bk^s \cdot R_{\varphi_{2n+1}^{-1}}(uh)\phi(-a_1, \dots, -a_n) \od\! a',
\end{aligned}
\]
where in the last step we make a change of variable $a  \mapsto  -a$ and use the fact that $\gamma(s, \omega, \bar\psi) = \omega(-1)\gamma(s,\omega, \psi)$ for $\omega\in \widehat{\bk^\times}$.
Noting that  
\[
 u^{-1}a =  \begin{bmatrix} 1_n \\ 0& 1_n \\ 0&  -e_n & 1\end{bmatrix}\begin{bmatrix} u_n^{-1}a \\ & u_n^{-1}a \\ & & 1\end{bmatrix}
\]
and $v_n u_n = e_n$, we have that 
\[
\begin{aligned}
R_{\varphi_{2n+1}^{-1}}(a' h)\phi(0) =   \eta^{-1}( a) R_{\varphi_{2n+1}^{-1}}(uh)\phi (-e_n u_n^{-1}a) 
= \eta^{-1}( a) R_{\varphi_{2n+1}^{-1}}(uh)\phi(-a_1,  \dots, -a_n). 
\end{aligned}
\]
It follows that 
\[
\begin{aligned}
&\Lambda_{\rm JS}(1-s, \tau_{2n+1}. \tilde f,   \CF_{\bar\psi}(\phi), \varphi_{2n+1}) \\ 
&\qquad\qquad=\eta(-1)^n \prod^n_{i=1}\gamma(s, \xi_i \xi_{2n+2-i}\eta^{-1},  \psi)  \\
&\qquad\qquad\qquad\qquad\cdot    \int_{A_n' \bsl S_{2n+1}}\int_{A_n'}
f(z_{2n+1}a' h) R_{\varphi_{2n+1}^{-1}}(a' h)\phi(0) | a'h|_\bk^{\frac{s}{2}}\od\! a' \od\! h  \\
&\qquad\qquad = \eta(-1)^n   \prod^n_{i=1}\gamma(s, \xi_i \xi_{2n+2-i}\eta^{-1},  \psi)  \cdot \Lambda_{\rm JS}(s, f, \phi, \varphi_{2n+1}^{-1}).
\end{aligned}
\]
This finishes the proof of \eqref{eq:FE'} in the odd case.

\section{$({\rm MF}_{m})+ ({\rm FE}_{m}) \Rightarrow ({\rm MF}_{m+1})$} \label{sec:Ind}

In this section we  will show that 
$({\rm MF}_{m})+ ({\rm FE}_{m}) \Rightarrow ({\rm MF}_{m+1})$.
In view of the discussions in Section \ref{sec:red}, this will finish the inductive proof of 
Theorem \ref{thm:FE_m} and Theorem \ref{thm:MF_m}.
The basic idea is to apply the theory of Godement sections for both sides of the functional equation (${\rm MF}_{m+1}$) and perform induction. It turns out that  the explicit  calculations are rather complicated. In particular $S_{2n-1}$ can not be embedded into $S_{2n}$. In this case one can only conjugate a subgroup of $S_{2n-1}$ into $S_{2n}$ and integrate over an open dense subset of $S_{2n}$. This requires manipulating different base points for the unique open $S_m$-orbit in $\CX_m$.  
Similar strategy has been applied in \cite{LLSS23} for the study of modifying factors for the Rankin-Selberg case, which leads to nice recurrence relations. In contrast, the recurrence relations \eqref{rec}, \eqref{rec'}, \eqref{recodd} and \eqref{recodd'} in the Jacquet-Shalika case are much more involved. As suggested by the method, we  prove the absolute convergence and  justify the change of order of certain multiple integrals in our calculation
by Fubini's theorem.

\subsection{Godement sections} Assume that $({\rm MF}_{m})$ and  $({\rm FE}_{m})$ hold, and that 
\[
\xi = (\xi_1,\xi_2,\dots, \xi_m) \in (\widehat{\bk^\times})^m\quad {\rm and}\quad \xi' = (\xi_1, \xi_2, \dots, \xi_m, \xi_{m+1})\in (\widehat{\bk^\times})^{m+1}. 
\]
We need to show that $({\rm MF}_{m+1})$ holds for $I(\xi')$, that is,  
\be \label{MFm+1}
\Lambda_{\rm JS}(s,f',\phi,\varphi_{m+1}^{-1}) = \prod_{1\leq i <j \leq m+1-i} \gamma(s, \xi_i \xi_j \eta^{-1},\psi) \cdot 
\oZ_{\rm JS}(s, W_{f'}, \phi, \varphi_{m+1}^{-1})
\ee
where $(s, \xi') \in \Omega_\eta^{m+1}$, $f' \in I(\xi')$ and $\phi \in \CS(\bk^n)$ with $n= \lfloor (m+1)/2\rfloor$, and the integrals of both sides converge absolutely. Note that 
$(s, \xi')\in \Omega_\eta^{m+1}$ implies that $(s, \xi) \in \Omega_\eta^m$.

We first observe that, by Theorem \ref{thm:FE_m} (1), Theorem \ref{thm:FE'_m} (2) and the uniqueness of meromorphic continuation, it suffices to prove \eqref{MFm+1}
when $(s,\xi)\in \Omega^m_\eta$ and $\Re(\xi_{m+1})$ is sufficiently large. 

As mentioned above, the method is to use Godement sections, for which we  recall some basic results from \cite{J09}. 
For $f\in I(\xi)$ and $\Phi \in \CS(\bk^{m\times (m+1)})$, set
\be\label{gs}
{\rm g}^+_{\Phi, f, \xi'}(h) :=    \xi_{m+1}( h) | h|_\bk^{\frac{m}{2}} 
\int_{G_m} \Phi([h_1 \mid 0]h) f(h_1^{-1}) \xi_{m+1}( h_1 ) |   h_1|_\bk^{\frac{m+1}{2}}\od\!h_1,
\ee
where $h\in G_{m+1}$ and $0$ indicates the zero vector in $\bk^{m\times 1}$. This defines an element of $I(\xi')$  when the integral converges absolutely.  Let 
\[
\CY_m:=\Set{ Y\in \bk^{m\times (m+1)} | {\rm rank}\, Y = m }.
\]
As in \cite[Section 7.2]{J09}, there are natural left and right actions of $G_{m+1}$ and $G_m$ on $\CS(\bk^{m\times (m+1)})$ respectively, which are denoted by 
\[
h.\Phi.h_1(Y) := \Phi(h_1 Y h),\quad h\in G_{m+1}, \ h_1\in G_m, \  Y\in \bk^{m\times (m+1)},
\]
which clearly  preserve $\CS(\CY_m)$. 

The following  are consequences of  Propositions 7.1 and 7.2  in \cite{J09}. 

\begin{prpl} \label{prop:gs}
\begin{enumerate}
\item If $\Re(\xi_{m+1}) > \Re(\xi_i) - 1$, $i=1,2,\dots, m$ or $\Phi \in \CS(\CY_m)$, then \eqref{gs} converges absolutely.  In this case if $f' = {\rm g}^+_{\Phi, f, \xi'}\in I(\xi')$, then
\be \label{gsW}
\begin{aligned}
W_{f'}(h)   = \xi_{m+1}( h) | h|_\bk^{\frac{m}{2}} \int_{G_m}&\int_{\bk^{m}}\Phi(h_1[1_m \mid  {}^tz]h)\bar\psi(e_m {}^tz)\od\! z \\
& W_f(h_1^{-1}) \xi_{m+1}( h_1) | h_1|_\bk^{\frac{m+1}{2}}\od\! h_1,\quad h\in G_{m+1},
\end{aligned}
\ee
where the integral converges absolutely.
\item $I(\xi')$ is spanned by the functions 
${\rm g}^+_{\Phi, f, \xi'}$ with $f\in I(\xi)$ and $\Phi \in \CS(\CY_m)$. 
\end{enumerate}
\end{prpl}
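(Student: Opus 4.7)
The plan is to derive both assertions from \cite[Propositions 7.1 and 7.2]{J09}, adapting the arguments there (stated with slightly different conventions for the inducing data and the Borel) to our present setup with lower-triangular $\overline{B}_{m+1}$ and general quasi-characters $\xi'$.

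For the absolute convergence in (1), I would first reduce to $h = 1$ via the Iwasawa decomposition $G_{m+1} = \overline{B}_{m+1} K_{m+1}$ and continuity of the $K_{m+1}$-action on $\CS(\bk^{m\times (m+1)})$. Decomposing $h_1 = \bar n_1 t_1 k_1 \in \overline{N}_m A_m K_m$ and using the $\overline{B}_m$-transformation of $f$, the integrand factors as a Schwartz-type function of the entries of $\bar n_1 t_1$ (bounded by $\Phi([h_1 \mid 0])$) times a product of quasi-characters in $t_1 \in A_m$. The Schwartz factor forces decay as $|t_1|_\bk \to \infty$; as $|t_1|_\bk \to 0$, each coordinate reduces to a Tate-type integral that converges precisely when $\Re(\xi_{m+1}) - \Re(\xi_i) > -1$. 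When $\Phi \in \CS(\CY_m)$, the vanishing of $\Phi$ on the singular locus yields $|\Phi([h_1 \mid 0])| \lesssim |h_1|_\bk^{N}$ for every $N$, which enforces absolute convergence independently of the spectral condition.

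For the formula \eqref{gsW} in (1), I would apply the Jacquet integral defining $W_{f'}$ first in a range of $\xi'$ where it converges absolutely simultaneously with \eqref{gs}. Writing $N_{m+1} = N_m \ltimes V$ with $V \cong \bk^m$ the unipotent factor corresponding to the last column of the mirabolic (adapted to our conventions), the $V$-integration unfolds against the matrix argument of $\Phi$ and produces the inner Fourier-type integral $\int_{\bk^m} \Phi(h_1[1_m \mid {}^tz]h)\bar\psi(e_m{}^tz)\od z$, while the $N_m$-integration converts $f(h_1^{-1})$ into $W_f(h_1^{-1})$. The interchange of integrals is justified by Fubini using a convergence bound parallel to that of \eqref{gs}, with the extra $z$-variable inheriting Schwartz decay from $\Phi$. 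The identity is then extended to the full range stated in (1) by meromorphic continuation in $\xi'$. For (2), the map $\CS(\CY_m) \otimes I(\xi) \to I(\xi')$, $\Phi \otimes f \mapsto {\rm g}^+_{\Phi, f, \xi'}$, is $G_{m+1}$-equivariant, so its image is a $G_{m+1}$-stable subspace; by concentrating $\Phi$ near the image of $[1_m \mid 0] \in \CY_m$ and selecting $f$ with prescribed restriction to $K_m$, one realizes sections with arbitrarily small support in $\overline{B}_{m+1}\bsl G_{m+1}$, whose $G_{m+1}$-translates span $I(\xi')$. This is exactly the argument of \cite[Prop.~7.2]{J09}.

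The main obstacle is the Fubini-plus-analytic-continuation step underlying \eqref{gsW}: one must locate a nonempty open region of $(s, \xi')$ in which both \eqref{gs} and the Jacquet integral of the resulting $f'$ converge absolutely, verify the unfolding pointwise there, and then track the meromorphic dependence on $\xi_{m+1}$ carefully enough to extend the identity to the stated range. Once this is in hand the remaining steps are bookkeeping modeled directly on \cite[\S 7]{J09}, and no essentially new ingredient beyond \emph{loc.~cit.} is required.
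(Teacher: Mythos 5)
Your proposal is correct and follows essentially the same route as the paper: the paper derives this proposition directly as a consequence of Propositions 7.1 and 7.2 of \cite{J09}, and your plan (Iwasawa reduction and Tate-type bounds for convergence, unfolding the Jacquet integral with Fubini plus analytic continuation for \eqref{gsW}, and the equivariance/localization argument of \cite[Prop.~7.2]{J09} for the spanning statement) is precisely an adaptation of those arguments to the present conventions. The convergence threshold $\Re(\xi_{m+1})>\Re(\xi_i)-1$ you extract from the rank-one Tate integral is the correct one, so no essential ingredient is missing beyond what is already in \emph{loc.~cit.}
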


\begin{comment}
\subsubsection{$G_m \to G_m$}  For $f\in I(\xi)$ and $\Phi \in \CS(M_m)$, set
\be \label{gs'}
{\rm g}^\circ_{\Phi, f, \xi'}(h) := \int_{G_m} f(hh_1) \Phi(h_1) \xi_{m+1}(h_1) |h_1|_\bk^{\frac{m-1}{2}}\od\! h_1, \quad h\in G_m. 
\ee
This defines an element of $I(\xi)$ if the integral converges absolutely. 
We have the following by \cite[Proposition 2.4]{LLSS23}, \cite[Proposition 3.2]{IM22} and the Dixmier-Malliavin theorem. 

\begin{prpl} \label{prop:gs'}
\begin{enumerate}
 \item  If $\Re(\xi_{m+1})> - \Re(\xi_i)$, $i=1,2,\dots, m$ or $\Phi\in \CS(G_m)$, then \eqref{gs'}  converges absolutely. In this case if $f' = {\rm g}^\circ_{\Phi, f, \xi'}$, then 
 \[
 W_{f'}(h) = \int_{G_m} W_f(h h_1) \Phi(h_1) \xi_{m+1}(h_1) |h_1|_\bk^{\frac{m-1}{2}}\od\! h_1, 
 \]
 where the integral converges absolutely.
\item  $I(\xi)$ is spanned by the functions ${\rm g}^\circ_{\Phi, f, \xi'}$ with 
$f\in I(\xi)$ and $\Phi\in \CS(G_m)$. 
\end{enumerate}
\end{prpl}
\end{comment}

Thus to prove \eqref{MFm+1}, by Proposition \ref{prop:gs} (2) we may assume that 
\be \label{f'gs}
f' = {\rm g}^+_{\Phi, f, \xi'},\quad \textrm{where}\ f \in I(\xi)\ {\rm and}\  \Phi\in \CS(\CY_m).
\ee
We need to consider the even and odd cases for $m$ separately. 
To ease the notation, for a subgroup $\CG$ of $G_m$ put 
$
\CG^+:=\set{ h^+ \mid h \in \CG} \subset G_{m+1},
$ where for $h\in G_m$ we write 
$h^+ := \begin{bmatrix} h  \\ &1\end{bmatrix} \in G_{m+1}$.

\subsection{The case $G_{2n}\to G_{2n+1}$} Assume that $m=2n$. We need to prove \eqref{MFm+1} when $(s,\xi)\in \Omega^{2n}_\eta$ and 
$\Re(\xi_{2n+1})$ is sufficiently large, where $f' = {\rm g}^+_{\Phi, f, \xi'}$ is as in \eqref{f'gs}. 

\subsubsection{$\oZ_{\rm JS}$-side}  We start from  $\oZ_{\rm JS}(s, W_{f'}, \phi, \varphi_{2n+1}^{-1})$.  Define a subgroup of $S_{2n+1}$ by
\be \label{Sodd'}
S_{2n+1}' := \set{ h^+ \bar u_x  | h\in S_{2n},  x\in \bk^n},
\ee
where 
\be\label{ux}
\bar u_x: = \begin{bmatrix} 1_n \\ 0& 1_n \\ 0& x & 1\end{bmatrix},\quad x\in \bk^n. 
\ee
Define that $\overline{S}_{2n+1}' := \sigma_{2n+1}^{-1}N_{2n+1}\sigma_{2n+1}\cap S'_{2n+1}\bsl S'_{2n+1}$.
Then we have a natural identification:  
$\overline{S}_{2n+1}' = \overline{S}_{2n+1}$.

Note from \eqref{sigmam} that $\sigma_{2n+1} = \sigma_{2n}^+$, 
viewed as permutation matrices. The integral \eqref{JSodd} can be also written as 
\be \label{JSodd'}
\begin{aligned}
\oZ_{\rm JS}(s, W, \phi,\varphi_{2n+1}^{-1})  & =  \int_{\overline{S}_{2n+1}'} W(\sigma_{2n+1} h' )R_{\varphi_{2n+1}^{-1}}(h')\phi(0) |h'|_\bk^{\frac{s}{2}} \od\! h' \\
& =  \int_{ \overline{S}_{2n}}  W_\phi( (\sigma_{2n} h )^+)  \varphi_{2n}^{-1}(h)   
|h|_\bk^{\frac{s-1}{2}} \od\! h,
\end{aligned}
\ee
where 
\[
W_\phi(h') :=\int_{\bk^n} W(h' \bar u_x) \phi(x)\od\! x,\quad h'\in G_{2n+1}.
\]
In the same vein, we will write $\Phi_\phi$ and $f'_\phi$ for similar actions of $\phi \in \CS(\bk^n)$  on  $\Phi\in \CS(\bk^{2n\times (2n+1)})$ and  $f'\in I(\xi')$.
By \eqref{gsW}, for $h\in S_{2n}$ we have that 
\[
\begin{aligned}
W_{f',\phi} ( ( \sigma_{2n} h )^+ ) =   \xi_{2n+1}(\sigma _{2n} h) |h|_\bk^n \int_{G_{2n}} &  \int_{\bk^{2n}} \Phi_\phi\left(h_1[1_{2n} \mid {}^tz]  
(\sigma_{2n} h)^+ \right) \bar\psi(e_{2n}{}^t z)\od\! z  \\
  & W_f (h_1^{-1})\xi_{2n+1}(h_1) |h_1|_\bk^{n+\frac{1}{2}}\od\! h_1. 
  \end{aligned}
\]
We find that 
$h_1[1_{2n} \mid {}^tz] 
(\sigma_{2n} h)^+  = [h_1 \sigma_{2n} h  \mid  h_1 {}^tz]$. 
After change of variables $h_1\mapsto  h_1(\sigma_{2n} h)^{-1}$ and $z\mapsto  z \, {}^t( \sigma_{2n} h)$, 
we obtain that 
\[
\begin{aligned}
W_{f',\phi}  ( ( \sigma_{2n} h )^+ )=     |h|_\bk^{\frac{1}{2}} \int_{G_{2n}} & \int_{\bk^{2n}} \Phi_{\phi, h_1}(z)\bar\psi(e_{2n} h  \, {}^t z)\od\! z  \, W_f ( \sigma_{2n} h h_1^{-1})\xi_{2n+1}(h_1) |h_1|_\bk^{n+\frac{1}{2}}\od\! h_1,
 \end{aligned}
\]
where $\Phi_{\phi, h_1}\in \CS(\bk^{2n})$ is defined by 
\be \label{Phi-res}
\Phi_{\phi, h_1} (z) : = \Phi_\phi(h_1[1_{2n} \mid {}^tz]),\quad z \in \bk^{2n}. 
\ee
Write $z= (z_1, z_2)$ where $z_1, z_2\in \bk^n$, and write $\CF_{\psi'}^1$, $\CF_{\psi'}^2$ for the partial Fourier transforms on $\CS(\bk^{2n})$ with respect to the variables $z_1, z_2$ and a nontrivial unitary character $\psi'$ of $\bk$. Clearly on $\CS(\bk^{2n})$ one has 
\be \label{PF}
\CF_{\psi'} = \CF_{\psi'}^1 \circ \CF_{\psi'}^2 =  \CF_{\psi'}^2 \circ \CF_{\psi'}^1.
\ee
Recall the right action of $h\in S_{2n}$ on $\bk^n$ given by \eqref{Sact}.  In terms of the above notation and noting that $e_{2n}h  = (0, e_ng) = (0, e_n.h)$, we obtain that 
\[
W_{f',\phi} ((\sigma_{2n} h )^+) =   |h|_\bk^{\frac{1}{2}} \int_{G_{2n}} \CF_{\bar \psi}(\Phi_{\phi, h_1})(0, e_n.h) 
  W_f ( \sigma_{2n} h h_1^{-1})\xi_{2n+1}(h_1) |h_1|_\bk^{n+\frac{1}{2}}\od\! h_1. 
\]
Plugging this into \eqref{JSodd'} for $W=W_{f'}$ yields an  iterated integral
\[
\begin{aligned}
 \oZ_{\rm JS}(s, W_{f'}, \phi, \varphi_{2n+1}^{-1})   = &  \int_{\overline{S}_{2n}}\int_{G_{2n}} \CF_{\bar \psi}(\Phi_{\phi, h_1})(0, e_n. h) 
  W_f ( \sigma_{2n} h h_1^{-1})\xi_{2n+1}(h_1) |h_1|_\bk^{n+\frac{1}{2}}\od\! h_1 \\
  & \qquad\qquad \varphi_{2n}^{-1}(h)   
|h|_\bk^{\frac{s}{2}} \od\! h.
  \end{aligned}
\]
By Lemma \ref{lem:switch} below and Fubini's theorem,  we can switch the order of integration and obtain the recurrence relation
\be \label{rec}
\begin{aligned}
 & \oZ_{\rm JS}(s, W_{f'}, \phi, \varphi_{2n+1}^{-1})  \\
= &   \int_{G_{2n}}  \int_{\overline{S}_{2n}} W_f ( \sigma_{2n} h h_1^{-1}) \CF_{\bar \psi}(\Phi_{\phi, h_1})(0, e_n. h) \varphi_{2n}^{-1}(h)   
|h|_\bk^{\frac{s}{2}} \od\! h 
\xi_{2n+1}(h_1) |h_1|_\bk^{n+\frac{1}{2}}\od\! h_1\\
  = &  \int_{G_{2n}}   \oZ_{\rm JS}(s,  W_{h_1^{-1}.f},   \CF_{\bar\psi}(\Phi_{\phi, h_1})(0, \cdot), \varphi_{2n}^{-1})   \xi_{2n+1}(h_1) |h_1|_\bk^{n+\frac{1}{2}}\od\! h_1.
\end{aligned}
\ee

\begin{leml} \label{lem:switch}
The double integral \eqref{rec} converges absolutely when $(s,\xi)\in \Omega^{2n}_\eta$ and $\Re(\xi_{2n+1})$ is sufficiently large. 
\end{leml}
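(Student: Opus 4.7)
The plan is to verify the hypotheses of Fubini's theorem by showing that the iterated integral of absolute values in \eqref{rec} is finite. The key observation is that, for each fixed $h_1 \in G_{2n}$, the inner integral over $\overline{S}_{2n}$ is precisely a Jacquet-Shalika integral: setting $\phi_{h_1}(v) := \CF_{\bar\psi}(\Phi_{\phi, h_1})(0, v)$, which lies in $\CS(\bk^n)$ because $\Phi_\phi \in \CS(\bk^{2n\times(2n+1)})$ and partial integration and partial Fourier transform preserve Schwartz functions, the inner integral equals $\oZ_{\rm JS}(s, W_{h_1^{-1}.f}, \phi_{h_1}, \varphi_{2n}^{-1})$. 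Since $(s, \xi) \in \Omega^{2n}_\eta$, Proposition~\ref{prop:conv}(1) gives absolute convergence of this inner integral, and the uniform seminorm estimate of Proposition~\ref{prop:conv0}(2) bounds its absolute value by a product $p_\CV(W_{h_1^{-1}.f}) \cdot q_\CV(\phi_{h_1})$ for suitable continuous seminorms $p_\CV$ and $q_\CV$ on the relevant Whittaker and Schwartz spaces.

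It then remains to show that the outer integral
\begin{equation*}
\int_{G_{2n}} p_\CV(W_{h_1^{-1}.f}) \cdot q_\CV(\phi_{h_1}) \cdot |\xi_{2n+1}(h_1)| \cdot |h_1|_\bk^{n+1/2} \, \od h_1
\end{equation*}
converges. Using the Iwasawa decomposition $h_1 = k_1 a_1 n_1$ and the compactness of $K_{2n}$, one reduces to estimating the contribution from the torus part $a_1$. The Whittaker seminorm $p_\CV(W_{h_1^{-1}.f})$ grows at most polynomially in $a_1$ by the continuity of right translation on the appropriate function space together with Lemma~\ref{whit-est}, while the Schwartz property of $\Phi_\phi$ ensures rapid decay of $q_\CV(\phi_{h_1})$ in the directions where $\|a_1\| \to \infty$. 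In the region where some diagonal entries of $a_1$ tend to zero, the integral is regularized by $|\xi_{2n+1}(h_1)| \cdot |h_1|_\bk^{n+1/2}$ once $\Re(\xi_{2n+1})$ is sufficiently large, following exactly the classical absolute convergence argument for Godement sections underlying Proposition~\ref{prop:gs}(1).

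The hard part will be making the joint $h_1$-dependence of both seminorms sufficiently explicit to close the estimate. Concretely, one must control how a generating family of Schwartz seminorms on $\phi_{h_1}$ depends on $h_1$ through the formula $\phi_{h_1}(v) = \CF_{\bar\psi}\bigl(z \mapsto \Phi_\phi(h_1[1_{2n}\mid {}^tz])\bigr)(0,v)$, and simultaneously track the polynomial growth of the Whittaker seminorms of $W_{h_1^{-1}.f}$ under right translation. A clean approach is to bound each Schwartz seminorm of $\phi_{h_1}$ by a polynomial in $\|h_1^{\pm 1}\|$ times a Schwartz seminorm of $\Phi_\phi$, and likewise for the Whittaker side, thereby reducing the outer integral to a finite sum of one-variable Tate-type integrals on $\bk^\times$ that are absolutely convergent for $\Re(\xi_{2n+1})$ large.
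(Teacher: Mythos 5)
Your overall strategy coincides with the paper's: for fixed $h_1$ recognize the inner $\overline{S}_{2n}$-integral as $\oZ_{\rm JS}(s, W_{h_1^{-1}.f}, \phi_{h_1},\varphi_{2n}^{-1})$, bound it by the equicontinuity estimate of Proposition \ref{prop:conv0} (2) transferred to Whittaker models via Proposition \ref{prop:conv}, and then prove convergence of the outer $G_{2n}$-integral for $\Re(\xi_{2n+1})$ large by a Godement-section type estimate. The paper streamlines the $h_1$-dependence by assuming without loss of generality that $\Phi_\phi(X\mid {}^tz)=\Phi'(X)\phi'(z)$ is a pure tensor, so that \eqref{Phi-res} gives the explicit factorization $\CF_{\bar\psi}(\Phi_{\phi,h_1})(z)=\Phi'(h_1)\,\CF_{\bar\psi}(\phi')(zh_1^{-1})\,|h_1|_\bk^{-1}$, reducing everything to the convergence of $\int_{G_{2n}}\|h_1\|_{\rm HC}^{M}\,\Phi'(h_1)\,\xi_{2n+1}(h_1)\,|h_1|_\bk^{n-\frac12}\,\od\! h_1$, which is exactly \cite[Lemma 3.3 (ii)]{J09}.

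The one step of your plan that fails as literally stated is the final reduction: bounding each Schwartz seminorm of $\phi_{h_1}$ by ``a polynomial in $\|h_1^{\pm1}\|$ times a Schwartz seminorm of $\Phi_\phi$'' discards the essential decay, since a seminorm of $\Phi_\phi$ is a constant independent of $h_1$. That bound only yields $q_\CV(\phi_{h_1})\lesssim \|h_1\|_{\rm HC}^{M}$, and the resulting outer integrand $\|h_1\|_{\rm HC}^{M'}\,|\det h_1|^{\Re(\xi_{2n+1})+n+\frac12}$ is not integrable over $G_{2n}$ for any choice of $\Re(\xi_{2n+1})$: the determinant factor helps only where $|\det h_1|$ is small and actively grows where $|\det h_1|$ is large. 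What rescues the estimate --- and what your middle paragraph correctly identifies but your closing paragraph drops --- is that $\Phi_{\phi,h_1}(z)=\Phi_\phi(h_1[1_{2n}\mid {}^tz])$ evaluates $\Phi_\phi$ on matrices whose left $2n\times 2n$ block is $h_1$ itself, so $q_\CV(\phi_{h_1})$ decays rapidly as $\|h_1\|\to\infty$. You must carry into the outer integral a bound of the shape (polynomial in $\|h_1\|_{\rm HC}$) times (a Schwartz-decaying function of $h_1$, e.g. $\Phi'(h_1)$ after the pure-tensor reduction, together with the factor $|h_1|_\bk^{-1}$ from the Fourier change of variables); with that retained, the convergence for $\Re(\xi_{2n+1})$ large is precisely the estimate underlying Proposition \ref{prop:gs} (1), i.e. \cite[Lemma 3.3 (ii)]{J09}, rather than a genuinely one-variable Tate computation.
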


\begin{proof}
Without loss of generality, assume that 
$\Phi_\phi( X \mid {}^tz) =  \Phi'(X) \phi'(z)$ holds with $X\in \bk^{2n\times 2n}$ and $z\in \bk^{2n}$,
for some $\Phi'\in \CS(\bk^{2n\times 2n})$ and $\phi'\in \CS(\bk^{2n})$. Then from \eqref{Phi-res} we find that
\[
\CF_{\bar\psi}(\Phi_{\phi, h_1})(z) = \Phi'(h_1) \CF_{\bar\psi}(\phi')(zh_1^{-1}) |h_1|_\bk^{-1}.
\]
Thus by Proposition \ref{prop:conv0} (2) and Proposition \ref{prop:conv}, it suffices to show that given $M>0$, the integral 
\[
\int_{G_{2n}}  \| h_1\|^M_{\rm HC} \Phi'(h_1) \xi_{2n+1}(h_1) |h_1|_\bk^{n-\frac{1}{2}}\od\! h_1
\]
converges absolutely for $\Re(\xi_{2n+1})$ sufficiently large, where
$
\|h_1\|_{\rm HC}:=\| h_1\|+\| h_1^{-1}\|
$
for $\|\cdot\|$ the standard norm on $M_{2n}$ (\cf \cite[Section 3.1]{J09} for the Archimedean case).  This is \cite[Lemma 3.3 (ii)]{J09}.
\end{proof}

In view of  (${\rm FE}_{2n}$) and \eqref{PF},  and noting that $s\in  \Omega_{\xi, \eta}$, we have that 
\[
\begin{aligned}
&  \gamma(s, I(\xi), \wedge^2\otimes\eta^{-1}, \psi) \oZ_{\rm JS}(s, W_{h_1^{-1}. f},   \CF_{\bar\psi}(\Phi_{\phi, h_1})(0, \cdot), \varphi_{2n}^{-1})  \\
=\,  & \oZ_{\rm JS}(1-s, \tau_{2n}.W_{ {}^t h_1.\tilde f}, \CF_{\bar\psi}^1(\Phi_{\phi, h_1})(0,\cdot), \varphi_{2n}).
\end{aligned}
\]
Applying (${\rm MF}_{2n}$) for  $\tilde{\xi} = (\xi_{2n}^{-1}, \dots, \xi_2^{-1}, \xi_1^{-1})$, and noting from Remark \ref{rmk:Omega} (1) that $1-s \in \Omega_{\tilde\xi, \eta^{-1}}$, we obtain that 
\[
\begin{aligned}
& \prod_{1\leq i< j \leq 2n-i} \gamma(1-s, \xi_{2n+1-i}^{-1} \xi_{2n+1-j}^{-1}\eta, \bar\psi) \oZ_{\rm JS}(1-s,  \tau_{2n}.W_{{}^t h_1. \tilde f}, \CF_{\bar\psi}^1(\Phi_{\phi, h_1})(0,\cdot), \varphi_{2n}) \\
= \, & \Lambda_{\rm JS}(1-s, \tau_{2n}{}^th_1.\tilde f,  \CF_{\bar\psi}^1(\Phi_{\phi, h_1})(0,\cdot), \varphi_{2n}).
\end{aligned}
\]
Using $\gamma(s,\omega,\psi)\gamma(1-s, \omega^{-1}, \bar\psi)=1$ for $\omega\in\widehat{\bk^\times}$, it is straightforward to check that 
\[
  \gamma(s, I(\xi), \wedge^2\otimes\eta^{-1}, \psi)  \prod_{1\leq i< j \leq 2n-i} \gamma(1-s, \xi_{2n+1-i}^{-1} \xi_{2n+1-j}^{-1}\eta, \bar\psi) = \prod_{1\leq i<j\leq 2n+1-i}
  \gamma(s, \xi_i\xi_j\eta^{-1}, \psi). 
\]
From \eqref{rec} and the above calculations, we find that  \eqref{MFm+1} for $m=2n$ is reduced to the recurrence relation 
\be \label{rec'}
\begin{aligned}
\Lambda_{\rm JS}(s, f', \phi, \varphi_{2n+1}^{-1}) =    \int_{G_{2n}}  & \Lambda_{\rm JS}(1-s,  \tau_{2n}{}^th_1.\tilde f,  \CF_{\bar\psi}^1(\Phi_{\phi, h_1})(0,\cdot), \varphi_{2n}) \\
&  \xi_{2n+1}(h_1) |h_1|_\bk^{n+\frac{1}{2}}\od\! h_1
\end{aligned}
\ee
when $(s,\xi)\in\Omega^{2n}_\xi$ and $\Re(\xi_{2n+1})$ is sufficiently large.

\subsubsection{$\Lambda_{\rm JS}$-side}
Let us prove \eqref{rec'}. Recall that 
\[
  \Lambda_{\rm JS}(s, f', \phi, \varphi_{2n+1}^{-1})    
  =   \int_{S_{2n+1}} f'(z_{2n+1}h') R_{\varphi_{2n+1}^{-1}}(h')\phi(0) |h'|_\bk^{\frac{s}{2}}\od\! h',
  \]
where $S_{2n+1} = \Set{ u_y  h^+ \bar u_x |  h\in S_{2n}, x, y\in \bk^n}$ with the element $\bar u_x$ given by \eqref{ux}, and 
\be \label{uy}
u_y:= \begin{bmatrix} 1_n & & {}^ty \\ & 1_n & \\ & & 1\end{bmatrix},\quad y\in \bk^n.
\ee
Using Proposition \ref{prop:Rodd} (1), we find that $R_{\varphi_{2n+1}^{-1}}(u_y h^+ \bar u_x) \phi(0)  = \varphi_{2n}^{-1}(h) \phi(x)$ for $\phi \in \CS(\bk^n)$. 
It follows that 
\be \label{Lambdaodd}
  \Lambda_{\rm JS}(s, f', \phi, \varphi_{2n+1}^{-1})     =   \int_{S_{2n}}   \int_{\bk^{n}} f'_\phi\left(z_{2n+1} u_y  h^+ \right) \od\!y \,
    \varphi_{2n}^{-1}(h)  |h|_\bk^{\frac{s-1}{2}} \od\! h. 
\ee
By \eqref{gs}, we have that 
\[
\begin{aligned}
f'_\phi(z_{2n+1} u_y h^+ )  = & \ \xi_{2n+1}(z_{2n+1}h^+)|h|_\bk^n \\
&\cdot \int_{G_{2n}}  \Phi_\phi((h_1 \mid 0)z_{2n+1} u_y h^+ ) f(h_1^{-1}) \xi_{2n+1}(h_1)|h_1|_\bk^{n+\frac{1}{2}}\od\!h_1.
\end{aligned}
\]
A direct calculation gives that
\[
[h_1 \mid 0] z_{2n+1} u_y h^+ = [h_1 \mid 0]\begin{bmatrix} z_{2n} h  & {}^t(y, v_n) \\ & 1\end{bmatrix} = h_1 [z_{2n} h \mid  {}^t(y, v_n)].
\]
By a change of variable $h_1\mapsto h_1 (z_{2n}h)^{-1}$, and noting that 
$
\det z_{2n+1}= \det z_{2n} $
and 
$
(y, v_n){}^t(z_{2n}h)^{-1} = (y, v_n)z_{2n} {}^t h^{-1} = (y, v_n) {}^t h^{-1},
$
 we obtain that 
\[
f'_\phi(z_{2n+1} u_y h^+ )  =|h|_\bk^{-\frac{1}{2}}  \int_{G_{2n}}   \Phi_{\phi, h_1}( (y, v_n){}^t h^{-1})  f(z_{2n} h h_1^{-1})\xi_{2n+1}(h_1)|h_1|_\bk^{n+\frac{1}{2}}\od\!h_1
\]
It is easy to see that we can exchange the order of integration over $h_1\in G_{2n}$ in the above integral and that over $y\in \bk^n$ in \eqref{Lambdaodd}.  Then for any $h\in S_{2n}$ as in \eqref{Sact}, an affine transform in $y$  yields that 
\[
\int_{\bk^n}\Phi_{\phi, h_1}((y, v_n) {}^t h^{-1}) \od\! y = |g|_\bk \int_{\bk^n}\Phi_{\phi, h_1}(y, v_n {}^t g^{-1})\od\! y = |h|_\bk^{\frac{1}{2}} \,\CF_{\bar\psi}^1(\Phi_{\phi,h_1})(0, v_n.\hat h).
\]
It follows that 
\[
\begin{aligned}
 \Lambda_{\rm JS}(s, f', \phi, \varphi_{2n+1}^{-1})     =   \int_{S_{2n}} &\int_{G_{2n}} f(z_{2n}h h_1^{-1}) \CF^1_{\bar\psi}(\Phi_{\phi,h_1})(0, v_n.\hat h)\xi_{2n+1}(h_1)|h_1|_\bk^{n+\frac{1}{2}}\od\!h_1 \\
 & \varphi_{2n}^{-1}(h)  |h|_\bk^{\frac{s-1}{2}}  \od\! h. 
 \end{aligned}
\]

Assuming the absolute convergence, we can switch the order of integration and obtain that  
\be\label{doub'}
\begin{aligned}
 \Lambda_{\rm JS}(s, f', \phi, \varphi_{2n+1}^{-1})     =   \int_{G_{2n}} &\int_{S_{2n}} f(z_{2n}h h_1^{-1}) \CF^1_{\bar\psi}(\Phi_{\phi,h_1})(0, v_n.\hat h)  \varphi_{2n}^{-1}(h)  |h|_\bk^{\frac{s-1}{2}}  \od\! h\\
 & \xi_{2n+1}(h_1)|h_1|_\bk^{n+\frac{1}{2}}\od\!h_1.
 \end{aligned}
\ee
On the other hand, 
\[
\begin{aligned}
  & \Lambda_{\rm JS}(1-s,  \tau_{2n}{}^th_1.\tilde f,  \CF_{\bar\psi}^1(\Phi_{\phi, h_1})(0,\cdot), \varphi_{2n}) \\
=  \, & \int_{S_{2n}} f(  w_{2n} z_{2n} {}^th^{-1} \tau_{2n} h_1^{-1}) \CF_{\bar\psi}^1(\Phi_{\phi, h_1})(0, v_n.h) \varphi_{2n}(h) |h|_\bk^{\frac{1-s}{2}}\od\!h \\
= \, & \int_{S_{2n}}f(z_{2n}\hat h h_1^{-1}) \CF_{\bar\psi}^1(\Phi_{\phi, h_1})(0, v_n.h) \varphi_{2n}(h) |h|_\bk^{\frac{1-s}{2}}\od\!h \\
= \, & \int_{S_{2n}} f(z_{2n} h h_1^{-1}) \CF_{\bar\psi}^1(\Phi_{\phi, h_1})(0, v_n.\hat h) \varphi_{2n}^{-1}(h) |h|_\bk^{\frac{s-1}{2}}\od\!h.
\end{aligned}
\]
The same arguments as in the proof of Lemma \ref{lem:switch}  together with $({\rm MF}_{2n})$ show that  \eqref{doub'} is absolutely convergent. This proves \eqref{rec'},  hence finishes the proof of \eqref{MFm+1} for $m=2n$.

\subsection{The case $G_{2n-1}\to G_{2n}$} Assume that $m=2n-1$.  We need to prove \eqref{MFm+1} when $(s,\xi)\in \Omega^{2n-1}_\eta$ and 
$\Re(\xi_{2n})$ is sufficiently large, where $f' = {\rm g}^+_{\Phi, f, \xi'}$ is as in \eqref{f'gs}. 
Although the strategy is similar to the case that $m$ is even,  the calculation is much more complicated. 

\subsubsection{$\oZ_{\rm JS}$-side} 

We first make some group-theoretic preparations. From \eqref{sigmam} it is easy to verify that 
\be\label{varsig}
\sigma_{2n} = \sigma_{2n-1}^+\varsigma_n^+,\quad \textrm{where}\quad \varsigma_n:=\begin{bmatrix}
1_{n-1}\\
& 0& 1_{n-1} \\
& 1 &0
\end{bmatrix}\in G_{2n-1}.
\ee
Consider the subgroup $S_{2n-1}'$ of $S_{2n-1}$ as given by \eqref{Sodd'}. Put
\[
T_{n}:=\varsigma_n^{-1} S_{2n-1}' \varsigma_n = \Set{ \begin{bmatrix} g & 0 & Xg  \\ & 1 & x \\ & & g \end{bmatrix}  | 
 \begin{array}{l} g\in G_{n-1}, X\in M_{n-1} \\ x \in \bk^{1\times (n-1)} \end{array}}.
\]
Then $T_n^+\subset S_{2n}$. Moreover if we define $\overline{T}_n:= \varsigma_n^{-1}\overline{S}_{2n-1}' \varsigma_n$ and $\overline{T}_n^+$ in the obvious way, then 
 from \eqref{varsig} we see that
$\overline{T}_n^+$ embeds into $\overline{S}_{2n}$. 
Define a subgroup $R_n$ of $G_n$ by 
\[
R_n:=\Set{  \begin{bmatrix} 1_{n-1} \\  v & a \end{bmatrix} | a\in\bk^\times,  v\in \bk^{n-1} },
\]
so that $\overline{P}_{n-1,1}:=G_{n-1}^+ R_n$ is the lower triangular  maximal parabolic subgroup of $G_n$ of type $(n-1,1)$. 
Following the notation \eqref{dag}, it is easy to see that $\overline{P}_{n-1,1}^\dag$ normalizes the unipotent radical of $T_n^+$, which implies that 
$
T_n^+ R_n^\dag
$
is a subgroup of $S_{2n}$.  Moreover, the multiplication map $T_n^+\times R_n^\dag \to T_n^+ R_n^\dag$ is bijective and the multiplication map 
$
\overline{T}_n^+\times R_n^\dag \to \overline{S}_{2n}
$
is an embedding with open dense image. 
It follows that the integral \eqref{JSeven} can be written as 
\be  \label{JSeven'}
\begin{aligned}
& \oZ_{\rm JS}(s, W, \phi, \varphi_{2n}^{-1}) \\
 = &  \int_{ R_n} \int_{\overline{T}_n } W(\sigma_{2n} h^+ r^\dag)  \phi(e_n.h^+ r^\dag)  \varphi_{2n}^{-1}(h^+ r^\dag) |h|_\bk^{\frac{s-1}{2}} |r|_\bk^{s}\od\! h \od\! r \\
 = &  \int_{R_n}\int_{\overline{S}_{2n-1}'}W ((\sigma_{2n-1} h \varsigma_n)^+r^\dag) \phi(e_n r) \varphi_{2n-1}'^{-1}(h)\eta^{-1}(r) |h|_\bk^{\frac{s-1}{2}} |r|_\bk^{s}\od\! h \od\! r,
\end{aligned}
\ee
where $\varphi_{2n-1}'$ is the character of $S_{2n-1}'$ given by 
\be \label{S'}
h = \begin{bmatrix} g & Xg &0\\ & g&0 \\ & x & 1\end{bmatrix} \mapsto \eta(g)\psi({\rm tr}\, X),\quad g\in G_{n-1}, X\in M_{n-1}, \ x\in \bk^{n-1}. 
\ee
By \eqref{gsW},  for $f' ={\rm g}^+_{\Phi, f, \xi'}$ as in \eqref{f'gs}, $h\in S_{2n-1}'$ and $r\in R_n$, one has that 
\[
\begin{aligned}
W_{f'}((\sigma_{2n-1} h \varsigma_n)^+r^\dag) = \ &  \xi_{2n}((\sigma_{2n-1} h \varsigma_n)^+ r^\dag) | h^+ r^\dag|_\bk^{n-\frac{1}{2}}\\
&  \begin{aligned}\cdot \int_{G_{2n-1}} &  \int_{\bk^{2n-1}}\Phi(h_1[1_{2n-1} \mid {}^t z] (\sigma_{2n-1} h \varsigma_n)^+r^\dag)\bar\psi(e_{2n-1}{}^t z)\od\! z \\
 & W_f(h_1^{-1})\xi_{2n}(h_1)|h_1|_\bk^n \od\! h_1.
 \end{aligned}
\end{aligned}
\]
Note that $h_1[1_{2n-1} \mid {}^t z] (\sigma_{2n-1} h \varsigma_n)^+ = [h_1 \sigma_{2n-1} h  \varsigma_n \mid h_1 {}^tz]$ 
and change the variables $h_1\mapsto h_1(\sigma_{2n-1} h\varsigma_n)^{-1}$ and $z\mapsto z\, {}^t(\sigma_{2n-1} h\varsigma_n)$. For 
$h$ given by \eqref{S'}, a direct calculation shows that 
$e_{2n-1} \sigma_{2n-1}  h\varsigma_n = (e_n, x) \in \bk^{2n-1}$. 
It follows that 
\[
\begin{aligned}
W_{f'}((\sigma_{2n-1} h \varsigma_n)^+r^\dag)  = \ & \xi_{2n}^2(r) |r|_\bk^{2n-1} |h|_\bk^{\frac{1}{2}}\\
& \int_{G_{2n-1}}  \CF_{\bar\psi}(\Phi_{r, h_1})(e_n, x) W_f(\sigma_{2n-1} h \varsigma_n h_1^{-1})
\xi_{2n}(h_1) |h_1|_\bk^n \od\! h_1,
\end{aligned}
\]
where $\Phi_{r, h_1}\in \CS(\bk^{2n-1})$ is defined by
$\Phi_{r, h_1}(z) := \Phi(h_1[1_{2n-1} \mid {}^t z]r^\dag)$ for $z\in \bk^{2n-1}$.

Similar to the even case, write $z=(z_1, z_2)$, where $z_1\in \bk^n$, $z_2\in \bk^{n-1}$. Denote by $\CF_{\psi'}^1$, $\CF_{\psi'}^2$ the partial Fourier transforms on
$\CS(\bk^{2n-1})$ with respect to the variables $z_1$, $z_2$, where $\psi'$ is a nontrivial unitary character  of $\bk$. In this way, on $\CS(\bk^{2n-1})$ one has that 
$\CF_{\psi'} = \CF_{\psi'}^1\circ \CF_{\psi'}^2 = \CF_{\psi'}^2\circ \CF_{\psi'}^1$.

Plugging the above equation for $W_{f'}((\sigma_{2n-1} h \varsigma_n)^+r^\dag)$  into \eqref{JSeven'}  gives that 
\[
\begin{aligned}
 \oZ_{\rm JS}(s, W_{f'}, \phi, \varphi_{2n}^{-1})  = \int_{R_n} \int_{\overline{S}_{2n-1}'}& \int_{G_{2n-1}}
W_{\varsigma_n h_1^{-1}.f}(\sigma_{2n-1} h)  \CF_{\bar\psi}(\Phi_{r, h_1})(e_n, x) \xi_{2n}(h_1) |h_1|_\bk^n \od\! h_1 \\
& \varphi_{2n-1}'^{-1}(h) |h|_\bk^{\frac{s}{2}} \od\! h   \,  \phi(e_nr) \xi_{2n}^2\eta^{-1}(r)    |r|_\bk^{s+2n-1} \od\! r.
 \end{aligned}
\]
Similar to  Lemma \ref{lem:switch}, we can  switch the order of integration and obtain  the recurrence relation
\be \label{recodd}
\begin{aligned}
&\oZ_{\rm JS}(s, W_{f'}, \phi, \varphi_{2n}^{-1}) \\
= &   \int_{R_n}   \int_{G_{2n-1}}   \int_{\overline{S}_{2n-1}'}
W_{\varsigma_n h_1^{-1}.f}(\sigma_{2n-1} h)  \CF_{\bar\psi}(\Phi_{r, h_1})(e_n, x) \varphi_{2n-1}'^{-1}(h) |h|_\bk^{\frac{s}{2}} \od\! h \\
& \qquad \qquad\qquad\xi_{2n}(h_1) |h_1|_\bk^n \od\! h_1 \, \phi(e_n r) \xi_{2n}^2 \eta^{-1}(r)   |r|_\bk^{s+2n-1} \od\! r\\
= & \int_{R_n}\int_{G_{2n-1}}  \oZ_{\rm JS}(s, W_{\varsigma_n h_1^{-1}.f}, \CF_{\bar\psi}(\Phi_{r, h_1})(e_n, \cdot), \varphi_{2n-1}^{-1}) \\
 &\qquad  \qquad\qquad\xi_{2n}(h_1) |h_1|_\bk^n \od\! h_1\, \phi(e_n r) \xi_{2n}^2  \eta^{-1}(r)   |r|_\bk^{s+2n-1} \od\! r,
 \end{aligned}
\ee
where we have used \eqref{JSodd'} and \eqref{S'}.

Similar to the case that $m$ is even,   applying (${\rm FE}_{2n-1}$)  for $\xi$ and (${\rm MF}_{2n-1}$)  for $\tilde\xi$, and noting that $s\in  \Omega_{\xi,\eta}$, we find that 
\eqref{MFm+1} for $m=2n-1$ is reduced to the recurrence relation
\be \label{recodd'}
\begin{aligned}
& \Lambda_{\rm JS}(s, f', \phi, \varphi_{2n}^{-1}) \\
=\ & \eta(-1)^{n-1}  \int_{R_n}   \int_{G_{2n-1}}   \Lambda_{\rm JS}(1-s, \tau_{2n-1} \varsigma_n {}^t h_1. \tilde f, \CF_{\bar\psi}^1(\Phi_{r, h_1}^-)(e_n,\cdot ), \varphi_{2n-1}) \\
 & \qquad\qquad\qquad \xi_{2n}(h_1) |h_1|_\bk^n \od\! h_1\, \phi(e_n r) \xi_{2n}^2 \eta^{-1}(r)   |r|_\bk^{s+2n-1} \od\! r,
\end{aligned}
\ee
with $\Phi_{r, h_1}^-(z_1, z_2):=\Phi_{r, h_1}(z_1, -z_2)$, for 
$(s,\xi)\in \Omega^{2n-1}_\eta$ and $\Re(\xi_{2n})$ sufficiently large.

\subsubsection{$\Lambda_{\rm JS}$-side} Let us prove \eqref{recodd'}. Recall the base point $x_{2n} = (\overline{B}_{2n}z_{2n}, v_n)$  of the open  $S_{2n}$-orbit in $\CX_{2n}$ given by \eqref{basept}. For convenience  we choose a new base point as follows.  Recall the element 
\[
z_{2n-1}' =\begin{bmatrix} -v_{n-1} & 0& 1 \\ 1_{n-1} & 0&0\\ 0&  w_{n-1}&0 \end{bmatrix} \in G_{2n-1}
\]
as given by \eqref{z'}. Let 
$
g_n: = \begin{bmatrix}   -v_{n-1} & 1  \\ 1_{n-1} & 0 \end{bmatrix}\in G_n.
$
Then one can check that 
\be \label{zodd'}
(z_{2n}g_n^\dag, v_n.g_n^\dag) = (z_{2n}',  e_n),
\quad \textrm{where}\quad 
z_{2n}' : = \begin{bmatrix} -v_{n-1} & 1 \\ 1_{n-1} &0 \\ & & w_{n-1}&0 \\ & & -v_{n-1} & 1\end{bmatrix},
\ee
and it is clear that
$[1_{2n-1} \mid 0 ] z_{2n}' = [z_{2n-1}' \varsigma_n \mid 0]$. 
Noting that $\det g_n = (-1)^{n-1}$, we have that 
\be \label{newlambda}
\begin{aligned}
\Lambda(s, f',\phi, \varphi_{2n}^{-1}) &= \int_{S_{2n}}f'(z_{2n}h')   \phi(v_n.h') \varphi_{2n}^{-1}(h') |h'|_\bk^{\frac{s}{2}}\od\! h'\\
& = \eta(-1)^{n-1}\int_{S_{2n}} f'(z_{2n}' h')  \phi(e_n.h') \varphi_{2n}^{-1}(h') |h'|_\bk^{\frac{s}{2}}\od\! h'.
\end{aligned}
\ee
The integral over $S_{2n}$ can be manipulated  as follows. 
Recall the subgroup $T_n^+ R_n^\dag$ of $S_{2n}$ and the unipotent radical $U_n$ of the mirabolic subgroup $P_n$ of $G_n$, 
that is
\[
U_n : = \Set{  u_y':=\begin{bmatrix} 1_{n-1}  & {}^ty \\ & 1\end{bmatrix} | y \in \bk^{n-1}}.
\]
Finally let 
\[
V_n:= \Set{v_z:= \begin{bmatrix} 1_{n} & 0&  {}^t z \\   & 1_{n-1} &0 \\ & & 1 \end{bmatrix} | z\in \bk^{n}}.
\]
Then  it is  easy to check that the multiplication map 
\be \label{4gp}
U_n^\dag \times T_n^+ \times V_n \times R_n^\dag \to S_{2n}
\ee
is an embedding with open dense image. We can take the integral over this image. 

Recall that $T_n=\varsigma_n^{-1}S_{2n-1}'\varsigma_n$ and consider an element  
\be\label{h'decom}
h' =   u_y'^\dag\,  (\varsigma_n^{-1} h \varsigma_n)^+\, v_z \, r^\dag \in S_{2n}, \quad  \textrm{where}\quad  h\in S'_{2n-1}, \ r\in R_n
\ee
associated to the embedding \eqref{4gp}. 
Since $ U_n^\dag T_n^+ V_n\subset P_{2n}$, one has that 
\be \label{chardecom}
e_n. h' = e_nr \quad \textrm{and} \quad \varphi_{2n}(h') = \varphi_{2n-1}'(h) \psi( e_n {}^tz)  \eta(r), 
\ee
where $\varphi_{2n-1}'$ is the character of $S_{2n-1}'$ given by \eqref{S'}.
By \eqref{gs} we have 
\[
f'(z_{2n}'h') =  \xi_{2n}(z_{2n}' h') |h'|_\bk^{n-\frac{1}{2}}  \int_{G_{2n-1}}\Phi(h_1 [1_{2n-1} \mid 0]z_{2n}' h') f(h_1^{-1}) \xi_{2n}(h_1) |h_1|_\bk^n \od\! h_1.
\]
By direct calculation we find that for $h'$ given by \eqref{h'decom}, 
\[
\begin{aligned}
[1_{2n-1} \mid 0]z_{2n}' h'  = [z_{2n-1}' \varsigma_n \mid 0] u_y'^\dag  \, (\varsigma_n^{-1} h \varsigma_n)^+\, v_z \, r^\dag = [z_{2n-1}'u_y h \varsigma_n \mid {}^t z_{h'}] r^\dag, 
\end{aligned}
\] 
where $u_y$ is as in \eqref{uy} and 
$
{}^t z_{h'} =  z_{2n-1}' u_y h \varsigma_n  \begin{bmatrix} {}^t z \\0\end{bmatrix} + \begin{bmatrix} 0 \\ w_{n-1} {}^ty\end{bmatrix} \in \bk^{(2n-1)\times1}.
$
We change the variable $h_1\mapsto h_1(z_{2n-1}' u_y h \varsigma_n )^{-1}$  in the integral representation of $f'(z_{2n}' h')$. At this point,  an extensive calculation is required. Write  
\[
h =\begin{bmatrix} g & Xg&0 \\0& g &0\\ 0& x & 1\end{bmatrix} \in S_{2n-1}' 
\]
as in \eqref{S'}.  Then by a direct computation we obtain that 
\[
(z_{2n-1}' u_y h \varsigma_n )^{-1}[1_{2n-1} \mid 0]z_{2n}' h'  = [1_{2n-1} \mid {}^t z_{h'}']r^\dag,
\]
where
\[
 {}^t z_{h'}' =  \begin{bmatrix} {}^t z \\ 0\end{bmatrix}  - \begin{bmatrix} g^{-1} X \,{}^t y \\  x g^{-1}\,  {}^ty \\  -g^{-1} \, {}^t y\end{bmatrix}. 
\]
Further make a change of variable $z\mapsto z +(y\,  {}^tX \, {}^t g^{-1}, y \, {}^t g^{-1}\, {}^t x)$ in \eqref{newlambda}.
Recall the right action of $S_{2n-1}$ on $\bk^{n-1}$ from \eqref{Sact'} and  the involution in \eqref{inv}. It can be verified that 
$- y\, {}^t g^{-1} =  0.\widehat{u_y h}$.

Using \eqref{chardecom} and noting that $\det z_{2n}' = \det (z_{2n-1}' \varsigma_n )$, after the above change of variables we arrive at 
\[
\begin{aligned}
 \Lambda(s, f',\phi, \varphi_{2n}^{-1})   =\ &  \eta(-1)^{n-1}  \int_{R_n}  \int_{S_{2n-1}'} \int_{\bk^{n-1}} 
 \int_{\bk^{n}} \bar\psi(e_n {}^t z) \\
&  \int_{G_{2n-1}}\Phi_{r, h_1}^-(z, 0. \widehat{u_y h})  f(z_{2n-1}' u_y h \varsigma_n h_1^{-1}) \xi_{2n}(h_1)  |h_1|_\bk^n\od\! h_1 \od\! z \\
&  \qquad \psi( (0.\widehat{u_y h}) {}^t x)    \varphi_{2n-1}'^{-1}(h) |h|_\bk^{\frac{s-1}{2}} \od\!y  \od\! h\, \phi(e_n r) \xi_{2n}^2 \eta^{-1}(r) |r|_\bk^{s+2n-1}\od\! r.
\end{aligned}
\]
Assuming the absolute convergence, we can switch the order of integration and obtain that 
\be  \label{doubodd'}
\begin{aligned}   
\Lambda(s, f',\phi, \varphi_{2n}^{-1})  
= &\, \eta(-1)^{n-1} \int_{R_n}\int_{G_{2n-1}} 
\int_{S_{2n-1}'}\int_{\bk^{n-1}}  f(z_{2n-1}' u_y h \varsigma_n h_1^{-1})  \\
&\qquad\CF^1_{\bar\psi}(\Phi^-_{r, h_1})(e_n, 0.\widehat{u_yh})
\psi( (0.\widehat{u_y h}) {}^t x)    \varphi_{2n-1}'^{-1}(h) |h|_\bk^{\frac{s-1}{2}} \od\!y  \od\! h   \\
&\qquad \qquad\qquad  \xi_{2n}(h_1)  |h_1|_\bk^n  \od\! h_1  \, \phi(e_n r) \xi_{2n}^2 \eta^{-1}(r) |r_\bk^{s+2n-1}  \od\! r.
\end{aligned}
\ee

On the other hand since
$S_{2n-1} = \set{ u_y h  | h\in S_{2n-1}', \, y\in \bk^{n-1} }$, 
using \eqref{Lambda2} and noting that ${}^t\varsigma_n^{-1}=\varsigma_n$, we find that for any $\phi_1\in \CS(\bk^n)$, 
\[
\begin{aligned}
& \Lambda_{\rm JS}(1-s, \tau_{2n-1} \varsigma_n {}^t h_1. \tilde f, \phi_1, \varphi_{2n-1})  \\
= & \int_{S_{2n-1}'}\int_{\bk^{n-1}} f(z_{2n-1}' \widehat{u_y h }\varsigma_n h_1^{-1})R_{\varphi_{2n-1}}(u_y h)\phi_1(0)|h|_\bk^{\frac{1-s}{2}}\od\!y \od\! h \\
= & \int_{S_{2n-1}'}\int_{\bk^{n-1}} f(z_{2n-1}' u_y h \varsigma_n h_1^{-1})R_{\varphi_{2n-1}}(\widehat{u_y h})\phi_1(0)|h|_\bk^{\frac{s-1}{2}}\od\!y \od\!h.
\end{aligned}
\]
For the element $h\in S_{2n-1}'$ as above, from Proposition \ref{prop:Rodd} (1) it is straightforward to check that 
\[
R_{\varphi_{2n-1}}(\widehat{u_y h})\phi_1(0) = \phi_1(0.\widehat{u_yh})   \psi( (0.\widehat{u_y h}) {}^t x)    \varphi_{2n-1}'^{-1}(h).
\]
Now put $\phi_1 = \CF^1_{\bar\psi}(\Phi^-_{r, h_1})(e_n, \cdot)$.   Similar arguments as in the proof of Lemma 
\ref{lem:switch}  together with $({\rm MF}_{2n-1})$  show that  \eqref{doubodd'} is absolutely convergent. This proves \eqref{recodd'}, hence finishes the  proof of \eqref{MFm+1} for $m=2n-1$.

\section{Friedberg-Jacquet integrals and modifying factors} \label{sec:FJMF}

In this section we prove the results in Section \ref{sec2.3}. 

\subsection{Proof of Theorem \ref{cor:sha}}
By MVW involution, $I(\tilde\xi)$ has an irreducible generic quotient $\pi(\tilde\xi) \cong \pi(\xi)^\vee$,  such that $\pi(\tilde\xi)\otimes|\eta|^{\frac{1}{2}}$ is nearly tempered. By Theorem \ref{thm:FE_m} (4) and that $\oL(1-s, \pi(\tilde\xi), \wedge^2\otimes\eta)$ is holomorphic at $s=0$, it suffices to prove the following lemma. 

\begin{leml}\label{lem:sha}
Under the assumptions of Theorem \ref{cor:sha},  for all $\widetilde{W} \in \CW(\pi(\tilde\xi), \bar \psi)$ and $\phi \in \CS(\bk^n)$ with $\phi(0)=0$, it holds that  
\[
\oZ_{\rm JS}(1, \widetilde W, \hat \phi, \varphi_{2n}) = 0.
\]
\end{leml}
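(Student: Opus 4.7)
My plan is to exploit the absolute convergence of the integral at $s=1$, guaranteed by the near-temperedness of $\pi(\tilde\xi)\otimes |\eta|^{\frac{1}{2}}$ noted just above the lemma, in order to insert the Fourier inversion formula for $\hat\phi$ and recast the integral as a pairing $\int_{\bk^n}\phi(y) K_{\widetilde W}(y)\od y$ with an explicit distributional kernel $K_{\widetilde W}$. The heart of the proof will be to show that $K_{\widetilde W}$ is supported at the origin, so that $\phi(0)=0$ forces the pairing to vanish.

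\textbf{Step 1 (Absolute convergence).} Since $\pi(\tilde\xi)\otimes |\eta|^{\frac{1}{2}}=\pi(\tilde\xi)\otimes |\eta^{-1}|^{-\frac{1}{2}}$ is nearly tempered, Proposition \ref{prop:conv} gives absolute convergence of $\oZ_{\rm JS}(s,\widetilde W,\hat\phi,\varphi_{2n})$ on $\CH_{(\frac{1}{2}-\epsilon,\infty)}$ for some $\epsilon>0$; in particular the integrand is absolutely integrable at $s=1$. I will further use the Whittaker decay from Lemma \ref{whit-est} to justify all Fubini-type exchanges below via a standard approximation of $\phi$ by compactly supported functions.

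\textbf{Step 2 (Fourier inversion).} Using $\hat\phi(e_n g)=\int_{\bk^n}\phi(y)\psi(y(e_n g)^t)\od y$ in the explicit form \eqref{JSeven} adapted to $\varphi_{2n}$ in place of $\varphi_{2n}^{-1}$, and swapping the order of integration, I obtain
\[
\oZ_{\rm JS}(1,\widetilde W,\hat\phi,\varphi_{2n}) = \int_{\bk^n}\phi(y)\,K_{\widetilde W}(y)\od y,
\]
where
\[
K_{\widetilde W}(y) = \int_{N_n\bsl G_n}\int_{\frak q_n\bsl M_n}\widetilde W\left(\sigma_{2n}\begin{bmatrix} g & Xg\\ & g\end{bmatrix}\right)\psi({\rm tr}\,X+y(e_n g)^t)\,\od X\,\eta(g)|g|_\bk\od g.
\]

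\textbf{Step 3 (Vanishing of $K_{\widetilde W}$ on $\bk^n\setminus\{0\}$).} The decisive claim is that $K_{\widetilde W}(y)=0$ for every $y\neq 0$. The extra character $\psi(y(e_n g)^t)$ twists the Shalika-type character on $N_n \times (\frak q_n \bsl M_n)$ by a nontrivial unitary phase depending on the last row of $g$. For a given $y\neq 0$, I will choose a one-parameter unipotent subgroup $U_y\subset G_n$ whose right translation $g\mapsto g u$ with $u\in U_y$ absorbs the phase $\psi(y(e_n g)^t)$ through conjugation by an element of $N_{2n}$ inside $S_{2n}$ via the natural embedding; the Whittaker equivariance of $\widetilde W$ under $N_{2n}$ then produces an unbalanced character identity of the form $K_{\widetilde W}(y)=\psi(c_y(u))\cdot K_{\widetilde W}(y)$ with a nontrivial character $u\mapsto\psi(c_y(u))$ of $U_y$, which forces $K_{\widetilde W}(y)=0$.

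\textbf{Step 4 (Conclusion).} Since $K_{\widetilde W}$ is supported at $\{0\}\subset\bk^n$, the continuous functional $\phi\mapsto \int\phi(y)K_{\widetilde W}(y)\od y$ factors through $\phi\mapsto\phi(0)$. In the non-archimedean case this is immediate. In the archimedean case, the moderate-growth estimate from Proposition \ref{prop:conv0} (via Lemma \ref{whit-est}) prevents any contribution from derivatives of the delta distribution. In either case $\phi(0)=0$ therefore yields $\oZ_{\rm JS}(1,\widetilde W,\hat\phi,\varphi_{2n})=0$.

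\textbf{Main obstacle.} Step 3 is the crux. The technical challenge is to identify explicitly a unipotent subgroup $U_y\subset G_n$ whose right translation on $g$ lifts, through the Shalika embedding into $S_{2n}$ and conjugation by $\sigma_{2n}$, to an element of $N_{2n}$ whose Whittaker-$\psi_{2n}$-value precisely accounts for $\psi(y(e_n g)^t)$. One must track carefully how the Whittaker character interacts with the extra phase in order to isolate the unbalanced character that forces vanishing, and the same choice must work uniformly over archimedean and non-archimedean $\bk$. A secondary subtlety is the justification of the Fubini interchange in Step 2 near the boundary of the absolute convergence range, handled by the standard approximation argument sketched in Step 1.
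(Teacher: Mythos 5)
There is a genuine gap, and it sits exactly where you flag it. Your Step 2 already fails: the would-be kernel $K_{\widetilde W}(y)$ is not an absolutely convergent integral. In the convergence analysis of the Jacquet--Shalika integral (see the proof of Proposition \ref{prop:conv0}), the decay of the Schwartz function in the direction $e_n g$ (the factor $(1+|a_n|_\bk)^{-R}$ coming from $q_R(\phi)$) is indispensable; if you replace $\hat\phi(e_n g)$ by the unimodular phase $\psi\bigl(y\,{}^t(e_n g)\bigr)$, the integral over the last torus coordinate diverges. Approximating $\phi$ by compactly supported functions does not help, since the problem is the $(g,X)$-integral for fixed $y$, not the $y$-integral; so $K_{\widetilde W}$ is at best a distribution and the pointwise statement ``$K_{\widetilde W}(y)=0$ for $y\neq 0$'' is not even well posed as written. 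Step 3 is then not an argument but a hope: Whittaker equivariance of $\widetilde W$ is a \emph{left} $N_{2n}$-equivariance, while your proposed right translation $g\mapsto gu$ produces $\widetilde W\bigl(\sigma_{2n}\begin{bmatrix} g & Xg\\ & g\end{bmatrix}u^\dag\bigr)$ with $u^\dag$ on the right; moving it to the left requires conjugating through $\begin{bmatrix} g & Xg\\ & g\end{bmatrix}$, which yields an element depending on $(g,X)$, not a fixed element of $N_{2n}$, and no character identity results. (Translating $X$ cannot help either, since the extra phase does not involve $X$.) Finally, even granting support of $K_{\widetilde W}$ at the origin, your dismissal of derivative-of-delta terms in the Archimedean case by a ``moderate growth'' remark is unsubstantiated; ruling out those transversal-derivative contributions is essentially the content of Theorem \ref{cor:sha}(1), so it cannot be waved away.

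For comparison, the paper's proof avoids all of this by never working with the Jacquet--Shalika integral directly at the critical point. One writes $\widetilde W = W_{\tilde f}$ for $\tilde f\in I(\tilde\xi)$ and uses Theorem \ref{thm:FE'_m}, Theorem \ref{thm:MF_m} and meromorphic continuation in the inducing character to reduce the vanishing to that of the open-orbit integral $\Lambda_{\rm JS}(1, f', \hat\phi, \varphi_{2n})$ for $\eta^{-1}$-symmetric $\xi'\in\CM^\circ$ in the nearly tempered range, where the integral converges absolutely. There the integral is written as an iterated integral over $A_n^\dag\backslash S_{2n}$ and $A_n^\dag$; the $\eta^{-1}$-symmetry makes the character contributions on $A_n^\dag$ cancel, and $\prod_i|a_i|_\bk\,\od a^\dag$ is exactly the Lebesgue measure on $\bk^n$ restricted to $(\bk^\times)^n$, so the inner integral is $\int_{\bk^n} R_{\varphi_{2n}}(h)\hat\phi$, which equals a constant times $\phi(0)=0$ by Fourier inversion. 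That explicit torus computation on the open orbit is the mechanism that replaces your Steps 2--4, and it is precisely what your proposal is missing.
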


\begin{proof}
Since $\CW(\pi(\tilde\xi), \bar\psi) = \CW(I(\tilde\xi),\bar\psi)$, we may assume that 
$\widetilde W  = W_{\tilde f}$ for some $\tilde f \in I(\tilde\xi)$. By Theorem \ref{thm:FE'_m}, Theorem \ref{thm:MF_m}
and meromorphic continuation, it suffices to show that 
\[
\Lambda_{\rm JS}(1, f', \hat\phi, \varphi_{2n})=0
\]
for all $\xi'\in \CM^\circ$ which is $\eta^{-1}$-symmetric such that $I(\xi')\otimes |\eta|^{\frac{1}{2}}$ is nearly tempered, and all 
$f'\in I(\xi')$. In this case the integral $\Lambda_{\rm JS}(1, f', \hat\phi, \varphi_{2n})$ is absolutely convergent. Similar to the calculation in 
Section \ref{sec5.2},
\[
\begin{aligned}
\Lambda_{\rm JS}(1, f', \hat\phi, \varphi_{2n}) \, & = \int_{A_n^\dag\bs S_{2n}} \int_{A_n^\dag} f'(z_{2n} a^\dag h) R_{\varphi_{2n}}(h)\hat\phi(a_1, \ldots, a_n) \prod^n_{i=1}(\eta(a_i) |a_i|_\bk) \od\! a^\dag |h|_\bk^{\frac{1}{2}} \od \! h \\
& =  \int_{A_n^\dag\bs S_{2n}} \int_{A_n^\dag} R_{\varphi_{2n}}(h)\hat\phi(a_1, \ldots, a_n) \prod^n_{i=1} |a_i|_\bk \od\! a^\dag f'(z_{2n} h)  |h|_\bk^{\frac{1}{2}} \od \! h.
\end{aligned}
\]
Since $\phi(0)=0$ and $\prod^n_{i=1} |a_i|_\bk \od\! a^\dag$ is the restriction of the Haar measure on $\bk^n$ to  the open dense subset $(\bk^\times)^n \cong A_n^\dag$, the last inner integral vanishes. 
\end{proof}

\subsection{Proof of Proposition \ref{prop:Sha}} By Theorem \ref{thm:FE_m} and Theorem \ref{thm:MF_m}, for $f\in I(\xi)$ and $\phi\in \CS(\bk^n)$ we have 
\[
\begin{aligned}
\Gamma(s, I(\xi),\wedge^2\otimes\eta^{-1},\psi)\Lambda_{\rm JS}(s, f_\xi, \phi, \varphi_{2n}^{-1}) 
 =  &\gamma(s, I(\xi), \wedge^2\otimes \eta^{-1}, \psi) \, \oZ_{\rm JS}(s, W_f, \phi,\varphi_{2n}^{-1}) \\
=  &  \oZ_{\rm JS}(1-s, \tau_{2n}.W_{\tilde f}, \hat\phi, \varphi_{2n}). 
\end{aligned}
\]
The proposition follows from Lemma \ref{lem:sha}.

\subsection{Proof of Proposition \ref{prop:RS}}
Write  for short
\[
I_i: = I(\xi^i)\quad {\rm and}\quad \pi_i: = \pi(\xi^i),\quad {\rm for}\  i=1,2,
\]
where $\xi^1, \xi^2$ are as in Remark \ref{rmk:xi}. 
Without loss of generality we may assume that the restriction  $f|_{H_n}$
is an element $f_1\otimes f_2\in I_1\otimes I_2$, so that 
\[
\Lambda_{\rm RS}(s, f, \phi, \eta^{-1}) = \int_{G_n} f_1(g) f_2(w_n g) \phi(v_n g) \eta^{-1}(g) |g|_\bk^s\od\! g.
\] 
As mentioned in Section \ref{sec:CC}, $(\overline{B}_n, \overline{B}_n w_n, v_n)$ is a base point of the unique open $G_n$-orbit in $\CB_n\times \CB_n\times \bk^n$.  Hence there is a unique 
element $g'\in G_n$ taking this base point to the one in \cite[Lemma 1.1]{LLSS23}.  Then by \cite[Theorem 1.6 (a)]{LLSS23},  a change of variable $g\mapsto g'g$ in the above integral shows that there exists $c \in \BC^\times$  (depending on $g', \xi$ and $\eta$) such that 
\[
\Lambda_{\rm RS}(s, f, \phi, \eta^{-1})   =  c\, |g'|_\bk^s \prod_{i+j\leq n} \gamma(s, \xi_i\xi_{n+j}\eta^{-1}, \psi) \cdot \oZ_{\rm RS}(s, f_1, f_2, \phi, \eta^{-1}),
\]
where 
\[
\oZ_{\rm RS}(s, f_1, f_2, \phi, \eta^{-1}): = \int_{N_n \bsl G_n} W_{f_1}(g) \overline{W}_{f_2}(g) \phi(e_n g)\eta^{-1}(g) |g|_\bk^s \od\!g,
\]
and $W_{f_1}\in \CW(I_1,\psi) = \CW(\pi_1, \psi)$ and $\overline{W}_{f_2} \in \CW(I_2, \bar\psi) = \CW(\pi_2, \bar\psi)$ are the Whittaker functions associated to $f_1$ and $f_2$ via Jacquet integrals respectively. 
Note that both integrals  above are first defined in some domains of convergence and then extended meromorphically to $s\in\BC$. 

Recall from Remark \ref{rmk:xi} (4) that $\pi_2\cong \pi_1^\vee \otimes \eta$. It follows from  \cite{JPSS83, J09}  that there exists $\epsilon =\pm 1$ (depending on $\xi$ and $\eta$) such that
\[
\begin{aligned}
& \Gamma(s, I(\xi), \wedge^2\otimes \eta^{-1},\psi) \, \Lambda_{\rm RS}(s, f, \phi, \eta^{-1})  \\
= \ & \epsilon \, c\, |g'|_\bk^s \prod_{i, j=1,2\ldots, n} \gamma(s, \xi_i \xi_{n+j} \eta^{-1}, \psi) \cdot \oZ_{\rm RS}(s, f_1, f_2, \phi, \eta^{-1}) \\
= \ & \epsilon \, c\, |g'|_\bk^s \, \gamma(s, I_1\times I_2\otimes\eta^{-1}, \psi) \, \oZ_{\rm RS}(s, f_1, f_2, \phi, \eta^{-1}) \\
= \ & \epsilon \, c\, |g'|_\bk^s\, \gamma(s, \pi_1\times \pi_1^\vee, \psi) \, \oZ_{\rm RS}(s,  f_1, f_2, \phi, \eta^{-1}) \\
= \ & \epsilon \, c\, |g'|_\bk^s \, \varepsilon(s, \pi_1\times  \pi_1^\vee, \psi) \oL(1-s,  \pi_1^\vee\times \pi_1)  \oZ_{\rm RS}^\circ(s, f_1, f_2, \phi, \eta^{-1}),
\end{aligned}
\]
where
\[
  \oZ_{\rm RS}^\circ(s, f_1, f_2, \phi, \eta^{-1}) := \frac{  \oZ_{\rm RS}(s, f_1, f_2, \phi, \eta^{-1})}{\oL(s,  \pi_1\times \pi_1^\vee)}.
\]
It is well-known that $\oL(s, \pi\times \pi^\vee)$ is holomorphic at 
$s=1$ for any $\pi\in \Irr_{\rm gen}(G_n)$  (see e.g. \cite[Appendix A.1]{FLO12}).
%By assumption, $\oL(1-s, \pi\times \pi_1^\vee)$ is holomorphic at $s=0$.  
Since $\oZ_{\rm RS}^\circ(s, f_1, f_2, \phi, \eta^{-1})$ defines  a nonzero element in the space 
$\Hom_{G_n}(\pi_1\, \widehat \otimes \pi_2\, \widehat\otimes\, \CS(\bk^n), \eta|\cdot|_{\bk}^{-s})$ 
for $\forall s\in \BC$, we see that 
$\left(s^{d_\xi}  \Lambda_{\rm RS}(s, f, \phi, \eta^{-1})  \right)_{s=0} = \langle \lambda , f|_{H_n}\otimes \phi \rangle
$
for a nonzero functional  $\lambda\in \Hom_{G_n}(\pi_1 \,\widehat\otimes \, \pi_2 \, \widehat\otimes \, \CS(\bk^n), \eta)$.
Clearly 
\[
\Hom_{G_n}(\pi_1 \,\widehat\otimes \, \pi_2 , \eta) \cong \Hom_{G_n}(\pi_1 \,\widehat\otimes \,\pi_1^\vee, \BC) \neq \{0\},
\]
hence by the uniqueness of 
Rankin-Selberg periods  (\cite{SZ12, S12}), the  functional $\lambda$ factors through $\pi_1\,\widehat\otimes \, \pi_2$. The proposition follows.

\subsection{Proof of Theorem  \ref{MFSha}}
Following the above proof of Proposition \ref{prop:RS}, write $I_i = I(\xi^i)$, $i=1, 2$. Then we have induction in stages: 
$I(\xi) \cong \Ind^{G_{2n}}_{\overline{Q}_n}(I_1 \, \widehat\otimes \, I_2)$ by taking $f\mapsto f'$ with 
$f'(g)\in I_1 \, \widehat\otimes \, I_2$ for $g\in G_{2n}$, being given by %$f'(g) = (g.f)|_{H_n}$, that is, 
$f'(g)(h) = \delta_{\overline{Q}_n}^{-1/2}(h)f(hg)$ for $h\in H_n$ where $\delta_{\overline{Q}_n}$ is the modular character of $\overline{Q}_n$.
Take $\gamma_n$ in \eqref{gamman} and let
$
G_n':=\Set{ \begin{bmatrix} g \\ & 1_n\end{bmatrix} | g\in G_n}.
$
Then the multiplication map $\overline{Q}_n \times \{\gamma_n\}\times G_n' \to \overline{Q}_n \gamma_n H_n$ is a bijection. Hence  for
$f\in I(\xi)^\sharp$, by the support condition we may view the map
\[
G_n\to I_1 \, \widehat\otimes \, I_2, \quad  g\mapsto f' \left(\gamma_n \begin{bmatrix} g \\ & 1_n \end{bmatrix} \right)
\]
as an element of $C^\infty_c(G_n) \, \widehat\otimes \, I_1 \, \widehat\otimes\, I_2$. From the proof of Proposition 
\ref{prop:RS}, the functional $\lambda_{I(\xi)}'$ given by \eqref{lambda'} is of the form 
$
\langle \lambda_{I(\xi)}', f \rangle  =\langle \lambda', f'(1_n)\rangle
$
for some $\lambda' \in \Hom_{G_n}( I_1  \,\widehat\otimes\, I_2, \eta)$.  Then
\be \label{LambdaFJ'}
\Lambda_{\rm FJ}(s, f,\chi) = \int_{G_n}\left\langle \lambda', f' \left(\gamma_n \begin{bmatrix} g \\ & 1_n \end{bmatrix} \right) \right\rangle \, \chi(g)|g|_\bk^{s-\frac{1}{2}} \od\!g.
\ee
From this (1) and (2) of the theorem follow easily.

Assume that the conditions in (3) hold. We have the twisted  Shalika functional $\lambda_{I(\xi)}$. 
Note that $\overline{Q}_n \gamma_n H_n \subset \overline{Q}_n S_{2n} = \overline{Q}_n N_{Q_n}$, where $N_{Q_n}\cong M_n$ is the unipotent radical of the upper triangular parabolic subgroup $Q_n$ opposite to $\overline{Q}_n$, and we have a bijection $\overline{Q}_n\times N_{Q_n} \to \overline{Q}_n N_{Q_n}$. In fact one has that $\overline{Q}_n \gamma_n H_n = \overline{Q}_n N_{Q_n}^\diamond$, 
where
\[
N_{Q_n}^\diamond:=\Set{ \begin{bmatrix} 1_n & g \\ & 1_n \end{bmatrix} | g\in G_n }.
\]
Hence for $f\in I(\xi)^\sharp$ we may  view the map
$M_n \to I_1 \, \widehat\otimes \, I_2$ with $X\mapsto f' \left(\begin{bmatrix} 1_n & X \\ & 1_n \end{bmatrix}\right)
$
as an element of $C^\infty_c(G_n)\,\widehat\otimes\, I_1 \, \widehat\otimes\,I_2 \subset C^\infty_c(M_n)\,\widehat\otimes\, I_1 \, \widehat\otimes\,I_2$. 

From the above discussion and the definitions of $\lambda_{I(\xi)}$ and $\lambda_{I(\xi)}'$, we obtain that
\[
\begin{aligned}
\langle \lambda_{I(\xi)}, f \rangle   
= \int_{M_n} \left\langle \lambda_{I(\xi)}', {\begin{bmatrix} 1_n & X \\ & 1_n \end{bmatrix}}. f \right\rangle  \bar\psi({\rm tr}\, X)\od\! X 
=  \int_{M_n} \left\langle \lambda', f'\left(\begin{bmatrix} 1_n & X \\ & 1_n \end{bmatrix}\right)  \right\rangle \bar\psi({\rm tr}\, X)\od\! X.
\end{aligned}
\]
For $\Re(s)$ sufficiently large, we have that 
\[
\begin{aligned}
\oZ_{\rm FJ}(s, f, \chi) & = \int_{G_n} \left\langle \lambda_{I(\xi)},   \begin{bmatrix} g_n &  \\ & 1_n \end{bmatrix}.f\right\rangle  \chi(g)|g|_\bk^{s-\frac{1}{2}}\od\! g \\
&  = \int_{G_n} \int_{M_n} \left\langle \lambda', f'\left(\begin{bmatrix} 1_n & X\\ & 1_n \end{bmatrix} \begin{bmatrix} g & \\ & 1_n\end{bmatrix}\right)\right\rangle
 \bar\psi({\rm tr}\, X)\od\! X \, \chi(g)|g|_\bk^{s-\frac{1}{2}}\od\! g \\
& =  \int_{G_n} \int_{M_n} \left\langle \lambda',  I_1(g).f' \left(\begin{bmatrix} 1_n & X\\ & 1_n \end{bmatrix} \right) \right\rangle
\bar\psi({\rm tr}(gX))\od\! X \, \chi(g)|g|_\bk^{s+n-\frac{1}{2}}\od\! g,
\end{aligned}
\]
where we change the variable $X\mapsto gX$ in the last step. By the support condition on $f$ again, we may assume that the function
\[
\Phi(g, X) :=\left\langle \lambda',  I_1(g).f' \left(\begin{bmatrix} 1_n & X\\ & 1_n \end{bmatrix}\right)\right\rangle \chi(g),\quad (g, X)\in G_n\times M_n
\]
lies in the space ${\rm MC}(I_1\otimes\chi)\otimes C^\infty_c(M_n)$, where ${\rm MC}(I_1\otimes\chi)$ denotes the space spanned the matrix coefficients of $I_1\otimes\chi$.  Then the above inner integral over $M_n$ equals $\CF_{\bar\psi}(\Phi)(g, g)$, where $\CF_{\bar\psi}$ indicates the Fourier transform in the variable $X$ with respect to $\bar\psi$. 

Thus $\oZ_{\rm FJ}(s, f,\chi)$ can be viewed as a Godement-Jacquet integral (\cite{GJ72}) for the representation $I_1\otimes\chi$ of $G_n$. By the functional equation for Godement-Jacquet integrals and the uniqueness of meromorphic continuation, for $-\Re(s)$ sufficiently large we have that 
\[
\begin{aligned}
\gamma(s, I_1\otimes\chi, \psi) \oZ_{\rm FJ}(s, f, \chi) 
=\, &\int_{G_n} \Phi(g^{-1}, g) |g|_\bk^{\frac{1}{2}-s}\od\! g \\
= \, & \int_{G_n} \left\langle \lambda', I_1(g^{-1}).f'\left(\begin{bmatrix} 1_n & g \\ & 1_n \end{bmatrix}\right)\right\rangle  \chi(g^{-1}) |g|_\bk^{\frac{1}{2}-s}\od\! g \\
=\, &\int_{G_n} \left\langle \lambda', f'\left(\begin{bmatrix} g^{-1} & 1_n \\ & 1_n\end{bmatrix}\right) \right\rangle \chi(g^{-1}) |g|_\bk^{\frac{1}{2}-s}\od\! g\\
=\, & \int_{G_n} \left\langle \lambda', f'\left(\begin{bmatrix} g  & 1_n \\ & 1_n\end{bmatrix}\right)\right\rangle \chi(g) |g|_\bk^{s-\frac{1}{2}}\od\! g \\
= \, & \int_{G_n} \left\langle \lambda', f'\left(\gamma_n \begin{bmatrix} g\\ & 1_n\end{bmatrix} \right)\right\rangle \chi(g) |g|_\bk^{s-\frac{1}{2}}\od\! g \\
= \, & \Lambda_{\rm FJ}(s, f,\chi),
\end{aligned}
\]
in view of \eqref{LambdaFJ'}. It follows that 
$\gamma(s, I_1\otimes\chi, \psi) \oZ_{\rm FJ}(s, f, \chi)  = \Lambda_{\rm FJ}(s, f,\chi)$ 
for all $s\in\BC$ by the uniqueness of meromorphic continuation.

\section{Proof of Archimdedean period relations} \label{sec:PAPR}

In this section we will apply Theorem \ref{MFSha} to prove Theorem \ref{APR}, and we retain the notation in Section \ref{sec:APR}.
 Write 
$\zeta_\mu:= \chi_\mu\rho_{2n} = (\zeta_{\mu,1}, \zeta_{\mu,2},\dots, \zeta_{\mu,  2n}) \in (\widehat{\bk^\times})^{2n}$,
 so that $I_\mu = I(\zeta_\mu)$ in the notation of Section \ref{sec2.3}. 
 
 %\begin{leml} \label{lem:adjoint} The adjoint $L$-function $\oL(s, \pi(\zeta_\mu^1)\times \pi(\zeta_\mu^1)^\vee)$ is holomorphic at $s=1$. \end{leml}

Let $v_\mu^\vee\in (F_\mu^\vee)^{\overline{N}_{2n,\BC}}$ be the lowest weight vector specified as in \cite[Section 2.1]{LLS24}, and let 
$
\gamma_n':= \begin{bmatrix} 1_n & 1_n \\ & w_n\end{bmatrix}.
$
As in Section \ref{sec:APR}, assume that $\chi_\natural$ is $F_\mu$-balanced in the sense of Definition \ref{def:bal}. We specify a generator of $\Hom_{H_{n,\BC}}(F_\mu^\vee, \xi_{\mu, \chi_\natural})$ as follows. 

\begin{lem} \label{balanced-vec}
There exists a unique 
$\lambda_{F_\mu, \chi_\natural} \in \Hom_{H_{n,\BC}}(F_\mu^\vee, \xi_{ \mu,\chi_\natural})
$
with the property that $\lambda_{F_\mu,  \chi_\natural}(\gamma_n'^{-1}. v_\mu^\vee) =1$.
\end{lem}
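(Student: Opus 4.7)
The plan is to reduce the statement to an assertion about a single embedding $\iota$, identify the relevant open orbit on the flag variety, and use Borel-Weil geometry to nail down both the uniqueness and the nonvanishing at the distinguished vector.

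First I would decompose everything along $\iota\in\CE_\bk$. Under $G_{2n,\BC}\cong \prod_{\iota\in\CE_\bk}\GL_{2n}(\BC)$, one has $F_\mu = \bigotimes_\iota F_{\mu^\iota}$ and $v_\mu^\vee = \bigotimes_\iota v_{\mu^\iota}^\vee$, while $\xi_{\mu,\chi_\natural}$ and $H_{n,\BC}$ factor in the obvious way. Hence the Hom space factors as
\[
\bigotimes_{\iota\in\CE_\bk}\Hom_{\GL_n(\BC)\times\GL_n(\BC)}\!\bigl(F_{\mu^\iota}^\vee,\,{\det}^{\od\!\chi_\iota}\boxtimes {\det}^{-\od\!\chi_\iota-w_\iota}\bigr),
\]
and $\gamma_n'$ acts diagonally, so the full lemma follows from its single-$\iota$ version together with $\lambda_{F_{\mu^\iota}}(\gamma_n'^{-1}.v_{\mu^\iota}^\vee)=1$ on each factor.

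Next I would analyze the single-$\iota$ situation via the flag variety. The block-diagonal $\GL_n(\BC)\times\GL_n(\BC)\subset \GL_{2n}(\BC)$ is the fixed-point subgroup of $\Ad(\diag(1_n,-1_n))$, hence a symmetric subgroup; in particular it is spherical, so each character of $H_{n,\BC}$ occurs with multiplicity at most one in any irreducible algebraic $\GL_{2n}(\BC)$-representation restricted to $H_{n,\BC}$. Combined with the $F_\mu$-balanced assumption, this yields a one-dimensional Hom space. A direct matrix calculation shows that the $H_{n,\BC}$-stabilizer of the coset $\overline{B}_{2n,\BC}\gamma_n'$ is the $n$-dimensional torus $\{\diag(a_1,\dots,a_n,a_n,\dots,a_1):a_i\in\BC^\times\}$, and the ensuing dimension count $\dim H_{n,\BC}-n = 2n^2-n = \dim(\overline{B}_{2n,\BC}\backslash G_{2n,\BC})$ proves that the orbit through $\overline{B}_{2n,\BC}\gamma_n'$ is Zariski open.

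Then I would apply Borel-Weil: realize $F_{\mu^\iota}^\vee$ as global sections of an equivariant line bundle $\CL$ on $\overline{B}_{2n,\BC}\backslash G_{2n,\BC}$ in which $v_{\mu^\iota}^\vee$ corresponds to a canonical section that is non-vanishing on the big Bruhat cell. Composing with any $\lambda\in\Hom_{H_{n,\BC}}(F_{\mu^\iota}^\vee,\xi_{\mu^\iota,\chi_\natural^\iota})$ converts the pairing $\lambda(\gamma_n'^{-1}.v_{\mu^\iota}^\vee)$ into the value at $\overline{B}_{2n,\BC}\gamma_n'$ of a section of $\CL$ twisted by $\xi_{\mu^\iota,\chi_\natural^\iota}^{-1}$ that is $H_{n,\BC}$-invariant on the open orbit. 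Such an invariant is determined by its value at a single point, and by openness this value must be nonzero (otherwise the invariant, hence $\lambda$, would be identically zero). A unique scalar normalization then produces $\lambda_{F_\mu,\chi_\natural}$ satisfying $\lambda_{F_\mu,\chi_\natural}(\gamma_n'^{-1}.v_\mu^\vee)=1$.

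The main obstacle will be the compatibility check at the stabilizer torus. One must verify that the character by which the diagonal torus $\{\diag(a_1,\dots,a_n,a_n,\dots,a_1)\}$ acts on $\gamma_n'^{-1}.v_{\mu^\iota}^\vee$ (modulo higher-weight contributions coming from the upper-unipotent part of $\gamma_n' h\gamma_n'^{-1}$) agrees with the restriction of $\det^{\od\!\chi_\iota}\boxtimes\det^{-\od\!\chi_\iota-w_\iota}$. This is exactly where the symplectic-type relation $\mu^\iota_i+\mu^\iota_{2n+1-i}=w_\iota$ and the $F_\mu$-balanced condition must combine into a precise numerical match; once that match is in place, the existence, uniqueness, and normalization all follow from the open-orbit argument above.
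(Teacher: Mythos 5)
Your argument is correct and is essentially the paper's own proof: the paper's one-line justification is precisely that $\overline{B}_{2n,\BC}\,\gamma_n'\,H_{n,\BC}$ is Zariski open dense in $G_{2n,\BC}$, and your stabilizer-plus-dimension count together with the matrix-coefficient reasoning for the lowest weight vector $v_\mu^\vee$ is exactly how that density gets used; the decomposition over $\iota\in\CE_\bk$ and the appeal to sphericality of the symmetric pair are harmless extra packaging. Two corrections, though. First, the stabilizer of $\overline{B}_{2n,\BC}\gamma_n'$ in $H_{n,\BC}$ is the torus $\{\diag(a_1,\dots,a_n,a_1,\dots,a_n)\}$: one computes
\[
\gamma_n'\,\diag(g_1,g_2)\,\gamma_n'^{-1}=\begin{bmatrix} g_1 & (g_2-g_1)w_n\\ 0 & w_ng_2w_n\end{bmatrix},
\]
which lies in $\overline{B}_{2n,\BC}$ exactly when $g_1=g_2$ is diagonal; it is not $\{\diag(a_1,\dots,a_n,a_n,\dots,a_1)\}$, though this slip does not affect the dimension count. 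Second, the ``main obstacle'' in your last paragraph is not actually needed: the $F_\mu$-balanced hypothesis is by definition the nonvanishing of $\Hom_{H_{n,\BC}}(F_\mu^\vee,\xi_{\mu,\chi_\natural})$, so a nonzero $\lambda$ exists with no verification, and your open-orbit argument (together with irreducibility of $F_\mu^\vee$, which you should invoke explicitly to pass from vanishing of the matrix coefficient $g\mapsto\lambda(g^{-1}.v_\mu^\vee)$ on the dense double coset to $\lambda=0$) already forces $\lambda(\gamma_n'^{-1}.v_\mu^\vee)\neq 0$; the compatibility of characters on the stabilizer is automatic once such a $\lambda$ exists, so no numerical match between $\mu^\iota$, $w_\iota$ and $\od\!\chi_{\iota}$ has to be re-checked.
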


\begin{proof}
This follows from the fact that 
$\overline{B}_{2n,\BC}\, \gamma_n' \, H_{n,\BC}\subset G_{2n,\BC}$ 
is Zariski  open dense.
\end{proof}

Define 
\[
\oZ_{\rm FJ}^\diamond (s, f, \chi):=  \frac{\oZ_{\rm FJ}(s, f, \chi)}{\oL(s, \pi_\mu\otimes\chi)},\quad f\in I_\mu,
\]
which is holomorphic and non-vanishing on $I_\mu$ for each $s\in\BC$. Put
\[
\Xi_{\mu, \chi_\natural}(s):=  \prod^n_{i=1} \frac{\gamma(s,  \zeta_{0, i} \cdot \chi^\natural, \psi)}{\gamma(s, \zeta_{\mu, i}\cdot \chi, \psi)} \cdot \frac{\oL(s, \pi_0)}{\oL(s, \pi_\mu\otimes\chi)},
\]
which a priori depends on $\chi^\natural$ (in the real case) and is meromorphic.  Similar to the proof of \cite[Proposition 4.7]{LLS24}, using the standard results for the Archimedean local factors  it is straightforward to verify that 

\begin{leml}
$
\Xi_{\mu,\chi_\natural}(s) \equiv \Omega_{\mu, \chi_\natural}^{-1},
$
where $\Omega_{\mu,\chi_\natural}$ is the constant in Theorem \ref{APR}.
\end{leml}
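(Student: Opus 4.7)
The plan is to verify this identity by a direct computation with the explicit Archimedean local factors, paralleling the proof of \cite[Proposition 4.7]{LLS24}. First, I would factor all local quantities over the Archimedean embeddings: $\oL(s,\pi_\mu\otimes\chi)=\prod_{\iota\in\CE_\bk}\oL_\iota(s,\pi_\mu\otimes\chi)$, and similarly for $\oL(s,\pi_0)$ and for each $\gamma(s,\cdot,\psi)$. Since $\zeta_\mu=\chi_\mu\rho_{2n}$ and $\chi=\chi_\natural|_{\bk^\times}\cdot\chi^\natural$ both decompose as tensor products indexed by $\iota\in\CE_\bk$, it suffices to treat each place separately and then multiply. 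In each case the algebraic part of $\zeta_{\mu,i}$ at $\iota$ is $\iota^{\mu^\iota_i}$, and the algebraic part of $\chi$ at $\iota$ is $\iota^{\od\!\chi_\iota}$, while $\chi^\natural$ is a quadratic character.

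Next, using compatibility of local Langlands with parabolic induction (the remark after \eqref{exL2nt}) together with the fact that $\pi_\mu$ is the Langlands quotient of $I_\mu=I(\zeta_\mu)$, I would write
\[
\oL(s,\pi_\mu\otimes\chi)=\prod_{i=1}^{2n}\oL(s,\zeta_{\mu,i}\chi),\qquad \oL(s,\pi_0)=\prod_{i=1}^{2n}\oL(s,\zeta_{0,i}),
\]
and expand each gamma factor as $\gamma(s,\omega,\psi)=\epsilon(s,\omega,\psi)\,\oL(1-s,\omega^{-1})/\oL(s,\omega)$. The key structural input is the symplectic symmetry $\zeta_{\mu,i}\zeta_{\mu,2n+1-i}=\eta_\mu$ (which gives $\eta_0=1$ at $\mu=0$), together with $(\chi^\natural)^{-1}=\chi^\natural$. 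Grouping the $2n$ factors in the denominator of $\oL(s,\pi_0)/\oL(s,\pi_\mu\otimes\chi)$ into pairs $(i,2n+1-i)$ and using these symmetries, the $\oL$-factors at $1-s$ appearing inside $\gamma(s,\zeta_{0,i}\chi^\natural,\psi)$ and $\gamma(s,\zeta_{\mu,i}\chi,\psi)$ match, up to harmless shifts, the $\oL$-factors at $s$ coming from the complementary index. A line-by-line cancellation then collapses all $\oL$-function contributions.

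After those cancellations, $\Xi_{\mu,\chi_\natural}(s)$ reduces to a product of the Archimedean $\epsilon$-factors of one-dimensional characters. By the explicit formulas in Tate's thesis, each such $\epsilon$-factor is a power of ${\rm i}$ times an exponential in $s$; the $s$-dependent exponentials cancel between numerator and denominator because the shifts by $\chi_\natural|_{\bk^\times}$ in the denominator exactly match the absence of such shifts in the numerator (which carries only the quadratic $\chi^\natural$). The residual power of ${\rm i}$ can then be computed from the Tate formula: in the real case the contribution from $\iota$ is ${\rm i}^{-(\mu^\iota_i+\od\!\chi_\iota)}$ for each index $i=1,\dots,n$, and a parallel formula—using the $(p,q)$-parametrization of characters of $\BC^\times$—gives the same exponent in the complex case. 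Summing over $i$ and $\iota$ yields exactly $\Omega_{\mu,\chi_\natural}^{-1}={\rm i}^{-\sum_{\iota\in\CE_\bk}\sum_{i=1}^n(\mu^\iota_i+\od\!\chi_\iota)}$.

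The main obstacle is purely bookkeeping: carefully tracking the powers of ${\rm i}$ produced by each $\epsilon$-factor under the two conventions ($\bk\cong\BR$ versus $\bk\cong\BC$) and verifying that the $\oL$-factor and $s$-dependent $\epsilon$-factor pieces cancel so cleanly that the resulting constant is independent of $s$ (and of $\chi^\natural$). This is precisely the content of the analogous argument in \cite[Proposition 4.7]{LLS24}, which we adapt here by replacing the Rankin--Selberg pairing with the symplectic self-pairing $\zeta_{\mu,i}\zeta_{\mu,2n+1-i}=\eta_\mu$.
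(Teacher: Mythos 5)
Your overall plan---an explicit Archimedean computation modeled on \cite[Proposition 4.7]{LLS24}, exploiting the symmetry $\zeta_{\mu,i}\zeta_{\mu,2n+1-i}=\eta_\mu$---is exactly what the paper's one-line proof intends, but your execution breaks at the first substantive step. The factorizations $\oL(s,\pi_\mu\otimes\chi)=\prod_{i=1}^{2n}\oL(s,\zeta_{\mu,i}\chi)$ and $\oL(s,\pi_0)=\prod_{i}\oL(s,\zeta_{0,i})$ are false at Archimedean places: $\pi_\mu$ is the generic (essentially tempered) quotient of $I_\mu$, not its Langlands quotient, so its $L$-parameter is not $\bigoplus_i\zeta_{\mu,i}$. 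For $\bk\cong\BR$ the parameter is $\bigoplus_{i=1}^{n}\Ind_{W_\BC}^{W_\BR}\bigl(z\mapsto z^{a_i}\bar z^{b_i}\bigr)$ with $a_i=\mu^\iota_i+\tfrac{2n+1-2i}{2}$ and $b_i=w_\iota-a_i$, so that $\oL(s,\pi_\mu\otimes\chi)=\prod_{i=1}^{n}\Gamma_{\BC}(s+a_i+\od\!\chi_{\iota})$; for $\bk\cong\BC$ the parameter pairs the exponents $\mu^\iota_i$ with $\mu^{\bar\iota}_{2n+1-i}$ (reversed order), not with $\mu^{\bar\iota}_i$ as in $\zeta_{\mu,i}$. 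With your factorization the claimed line-by-line cancellation cannot close up, and $\Xi_{\mu,\chi_\natural}(s)$ would not even come out independent of $s$: already for $\GL_2(\BR)$, $\mu=0$, $\chi=|\cdot|^2$, the correct factors give $\Xi\equiv-1={\rm i}^{-2}$, whereas your expression for $\oL(s,\pi_0)$ is $\Gamma_\BR(s+\tfrac12)\Gamma_\BR(s-\tfrac12)$ instead of $\Gamma_\BC(s+\tfrac12)=\Gamma_\BR(s+\tfrac12)\Gamma_\BR(s+\tfrac32)$, and the discrepancies in numerator and denominator leave behind a nonconstant ratio of $\Gamma_\BR$'s.

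The computation does go through once the genuine parameters are inserted: expand $\Gamma_\BC(s+a_i+\od\!\chi_{\iota})=\Gamma_\BR(s+a_i+\od\!\chi_{\iota})\Gamma_\BR(s+a_i+\od\!\chi_{\iota}+1)$ by duplication, combine with the abelian $L$-factors occurring inside $\gamma(s,\zeta_{\mu,i}\chi,\psi)$ and $\gamma(s,\zeta_{0,i}\chi^\natural,\psi)$, and use the reflection formula. Note also that the Archimedean $\varepsilon$-factors of characters (for the fixed $\psi$) are $s$-independent roots of unity, so there are no ``$s$-dependent exponentials'' to cancel as you assert; the residual power of ${\rm i}$ arises jointly from these constants and from the signs produced by the $\Gamma$-identities, and equals ${\rm i}^{-\sum_{\iota}\sum_{i=1}^{n}(\mu^\iota_i+\od\!\chi_{\iota})}=\Omega_{\mu,\chi_\natural}^{-1}$ as required; the complex case is handled by the same bookkeeping with characters $z\mapsto z^p\bar z^q$.
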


Therefore in  view of \eqref{CDtrans}, Theorem \ref{APR} is  reduced to the following result.

\begin{prpl}
The following diagram is commutative:
\[
\begin{CD}
I_\mu \otimes F_\mu^\vee @>\oZ_{\rm FJ}^\diamond(s, \cdot, \chi) \otimes \lambda_{F_\mu, \chi_\natural}>> \BC  \\
@A \imath_\mu AA  @AA \Xi_{\mu,\chi_\natural}(s)A \\
I_0  @>\oZ_{\rm FJ}^\diamond(s, \cdot, \chi^\natural)>>  \BC
\end{CD}
\]
\end{prpl}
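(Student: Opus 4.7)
The plan is to transport the claimed commutativity into an identity for the open-orbit integrals $\Lambda_{\rm FJ}$ via Theorem~\ref{MFSha}, and then prove the resulting identity by an explicit computation on the open $H_n$-orbit $\overline{Q}_n\gamma_n H_n$. This parallels the strategy used in \cite{LLS24} for the Rankin--Selberg setting.

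First I would reduce to test sections $f_0 \in I_0^\sharp$: both sides of the diagram are continuous linear functionals on $I_0$, and the subspace $I_0^\sharp$ of sections supported on the open orbit $\overline{Q}_n \gamma_n H_n$ is dense in $I_0$ by a standard cutoff argument, so it suffices to verify the identity on $I_0^\sharp$. Using the explicit form of $\imath_\mu$ recalled from \cite[Section 2.2]{LLS24}, the image $\imath_\mu(f_0) \in I_\mu\otimes F_\mu^\vee$ can be written as a finite sum $\sum_i f_i\otimes v_i$ with each $f_i \in I_\mu^\sharp$; since $\pi_\mu\otimes|\eta_\mu|^{-1/2}$ is tempered, the hypotheses of Theorem~\ref{MFSha}(3) are met for such $f_i$. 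Substituting the relation
\[
\oZ_{\rm FJ}^\diamond(s, f, \chi) = \prod_{j=1}^n \gamma(s, \zeta_{\mu, j} \chi, \psi)^{-1} \cdot \frac{\Lambda_{\rm FJ}(s, f, \chi)}{\oL(s, \pi_\mu \otimes \chi)}
\]
from Theorem~\ref{MFSha}(3) on both sides of the diagram and invoking the definition of $\Xi_{\mu, \chi_\natural}(s)$---which is precisely the combined ratio of $\gamma$-factors and $L$-factors appearing---the commutativity reduces to the open-orbit identity
\[
\sum_i \Lambda_{\rm FJ}(s, f_i, \chi) \cdot \lambda_{F_\mu, \chi_\natural}(v_i) = \Lambda_{\rm FJ}(s, f_0, \chi^\natural).
\]

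Next I would unfold $\Lambda_{\rm FJ}$ through the Rankin--Selberg functional $\lambda'_{I(\xi)}$ of Proposition~\ref{prop:RS}, which factors through restriction to $H_n$. Both sides of the above identity then become Mellin integrals over $G_n$ whose integrands are linear in the $H_n$-restriction of the sections, so equality at the level of integrals reduces to the pointwise identity
\[
\lambda_{F_\mu, \chi_\natural}\!\left(\imath_\mu(f_0)\!\left(\gamma_n\begin{bmatrix} g \\ & 1_n \end{bmatrix}\right)\right) \chi(g) = f_0\!\left(\gamma_n\begin{bmatrix} g \\ & 1_n \end{bmatrix}\right) \chi^\natural(g), \qquad g \in G_n.
\]
This is where the $F_\mu$-balancedness intervenes decisively: the $H_{n, \BC}$-equivariance of $\lambda_{F_\mu, \chi_\natural}$, combined with the character identity $\xi_{\mu, \chi} \otimes \xi_{\mu, \chi_\natural}^\vee = \chi^\natural \boxtimes \chi^\natural$ of characters of $H_n$, forces the character $\chi/\chi^\natural = \chi_\natural$ along the $G_n$-factor $\{\diag(g, 1_n)\}$, while the normalization $\lambda_{F_\mu, \chi_\natural}(\gamma_n'^{-1}.v_\mu^\vee) = 1$ from Lemma~\ref{balanced-vec} pins down the absolute constant.

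The hardest part will be verifying this pointwise identity. The principal subtlety is reconciling the two base points $\gamma_n$ (used for the open-orbit decomposition in Theorem~\ref{MFSha}) and $\gamma_n' = \gamma_n\cdot\diag(1_n, w_n)$ (used to normalize $\lambda_{F_\mu, \chi_\natural}$): their ratio lies in $H_n$ and contributes an explicit character factor through the $H_{n, \BC}$-equivariance of $\lambda_{F_\mu, \chi_\natural}$, which must be tracked carefully to ensure the constants match. A secondary task is to unravel the explicit form of $\imath_\mu$ from \cite[Section 2.2]{LLS24} along the open orbit and verify that the character twist it induces is precisely $\chi_\natural$ on $\{\diag(g, 1_n)\}$; this is mechanical bookkeeping but not conceptually difficult once the explicit formula is at hand.
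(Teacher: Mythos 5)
Your overall computational route coincides with the paper's: push the identity through Theorem~\ref{MFSha}(3) so that the modifying factor $\Xi_{\mu,\chi_\natural}(s)$ cancels against the $\gamma$- and $L$-factors, reduce to the identity $\langle \Lambda_{\rm FJ}(s,\cdot,\chi)\otimes\lambda_{F_\mu,\chi_\natural}, \imath_\mu(f_0)\rangle = \Lambda_{\rm FJ}(s,f_0,\chi^\natural)$ for $f_0\in I_0^\sharp$, and verify it by unfolding through the Rankin--Selberg functional of Proposition~\ref{prop:RS} together with Lemma~\ref{balanced-vec}. However, your first reduction has a genuine gap: $I_0^\sharp$ is \emph{not} dense in $I_0$. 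Every $f\in I_0^\sharp$ vanishes identically on the closed complement of the open orbit $\overline{Q}_n\gamma_n H_n$, so a section of $I_0$ that is nonzero at a point of that complement cannot be approximated by elements of $I_0^\sharp$ even in the sup norm, let alone in the smooth Fr\'echet topology; hence equality of two continuous functionals on $I_0^\sharp$ does not give equality on $I_0$. The paper closes exactly this gap differently: for each $s$ both composites $\oZ^\diamond_{\rm FJ}(s,\cdot,\chi)\otimes\lambda_{F_\mu,\chi_\natural}\circ\imath_\mu$ and $\Xi_{\mu,\chi_\natural}(s)\cdot\oZ^\diamond_{\rm FJ}(s,\cdot,\chi^\natural)$ are twisted linear functionals on $I_0$, so by the uniqueness of twisted linear periods (\cite{CS20}) and holomorphic continuation in $s$ they are proportional, and the proportionality constant is pinned down by comparing them on $I_0^\sharp$, where $\Lambda_{\rm FJ}$ is non-vanishing by Theorem~\ref{MFSha}(2). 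Without the multiplicity-one input (or some substitute, e.g.\ that $I_0^\sharp$ generates $I_0$ under $G_{2n}$ together with equivariance), your argument only proves the identity on the proper subspace $I_0^\sharp$.

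Two smaller points. First, your base-point comparison is off: $\gamma_n'\neq\gamma_n\cdot\diag(1_n,w_n)$; rather $\gamma_n'=z_{2n}\gamma_n$ with $z_{2n}=\diag(1_n,w_n)$ acting on the \emph{left}, and in the paper this discrepancy is absorbed through the $z_{2n}$ occurring in the defining integral of $\Lambda_{\rm RS}$ (via $z_{2n}\diag(g',g')\gamma_n\diag(g,1_n)=\gamma_n'\diag(g'g,g')$), not as a right $H_n$-translation hitting $\lambda_{F_\mu,\chi_\natural}$. Second, your ``pointwise identity'' forgets that $\Lambda_{\rm FJ}$ contains the inner Rankin--Selberg integral over $g'\in G_n$ defining $\lambda'_{I_\mu}$; the identity one actually needs is at the points $\gamma_n'\diag(g'g,g')$, where the character $\eta_\mu(g')$ must match the $\eta_\mu^{-1}(g')$ in $\Lambda_{\rm RS}$, and one also needs $d_{\zeta_\mu}=d_{\zeta_0}$ to compare the normalizations of $\lambda'_{I_\mu}$ and $\lambda'_{I_0}$. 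These are repairable bookkeeping issues, but the density claim is the step that would fail.
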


\begin{proof}
Following \cite[Section 2.2]{LLS24}, we realize  $I_\mu \otimes F_\mu^\vee$ as a space of $F_\mu^\vee$-valued functions $\varphi$ on $G_{2n}$, on which $h \in G_{2n}$ acts by
$h. \varphi(x) := h.(\varphi(xh))$ for $x\in G_{2n}$. 
Then the translation $\imath_\mu: I_0 \to I_\mu \otimes F_\mu^\vee$ is given by 
\be \label{trans}
\imath_\mu(f)(x) := f(x)\cdot x^{-1}.v_\mu^\vee,\quad f\in I_0, \ x\in G_{2n}.
\ee
Clearly $\imath_\mu$ maps $I_0^\sharp$ into $I_\mu^\sharp\otimes F_\mu^\vee$, where $I_\mu^\sharp = I(\zeta_\mu)^\sharp$ is defined by \eqref{sharp}.

By the uniqueness of twisted linear periods (\cite{CS20}) and holomorphic continuation, in view of Theorem \ref{MFSha} it suffices to prove the commutativity of following diagram:
\be\label{CDLambda}
\begin{CD}
I_\mu^\sharp \otimes F_\mu^\vee @>\Lambda_{\rm FJ}(s, \cdot, \chi) \otimes \lambda_{F_\mu, \chi_\natural}>> \BC  \\
@A \imath_\mu AA  @| \\
I_0^\sharp  @>\Lambda_{\rm FJ}(s, \cdot, \chi^\natural)>>  \BC
\end{CD}
\ee
By definition, for $f\in I_0^\sharp$ we have that 
\be \label{Lambdasharp}
\begin{aligned}
 \langle \Lambda_{\rm FJ}(s, \cdot, \chi) \otimes \lambda_{F_\mu, \chi_\natural}, \imath_\mu(f)\rangle 
=  \int_{G_n} \left\langle \lambda_{I_\mu}' \otimes \lambda_{F_\mu,\chi_\natural},   \gamma_n\begin{bmatrix} g & \\ & 1\end{bmatrix}. \imath_\mu(f)\right\rangle  \chi(g) |g|_\bk^{s}\od\! g,
\end{aligned}
\ee
where $\lambda_{I_\mu}'$ is given by \eqref{lambda'} and $\gamma_n$ is given by \eqref{gamman}. We find that 
\[
\begin{aligned}
& \left\langle \lambda_{I_\mu}' \otimes \lambda_{F_\mu, \chi_\natural},  \gamma_n\begin{bmatrix} g & \\ & 1\end{bmatrix}. \imath_\mu(f)\right\rangle  \\
= & \left[ s_1^{d_{\zeta_\mu}} \langle \Lambda_{\rm RS}(s_1, \cdot, \phi, \eta_\mu^{-1})\otimes \lambda_{F_\mu, \chi_\natural}, \imath_\mu(f)\rangle \right]_{s_1=0} \\
= & \left[ s_1^{d_{\zeta_\mu}} \int_{G_n} \left\langle \lambda_{F_\mu,\chi_\natural},  \imath_\mu(f)\left(z_{2n}\begin{bmatrix} g' \\ & g' \end{bmatrix} \gamma_n \begin{bmatrix} g \\ & 1\end{bmatrix} \right) \right\rangle \phi(v_n g') \eta_\mu^{-1}(g') |g'|_\bk^{s_1} \od\! g' \right]_{s_1=0},
\end{aligned}
\]
where $\phi$ is an arbitrary element of $\CS(\bk^n)$ with $\phi(0)=0$, and the last integral is interpreted in the sense of meromorphic continuation via standard sections. 
Noting that $z_{2n}\gamma_n = \gamma_n'$ and 
\[
z_{2n}\begin{bmatrix} g' \\ & g' \end{bmatrix} \gamma_n \begin{bmatrix} g \\ & 1\end{bmatrix} = \gamma_n' \begin{bmatrix}  g'g\\ & g'\end{bmatrix},
\]
from Lemma \ref{balanced-vec} and \eqref{trans} it is easy to check that 
\[ 
\left\langle \lambda_{F_\mu,\chi_\natural}, \imath_\mu(f)\left(\gamma_n' \begin{bmatrix} g'g\\ & g'\end{bmatrix} \right)\right\rangle = f\left(\gamma_n' \begin{bmatrix} g'g\\ & g'\end{bmatrix} \right) 
\eta_\mu(g') \prod_{\iota\in \CE_\bk} \iota(\det g)^{-\od\!\chi_{\iota}}.
\]
Recall that by definition $d_{\zeta_\mu}$ is the order of 
\[
\Gamma(s_1, I_\mu, \wedge^2\otimes\eta_\mu^{-1},\psi) = \prod_{1\leq i\leq 2n-i <j} \gamma(s_1, \zeta_{\mu, i} \zeta_{\mu, j} \eta_\mu^{-1}, \psi)
\]
at $s_1=0$. It is straightforward to verify that $d_{\zeta_\mu} = d_{\zeta_0}$, hence 
\[
 \left\langle \lambda_{I_\mu}' \otimes \lambda_{F_\mu, \chi_\natural},  \gamma_n\begin{bmatrix} g & \\ & 1\end{bmatrix}. \imath_\mu(f)\right\rangle  =  \left\langle \lambda_{I_0}',  \gamma_n\begin{bmatrix} g & \\ & 1\end{bmatrix}. f\right\rangle 
 \prod_{\iota\in\CE_\bk}\iota(\det g)^{-\od\!\chi_{\iota}}. 
\]
Plugging the last equation into \eqref{Lambdasharp} shows that 
\[
\langle \Lambda_{\rm FJ}(s, \cdot, \chi) \otimes \lambda_{F_\mu, \chi_\natural}, \imath_\mu(f)\rangle = \Lambda_{\rm FJ}(s, f, \chi^\natural),
\]
which verifies the commutativity of \eqref{CDLambda}.
\end{proof}

\section{Cohomology groups and modular symbols} \label{sec:CGMS}

In this section we introduce certain cohomology groups and modular symbols, which are needed for the proof of Theorem \ref{BDconj} in the next section.  We turn to the global setting and retain the notation from the Introduction.

\subsection{Preliminaries on cohomology groups} \label{sec:PCG}
For convenience write  $G:=\GL_{2n}$ in the sequel. We have the  regular algebraic irreducible cuspidal automorphic representation 
$\Pi = \Pi_f\otimes \Pi_\infty$ of $G(\BA)$, which is of symplectic type and has a coefficient system $F_\mu$ with $\mu$ being now a pure weight in  $(\BZ^{2n})^{\CE_\rk}$.

Recall that $\bdl{\eta}$ is a character of $\rk^\times\bsl \BA^\times$ such that $\oL(s, \Pi, \wedge^2\otimes \bdl{\eta}^{-1})$ has a pole at $s=1$. 
Define a nontrivial unitary character $\bdl{\psi}$ of $\rk\bsl \BA$ by the composition 
\[
\rk\bsl \BA \xrightarrow{{\rm Tr}_{\rk/\BQ}} \BQ \bsl \BA_\BQ \to \BQ\bsl \BA_\BQ / \widehat{\BZ} = \BR/\BZ \xrightarrow{\psi_\BR} \BC^\times,
\]
where $\BA_\BQ$ is the adele ring of $\BQ$, $\widehat\BZ$ is the profinite completion of $\BZ$ and $\psi_\BR(x) = e^{2 \pi {\rm i} x}$, $x\in \BR$. 
Denote by $S = \GL_n^\dag \ltimes N$ the Shalika subgroup of $\GL_{2n}$, where $\GL_n^\dag$ is the diagonal image of 
$\GL_n$ in $H=\GL_n\times \GL_n$, and $N \cong {\rm Mat}_{n\times n}$ is the unipotent radical of $S$.  Similar to the local case, we have a character $\bdl{\eta}\otimes \bdl{\psi}$ of $S(\rk)\bsl S(\BA)$ defined 
as in \cite[Section 2.3]{JST19}.

Fix the measure on $N(\rk) \bsl N(\BA)$ to be induced from the self-dual Haar measure on $\rk\bsl \BA$ with respect to $\bdl{\psi}$, and fix once for all an $\GL_n^\dag(\BA)$-invariant positive Borel measure 
on $(\GL_n^\dag(\rk)\BR^\times_+) \bsl \GL_n^\dag(\BA)$. This gives an $S(\BA)$-invariant positive Borel measure on $(S(\rk)\BR^\times_+)\bsl S(\BA)$, and thereby fixes a Shalika functional 
\[
\lambda_\BA: \Pi\otimes (\bdl{\eta}\otimes \bdl{\psi})^{-1} \to \BC, \quad \phi \mapsto \int_{(S(\rk)\BR^\times_+)\bsl S(\BA)} \phi(g) (\bdl{\eta}\otimes \bdl{\psi})^{-1}(g)\od\! g. 
\]
Fix a factorization $\lambda_\BA =\lambda_f \otimes \lambda_\infty$ thanks to the uniqueness of Shalika models. Using $\lambda_f$ we embed $\Pi_f$ into
$\Ind^{G(\BA_f)}_{S(\BA_f)} (\bdl{\eta}_f\otimes \bdl{\psi}_f)$.  Using cyclotomic characters as in \cite[Section 3.1]{JST19}, each  $\sigma\in \Aut(\BC)$ gives   a $\sigma$-linear isomorphism $\Ind^{G(\BA_f)}_{S(\BA_f)}  (\bdl{\eta}_f\otimes \bdl{\psi}_f)
\to \Ind^{G(\BA_f)}_{S(\BA_f)} ({}^\sigma \bdl{\eta}_f\otimes \bdl{\psi}_f)$, which restricts to a $\sigma$-linear isomorphism 
$\sigma: \Pi_f \to {}^\sigma\Pi_f$. 

Recall that $H=\GL_n\times \GL_n \subset G$. We introduce
\[
\CX_G := (G(\rk) \BR^\times_+)\bsl G(\BA) / K_\infty^0\quad \textrm{and}\quad \CX_H:= (H(\rk)\BR^\times_+)\bsl H(\BA) / C_\infty^0,
\]
where $K_\infty$ and $C_\infty$ are the standard maximal compact 
subgroups of $G_\infty:=G(\rk_\infty)$ and $H_\infty:=H(\rk_\infty)$ respectively. Then the natural inclusion 
$\imath: \CX_H \hookrightarrow \CX_G
$
is a proper map. 
Define a real vector space $\frak q_\infty: = (\frak c_\infty\oplus\BR)\bsl \frak h_\infty$, where 
as usual gothic letters denote the Lie algebras of the corresonding real Lie groups, and $\BR$ indicates the Lie algebra of $\BR^\times_+$. Put 
$
d_\infty: = \dim \frak q_\infty = \sum_{v\mid \infty} d_{\rk_v} + r-1,
$
where $d_{\rk_v}$ is as in \eqref{dk} and $r$ is the number of Archimedean places of $\rk$.  As in \cite{Cl90}, we have the canonical isomorphism 
\be \label{betti}
\iota_{\rm can}: \oH^{d_\infty}_{\rm ct}(\BR^\times_+\bsl G_\infty^0; \Pi_\infty \otimes F_\mu^\vee) \otimes \Pi_f   \cong  \oH^{d_\infty}_{\rm ct}(\BR^\times_+\bsl G_\infty^0; \Pi \otimes F_\mu^\vee)   \hookrightarrow \oH^{d_\infty}_c(\CX_G, F_\mu^\vee),
\ee
where $\oH^*_c$ denotes the Betti cohomology with compact support.  As is known (see e.g. \cite[Section 6.3]{LLS24}), \eqref{betti} is $G^\natural$-equivariant, where 
$
G^\natural: = G(\BA_f) \times \pi_0(\rk_\infty^\times).
$

Denote by $\frak m: = \frak m_f \otimes \frak m_\infty$ the one-dimensional space of invariant measures on $H(\BA)$.  Let $\GL_n':=\GL_n\times \{1\} \subset H$, and denote by $\frak m':=\frak m_f' \otimes \frak m_\infty'$
the one-dimensional space of invariant measures on $\GL_n'(\BA)$. Recall that we have fixed a positive Borel measure on $(\GL_n^\dag(\rk)\BR^\times_+)\bsl \GL_n^\dag(\BA)$. This enables us to identify 
$\frak m, \frak m_f$ and  $\frak m_\infty$ with $\frak m', \frak m'_f$ and $\frak m'_\infty$ respectively. 

Let  $\omega_\infty:= (\wedge^{d_\infty}\frak q_\infty)\otimes_\BR \BC$, and let $\frak O_\infty$ be the complex orientation space of $\omega_\infty$.  It is clear that 
$\pi_0(\rk_\infty^\times)$ acts on $\omega_\infty$ and $\frak O_\infty$ trivially.  Similar to \cite[Section 3.1]{LLS24}, we have an identification: 
$
\frak m_\infty = \omega_\infty^* \otimes \frak O_\infty,
$
where a superscript $*$ indicates the linear dual.  Then we have that
\be \label{cohm}
\oH^{d_\infty}_{\rm ct}(\BR^\times_+\bsl H_\infty^0; \frak m_\infty^*) \otimes \frak O_\infty = \oH^{d_\infty}_{\rm ct}(\BR^\times_+\bsl H_\infty^0; \omega_\infty) = \BC,
\ee
where we use $(\frak h_\infty, \BR^\times_+ C_\infty^\circ)$-cohomology in the last equality.

Recall that we have an algebraic Hecke character $\chi$ of $\rk^\times \bsl \BA^\times$, with coefficient system $\chi_\natural$. Define the character
$
\xi_{\bdl{\eta}, \chi} : = \chi  \boxtimes (\chi^{-1} \bdl{\eta}^{-1})
$
of $H(\BA)$. Then we have the factorization 
$
\xi_{\bdl{\eta}, \chi} = \xi_{\bdl{\eta}_f, \chi_f} \otimes \xi_{\bdl{\eta}_\infty, \chi_\infty}.
$
Recall the character $\xi_{\mu, \chi_\natural}$ of $H(\rk\otimes_\BQ\BC)$ given by \eqref{char:xi}, which is the coefficient system of $\xi_{\bdl{\eta}, \chi}$. 
To ease the notation, write 
\be \label{shortH}
\oH(\Pi):= \oH^{d_\infty}_{\rm ct}(\BR^\times_+\bsl G_\infty^0;  \Pi \otimes F_\mu^\vee)\quad \textrm{and}\quad 
\oH(\Pi_\infty):= \oH^{d_\infty}_{\rm ct}(\BR^\times_+\bsl G_\infty^0;  \Pi_\infty \otimes F_\mu^\vee).
\ee
Likewise, write 
\[
\begin{aligned}
\oH(\xi_{\bdl{\eta}, \chi}) := \oH^0_{\rm ct}(\BR^\times_+\bsl H_\infty^0;\xi_{\bdl{\eta}, \chi} \otimes \xi_{\mu, \chi_\natural}^\vee)\ {\rm and}\  
\oH(\xi_{\bdl{\eta}_\infty, \chi_\infty}) := \oH^0_{\rm ct}(\BR^\times_+\bsl H_\infty^0;\xi_{\bdl{\eta}_\infty, \chi_\infty} \otimes \xi_{\mu, \chi_\natural}^\vee).
\end{aligned}
\]
Without further explanation,  similar notation applies to the $\sigma$-twist with $\sigma\in\Aut(\BC)$. 

\subsection{Modular symbols and a commutative diagram}  \label{sec:MSCD} 
We define global and (normalized) local modular symbols. 

\subsubsection{Global modular symbol} 
When $\chi_\natural$ is $F_\mu$-balanced, fix a generator 
\[
\lambda_{F_\mu, \chi_\natural} \in \Hom_{H(\rk\otimes_\BQ\BC)}( F_\mu^\vee \otimes \xi_{\mu,\chi_\natural}^\vee,\BC)
\]
as in Lemma \ref{balanced-vec} (by abuse of notation).
Recall the space of measures $\frak m_f$ on $H(\BA_f)$ and the orientation space $\frak O_\infty$. Put
$\frak m^\natural : = \frak m_f \otimes \frak O_\infty.
$
In the notation of \eqref{shortH}, we have  the global modular symbol 
\begin{eqnarray}
\nonumber \wp \colon  \oH(\Pi) \otimes \oH(\xi_{\bdl{\eta},\chi})\otimes \frak m^\natural  
& \hookrightarrow &  \oH^{d_\infty}_c(\CX_G, F_\mu^\vee) \otimes \oH^0(\CX_H, \xi_{\mu, \chi_\natural}^\vee)\otimes \frak m^\natural \\
& \xrightarrow{\imath^*} &  \oH^{d_\infty}_c(\CX_H, F_\mu^\vee) \otimes \oH^0(\CX_H, \xi_{\mu, \chi_\natural}^\vee) \otimes \frak m^\natural  \\
\nonumber  & \xrightarrow{\lambda_{F_\mu, \chi_\natural}} & \oH^{d_\infty}_c(\CX_H, \BC) \otimes \frak m^\natural \\
\nonumber  & \xrightarrow{\int_{\CX_H}}&\BC,
\end{eqnarray}
where $\int_{\CX_H}$ is the pairing with the fundamental class (see e.g. \cite[Section 4.2]{JST19} for details).

\subsubsection{Archimedean modular symbol} Recall the Shalika functional $\lambda_\BA = \lambda_f\otimes \lambda_\infty$. Similar to the local case, using $\lambda_\infty$ we have the normalized 
Friedbert-Jacquet periods 
\[
\oZ^\circ_{\rm FJ}(\frac{1}{2}, \cdot, \chi_\infty) = \frac{\oZ_{\rm FJ}(\frac{1}{2}, \cdot, \chi_\infty)}{\oL(\frac{1}{2}, \Pi_\infty\otimes\chi_\infty)}: \Pi_\infty\otimes \xi_{\bdl{\eta}_\infty, \chi_\infty} \to \frak m_\infty'^* = \frak m_\infty^*,
\]
where we have identified $\frak m_\infty$ with $\frak m'_\infty$ as in Section \ref{sec:PCG}.  As above assume that
$\chi_\natural$ is $F_\mu$-balanced. Introduce the normalized Archimedean modular symbol
\begin{eqnarray} \label{AMS}
\wp_{\infty}^\circ \colon \oH(\Pi_\infty) \otimes \oH(\xi_{\bdl{\eta}_\infty,\chi_\infty})\otimes \frak O_\infty  \to   \oH^{d_\infty}_{\rm ct}(\BR^\times_+ \bsl H_\infty^\circ; \frak{m}_\infty^*)\otimes \frak O_\infty =  \BC,
\end{eqnarray}
where the first arrow is induced by restriction and the functional 
\[
\Omega_{\mu, \chi_\natural} \cdot \oZ^\circ_{\rm FJ}(\frac{1}{2}, \cdot, \chi_\infty )\otimes \lambda_{F_\mu, \chi_\natural},
\]
and the last equality is \eqref{cohm}.

We  mention that the above formulation is more canonical, while in the Archimedean modular symbol  given by \eqref{archMS} we have fixed the measure on $\GL_n(\bk)$ for simplicity. 

\subsubsection{Non-Archimedean modular symbol} We further factorize $\lambda_f = \otimes_{v\nmid\infty}\lambda_v$ and $\frak m_f= \frak m_f' = \otimes_{v\nmid v}\frak m_v'$, and introduce the normalized non-Archimedean modular symbol
\be \label{NAMS}
\wp^\circ_{f}:= \otimes_{v\nmid \infty} \wp^\circ_{v}: \Pi_f \otimes \xi_{\bdl{\eta}_f, \chi_f} \otimes \frak m_f\to \BC, 
\ee
where  $\wp^\circ_{v}: \Pi_v\otimes \xi_{\bdl{\eta}_v, \chi_v, \frac{1}{2}}\otimes \frak m_v'\to\BC$ is given by
\[
\wp^\circ_{v}:= \CG(\chi_v)^n \cdot \oZ^\circ_{\rm FJ}(\frac{1}{2}, \cdot, \chi_v)  = \CG(\chi_v)^n \cdot \frac{\oZ_{\rm FJ}(\frac{1}{2}, \cdot, \chi_v)}{\oL(\frac{1}{2}, \Pi_v\otimes \chi_v)}.
\]
In the above, $\CG(\chi_v)$ is the local Gauss sum defined using $\bdl{\psi}_v$ as in \cite[Section 2.2]{JST19}. 

\subsubsection{A commutative diagram}  
The following is a consequence of \cite[Proposition 2.3]{FJ93}, which relates the local Friedberg-Jacquet periods and  the global period
\[
\oZ_{\rm FJ}(\frac{1}{2},\cdot, \chi):\Pi \otimes \xi_{\bdl{\eta}, \chi}\to \BC, \quad  \phi\otimes 1 \mapsto  \int_{(Z(\BA) H(\rk))\bsl H(\BA)} \phi(h)  \xi_{\bdl{\eta}, \chi}(h)\od\! h,
\]
where $Z$ is the center of $G$. They are interpreted in terms of the global and local modular symbols as follows. 

\begin{prpl} The following diagram is commutative:
\be \label{FJCD}
\begin{CD}
\oH(\Pi_\infty) \otimes \oH(\xi_{\bdl{\eta}_\infty, \chi_\infty}) \otimes \frak O_\infty \otimes  \Pi_f \otimes \xi_{\bdl{\eta}_f, \chi_f}\otimes \frak m_f  @> \CP^\circ_{\infty}\otimes \CP^\circ_{f}>>  \BC  \\
 @V \iota_{\rm can} VV  @VV \frac{\oL(\frac{1}{2}, \Pi\otimes \chi)}{\Omega_{\mu, \chi_\natural} \cdot \CG(\chi)^n} V \\
\oH(\Pi)\otimes \oH(\xi_{\bdl{\eta},\chi})\otimes \frak m^\natural @> \wp >>  \BC,
\end{CD}
\ee
where the left vertical arrow is induced by \eqref{betti}.
\end{prpl}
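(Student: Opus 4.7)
The diagram is essentially the cohomological repackaging of the Euler factorization of Friedberg-Jacquet. The plan is to unravel both routes through the diagram and show they produce the same number on a decomposable test vector.

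First, I would fix a decomposable cusp form $\phi = \phi_\infty \otimes \phi_f \in \Pi_\infty \otimes \Pi_f$ and a decomposable cohomology class in $\oH(\Pi_\infty)\otimes \oH(\xi_{\bdl{\eta}_\infty,\chi_\infty})\otimes \frak O_\infty$ realized by $\phi_\infty$ together with an $H_\infty$-invariant element. Chasing the left-then-bottom route, the class $\iota_{\rm can}$ produces a $G^\natural$-equivariant cohomology class on $\CX_G$, and the global modular symbol $\wp$ applied to it---by its very definition as integration of the restriction over $\CX_H$---evaluates to the global Friedberg-Jacquet integral $\oZ_{\rm FJ}(\tfrac{1}{2},\phi,\chi)$, up to the explicit choice of Shalika functional $\lambda_\BA$ and the pairing $\lambda_{F_\mu,\chi_\natural}$ with the coefficient system. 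This is the content of (the proof of) \cite[Proposition 2.3]{FJ93} translated into the cohomological language of \cite[Section 4.2]{JST19}, and I would cite that translation directly.

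Next, I would invoke the Friedberg-Jacquet Euler product: on decomposable data with respect to the factorization $\lambda_\BA = \lambda_\infty \otimes \bigotimes_{v\nmid\infty}\lambda_v$, the global zeta integral factors as
\[
\oZ_{\rm FJ}(\tfrac{1}{2},\phi,\chi) \;=\; \oL(\tfrac{1}{2},\Pi\otimes\chi) \cdot \oZ_{\rm FJ}^\circ(\tfrac{1}{2},\phi_\infty,\chi_\infty) \cdot \prod_{v\nmid \infty}\oZ_{\rm FJ}^\circ(\tfrac{1}{2},\phi_v,\chi_v).
\]
Now going the other direction around the square, the top arrow is $\wp^\circ_\infty \otimes \wp^\circ_f$, whose Archimedean factor is $\Omega_{\mu,\chi_\natural}\cdot \oZ_{\rm FJ}^\circ(\tfrac{1}{2},\phi_\infty,\chi_\infty)\otimes \lambda_{F_\mu,\chi_\natural}$ (by the definition of $\wp^\circ_\infty$ in \eqref{AMS}) and whose non-Archimedean factor inserts a Gauss sum $\CG(\chi_v)^n$ at every finite place (by \eqref{NAMS}). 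Multiplying these out and using $\CG(\chi) = \prod_{v\nmid\infty}\CG(\chi_v)$, the top-then-right composite yields
\[
\frac{\oL(\tfrac{1}{2},\Pi\otimes\chi)}{\Omega_{\mu,\chi_\natural}\cdot\CG(\chi)^n} \cdot \Omega_{\mu,\chi_\natural}\cdot \CG(\chi)^n \cdot \oZ_{\rm FJ}^\circ(\tfrac{1}{2},\phi_\infty,\chi_\infty) \cdot \prod_{v\nmid\infty}\oZ_{\rm FJ}^\circ(\tfrac{1}{2},\phi_v,\chi_v),
\]
which matches the value of the other composite on the nose.

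The non-routine step is the cohomological interpretation of the global period in the first paragraph: one has to track the compatibility between the canonical embedding $\iota_{\rm can}$ and the cap-product with the fundamental class of $\CX_H$, and to check that the auxiliary measure choices on $(\GL_n^\dag(\rk)\BR^\times_+)\bsl\GL_n^\dag(\BA)$ and the identification $\frak m_\infty = \omega_\infty^*\otimes\frak O_\infty$ that were fixed in Section \ref{sec:PCG} cancel precisely against the measure appearing in the definition of $\lambda_\BA$, so that no extraneous constant appears. This bookkeeping is formally identical to the Rankin-Selberg analogue carried out in \cite[Section 6]{LLS24}, and I would follow that template essentially verbatim, replacing Rankin-Selberg inputs by the Friedberg-Jacquet inputs of Section \ref{sec2.3}. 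The multilinearity of both routes in the decomposable test data, together with the density of decomposable vectors, then upgrades the pointwise equality to commutativity of the diagram.
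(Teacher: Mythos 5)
Your plan is correct and matches the paper's own treatment: the paper proves this proposition simply by invoking \cite[Proposition 2.3]{FJ93} (the Euler factorization of the global linear period with respect to the fixed factorization $\lambda_\BA=\lambda_\infty\otimes\lambda_f$) together with the cohomological bookkeeping of \cite[Section 4.2]{JST19} and \cite[Section 6]{LLS24}, which is exactly your route, with the $\Omega_{\mu,\chi_\natural}$ and $\CG(\chi_v)^n$ normalizations cancelling against the right vertical arrow as you compute.
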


\section{Shalika periods and the Blasius-Deligne conjecture} \label{sec:SPBD}

In this section we are ready to define the canonical family of Shalika periods under Assumption \ref{Ass1} 
and prove Theorem \ref{BDconj}.

\subsection{The kernels of modular symbols}
Recall that  $\widehat{\pi_0(\rk_\infty^\times)}$ acts on $\oH(\Pi)$ and $\oH(\Pi_\infty)$, and we shall write their $\varepsilon'$-isotypic components as  $\oH(\Pi)[\varepsilon']$ and $\oH(\Pi_\infty)[\varepsilon']$ respectively for every  $\varepsilon' \in \widehat{\pi_0(\rk_\infty^\times)}$.  We now make the identification
\be \label{iden}
\oH(\xi_{\bdl{\eta}_\infty, \chi_\infty})\otimes \frak O_\infty = \varepsilon:=  \chi_\natural.
\ee
For the modular symbol $\wp_{\infty}^\circ$ given by \eqref{AMS}, 
it is clear that the map 
$\oH(\Pi_\infty)\to \BC$ with $\kappa\mapsto \wp^\circ_{\infty}(\kappa\otimes 1)$ 
 is supported on $\oH(\Pi_\infty)[\varepsilon]$, and we denote its restriction by
\be \label{wpeps}
\wp^\circ_\varepsilon: \oH(\Pi_\infty)[\varepsilon]\to \BC, \quad \kappa \mapsto \wp^\circ_{\infty}(\kappa\otimes 1).
\ee  
Recall that 
$\Pi_\infty\cong \pi_\mu := \widehat\otimes_{v\mid \infty} \pi_{\mu_v},
$
where $\mu_v:= \{\mu^{\iota}\}_{\iota\in \CE_{\rk_v}}$, and we have a Shalika functional $\lambda_\infty$ on $\Pi_\infty$.  Let $\pi_0 \in \Irr(G_\infty)$ be the specialization of $\pi_\mu$ at $\mu=0$, and we 
fix a nonzero Shalika functional $\lambda_{0,\infty}$ on $\pi_0$. There is a map $\jmath_\mu: \pi_0 \to \Pi_\infty \otimes F_\mu^\vee$, which is $G_\infty$-equivariant, uniquely determined by $\lambda_\infty$ and $\lambda_{0,\infty}$ as in \eqref{CDtrans}, and induces an isomorphism 
\be \label{cohtrans}
\jmath_\mu: \oH(\pi_0) =\oH(\BR^\times_+\bsl G_\infty^\circ; \pi_0) \cong \oH(\Pi_\infty). 
\ee 
Specializing at $\mu =0$ and $\chi_\infty = \varepsilon$  in \eqref{wpeps}, we obtain a map 
\be \label{wpeps0}
\wp^\circ_{0, \varepsilon}: \oH(\pi_0)[\varepsilon] \to \BC. 
\ee

\begin{leml} \label{lem:ker}
The map $\wp^\circ_\varepsilon$ in \eqref{wpeps} and the kernel $\Ker\, \wp^\circ_\varepsilon \subset \oH(\Pi_\infty)[\varepsilon]$, which is a codimension one subspace, 
depend only on $\varepsilon$, but not on the character $\chi_\infty$ with $ \chi^\natural = \varepsilon$. 
\end{leml}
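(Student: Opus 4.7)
The plan is to reduce the statement to the zero-weight case via the Archimedean period relation (Theorem \ref{APR}), where the independence from $\chi_\natural$ becomes transparent because only the quadratic piece $\chi^\natural = \varepsilon$ survives in the coefficient system. First I would unpack \eqref{wpeps} and \eqref{AMS}: under the identification \eqref{iden}, the map $\wp^\circ_\varepsilon$ is induced on $\oH(\Pi_\infty)[\varepsilon]$ by the functional $\Omega_{\mu,\chi_\natural}\cdot \oZ^\circ_{\rm FJ}(\tfrac{1}{2},\cdot,\chi_\infty)\otimes \lambda_{F_\mu,\chi_\natural}$ followed by the canonical pairing with the fundamental class, which is the same prescription defining the top row of the diagram in Theorem \ref{APR}.

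Next I would invoke Theorem \ref{APR}, whose commutativity yields the identity
\[
\Omega_{\mu,\chi_\natural}\cdot \wp_{\mu,\chi}\circ(\jmath_\mu\otimes \mathrm{id}) \,=\, \wp_{0,\chi^\natural}
\]
at the level of Archimedean modular symbols (aggregated over Archimedean places). Re-interpreting the left-hand side as $\wp^\circ_\infty\circ(\jmath_\mu\otimes \mathrm{id})$ and restricting to the $\varepsilon$-isotypic component, I obtain the key formula $\wp^\circ_\varepsilon \,=\, \wp^\circ_{0,\varepsilon}\circ \jmath_\mu^{-1}$ on $\oH(\Pi_\infty)[\varepsilon]$, after identification of $\oH(\xi_{0,\chi^\natural})\otimes \frak O_\infty$ with $\varepsilon$ via the $\mu=0$ analogue of \eqref{iden}.

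Now the right-hand side is manifestly independent of $\chi_\natural$: the coefficient system $F_0$ is trivial, so $\xi_{0,\chi^\natural}=\chi^\natural\boxtimes \chi^\natural$ and the functional defining $\wp^\circ_{0,\varepsilon}$ involves only $\chi^\natural=\varepsilon$; meanwhile, the translation isomorphism $\jmath_\mu$ from \eqref{cohtrans} is determined by $\mu$ and by the fixed Shalika functionals $\lambda_\infty,\lambda_{0,\infty}$, with no dependence on the algebraic Hecke character. This proves that $\wp^\circ_\varepsilon$, and hence $\Ker\,\wp^\circ_\varepsilon$, depend only on $\varepsilon$. The codimension-one assertion is equivalent to the non-vanishing of $\wp^\circ_\varepsilon$, which via the above identity reduces to the non-vanishing of $\wp^\circ_{0,\varepsilon}$; this in turn is supplied by the Archimedean non-vanishing result \cite[Theorem 3.11]{JST19} in the trivial-coefficient case.

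The main obstacle is the bookkeeping required in the second paragraph: one must verify carefully that the $\Omega_{\mu,\chi_\natural}$ prefactor built into \eqref{AMS} exactly matches the scalar on the top row of Theorem \ref{APR}, that the identification \eqref{iden} is compatible with the identification of $\oH(\xi_{0,\chi^\natural})\otimes \frak O_\infty$ with $\varepsilon$ used in the $\mu=0$ diagram, and that the tensor product over Archimedean places of the local statement of Theorem \ref{APR} gives the global identity needed here. Once these compatibilities are checked, the cancellation of all $\chi_\natural$-dependence is automatic and the lemma follows.
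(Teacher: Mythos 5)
Your proposal is correct and follows essentially the same route as the paper: both reduce to the zero-weight case via the Archimedean period relation (Theorem \ref{APR}), obtaining the commutative diagram $\wp^\circ_\varepsilon\circ\jmath_\mu=\wp^\circ_{0,\varepsilon}$ (the paper handles your "bookkeeping" compatibilities by invoking the proof of \cite[Proposition 4.9]{JST19}), and both get the codimension-one/non-vanishing assertion from \cite[Theorem 3.11]{JST19}.
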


\begin{proof}
By the  Archimedean period relation in Theorem \ref{APR} and the proof of \cite[Proposition 4.9]{JST19}, we have a commutative diagram 
\[
\begin{CD}
\oH(\Pi_\infty)[\varepsilon] @> \wp^\circ_{\varepsilon} >> \BC \\
@A \jmath_\mu AA  @| \\
\oH(\pi_0)[\varepsilon] @> \wp^\circ_{0,\varepsilon} >> \BC
\end{CD}
\]
where the bottom arrow is \eqref{wpeps0}. The lemma follows easily. 
\end{proof}

Let $\sigma\in \Aut(\BC)$. Recall that  $\Pi_f$ is realized as a space of Shalika functions on $G(\BA_f)$, and we have a $\sigma$-linear isomorphism $\sigma: \Pi_f \to {}^\sigma\Pi_f$.  We also have a $\sigma$-linear isomorphism on the Betti cohomology 
\be \label{twist-betti}
\sigma: \oH^{d_\infty}_c(\CX_G, F_\mu^\vee) \to  \oH^{d_\infty}_c(\CX_G, {}^\sigma F_\mu^\vee),
\ee
which via \eqref{betti} restricts to a $\sigma$-linear isomorphism 
$
\sigma: \oH(\Pi) \to \oH({}^\sigma\Pi). 
$
Since  \eqref{twist-betti} intertwines the actions of $\pi_0(\rk_\infty^\times)$, we have a further restriction (\cf \cite[Proposition 6.2]{LLS24}): $\sigma: \oH(\Pi)[\varepsilon] \to \oH({}^\sigma\Pi)[\varepsilon]$. 
This induces a $\sigma$-linear isomorphism 
$
\sigma: \oH(\Pi_\infty)[\varepsilon] \to \oH({}^\sigma\Pi_\infty)[\varepsilon]
$
making the following diagram commutative:
\be \label{loc-globCD}
\begin{CD}
\oH(\Pi_\infty)[\varepsilon] \otimes \Pi_f  @> \sigma >>  \oH({}^\sigma\Pi_\infty)[\varepsilon] \otimes {}^\sigma\Pi_f  \\
@V \iota_{\rm can} VV @VV \iota_{\rm can} V \\
\oH(\Pi)[\varepsilon] @> \sigma >> \oH({}^\sigma \Pi)[\varepsilon].
\end{CD}
\ee
Introduce a family of  representations ${}^\sigma\Pi^\natural: = {}^\sigma \Pi_f\otimes \varepsilon$ of $G^\natural = G(\BA_f)\times \pi_0(\rk_\infty^\times)$, 
where $\varepsilon$ is realized as the $\sigma$-twist of  \eqref{iden}, noting that ${}^\sigma\chi^\natural = \chi^\natural$ (\cf \cite[Remark 6.3]{LLS24}). We equip $\frak m_f$ with  a natural $\BQ$-rational structure as in \cite[Section 5.2]{LLS24}.  

For all the modular symbols on the cohomologies of  $\sigma$-twists, we will also put a left superscript $\sigma$ for clarity. By \eqref{FJCD}, \eqref{loc-globCD} and the well-known $\Aut(\BC)$-equivariance of global 
modular symbols, we have a commutative diagram 
\be \label{mainCD}
\begin{tikzcd}
\arrow[ddd, bend right =80, "\sigma" '] \oH(\Pi_\infty)[\varepsilon] \otimes \Pi^\natural \otimes \xi_{\bdl{\eta}_f,\chi_f} \otimes \frak m_f  \arrow[rr,  "\wp^\circ_{\infty} \otimes \wp^\circ_{f}"]  \arrow[d, "\iota_{\rm can}" ']
& & \BC \arrow[d, "\frac{\oL(\frac{1}{2}, \Pi\otimes\chi)}{\Omega_{\mu,\chi_\natural}\cdot\CG(\chi)^n}"]\\
\oH(\Pi)[\varepsilon]\otimes  \oH(\xi_{\bdl{\eta},\chi}) \otimes \frak m^\natural  \arrow[rr, "\wp"]  \arrow[d, "\sigma" '] & & \BC  \arrow[d, "\sigma" ] \\
\oH({}^\sigma\Pi)[\varepsilon] \otimes \oH({}^\sigma\xi_{\bdl{\eta},\chi}) \otimes \frak m^\natural \arrow[rr, " {}^\sigma\wp"] && \BC \\
\oH({}^\sigma \Pi_\infty)[\varepsilon] \otimes {}^\sigma \Pi^\natural \otimes {}^\sigma \xi_{\bdl{\eta}_f,\chi_f} \otimes \frak m_f  \arrow[u, "\iota_{\rm can}"]  \arrow[rr, "{}^\sigma\wp^\circ_{\infty} \otimes {}^\sigma\wp^\circ_{f}"]   &&   \BC \arrow[u, " \frac{\oL(\frac{1}{2}, {}^\sigma\Pi\otimes{}^\sigma\chi)}{\Omega_{\mu,\chi_\natural}\cdot{}^\sigma\CG(\chi)^n} " ']
\end{tikzcd}
\ee
 Here we have used the facts that ${}^\sigma F_\mu = F_{{}^\sigma\mu}$ with ${}^\sigma\mu:= \{\mu^{\sigma^{-1}\circ \iota}\}_{\iota\in \CE_\rk}$,
and that 
\[
\Omega_{{}^\sigma\mu, {}^\sigma\chi_\natural} = \Omega_{\mu, \chi_\natural}.
\] 

The following result is crucial for the definition of Shalika periods. 

\begin{leml} \label{lem:inv}
Under Assumption \ref{Ass1} when $\rk$ has a complex place, the $\sigma$-linear isomorphism $\sigma: \oH(\Pi_\infty)[\varepsilon] \to \oH({}^\sigma\Pi_\infty)[\varepsilon]$ restricts to a $\sigma$-linear isomorphism 
\[
\sigma: \Ker\,\wp^\circ_\varepsilon \to \Ker\, {}^\sigma\wp^\circ_\varepsilon. 
\]
\end{leml}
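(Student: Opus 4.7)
Since both $\Ker\,\wp^\circ_\varepsilon$ and $\Ker\,{}^\sigma\wp^\circ_\varepsilon$ are codimension-one $\BC$-subspaces by Lemma \ref{lem:ker}, and $\sigma$ is a $\sigma$-linear bijection carrying $\BC$-subspaces to $\BC$-subspaces of the same dimension, it suffices to prove the inclusion $\sigma(\Ker\,\wp^\circ_\varepsilon)\subset \Ker\,{}^\sigma\wp^\circ_\varepsilon$; a dimension count then upgrades this to equality. The plan is to read off the desired vanishing from the commutative diagram \eqref{mainCD}, with Assumption \ref{Ass1} ensuring that the $L$-value appearing on the right-hand vertical arrow does not vanish.

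Let $\sigma'$ and $\chi'$ be as in Assumption \ref{Ass1}. I would set $\tau:=\sigma'\sigma^{-1}$ and $\chi'':={}^\sigma\chi'$, and note that $\chi''^\natural=\chi'^\natural=\varepsilon$ (the quadratic character is $\sigma$-invariant) and that $\chi''_\natural$ is $F_{\sigma\mu}$-balanced because $\chi'_\natural$ is $F_\mu$-balanced. Applying \eqref{mainCD} with the triple $({}^\sigma\Pi,\tau,\chi'')$ in place of $(\Pi,\sigma,\chi)$, the $L$-value on the bottom-right vertical arrow becomes
\[
\oL(\tfrac{1}{2},\,{}^\tau({}^\sigma\Pi)\otimes {}^\tau\chi'')=\oL(\tfrac{1}{2},{}^{\sigma'}\Pi\otimes {}^{\sigma'}\chi'),
\]
which is nonzero by Assumption \ref{Ass1}. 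For $\kappa'\in\Ker\,{}^\sigma\wp^\circ_\varepsilon$, the top horizontal composition vanishes on $\kappa'\otimes v'$ for every $v'$, so propagating the zero through the diagram and using the nonvanishing of the $L$-value on the right yields
\[
({}^{\sigma'}\wp^\circ_\infty\otimes{}^{\sigma'}\wp^\circ_f)(\tau\kappa'\otimes\tau v')=0\quad\text{for every }v'.
\]
Because ${}^{\sigma'}\wp^\circ_f$ is a nonzero functional---each local factor $\CG({}^{\sigma'}\chi'_v)^n\cdot \oZ^\circ_{\rm FJ}(\tfrac12,\cdot,{}^{\sigma'}\chi'_v)$ being nonzero by Theorem \ref{MFSha} together with the uniqueness of local Shalika functionals---I can select $v'$ with ${}^{\sigma'}\wp^\circ_f(\tau v')\neq 0$, which forces $\tau\kappa'\in\Ker\,{}^{\sigma'}\wp^\circ_\varepsilon$. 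Dimension count then upgrades this to the equality $\tau(\Ker\,{}^\sigma\wp^\circ_\varepsilon)=\Ker\,{}^{\sigma'}\wp^\circ_\varepsilon$.

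Running the same argument with $\sigma=\mathrm{id}$ and $\chi=\chi'$ applied to the original diagram for $(\Pi,\sigma',\chi')$ produces the companion equality $\sigma'(\Ker\,\wp^\circ_\varepsilon)=\Ker\,{}^{\sigma'}\wp^\circ_\varepsilon$. Combining the two equalities and cancelling the $\sigma'$-linear bijection $\sigma'$ yields $\sigma(\Ker\,\wp^\circ_\varepsilon)=\Ker\,{}^\sigma\wp^\circ_\varepsilon$, which is the desired statement. The main obstacle I anticipate is the careful bookkeeping of the various $\sigma$-linear structures on the nodes of \eqref{mainCD} and the compatibility of the Gauss-sum and Archimedean normalizations under $\sigma$-twists of $\chi'$; once these compatibilities (already largely arranged in Sections \ref{sec:PCG}--\ref{sec:MSCD}) are in place, the proof is a clean application of the nonvanishing input supplied by Assumption \ref{Ass1}.
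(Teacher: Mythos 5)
Your proof is correct and takes essentially the same route as the paper, whose entire argument is a diagram chase in \eqref{mainCD} for the data $\sigma',\chi'$ of Assumption \ref{Ass1} combined with Lemma \ref{lem:ker} and a codimension-one count; your device of running the chase for $({}^{\sigma}\Pi,\tau,{}^{\sigma}\chi')$ with $\tau=\sigma'\sigma^{-1}$ and again for $(\Pi,\sigma',\chi')$ and then composing is a legitimate way to organize that chase (it trades the alternative preliminary step of deducing $\oL(\frac12,\Pi\otimes\chi')\neq 0$ for the standard cocycle compatibility $\tau\circ\sigma=\sigma'$ of the $\sigma$-linear structures). Two harmless remarks: the paper first dispatches the totally real case, where $\dim\oH(\Pi_\infty)[\varepsilon]=1$ and the kernel is zero, which your write-up omits; and the nonvanishing of ${}^{\sigma'}\wp^\circ_f$ is more accurately credited to the local Friedberg--Jacquet theory (\cite[Proposition 3.1]{FJ93} and the unramified computation) than to Theorem \ref{MFSha}.
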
 

\begin{proof}
First note that if $\rk$ is totally real, then $\dim \oH(\Pi_\infty)[\varepsilon]=1$ so that $\Ker\, \wp^\circ_\varepsilon=\{0\}$, in which case the assertion is trivial. 

In view of  Lemma \ref{lem:ker}, the assertion follows
easily from a diagram chasing in \eqref{mainCD} for the data $\sigma'$ and $\chi'$  satisfying Assumption \ref{Ass1}  when $\rk$ has a complex place.
\end{proof}

\subsection{Shalika periods and the end of proof}

We now give the definition of Shalika periods. Recall from \cite[Proposition 4.4]{JST19} that $\Pi_f$ has a unique $\BQ(\Pi, \bdl{\eta})$-rational structure such that the modular symbol $\wp^\circ_{f}$ 
in \eqref{NAMS} is defined over $\mathbb{Q}(\Pi, \bdl{\eta}, \chi)$ for all algebraic Hecke characters $\chi$. Moreover we have the non-Archimedean period relation 
\be\label{NAPR}
\begin{CD}
\Pi_f \otimes \xi_{\bdl{\eta},\chi}  \otimes \frak m_f @> \wp^\circ_{f} >> \BC \\
@V\sigma VV @VV \sigma V \\
{}^\sigma \Pi_f \otimes {}^\sigma \xi_{\bdl{\eta},\chi}  \otimes \frak m_f  @> {}^\sigma \wp^\circ_{f} >> \BC.
\end{CD}
\ee
It is clear that there is a $\kappa_{\varepsilon}\in \oH(\Pi_\infty)[\varepsilon] \setminus \Ker\, \wp^\circ_\varepsilon$ such that  the map 
$\omega_{\Pi^\natural}: \Pi_f \to \oH(\Pi)[\varepsilon]$ by $\phi_f \mapsto  \iota_{\rm can}( \kappa_{\varepsilon}\otimes \phi_f )$ 
belongs to 
$
\Hom_{G(\BA_f)}(\Pi_f, \oH(\Pi)[\varepsilon])^{\Aut(\BC/\BQ(\Pi, \bdl{\eta}))}.
$
For $\sigma\in \Aut(\BC)$ put 
$
{}^\sigma \kappa_\varepsilon : = \sigma(\kappa_\varepsilon) \in \oH({}^\sigma\Pi)[\varepsilon],
$
so that the map $\sigma(\omega_{\Pi^\natural})$ is $\Aut(\BC/\BQ({}^\sigma\Pi, {}^\sigma\bdl{\eta}))$-invariant, i.e., 
it belongs to the space $\Hom_{G(\BA_f)}({}^\sigma\Pi_f, \oH({}^\sigma\Pi)[\varepsilon])^{\Aut(\BC/\BQ({}^\sigma\Pi, {}^\sigma\bdl{\eta}))}$, and is given by 
\be \label{twist-omega}
\sigma(\omega_{\Pi^\natural}): {}^\sigma\Pi \to \oH({}^\sigma\Pi)[\varepsilon],\quad {}^\sigma\phi_f \mapsto \iota_{\rm can}({}^\sigma\kappa_\varepsilon\otimes {}^\sigma\phi_f).
\ee

\begin{dfnl}\label{def:period}
Under the Assumption \ref{Ass1} when $\rk$ has a complex place, for every $\sigma\in\Aut(\BC)$ define the Shalika period
\[
\Omega_\varepsilon({}^\sigma\Pi, {}^\sigma \bdl{\eta} ):= \frac{1}{{}^\sigma \wp^\circ_{\varepsilon}( {}^\sigma \kappa_\varepsilon)}\in \BC^\times.
\]
\end{dfnl}

We  justify that $\Omega_\varepsilon({}^\sigma\Pi, {}^\sigma \bdl{\eta} )$ is well-defined through the following steps:
\begin{itemize}
\item 
 By Lemma \ref{lem:inv}, in  Definition \ref{def:period} we have that 
$
{}^\sigma\kappa_\varepsilon \in \oH({}^\sigma\Pi_\infty)[\varepsilon] \setminus \Ker\, {}^\sigma\wp^\circ_\varepsilon,
$
hence  ${}^\sigma \wp^\circ_{\varepsilon}( {}^\sigma \kappa_\varepsilon) \neq 0$.  

\item By Lemma \ref{lem:ker}, $\Omega_\varepsilon({}^\sigma\Pi, {}^\sigma \bdl{\eta} )$ only depends on $\varepsilon$, not on $\chi$.

\item 
By definition it is clear that if ${}^\sigma\Pi \cong \Pi$ and ${}^\sigma\bdl{\eta} \cong \bdl{\eta}$, then $\Omega_\varepsilon({}^\sigma\Pi, {}^\sigma \bdl{\eta} ) = \Omega_\varepsilon(\Pi,  \bdl{\eta} )$. 

\item For every 
$\sigma\in\Aut(\BC)$, there exists a unique class in $\BC^\times / \BQ({}^\sigma\Pi, {}^\sigma\bdl{\eta})^\times$ given by the Shalika period $\Omega_{\varepsilon}({}^\sigma\Pi)$. More precisely we have the following result.
\end{itemize}

\begin{remarkl}
We expect that Lemma \ref{lem:inv} holds without the  Assumption \ref{Ass1}. If this is the case,  the Shalika periods $\{\Omega_\varepsilon({}^\sigma\Pi, {}^\sigma \bdl{\eta} )\}_{\sigma\in \Aut(\mathbb C)}$ is similarly defined without the Assumption \ref{Ass1}. 
\end{remarkl}
\begin{leml}
If $\kappa'_\varepsilon\in \oH(\Pi_\infty)[\varepsilon] \setminus \Ker\, \wp^\circ_\varepsilon$ is another class such that the map 
\[
\omega_{\Pi^\natural}'\colon \phi_f \mapsto \iota_{\rm can}(\kappa'_\varepsilon \otimes \phi_f)
\]
also belongs to $\Hom_{G(\BA_f)}(\Pi_f, \oH(\Pi)[\varepsilon])^{\Aut(\BC/\BQ(\Pi, \bdl{\eta}))}$, then the resulting Shalika period $\Omega'_\varepsilon({}^\sigma\Pi)$ satisfies that 
$
\Omega'_\varepsilon({}^\sigma\Pi) = c \cdot \Omega_\varepsilon({}^\sigma\Pi, {}^\sigma \bdl{\eta} )
$
for some $c\in \BQ({}^\sigma\Pi, {}^\sigma\bdl{\eta})^\times$. 
\end{leml}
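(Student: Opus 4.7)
The plan is to pass to a one-dimensional quotient on which both maps $\omega_{\Pi^\natural}$ and $\omega'_{\Pi^\natural}$ become scalar multiples of one another, and then to read off the rationality of the ratio from Galois invariance.

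First, I would set $K := \ker \wp^\circ_\varepsilon \subset \oH(\Pi_\infty)[\varepsilon]$, a codimension-one subspace by Lemma \ref{lem:ker}, and write $V := \oH(\Pi_\infty)[\varepsilon]/K$, so that $\wp^\circ_\varepsilon$ descends to an isomorphism $\bar\wp^\circ_\varepsilon \colon V \xrightarrow{\sim} \BC$. By Lemma \ref{lem:inv}, for every $\sigma\in\Aut(\BC)$ the $\sigma$-linear map $\sigma \colon \oH(\Pi_\infty)[\varepsilon]\to\oH({}^\sigma\Pi_\infty)[\varepsilon]$ carries $K$ onto its analogue $K_\sigma$ for ${}^\sigma\Pi$. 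Under the canonical identification $\iota_{\rm can}\colon \oH(\Pi_\infty)[\varepsilon]\otimes \Pi_f \xrightarrow{\sim} \oH(\Pi)[\varepsilon]$ coming from \eqref{betti}, it follows that $N:=\iota_{\rm can}(K\otimes \Pi_f)$ is a $G(\BA_f)$-stable subspace preserved by the $\sigma$-linear action of every $\sigma\in\Aut(\BC/\BQ(\Pi,\bdl{\eta}))$. Set $Q:=\oH(\Pi)[\varepsilon]/N \cong V\otimes \Pi_f$, and let $\bar\omega_{\Pi^\natural}, \bar\omega'_{\Pi^\natural}\colon \Pi_f\to Q$ be the maps induced by $\omega_{\Pi^\natural}$ and $\omega'_{\Pi^\natural}$. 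Since $\Pi_f$ is irreducible admissible, Schur's lemma gives $\End_{G(\BA_f)}(\Pi_f)=\BC$, hence $\Hom_{G(\BA_f)}(\Pi_f,Q)=V$ is one-dimensional over $\BC$; thus $\bar\omega'_{\Pi^\natural}=c\cdot \bar\omega_{\Pi^\natural}$ for a unique $c\in\BC^\times$, and passing through $\bar\wp^\circ_\varepsilon$ gives
\[
c \;=\; \frac{\wp^\circ_\varepsilon(\kappa'_\varepsilon)}{\wp^\circ_\varepsilon(\kappa_\varepsilon)} \;=\; \frac{\Omega_\varepsilon(\Pi,\bdl{\eta})}{\Omega'_\varepsilon(\Pi,\bdl{\eta})}.
\]

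For any $\sigma\in \Aut(\BC/\BQ(\Pi,\bdl{\eta}))$, the maps $\bar\omega_{\Pi^\natural}$ and $\bar\omega'_{\Pi^\natural}$ remain $\sigma$-invariant, because $\omega_{\Pi^\natural}$ and $\omega'_{\Pi^\natural}$ are $\sigma$-invariant by hypothesis and $N$ is $\sigma$-stable by the previous paragraph. The identity $\sigma(cf)=\sigma(c)\cdot \sigma(f)$ for semilinear actions, applied to $\bar\omega'_{\Pi^\natural} = c\cdot \bar\omega_{\Pi^\natural}$, then forces $c = \sigma(c)$ for every such $\sigma$, so $c\in \BQ(\Pi,\bdl{\eta})^\times$. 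Running the identical argument on the ${}^\sigma$-side produces the scalar $\sigma(c)\in \BQ({}^\sigma\Pi,{}^\sigma\bdl{\eta})^\times$, and substitution into Definition \ref{def:period} yields
\[
\Omega'_\varepsilon({}^\sigma\Pi,{}^\sigma\bdl{\eta}) \;=\; \sigma(c)^{-1}\cdot \Omega_\varepsilon({}^\sigma\Pi,{}^\sigma\bdl{\eta})
\]
with $\sigma(c)^{-1}\in \BQ({}^\sigma\Pi,{}^\sigma\bdl{\eta})^\times$, which is the asserted relation. The main obstacle is verifying the Galois-stability of $N$ needed to produce the quotient $Q$ with its compatible semilinear action; this is exactly the content of Lemma \ref{lem:inv} and requires Assumption \ref{Ass1}, without which the kernel $K$ cannot be guaranteed to transform consistently under $\Aut(\BC)$ and the whole descent collapses. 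Everything else is a routine Schur-type computation together with the standard semilinear identity $\sigma(cf)=\sigma(c)\sigma(f)$.
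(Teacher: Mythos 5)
Your proof is correct and follows essentially the same route as the paper's: both rest on Lemma \ref{lem:inv} together with \eqref{loc-globCD} to make the one-dimensional quotient $\oH({}^\sigma\Pi_\infty)[\varepsilon]/\Ker\,{}^\sigma\wp^\circ_\varepsilon$ Galois-compatible, and then compare the images of the two classes in that quotient before reading off the period relation from Definition \ref{def:period}. You merely expand the paper's terse appeal to a $\BQ({}^\sigma\Pi,{}^\sigma\bdl{\eta})$-rational structure on this quotient into an explicit fixed-field/semilinearity computation; the Schur-lemma step is harmless but not strictly needed, since proportionality of the two nonzero images in a one-dimensional space is automatic.
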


\begin{proof}
By \eqref{loc-globCD} and Lemma \ref{lem:inv}, the quotient space 
$\oH({}^\sigma\Pi_\infty)[\varepsilon]/ \Ker\,{}^\sigma\wp^\circ_\varepsilon$, which is one-dimensional, 
is defined over  $\BQ({}^\sigma\Pi, {}^\sigma\bdl{\eta})$. 
By assumption, the images of ${}^\sigma\kappa_\varepsilon$ and ${}^\sigma\kappa_\varepsilon':=\sigma(\kappa_\varepsilon)$ in the above quotient space differ by a scalar in $\BQ({}^\sigma\Pi, {}^\sigma\bdl{\eta})^\times$.
Hence the assertion is clear by the definition of Shalika periods. 
\end{proof}

Finally, we finish the proof of the Blasius-Deligne conjecture as follows. 

\begin{proof}(of Theorem \ref{BDconj})
In view of \eqref{mainCD} and \eqref{twist-omega}, we have a commutative diagram 
\[
\begin{CD}
\Pi^\natural \otimes \xi_{\bdl{\eta}_f,\chi_f} \otimes \frak m_f  @> \kappa_\varepsilon \otimes \cdot >> \oH(\Pi_\infty)[\varepsilon] \otimes \Pi^\natural \otimes \xi_{\bdl{\eta}_f,\chi_f} \otimes \frak m_f @> \wp^\circ_{\infty} \otimes \wp^\circ_{f} >> \BC \\
@| @V \iota_{\rm can} VV @VV \frac{\oL(\frac{1}{2}, \Pi\otimes\chi)}{\Omega_{\mu,\chi_\natural}\cdot\CG(\chi)^n} V \\
\Pi^\natural \otimes \xi_{\bdl{\eta}_f,\chi_f} \otimes \frak m_f  @>\omega_{\Pi^\natural}\otimes \iota_{\rm can} >> \oH(\Pi)[\varepsilon]\otimes  \oH(\xi_{\bdl{\eta},\chi}) \otimes \frak m^\natural  @> \wp >> \BC \\ 
@V\sigma VV @V \sigma VV  @VV\sigma V \\ 
{}^\sigma \Pi^\natural \otimes {}^\sigma \xi_{\bdl{\eta},\chi}  \otimes \frak m_f  @>\sigma(\omega_{\Pi^\natural})\otimes \iota_{\rm can}   >> \oH({}^\sigma\Pi)[\varepsilon] \otimes \oH({}^\sigma\xi_{\bdl{\eta},\chi}) \otimes \frak m^\natural @> {}^\sigma\wp>>  \BC  \\
@| @A\iota_{\rm can} AA @AA \frac{\oL(\frac{1}{2}, {}^\sigma\Pi\otimes{}^\sigma\chi)}{\Omega_{\mu,\chi_\natural}\cdot{}^\sigma\CG(\chi)^n} A \\
{}^\sigma \Pi^\natural \otimes {}^\sigma \xi_{\bdl{\eta},\chi}  \otimes \frak m_f  @> {}^\sigma\kappa_\varepsilon\otimes \cdot >>\oH({}^\sigma \Pi_\infty)[\varepsilon] \otimes {}^\sigma \Pi^\natural \otimes {}^\sigma \xi_{\bdl{\eta}_f,\chi_f} \otimes \frak m_f @> {}^\sigma\wp^\circ_{\infty} \otimes {}^\sigma\wp^\circ_{f} >> \BC
\end{CD}
\]
Chase the diagram from the top-left corner to the penultimate copy of $\BC$ in the right column, along the boundary  of the diagram in two different directions. From \eqref{NAPR} and Definition \ref{def:period}, we deduce that 
\[
\sigma \left( \frac{\oL(\frac{1}{2}, \Pi\otimes\chi)}{ \Omega_{\mu,\chi_\natural}\cdot\CG(\chi)^n \cdot \Omega_\varepsilon(\Pi, \bdl{\eta} )}\right) =   \frac{\oL(\frac{1}{2}, {}^\sigma\Pi\otimes {}^\sigma\chi)}{ \Omega_{\mu,\chi_\natural}\cdot{}^\sigma\CG(\chi)^n \cdot \Omega_\varepsilon({}^\sigma\Pi, {}^\sigma \bdl{\eta})}.
\]
This proves \eqref{reci}, from which \eqref{alg} follows directly.
\end{proof}

\section*{Acknowledgements}

D. Jiang is supported in part by 
the Simons Grants: SFI-MPS-SFM-00005659 and 
SFI-MPS-TSM-00013449. 
D. Liu is supported in part by National Key R \& D Program of China No. 2022YFA1005300 and Zhejiang Provincial Natural Science Foundation of China under Grant No. LZ22A010006.  B. Sun is supported in part by National Key R \& D Program of China No. 2022YFA1005300  and New Cornerstone Science Foundation. F. Tian
is supported in part by National Key R \& D Program of China No. 2022YFA1005304.

\end{document}